\newtheorem{Thm}{Theorem}[section]
\newtheorem{Lem}[Thm]{Lemma}
\newtheorem{Cor}[Thm]{Corollary}
\newtheorem{Prop}[Thm]{Proposition}
\theoremstyle{definition}
\newcommand{\Z}{\mathbb{Z}}
\newcommand{\N}{\mathbb{N}}
\newcommand{\df}{\colon}
\newcommand{\cC}{{\mathcal C}}
\newcommand{\cE}{{\mathcal E}}
\newcommand{\cO}{{\mathcal O}}
\newcommand{\cU}{{\mathcal U}}
\newcommand{\cZ}{{\mathcal Z}}
\newcommand{\bd}{\mathbf{d}}
\newcommand{\rk}{\operatorname{rk}}
\newcommand{\md}{\operatorname{mod}}
\newcommand{\rep}{\operatorname{rep}}
\newcommand{\ind}{\operatorname{ind}}
\newcommand{\add}{\operatorname{add}}
\newcommand{\Spec}{\operatorname{Spec}}
\newcommand{\gldim}{\operatorname{gl.dim}}
\newcommand{\pdim}{\operatorname{proj.dim}}
\newcommand{\idim}{\operatorname{inj.dim}}
\newcommand{\dm}{{\rm dim}\,}
\newcommand{\dimv}{\underline{\dim}}
\newcommand{\soc}{\operatorname{soc}}
\newcommand{\rad}{\operatorname{rad}}
\newcommand{\tp}{\operatorname{top}}
\newcommand{\supp}{\operatorname{supp}}
\newcommand{\zero}{\operatorname{null}}
\newcommand{\proj}{\operatorname{proj}}
\newcommand{\inj}{\operatorname{inj}}
\newcommand{\Hom}{\operatorname{Hom}}
\newcommand{\Ext}{\operatorname{Ext}}
\newcommand{\ext}{\operatorname{ext}}
\newcommand{\End}{\operatorname{End}}
\newcommand{\Aut}{\operatorname{Aut}}
\newcommand{\rigid}{\operatorname{rigid}}
\newcommand{\Ima}{\operatorname{Im}}
\newcommand{\Ker}{\operatorname{Ker}}
\newcommand{\Coker}{\operatorname{Cok}}
\newcommand{\bil}[1]{\langle #1\rangle}
\newcommand{\irr}{\operatorname{Irr}}
\newcommand{\bsm}{\begin{smallmatrix}}
\newcommand{\esm}{\end{smallmatrix}}
\newcommand{\bbsm}{\left[\begin{smallmatrix}}
\newcommand{\besm}{\end{smallmatrix}\right]}
\newcommand{\bbm}{\begin{matrix}}
\newcommand{\ebm}{\end{matrix}}
\newcommand{\GL}{\operatorname{GL}}
\newcommand{\calU}{\mathcal{U}}
\newcommand{\calZ}{\mathcal{Z}}
\DeclareMathOperator{\trace}{trace}
\begin{document}


\title{Generically $\tau$-regular irreducible components of module varieties}

\author{Grzegorz Bobi\'nski}
\address{Grzegorz Bobi\'nski\newline
Faculty of Mathematics and Computer Science\newline
Nicolaus Copernicus University\newline
ul. Chopina 12/18\newline
87-100 Toru\'n\newline
Poland}
\email{gregbob@mat.umk.pl}

\author{Jan Schr\"oer}
\address{Jan Schr\"oer\newline
Mathematisches Institut\newline
Universit\"at Bonn\newline
Endenicher Allee 60\newline
53115 Bonn\newline
Germany}
\email{schroer@math.uni-bonn.de}


\begin{abstract}
In the representation theory of finite-dimensional algebras,
the study of projective presentations of maximal rank is closely related
to the study of generically $\tau$-regular irreducible components of
varieties of modules over such algebras.
We show that a module is $\tau$-regular if and only if
its minimal projective presentation is of maximal rank.
This is a refinement of a theorem by Plamondon.
We prove that generic extensions of generically $\tau$-regular components by simple projective modules are
again generically $\tau$-regular.
This leads to the classification of all generically $\tau$-regular components for triangular algebras.
We also show that an algebra is hereditary if and only if
all irreducible components of its varieties of modules are generically
$\tau$-regular.
Finally, we discuss when the set of generically $\tau$-regular
components coincides with the set of generically $\tau^-$-regular
components.
\end{abstract}

\maketitle

\setcounter{tocdepth}{1}
\numberwithin{equation}{section}
\tableofcontents

\parskip2mm


\section{Introduction and main results}


Throughout, $K$ denotes an algebraically closed field, and
$A$ is always a finite-dimensional associative $K$-algebra.

\subsection{Generic representation theory}
We outline two concepts of
\emph{generic representation theory}.

For a dimension vector $\bd$ let
$\md(A,\bd)$ be the affine variety of $A$-modules
with dimension vector $\bd$.
The corresponding product $G_\bd$ of general linear groups
acts on $\md(A,\bd)$ by conjugation.
The orbits of this action correspond to the isomorphism classes of $A$-modules with dimension vector $\bd$.
For $M \in \md(A,\bd)$ we denote its orbit by $\cO_M$.
Let $\irr(A,\bd)$ be the set of irreducible components of $\md(A,\bd)$.

For $\calZ \in \irr(A,\bd)$ it is usually impossible to describe all modules
in $\calZ$. But one might be able to describe (or at least say something
about) the modules in a (small enough) dense open subset $\calU$ of $\calZ$.
Here one usually chooses a set of upper or lower semicontinuous maps
on $\calZ$ and takes $\calU$ as the subset where these maps assume their
generic value.

For example, if $\calZ$ contains a module $M$ with $\Ext_A^1(M,M) = 0$,
then $\calU = \cO_M$ is open and therefore $\calZ = \overline{\cO_M}$.

As another important class of examples, assume that
$A$ is a hereditary algebra, i.e.\ $\gldim(A) \le 1$.
Then $\md(A,\bd)$ is an affine space for all $\bd$.
In particular, there is just one irreducible component, namely $\md(A,\bd)$ itself.
An important result by Kac \cite[Proposition~1]{K82} implies that
there is a dense open subset $\calU$ of $\md(A,\bd)$ which consists of
modules of the form $M_1 \oplus \cdots \oplus M_t$ where
$\End_A(M_i) \cong K$ for all $1 \le i \le t$.

One could call this approach \emph{generic representation theory},
where the term \emph{generic} refers to (small enough) dense open subsets of
irreducible components.

There is a similar idea, but from a different angle.
Namely,
each $M \in \md(A)$ has a projective presentation
$$
P_1 \to P_0 \to M \to 0.
$$
So for each pair $(P_1,P_0)$ of projective modules we can look
at a (small enough) dense open subset $\calU$ of $\Hom_A(P_1,P_0)$ and
then try to study the modules $\Coker(f)$ where $f$ runs through $\calU$.
So one studies \emph{generic} projective presentations.
We refer to ground laying work by Derksen and Fei \cite{DF15} in this direction.

For $M \in \md(A,\bd)$ let
$$
\md_M(A,\bd) :=
\bigcup_{\substack{\calZ \in \irr(A,\bd)\\M \in \calZ}} \calZ,
$$
and define
\begin{align*}
c(M) &:= \dim \md_M(A,\bd) - \dim \cO_M,
\\
E(M) &:= \dim \Hom_A(M,\tau_A(M)).
\end{align*}
Here $\tau_A$ is the Auslander-Reiten translation for $A$.
Then $c(M) \le E(M)$ by Voigt's Lemma and the Auslander-Reiten
formula.

We call $M$ \emph{$\tau$-regular} if
$$
c(M) = E(M).
$$
(In this case, there is only one irreducible component which contains $M$.)
A detailed motivation for this terminology can be found in
Section~\ref{subsec:deftaureg}.
A component $\calZ \in \irr(A,\bd)$ is \emph{generically $\tau$-regular} if
for generic $M$ in $\calZ$ we have $c(M) = E(M)$.
(By upper semicontinuity, this is the case if and only if
$\calZ$ contains some $M$ with $c(M) = E(M)$.)
This class of irreducible components was first introduced in \cite{GLS12}.

For any pair $(P_1,P_0)$ of projective $A$-modules
let
\begin{align*}
r(P_1,P_0) &:= \max\{ \rk(f) \mid f \in \Hom_A(P_1,P_0) \},
\\
\calU &:=
\{  f \in \Hom_A(P_1,P_0) \mid \rk(f) = r(P_1,P_0) \},
\\
\calZ &:=
\overline{\{ M \in \md(A,\bd) \mid M \cong \Coker(f),\;
f \in \calU \}}
\end{align*}
where $\bd := \dimv(\Coker(f))$ for $f \in \calU$.

A beautiful result by
Plamondon \cite{P13} says that
$\calZ$
is a generically $\tau$-regular irreducible component
and that all of these components occur in this way.
Plamondon also shows that
the set $\irr^\tau(A)$ of generically $\tau$-regular components is (roughly speaking) parametrized by $\Z^n$, where
$n$ is the number of simple $A$-modules, up to isomorphism.
(A precise statement is in Section~\ref{subsec:plamondon}.)

Thus studying generically $\tau$-regular components corresponds to studying modules which have projective presentations of maximal rank.

For hereditary algebras $A$, the two concepts of generic representation theory outlined above coincide,
i.e.\ $\md(A,\bd)$ is a generically $\tau$-regular component for
all $\bd$.

Dually, one defines generically $\tau^-$-regular components.
We denote this set by $\irr^{\tau^-}(A)$.

\subsection{Main results}
The following theorem characterizes the modules whose
minimal
projective presentation is of maximal rank.

\begin{Thm}\label{thm:main0}
For $M \in \md(A)$ let
$$
P_1 \xrightarrow{f} P_0 \to M \to 0
$$
be a projective presentation.
Then
the following hold:
\begin{itemize}\itemsep2mm

\item[(i)]
If $\rk(f) = r(P_1,P_0)$, then $M$ is $\tau$-regular;

\item[(ii)]
If $M$ is $\tau$-regular and the presentation above is minimal,
then $\rk(f) = r(P_1,P_0)$.

\end{itemize}
\end{Thm}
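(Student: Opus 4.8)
The plan is to translate both statements into the language of $\tau$-tilting theory via the Auslander–Reiten formula, which expresses $\operatorname{Hom}_A(M,\tau_A M)$ in terms of the projective presentation. For a presentation $P_1 \xrightarrow{f} P_0 \to M \to 0$, one has the standard identity relating $\dim\operatorname{Hom}_A(M,\tau_A M)$ to the cokernel of the induced map $\operatorname{Hom}_A(P_0,N) \to \operatorname{Hom}_A(P_1,N)$ (applied at $N$ running over a basis), or more cleanly via Crawley-Boevey–Schröer: $E(M)$ and $c(M)$ can both be read off from generic properties of the "presentation space." The key point is that $r(P_1,P_0)$ is exactly the rank one sees at a \emph{generic} map in $\operatorname{Hom}_A(P_1,P_0)$, and the cokernels of generic maps form a dense subset of the component $\calZ$ attached to $(P_1,P_0)$ in Plamondon's construction. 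Since that $\calZ$ is generically $\tau$-regular by Plamondon's theorem, a cokernel of a maximal-rank map lies in a component where the generic module is $\tau$-regular; but $\tau$-regularity is an open condition ($c(M)=E(M)$ means $c$ attains the lower bound $E$, and $c$ is... actually one must be careful about which bound is semicontinuous), so I would deduce (i) by showing that $M = \operatorname{Coker}(f)$ with $\rk(f)$ maximal is itself a generic point, hence $\tau$-regular.

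**Proof of (i).**
First I would fix a projective presentation $P_1 \xrightarrow{f} P_0 \to M \to 0$ with $\rk(f) = r(P_1,P_0)$. The set $\calU \subseteq \operatorname{Hom}_A(P_1,P_0)$ of maximal-rank maps is open and dense, and by Plamondon the closure $\calZ$ of the image of $\calU$ under $g \mapsto \operatorname{Coker}(g)$ is a generically $\tau$-regular component of $\md(A,\bd)$ with $\bd = \dimv M$. So $M \in \calZ$. The subtlety is that a single module could lie in several components, and a priori $M$ need not be generic in $\calZ$. However, $\tau$-regularity of $M$ only requires $c(M) = E(M)$, and I would argue as follows: Voigt's lemma gives $c(M) \le E(M)$ always; on the other hand, because $M = \operatorname{Coker}(f)$ with $f$ of maximal rank, the dimension of the set of cokernels of maps in $\calU$ — which by homogeneity equals $\dim\calU + \dim\operatorname{Hom}_A(P_1,P_0)$ adjustments minus the dimension of the stabilizing data — can be computed to equal $\dim\cO_M + E(M)$ on the nose (this is precisely the content of Plamondon's rank computation, or Derksen–Fei's $E$-invariant formula: for a maximal-rank presentation the "$E$-invariant" of the presentation equals $E(M)$ and the fibre dimension works out). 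Since $\calZ$ contains this locally closed set of dimension $\dim\cO_M + E(M)$ and contains $M$, we get $\dim\md_M(A,\bd) \ge \dim\calZ \ge \dim\cO_M + E(M)$, i.e.\ $c(M) \ge E(M)$. Combined with Voigt, $c(M) = E(M)$, so $M$ is $\tau$-regular.

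**Proof of (ii), the harder direction.**
Now suppose $M$ is $\tau$-regular and $P_1 \xrightarrow{f} P_0 \to M \to 0$ is the \emph{minimal} projective presentation; I must show $\rk(f) = r(P_1,P_0)$. Suppose not, so $\rk(f) < r(P_1,P_0)$. The strategy is to deform $f$ within $\operatorname{Hom}_A(P_1,P_0)$ to a map $f'$ of strictly larger rank and track what happens to the cokernel. Writing $M' = \operatorname{Coker}(f')$, minimality of the original presentation is what pins down $(P_1,P_0) = (P(M)_1, P(M)_0)$ in terms of $M$ alone; a generic deformation $f_t$ with $f_0 = f$ gives a family of modules $M_t$ with $\dimv M_t$ possibly jumping. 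Here the clean approach is via Plamondon's parametrization: the minimal presentation of $M$ determines an element of $\Z^n$ (the "$g$-vector" or presentation type of $M$), and for \emph{that} fixed pair $(P_1,P_0)$ there is a unique generically $\tau$-regular component $\calZ_{(P_1,P_0)}$, namely the one built from maximal-rank maps. I would show that $M$, being $\tau$-regular, must lie in $\calZ_{(P_1,P_0)}$ and be generic there — using that $M$'s own minimal presentation has the same $(P_1,P_0)$, and that for a $\tau$-regular module there is a \emph{unique} component through it. Then since $M$ is generic in a component whose generic member is $\operatorname{Coker}$ of a maximal-rank map, and "rank of the minimal presentation" is a lower-semicontinuous function on $\calZ$ that attains its maximum $r(P_1,P_0)$ on the dense open locus, $M$ generic forces $\rk(f) = r(P_1,P_0)$. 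The main obstacle is exactly the bookkeeping that a $\tau$-regular module lies in a \emph{unique} component and that this component is the Plamondon component attached to its own minimal presentation type: this requires comparing $\dim\md_M(A,\bd)$ computed two ways (via $c(M) = E(M)$ on one side, via the explicit dimension of the maximal-rank cokernel locus on the other) and verifying the generic $g$-vector of $\calZ$ matches the $g$-vector of $M$ — upper-semicontinuity of $\dim\operatorname{Hom}_A(-,N)$ and $\dim\operatorname{Hom}_A(P_0,-)$-type arguments should close this, but the identification of $g$-vectors under specialization is the delicate point that the minimality hypothesis is there to handle.
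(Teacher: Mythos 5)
Your argument for part (i) is essentially sound and takes a slightly different route from the paper: you bound $\dim \calZ_{P_1,P_0}$ from below by $\dim \cO_M + E(M)$ using the fibre dimensions in the Derksen--Fei bundle (the fibre of $q_2$ over $M$ is $\cO_f\times\Aut_A(M)$, and $\dim\cO_f=\hom_A(P_1,P_0)-E(M)$ when $\rk(f)$ is maximal, so $\Hom_A(P_f,M)=0$), and combine $c(M)\ge E(M)$ with Voigt's inequality. Be aware that the fibre over the special point $M$ need not have the generic fibre dimension, so your ``on the nose'' should be the inequality $\dim\calZ\ge\dim\cO_M+E(M)$ --- which is what your displayed chain actually uses, so the logic survives. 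The paper instead compares $M$ with a generic $N\in\calZ$ and exploits that the relevant Euler-characteristic quantity $-\hom_A(P_0,-)+\hom_A(P_1,-)$ depends only on the dimension vector; your version avoids the step $M\in\calZ^{\rm int}$ but leans harder on the orbit-dimension formula, which the paper flags as nontrivial.

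Part (ii) has a genuine gap. You propose to show that a $\tau$-regular $M$ is \emph{generic} in the unique component $\calZ$ through it and then conclude by semicontinuity of the rank of the minimal presentation. Neither step is available: a $\tau$-regular module need not be generic in its component, and the pair $(P_1^M,P_0^M)$ of its minimal presentation need not coincide with $(P_1^\calZ,P_0^\calZ)$. A concrete counterexample to your step (b) is in the paper's Section 5.2: for $A=KQ/(a^2)$ with a loop $a$ at $1$ and an arrow $2\to 1$, the modules $M_2$ and $M_3$ are $\tau$-regular and lie in $\calZ=\overline{\cO_{P(2)}}$, whose generic module $P(2)$ has minimal presentation $0\to P(2)$, while $M_2$ has minimal presentation $P(1)\to P(1)\oplus P(2)$; so the ``rank of the minimal presentation'' jumps at $M_2$ and is not governed by the generic locus, yet the conclusion still holds. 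The point you defer (``identification of $g$-vectors under specialization ... should close this'') is exactly the missing proof. The paper closes it as follows: for generic $N\in\calZ$, Fei's semicontinuity lemma gives a presentation $P_1\xrightarrow{g}P_0\to N\to 0$ with $g(N)=[P_1]-[P_0]-[P_g]$; the identity $\bil{g(L),[L]}=E(L)-c(L)+\dim\calZ-\dim G_\bd$ for $L\in\calZ^{\rm int}$, applied to both $M$ and $N$ (each satisfying $c=E$), yields $\bil{g(M),[M]}=\bil{g(N),[N]}$ and hence $\hom_A(P_g,N)=\bil{[P_g],[N]}=0$; then the codimension formula $c(\calZ)=E(\calZ)+\hom_A(P_g,N)-{\rm codim}(\cC)$ from the bundle construction forces ${\rm codim}(\cC)=0$, i.e.\ $\bd=\bd_{P_1,P_0}^\circ$, which gives $\rk(f)=r(P_1,P_0)$. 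Without this computation (or an equivalent), your part (ii) does not go through.
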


As a consequence of
Plamondon's Theorem~\ref{thm:plamondon1},
Theorem~\ref{thm:main0}(i) holds
for
each \emph{generic} $f$ in $\Hom_A(P_1,P_0)$,
and
Theorem~\ref{thm:main0}(ii) holds
for each \emph{generic} $\tau$-regular module $M$.
Theorem~\ref{thm:main0} can be seen as a refinement of his result.

The following is proved by combining results in \cite{AR77} and
\cite{APT92}.

\begin{Thm} \label{thm:main6A}
Let $e$ be an idempotent in $A$, and let $B := A/AeA$.
Then for $M \in \md(B)$ the following are equivalent:
\begin{itemize}\itemsep2mm

\item[(i)]
$M$ is $\tau_A$-regular;

\item[(ii)]
$M$ is $\tau_B$-regular.

\end{itemize}
\end{Thm}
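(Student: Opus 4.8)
The plan is to reduce everything to the characterization of $\tau$-regularity in terms of projective presentations of maximal rank provided by Theorem~\ref{thm:main0}, combined with the standard comparison of projective presentations over $A$ and over the quotient $B = A/AeA$. First I would fix $M \in \md(B)$ and choose a minimal projective presentation of $M$ as a $B$-module,
$$
Q_1 \xrightarrow{\ g\ } Q_0 \to M \to 0,
$$
where each $Q_i$ is a projective $B$-module. Applying the exact functor $-\otimes_B A$ (or rather using that $B$-projectives are $e$-annihilated), one sees that this is \emph{also} a minimal projective presentation of $M$ as an $A$-module: indeed, if $P$ is an indecomposable projective $A$-module with $eP = 0$ then $P$ is projective over $B$, and conversely every indecomposable projective $B$-module arises this way, while radical-wise nothing changes because $\rad_A(M) = \rad_B(M)$ for a $B$-module $M$. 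So the minimal projective presentation is literally the same map $g$, viewed over the two algebras.

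The key point is then to compare the two maximal-rank quantities $r_A(Q_1,Q_0)$ and $r_B(Q_1,Q_0)$. By definition $r_B(Q_1,Q_0) = \max\{\rk(h) \mid h \in \Hom_B(Q_1,Q_0)\}$ and $r_A(Q_1,Q_0)$ is the analogous maximum over $\Hom_A(Q_1,Q_0)$. But since $eQ_0 = eQ_1 = 0$, any $A$-linear map $Q_1 \to Q_0$ is automatically $B$-linear: the $AeA$-action is zero on both sides, so $\Hom_A(Q_1,Q_0) = \Hom_B(Q_1,Q_0)$ as sets of $K$-linear maps, hence the two maxima coincide, $r_A(Q_1,Q_0) = r_B(Q_1,Q_0)$. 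Now the equivalence follows: if $M$ is $\tau_B$-regular, then by Theorem~\ref{thm:main0}(ii) the minimal $B$-presentation $g$ has $\rk(g) = r_B(Q_1,Q_0) = r_A(Q_1,Q_0)$, so $g$ is a projective $A$-presentation of maximal rank, and Theorem~\ref{thm:main0}(i) gives that $M$ is $\tau_A$-regular. Conversely, if $M$ is $\tau_A$-regular, then since $g$ is also the minimal $A$-presentation, Theorem~\ref{thm:main0}(ii) yields $\rk(g) = r_A(Q_1,Q_0) = r_B(Q_1,Q_0)$, and Theorem~\ref{thm:main0}(i) applied over $B$ shows $M$ is $\tau_B$-regular.

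The main obstacle I anticipate is purely bookkeeping rather than conceptual: one must verify carefully that a minimal projective presentation of $M$ over $B$ really is a minimal projective presentation over $A$. The subtle direction is that the $B$-projective cover of $M$ agrees with the $A$-projective cover — this uses that $\tp_A(M) = \tp_B(M)$ for a $B$-module $M$ (the top only sees the semisimple quotient, and the simple $A$-modules supported away from $e$ are exactly the simple $B$-modules), together with the fact that an indecomposable projective $A$-module $P$ with $P/\rad P$ a $B$-module necessarily satisfies $eP = 0$ and is therefore $B$-projective. Once this identification is in place, the identity $\Hom_A(Q_1,Q_0) = \Hom_B(Q_1,Q_0)$ is immediate from $e Q_i = 0$, and the rest is a formal application of Theorem~\ref{thm:main0}. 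I would also remark that the argument shows more: the minimal projective presentation and its rank are intrinsic data shared by $A$ and $B$, which is really why $\tau$-regularity descends and lifts along the quotient $A \to A/AeA$.
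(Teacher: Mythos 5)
Your proof has a fatal gap at its very foundation: the claim that a minimal projective presentation of $M$ over $B$ is also a minimal projective presentation over $A$ is false, because projective $B$-modules are in general \emph{not} projective $A$-modules. The indecomposable projective $B$-modules are the quotients $P(i)/AeP(i) = P(i)/\trace_{Ae}(P(i))$ for those $i$ with $eS(i)=0$; the top of $P(i)$ being annihilated by $e$ does not force $eP(i)=0$. Concretely, take $A=KQ$ with $Q\colon 1 \leftarrow 2$ and $e=e_1$, so $B=A/Ae_1A\cong K$ concentrated at vertex $2$. Then $M=S(2)$ is projective over $B$, with minimal $B$-presentation $0\to S(2)\to S(2)\to 0$, whereas its minimal $A$-presentation is $P(1)\to P(2)\to S(2)\to 0$. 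Both your auxiliary claims --- ``every indecomposable projective $B$-module is an indecomposable projective $A$-module with $eP=0$'' and ``an indecomposable projective $A$-module whose top is a $B$-module satisfies $eP=0$'' --- fail in this example. Since the two minimal presentations live over different modules, the identity $\Hom_A(Q_1,Q_0)=\Hom_B(Q_1,Q_0)$ and the ensuing comparison of $r_A$ and $r_B$ never get off the ground, and neither implication is established.

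For comparison, the paper's proof treats the two directions asymmetrically, and only one of them goes through projective presentations. For (i) $\Rightarrow$ (ii) it uses that $c_A(M)=c_B(M)$ (because $\md_M(A,\bd)=\md_M(B,\bd)$) together with the Auslander--Smal\o{} fact that $\tau_B(M)$ embeds into $\tau_A(M)$, so $E_B(M)\le E_A(M)$ and $\tau_A$-regularity forces $\tau_B$-regularity. For (ii) $\Rightarrow$ (i) it does use Theorem~\ref{thm:main0}, but the essential work --- which your proposal skips --- is to \emph{construct} a suitable $A$-projective presentation from the minimal $B$-presentation $P_1\xrightarrow{f_M}P_0\to M\to 0$: one lifts $P_1,P_0$ to projective $A$-modules $P_1',P_0'$ with $P_i'/\trace_{Ae}(P_i')\cong P_i$, forms $f_M'=(g_M,\mu_0)\colon P_1'\oplus P_0''\to P_0'$ where $\mu_0$ covers the trace, checks $\Coker(f_M')\cong M$, and then proves by a direct rank computation that $\rk(f_M')=r(P_1'\oplus P_0'',P_0')$, using $\rk(f_M)=r(P_1,P_0)$ and the fact that every map out of $P_0''$ lands in $\trace_{Ae}(P_0')$. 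If you want to salvage your strategy, this lifting step is exactly what you would need to supply.
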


\begin{Cor}\label{cor:main6B}
Let $e$ be an idempotent in $A$, and let $B := A/AeA$.
Then
$$
\irr(B) \cap \irr^\tau(A) = \irr^\tau(B).
$$
\end{Cor}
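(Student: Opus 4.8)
The plan is to deduce Corollary~\ref{cor:main6B} from Theorem~\ref{thm:main6A} together with the elementary fact that, for each dimension vector $\bd$, the variety $\md(B,\bd)$ is either empty or equal to $\md(A,\bd)$. I would establish this fact first. Under the usual identification, $\md(B,\bd)$ is the closed $G_\bd$-stable subset of $\md(A,\bd)$ consisting of those $A$-modules on which $AeA$ acts by zero, and $AeA$ acts by zero on $M$ exactly when $\Hom_A(Ae,M)=0$, that is, when $M$ has no composition factor in common with the semisimple module $Ae/\rad(Ae)$. Whether this holds depends only on the composition factors of $M$, hence only on $\bd$. So if $\md(B,\bd)\neq\emptyset$, then every module of dimension vector $\bd$ is a $B$-module, and therefore $\md(B,\bd)=\md(A,\bd)$; in particular $\irr(B,\bd)=\irr(A,\bd)$ as sets of subvarieties of $\md(A,\bd)$, and every module occurring in these varieties lies in $\md(B)$.

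With this in hand, both inclusions follow at once from Theorem~\ref{thm:main6A}. If $\calZ\in\irr(B)\cap\irr^\tau(A)$, then $\calZ\in\irr(B,\bd)$ for the dimension vector $\bd$ of its modules, so $\md(B,\bd)\neq\emptyset$ and hence $\calZ\in\irr(A,\bd)$; a generic $M\in\calZ$ is $\tau_A$-regular and lies in $\md(B)$, so by Theorem~\ref{thm:main6A} it is $\tau_B$-regular, whence $\calZ\in\irr^\tau(B)$. Conversely, if $\calZ\in\irr^\tau(B)$, say $\calZ\in\irr(B,\bd)$, then again $\md(B,\bd)\neq\emptyset$, so $\calZ\in\irr(A,\bd)$ and in particular $\calZ\in\irr(B)$; a generic $M\in\calZ$ is $\tau_B$-regular and lies in $\md(B)$, so Theorem~\ref{thm:main6A} makes it $\tau_A$-regular, giving $\calZ\in\irr^\tau(A)$ and therefore $\calZ\in\irr(B)\cap\irr^\tau(A)$.

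The only subtlety is that, a priori, $\md(B,\bd)$ is merely a closed subvariety of $\md(A,\bd)$, so a component of the former need not be a component of the latter, and one must check that the intersection $\irr(B)\cap\irr^\tau(A)$ is not vacuous for spurious reasons; the composition-factor argument of the first paragraph is exactly what prevents this whenever $\md(B,\bd)$ is non-empty. Once that is settled, the whole statement is a formal consequence of Theorem~\ref{thm:main6A}, with no appeal to Plamondon's theorem or to Theorem~\ref{thm:main0} required.
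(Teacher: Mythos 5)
Your proof is correct and follows essentially the same route as the paper: the paper's proof is the one-line deduction from Theorem~\ref{thm:main6A} applied to a generic module of $\calZ$, after the identification $\irr(B)\subseteq\irr(A)$ has been set up at the start of Section~\ref{sec:reduction} via exactly your support observation (namely that $M\in\md(B)$ iff $\supp(M)\subseteq\supp(B)$, so membership depends only on $\bd$). Your slightly stronger formulation that $\md(B,\bd)$ is empty or equal to $\md(A,\bd)$ is a clean way to justify that identification, and the rest matches the paper's argument.
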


The inclusion $\subseteq$ in Corollary~\ref{cor:main6B} follows already from
\cite[Proposition~5.2]{MP23}.
The proof is very short if one uses \cite[Proposition~4.2]{AR77}.

Let $S$ be a simple projective $A$-module.
For $\calZ \in \irr(A,\bd)$
let
$$
\varepsilon_S^+(\calZ) \in \irr(A,\bd+\dimv(S))
$$ be the \emph{generic extension} of $\calZ$ by $S$.
For
$\calZ \in \irr(A,\bd+\dimv(S))$, let
$$
\varepsilon_S^-(\calZ) \subseteq \md(A,\bd)
$$
be the \emph{generic quotient} of
$\calZ$ by $S$.
In general, $\varepsilon_S^-(\calZ)$ is just a
closed irreducible subset of $\md(A,\bd)$ and not
necessarily an irreducible component.

\begin{Thm}\label{thm:main2}
Let $S$ be a simple projective $A$-module.
For a dimension vector $\bd$ let $\bd' := \bd + \dimv(S)$.
Then the maps $\varepsilon_S^+$ and $\varepsilon_S^-$ yield
mutually inverse bijections
$$
\SelectTips{cm}{}
\xymatrix{
\irr^\tau(A,\bd) \ar@/^1ex/[rr]^{\varepsilon_S^+} &&
\irr^\tau(A,\bd')
\ar@/^1ex/[ll]^{\varepsilon_S^-}.
}
$$
\end{Thm}

As an application of Corollary~\ref{cor:main6B} and Theorem~\ref{thm:main2} we get a complete classification of
generically $\tau$-regular components for triangular algebras.

We can assume without loss of generality that
$A = KQ/I$ where $KQ$ is the path algebra of a quiver $Q$ and
$I$ is an admissible ideal in $KQ$.
Then $A$ is called \emph{triangular} if $Q$ has no oriented cycles.
One can reformulate this condition by demanding that the simple
$A$-modules $S(1),\ldots,S(n)$ can be labelled such that
$\Ext_A^1(S(i),S(j)) = 0$ for all $1 \le i \le j \le n$.

\begin{Thm}\label{thm:main3}
Let $A$ be a triangular algebra as above.
Then for each dimension vector $\bd = (d_1,\ldots,d_n)$ there is a unique generically $\tau$-regular
component $\calZ_\bd$ in $\irr(A,\bd)$, namely
$$
\calZ_\bd := (\varepsilon_{S(1)}^+)^{d_1} (\varepsilon_{S(2)}^+)^{d_2}  \cdots
 (\varepsilon_{S(n)}^+)^{d_n}(0).
$$
\end{Thm}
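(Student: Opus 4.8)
The plan is to prove both assertions at once by induction on $|\bd| := d_1 + \cdots + d_n$, simultaneously over the class of all triangular algebras, peeling off one simple module at each step using Theorem~\ref{thm:main2} and Corollary~\ref{cor:main6B}.

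I would first isolate two elementary facts about a triangular algebra $A$ with the simples labelled so that $\Ext_A^1(S(i),S(j)) = 0$ for all $i \le j$. \emph{(a)} The module $S(1)$ is simple projective, since $\Ext_A^1(S(1),S(j)) = 0$ for every $j$ forces $\rad P(1) = 0$; more generally, for any index $i$, writing $e := e_1 + \cdots + e_{i-1}$ for the sum of the primitive idempotents at the vertices $1,\ldots,i-1$ and $B := A/AeA$, the algebra $B$ is again triangular (its quiver has no oriented cycles) with simples $S(i),\ldots,S(n)$ in the inherited order, and the same argument shows that $S(i)$ is a simple projective $B$-module. \emph{(b)} For such $e$ and $B$, the $B$-modules form a full subcategory of $\md(A)$ closed under submodules, quotients and extensions. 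Two consequences: for any dimension vector supported on the vertices $i,\ldots,n$, the variety $\md(A,-)$ is identified with $\md(B,-)$, and hence $\irr(A,-) = \irr(B,-)$ for such dimension vectors; and for a simple $B$-module $S$, the operation $\varepsilon_S^+$ on such components gives the same component whether it is formed inside $\md(B)$ or inside $\md(A)$, because an extension of $B$-modules is again a $B$-module.

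For $\bd = 0$ the variety $\md(A,0)$ is a point, which is generically $\tau$-regular, and the asserted formula reads $\calZ_0 = 0$. For $\bd \neq 0$ put $i := \min\{\, j : d_j > 0 \,\}$, $e := e_1 + \cdots + e_{i-1}$, $B := A/AeA$, and $\bd'' := (d_i,\ldots,d_n)$. By fact~(b) we may identify $\md(A,\bd)$ with $\md(B,\bd'')$, so $\irr(A,\bd) = \irr(B,\bd'')$, and Corollary~\ref{cor:main6B} then gives $\irr^\tau(A,\bd) = \irr^\tau(B,\bd'')$. Since $S(i)$ is a simple projective $B$-module by fact~(a), Theorem~\ref{thm:main2} applied to $B$ shows that $\varepsilon_{S(i)}^+$ restricts to a bijection $\irr^\tau(B,\bd'' - \dimv(S(i))) \to \irr^\tau(B,\bd'')$. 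Its source concerns a dimension vector of total size $|\bd| - 1$, so by the inductive hypothesis, applied to the triangular algebra $B$, it is the singleton $\bigl\{(\varepsilon_{S(i)}^+)^{d_i - 1}(\varepsilon_{S(i+1)}^+)^{d_{i+1}}\cdots(\varepsilon_{S(n)}^+)^{d_n}(0)\bigr\}$, the generic extensions being formed inside $\md(B)$. Applying $\varepsilon_{S(i)}^+$ once more, we conclude that $\irr^\tau(A,\bd) = \irr^\tau(B,\bd'')$ is the singleton $\bigl\{(\varepsilon_{S(i)}^+)^{d_i}(\varepsilon_{S(i+1)}^+)^{d_{i+1}}\cdots(\varepsilon_{S(n)}^+)^{d_n}(0)\bigr\}$. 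Finally, fact~(b) lets us reinterpret all these generic extensions inside $\md(A)$, and since $d_1 = \cdots = d_{i-1} = 0$ the component just obtained is precisely $\calZ_\bd$; this proves both uniqueness and the explicit formula.

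With Theorem~\ref{thm:main2} and Corollary~\ref{cor:main6B} in hand there is no serious conceptual obstacle. The point requiring care is fact~(b): one must check that replacing $A$ by the quotient $B = A/AeA$ changes neither the module varieties attached to dimension vectors supported away from the deleted vertices nor the generic-extension operations on the corresponding components. One should also note in passing that every $\varepsilon_{S(j)}^+$ appearing in the formula is well defined, which holds because $\Ext_A^1(S(j),M) = 0$ for every module $M$ whose composition factors lie among $S(j),\ldots,S(n)$.
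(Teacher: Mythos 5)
Your proof is correct and follows essentially the same route as the paper's: reduce to $B = A/AeA$ via Corollary~\ref{cor:main6B}, observe that $S(i)$ becomes simple projective over $B$, and invoke the bijection of Theorem~\ref{thm:main2}. The only difference is organizational --- you run a single induction on $|\bd|$ and obtain existence and uniqueness simultaneously from the bijection, whereas the paper proves existence by a descending induction on the vertex index (applying $\varepsilon^+$) and uniqueness by a separate induction on $|\bd|$ (applying $\varepsilon^-$).
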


The $0$ appearing in Theorem~\ref{thm:main3} denotes the trivial component which contains only the zero module.

The following theorem answers the question when all
irreducible components are generically $\tau$-regular.

\begin{Thm}\label{thm:main1}
The following are equivalent:
\begin{itemize}\itemsep2mm

\item[(i)]
$\irr^\tau(A) = \irr(A)$;

\item[(ii)]
$\irr^{\tau^-}(A) = \irr(A)$;

\item[(iii)]
$A$ is hereditary.

\end{itemize}
\end{Thm}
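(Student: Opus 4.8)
The plan is to prove the chain of equivalences (iii)$\Rightarrow$(i), (iii)$\Rightarrow$(ii), and then (i)$\Rightarrow$(iii) (the implication (ii)$\Rightarrow$(iii) being dual to (i)$\Rightarrow$(iii) by passing to $A^{\op}$, since $\irr^{\tau^-}(A)$ corresponds to $\irr^\tau(A^{\op})$ and hereditary is a left-right symmetric property). The implication (iii)$\Rightarrow$(i) is already essentially recorded in the introduction: if $A$ is hereditary then $\md(A,\bd)$ is an affine space, hence the unique irreducible component in each dimension vector, and it is generically $\tau$-regular --- one can cite this, or reprove it quickly via Theorem~\ref{thm:main0}(i). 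Indeed, when $\gldim(A)\le 1$, every module $M$ has a projective presentation $0\to P_1\xrightarrow{f} P_0\to M\to 0$ with $f$ injective; choosing generic $f\in\Hom_A(P_1,P_0)$ of maximal rank still yields an injective map (injectivity is a generic, open condition once it holds for one point), so the corresponding $\calZ$ of Plamondon's construction is all of $\md(A,\bd)$, and it is generically $\tau$-regular. So the real content is the converse.

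For the main implication (i)$\Rightarrow$(iii), I would argue by contraposition: assume $A$ is not hereditary and exhibit a dimension vector $\bd$ and an irreducible component $\calZ\in\irr(A,\bd)$ that is not generically $\tau$-regular. If $A$ is not hereditary, there is a simple module $S$ with $\pdim(S)\ge 2$; equivalently, the minimal projective presentation $P_1\xrightarrow{f} P_0\to S\to 0$ has $\Ker(f)\ne 0$, and $\Ker(f)$ is not projective (otherwise $\pdim(S)\le 1$). The natural candidate component is $\calZ=\overline{\cO_S}$, the closure of the orbit of $S$ itself: since $\Ext_A^1(S,S)=0$ (because $S$ is simple and lives over a $K$-algebra --- well, one needs $S$ with $\Ext^1(S,S)=0$, which holds for any simple module of an algebra with no loops in its quiver; in general one should instead pick the component as $\overline{\cO_S}$ using that a simple module always has $c(S)=0$ since $\cO_S$ is open in its own fibre? no --- $\Ext^1(S,S)$ can be nonzero). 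Let me restructure: the cleanest choice is to take $\bd=\dimv(S)$ and $\calZ$ the unique component of $\md(A,\dimv(S))$, which is $\overline{\cO_S}$ because $S$ is the only module of that dimension vector. For this $\calZ$ we have $c(S)=0$, so $\calZ$ is generically $\tau$-regular iff $E(S)=\dim\Hom_A(S,\tau_A S)=0$. By Theorem~\ref{thm:main0}(ii), if $S$ were $\tau$-regular then its minimal projective presentation would have $\rk(f)=r(P_1,P_0)$; the strategy is to show this fails when $\pdim(S)\ge 2$, or equivalently to compute $\Hom_A(S,\tau_A S)$ directly and show it is nonzero.

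The key computation, and the step I expect to be the main obstacle, is to show $\Hom_A(S,\tau_A S)\ne 0$ (equivalently $E(S)>0$) whenever $\pdim_A(S)\ge 2$. One route: use the formula for $\tau$ in terms of the minimal projective presentation $P_1\xrightarrow{f} P_0\to S\to 0$, namely $\tau_A S=\Ker(\nu f)$ where $\nu$ is the Nakayama functor $DA\otimes_A-$ applied after $\Hom_A(-,A)$, i.e.\ $\tau_A S=D\Coker\big(\Hom_A(P_0,A)\xrightarrow{f^*}\Hom_A(P_1,A)\big)$. When $\pdim(S)\ge 2$ the first syzygy $\Omega S=\Ker(P_0\to S)=\Ima(f)$ is a non-projective submodule of $P_0$; minimality of the presentation means $\Ima(f)\subseteq\rad P_0$ and $\Ker(f)\subseteq\rad P_1$. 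I would combine the Auslander--Reiten formula $\overline{\Hom}_A(S,\tau_A S)\cong D\underline{\Ext}^1_A(S,S)$ with a careful analysis: actually the right tool here is that $E(S)=c(S)$ would force, via the defect/rank formula underlying Theorem~\ref{thm:main0}, that $\rk(f)=r(P_1,P_0)$; but a generic map $P_1\to P_0$ of maximal rank has cokernel whose first syzygy is a \emph{direct summand} behaving like a generic subspace, and for a simple $S$ with $\pdim\ge 2$ the minimal presentation is ``too degenerate'' to be of maximal rank --- concretely, $r(P_1,P_0)$ exceeds $\rk(f)=\dim\Ima(f)=\dimv(P_0)-\dimv(S)$ because one can perturb $f$ to increase the rank precisely when $\Ker(f)$ is non-projective. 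I would make this precise by showing: for the minimal presentation of $S$, $\rk(f)<r(P_1,P_0)$ iff $\Ker(f)$ is not projective, using that $r(P_1,P_0)$ is computed by a generic map whose kernel, if nonzero, is a generic (hence ``most split'') module, and invoking Theorem~\ref{thm:main0}(i)--(ii) as a black box: a generic $f$ has $\tau$-regular cokernel $M_0$ with $\dimv(M_0)\le\dimv(S)$ componentwise and equality would force $M_0\cong S$ contradicting non-$\tau$-regularity of $S$. The delicate point is ruling out the degenerate possibility that the generic cokernel has strictly smaller dimension vector yet no component of $\md(A,\dimv(S))$ fails to be generically $\tau$-regular; this is handled because $\md(A,\dimv(S))$ has the single component $\overline{\cO_S}$, so non-$\tau$-regularity of $S$ itself is exactly non-generic-$\tau$-regularity of that component. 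Assembling these pieces gives (i)$\Rightarrow$(iii), and dualizing gives (ii)$\Rightarrow$(iii), completing the cycle.
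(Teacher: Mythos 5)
Your direction (iii)$\implies$(i),(ii) and your overall contrapositive strategy for (i)$\implies$(iii) are reasonable, but the witness component you choose does not work, and the key claim underlying it is false. You take $\bd=\dimv(S)$ for a simple $S$ with $\pdim(S)\ge 2$, observe that $\md(A,\bd)=\{S\}$ is a single point so $c(S)=0$, and then need $E(S)=\hom_A(S,\tau_A(S))>0$. However, applying Lemma~\ref{lem:gvector3} to the minimal presentation of $S=S(i)$ gives
$\hom_A(S,\tau_A(S))=\hom_A(P_1^S,S)-\hom_A(P_0^S,S)+\hom_A(S,S)=[P_1^S:P(i)]=\ext_A^1(S,S)$,
which vanishes whenever $Q$ has no loop at $i$ --- completely independently of $\pdim(S)$. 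Concretely, for $A=KQ/(ab)$ with $Q\colon 1\xleftarrow{a}2\xleftarrow{b}3$ (the paper's running example), $S(3)$ has $\pdim(S(3))=2$, yet $\tau_A(S(3))\cong S(2)$ and $\Hom_A(S(3),S(2))=0$, so $\overline{\cO_{S(3)}}$ \emph{is} generically $\tau$-regular; equivalently, the minimal presentation $P(2)\to P(3)\to S(3)\to 0$ already has maximal rank, since $\hom_A(P(2),P(3))=1$. Your auxiliary claim that ``$\rk(f)<r(P_1,P_0)$ iff $\Ker(f)$ is not projective'' also fails here: the kernel of $f$ is $P(1)$, which is projective, while $\pdim(S(3))=2$. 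The point is that $\pdim(M)\le 1$ is about the projectivity of $\Ima(f)=\Omega M$, not of $\Ker(f)$, and maximal rank of $f$ is perfectly compatible with $\Ima(f)$ being non-projective.

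The paper repairs exactly this difficulty by changing the witness: for each simple $S(i)$ with minimal presentation $P_1\xrightarrow{f}P_0\to S(i)\to 0$ it forms the generic extension $\calZ'=\cE(\overline{\cO_{S(i)}},\overline{\cO_{P_1}})$, which is an irreducible component in dimension vector $\dimv(S(i))+\dimv(P_1)$ by Theorem~\ref{thm:CBS} and has $g$-vector $-[P_0]-[P]$ for some direct summand $P$ of $P_1$. Since generically $\tau$-regular components are determined by their $g$-vectors (Theorem~\ref{thm:plamondon2}), $\calZ'\in\irr^\tau(A)$ would force $\calZ'=\overline{\cO_{P_0\oplus P}}$, hence a short exact sequence $0\to P_1\to P_0\oplus P\to S(i)\to 0$ and $\pdim(S(i))\le 1$ (Lemma~\ref{lem:alltaureg}). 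Some device of this kind --- a component in a \emph{larger} dimension vector that detects $\pdim(S(i))\ge 2$ --- is needed; the single-point component $\overline{\cO_{S}}$ cannot do the job.
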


Rougly speaking, the Auslander-Reiten translation $\tau$ yields a
bijection
$\irr^\tau(A) \to \irr^{\tau^-}(A)$.
(The precise statement generalizes
Adachi, Iyama and Reiten's \cite{AIR14}
bijection between the set of (isoclasses of) $\tau$-rigid pairs and
$\tau^-$-rigid pairs and can be found in
Section~\ref{subsec:bijectionpairs}.)

It is natural to ask when $\irr^\tau(A) = \irr^{\tau^-}(A)$.

The next theorem follows directly from Corollary~\ref{cor:main6B}
combined with some
homological formulas from Auslander-Reiten theory.

\begin{Thm}\label{thm:main4}
Suppose that $\irr^\tau(A) = \irr^{\tau^-}(A)$.
Then $A/AeA$ is a $1$-Iwanaga-Gorenstein algebra for each
idempotent $e$ in $A$.
\end{Thm}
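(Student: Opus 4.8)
\emph{Reduction.} Set $B:=A/AeA$. By Corollary~\ref{cor:main6B}, $\irr^\tau(B)=\irr(B)\cap\irr^\tau(A)$; applying the same corollary to $A^{\op}$ (and the idempotent $e$, so $A^{\op}/A^{\op}eA^{\op}=B^{\op}$) and transporting along the duality $D:=\Hom_K(-,K)$ — which identifies $\md((-)^{\op})$ with $\md(-)$ and interchanges $\tau$-regular with $\tau^-$-regular, hence $\irr^\tau((-)^{\op})$ with $\irr^{\tau^-}(-)$ — gives the dual statement $\irr^{\tau^-}(B)=\irr(B)\cap\irr^{\tau^-}(A)$. Intersecting the hypothesis $\irr^\tau(A)=\irr^{\tau^-}(A)$ with $\irr(B)$ therefore yields $\irr^\tau(B)=\irr^{\tau^-}(B)$, so it suffices to prove: if $\irr^\tau(B)=\irr^{\tau^-}(B)$, then $\idim{}_BB\le 1$ and $\idim B_B\le 1$.

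\emph{Two consequences of the hypothesis.} Write $E^-(M):=\dim\Hom_B(\tau_B^-(M),M)$, and let $D(B_B)$ be the injective cogenerator of $\md(B)$. Since $\tau_B^-(D(B_B))=0$ we get $E^-(D(B_B))=0$, hence $c(D(B_B))\le E^-(D(B_B))=0$ by the dual of Voigt's Lemma and the Auslander--Reiten formula, so $c(D(B_B))=0$. Thus $\overline{\cO_{D(B_B)}}$ is the unique irreducible component containing $D(B_B)$, and it is generically $\tau^-$-regular; by hypothesis it is also generically $\tau$-regular, and since $\cO_{D(B_B)}$ is dense in it this forces $0=c(D(B_B))=E(D(B_B))=\dim\Hom_B(D(B_B),\tau_BD(B_B))$. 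Running the same argument with the regular module ${}_BB$ (using $\tau_B({}_BB)=0$ and $\overline{\cO_{{}_BB}}\in\irr^\tau(B)=\irr^{\tau^-}(B)$) gives $\Hom_B(\tau_B^-({}_BB),{}_BB)=0$, which by $D$ (and $D\tau_B^-\cong\tau_{B^{\op}}D$) reads $\Hom_{B^{\op}}(D({}_BB),\tau_{B^{\op}}D({}_BB))=0$. Summarizing,
\[
\Hom_B\big(D(B_B),\tau_BD(B_B)\big)=0 \qquad\text{and}\qquad \Hom_{B^{\op}}\big(D({}_BB),\tau_{B^{\op}}D({}_BB)\big)=0 .
\]

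\emph{The homological step.} It remains to prove, for an arbitrary finite-dimensional algebra $\Lambda$ with injective cogenerator $D(\Lambda_\Lambda)$, the implication
\[
\Hom_\Lambda\big(D(\Lambda_\Lambda),\tau_\Lambda D(\Lambda_\Lambda)\big)=0\ \Longrightarrow\ \pdim_\Lambda D(\Lambda_\Lambda)\le 1\ \ (\text{equivalently }\idim\Lambda_\Lambda\le1);
\]
applying this to $\Lambda=B$ and to $\Lambda=B^{\op}$ then gives $\idim B_B\le1$ and $\idim{}_BB\le1$, so $B$ is $1$-Iwanaga-Gorenstein and we are done. I would argue by contraposition: assume $\pdim_\Lambda D(\Lambda_\Lambda)\ge 2$, so some indecomposable injective $I$ has $\pdim_\Lambda I\ge 2$. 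Take the start $P_2\xrightarrow{q}P_1\xrightarrow{p}P_0\to I\to 0$ of a minimal projective resolution, where $q\neq 0$ since $\pdim_\Lambda I\ge 2$. Dualizing with $\Hom_\Lambda(-,\Lambda)$ and applying $D$ recovers $\tau_\Lambda I=D\,\mathrm{Tr}(I)$ with $\mathrm{Tr}(I)=\Coker\!\big(p^{*}\colon\Hom_\Lambda(P_0,\Lambda)\to\Hom_\Lambda(P_1,\Lambda)\big)$, and $\Coker(p^{*})/\Ext^1_\Lambda(I,\Lambda)\cong\Ima(q^{*})$ is a \emph{nonzero} submodule of the projective right module $\Hom_\Lambda(P_2,\Lambda)$ (nonzero because $q\neq0$ and $\Hom_\Lambda(-,\Lambda)$ separates points of projectives). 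Dualizing back, $\tau_\Lambda I$ contains a nonzero submodule that is a quotient of the injective module $D\,\Hom_\Lambda(P_2,\Lambda)$; composing the surjection with the inclusion yields $\Hom_\Lambda(D\,\Hom_\Lambda(P_2,\Lambda),\tau_\Lambda I)\ne 0$. Since $D\,\Hom_\Lambda(P_2,\Lambda)$ is injective, hence a direct summand of a power of $D(\Lambda_\Lambda)$, and $\tau_\Lambda I$ is a direct summand of $\tau_\Lambda D(\Lambda_\Lambda)$, this forces $\Hom_\Lambda(D(\Lambda_\Lambda),\tau_\Lambda D(\Lambda_\Lambda))\ne 0$, contradicting the hypothesis.

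\emph{Main obstacle.} The reduction and the two Hom-vanishings are formal. The real content is the homological step: describing the transpose — hence the Auslander--Reiten translate — of an injective module in terms of its higher syzygies and of the injective modules $D\,\Hom_\Lambda(P_i,\Lambda)$, so that $\pdim_\Lambda D(\Lambda_\Lambda)\ge 2$ produces a nonzero map from an injective into $\tau_\Lambda D(\Lambda_\Lambda)$. This is where the homological formulas from Auslander--Reiten theory enter, and keeping track of projective direct summands and of the passage $B\leftrightarrow B^{\op}$ is the part demanding care.
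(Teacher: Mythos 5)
Your proof is correct and follows essentially the same route as the paper: reduce to $B=A/AeA$ via Corollary~\ref{cor:main6B} and its dual, observe that the hypothesis forces $D(B_B)$ to be $\tau$-rigid and ${}_BB$ to be $\tau^-$-rigid, and then convert these Hom-vanishings into $\idim B_B\le 1$ and $\idim{}_BB\le 1$. The only difference is that your ``homological step'' re-derives (the relevant direction of) Lemma~\ref{lem:pdim1} from the transpose construction, whereas the paper simply cites that lemma and its dual.
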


As a consequence of
Theorems~\ref{thm:main1} and \ref{thm:main4}
we immediately get the following:

\begin{Cor}\label{cor:main5}
Assume that $\gldim(A) < \infty$.
Then the following are equivalent:
\begin{itemize}\itemsep2mm

\item[(i)]
$\irr^\tau(A) = \irr^{\tau^-}(A)$;

\item[(ii)]
$A$ is hereditary.

\end{itemize}
\end{Cor}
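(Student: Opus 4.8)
The plan is to combine Theorem~\ref{thm:main1} and Theorem~\ref{thm:main4}, so the only real content is the implication (i) $\Rightarrow$ (ii). The implication (ii) $\Rightarrow$ (i) is immediate: if $A$ is hereditary, then by Theorem~\ref{thm:main1} we have $\irr^\tau(A) = \irr(A) = \irr^{\tau^-}(A)$, so the two sets coincide trivially. (This direction does not even use the finite global dimension hypothesis.)

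For (i) $\Rightarrow$ (ii), suppose $\irr^\tau(A) = \irr^{\tau^-}(A)$. First I would apply Theorem~\ref{thm:main4} with $e = 0$, i.e.\ to $A$ itself, to conclude that $A = A/A\cdot 0\cdot A$ is a $1$-Iwanaga-Gorenstein algebra. Recall that a ring is $1$-Iwanaga-Gorenstein precisely when $\idim({}_A A) \le 1$ and $\idim(A_A) \le 1$; in particular $\idim({}_A A) \le 1$. Now I would invoke the standard homological fact that an algebra $A$ with $\gldim(A) < \infty$ and $\idim({}_A A) \le 1$ must satisfy $\gldim(A) \le 1$: indeed, if $\gldim(A) = d < \infty$, then every simple module $S$ has a finite projective resolution of length $\le d$, and one shows by dimension shifting (using $\Ext_A^i(-, A) = 0$ for $i \ge 2$, which follows from $\idim({}_A A)\le 1$) that $\Ext_A^i(S, N) = 0$ for all $i \ge 2$ and all finitely generated $N$; hence $\pdim(S) \le 1$ for every simple $S$, and therefore $\gldim(A) \le 1$. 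Thus $A$ is hereditary.

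The main obstacle — or rather the one point requiring a little care — is the homological lemma that finite global dimension together with $\idim({}_A A)\le 1$ forces $\gldim(A)\le 1$. I expect this to be genuinely routine: one can argue as above via dimension shifting, or alternatively note that over an algebra of finite global dimension one has $\gldim(A) = \idim({}_A A)$ (this equality holds whenever $\gldim(A) < \infty$, since then $\idim({}_A A)$ equals the supremum of $\pdim$ of the simple modules, which is $\gldim(A)$). Either formulation makes the step a one-liner, and I would simply cite a standard reference (e.g.\ any text on homological algebra of Artin algebras) rather than reproduce the argument in detail. With that in hand, the corollary follows by chaining Theorem~\ref{thm:main4}, the homological lemma, and Theorem~\ref{thm:main1}.
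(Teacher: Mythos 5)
Your proposal is correct and follows essentially the same route as the paper (Proposition~\ref{prop:finiteglobaldim}): Theorem~\ref{thm:main4} yields that $A$ is $1$-Iwanaga-Gorenstein, the standard fact that a $1$-Iwanaga-Gorenstein algebra of finite global dimension is hereditary gives (ii), and the converse is the easy direction of Theorem~\ref{thm:main1}. The paper cites the homological step without proof, whereas you spell out the routine dimension-shifting argument; both are fine.
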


The following two propositions give a characterization for
$\irr^\tau(A) = \irr^{\tau^-}(A)$ for
some further classes of algebras.

\begin{Prop}\label{prop:main7}
Let $A = KQ/I$ be a gentle algebra such that $Q$ has no loops.
Then the following are equivalent:
\begin{itemize}\itemsep2mm

\item[(i)]
$\irr^\tau(A) = \irr^{\tau^-}(A)$;

\item[(ii)]
$A$ is a gentle surface algebra.

\end{itemize}
\end{Prop}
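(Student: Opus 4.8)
The plan is to prove the two implications separately, using Theorem~\ref{thm:main4} as the bridge from the regularity hypothesis to a homological condition, and a known combinatorial description of gentle surface algebras as the bridge back.

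For the implication (i) $\Rightarrow$ (ii), I would start from $\irr^\tau(A) = \irr^{\tau^-}(A)$ and apply Theorem~\ref{thm:main4}: the quotient $A/AeA$ is $1$-Iwanaga-Gorenstein for \emph{every} idempotent $e$ in $A$. It then remains to show that a gentle algebra $A = KQ/I$ without loops, all of whose idempotent quotients are $1$-Iwanaga-Gorenstein, is a gentle surface algebra. This is a statement about the string combinatorics of $(Q,I)$: the Gorenstein dimension of a gentle algebra is governed by the lengths of its maximal permitted and forbidden threads, and passing to $A/AeA$ simply amputates these threads at the vertices in the support of $e$. I would isolate this as a lemma and prove it by analysing, arrow by arrow, the local configuration of permitted and forbidden paths; requiring that no idempotent quotient produce a thread forcing Gorenstein dimension $\ge 2$ pins down exactly the local patterns occurring for triangulations of unpunctured marked surfaces, so that the Assem--Br\"ustle--Charbonneau--Plamondon recognition criterion applies. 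The hypothesis that $Q$ has no loops is used here to exclude the degenerate one-vertex patterns.

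For the converse (ii) $\Rightarrow$ (i), let $A$ be a gentle surface algebra. Each idempotent quotient $A/AeA$ is again a gentle surface algebra, hence --- by the lemma above, or directly --- $A$ and all of its idempotent quotients are $1$-Iwanaga-Gorenstein. The task then reduces to a converse of Theorem~\ref{thm:main4} valid for gentle algebras: if $A$ is gentle and $A/AeA$ is $1$-Iwanaga-Gorenstein for all idempotents $e$, then $\irr^\tau(A) = \irr^{\tau^-}(A)$. Using Corollary~\ref{cor:main6B} together with its opposite-algebra version, I would reduce the comparison of $\irr^\tau(A)$ with $\irr^{\tau^-}(A)$ to checking, for each generically $\tau$-regular component $\calZ$, that the Auslander--Reiten obstruction driving the proof of Theorem~\ref{thm:main4} vanishes on the generic module $M$ of $\calZ$ --- concretely, that $M$ has injective dimension at most one over the support quotient of $A$. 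For a gentle algebra the generic module of such a component is a direct sum of string modules, whose minimal injective coresolutions are explicit, and from the $1$-Iwanaga-Gorenstein hypothesis one reads off that the relevant $\Ext^2$ groups vanish; the reverse inclusion follows symmetrically since $A^{\op}$ is again a gentle surface algebra.

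The main obstacle is precisely this homological vanishing in the second implication: Theorem~\ref{thm:main4} only gives necessity and has no converse for general algebras, so the gentle structure must be genuinely exploited. One has to identify which string (and band) modules occur generically in the generically $\tau$-regular components of a gentle algebra all of whose idempotent quotients are $1$-Iwanaga-Gorenstein, and to show that every such module has projective (equivalently injective) dimension at most one. The combinatorial lemma behind (i) $\Rightarrow$ (ii) is laborious but routine; the substance of the proposition lies in this dimension bound.
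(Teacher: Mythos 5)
Both halves of your plan hinge on lemmas that are false, and the same example refutes both: let $B = KQ'/(ab,ba)$ where $Q'$ is the $2$-cycle quiver $1 \rightleftarrows 2$ with arrows $a,b$. This $B$ is gentle, loop-free and selfinjective, hence $1$-Iwanaga-Gorenstein, and every idempotent quotient of $B$ (namely $B$, $K$, $K$, $0$) is again $1$-Iwanaga-Gorenstein; yet $B$ is not a gentle surface algebra, and $\irr^\tau(B) \neq \irr^{\tau^-}(B)$ because $P(1)\oplus S(1)$ is $\tau$-rigid but not $\tau^-$-rigid. So your combinatorial lemma for (i) $\Rightarrow$ (ii) (``loop-free gentle with all idempotent quotients $1$-Iwanaga-Gorenstein implies gentle surface algebra'') fails, and so does your proposed ``converse of Theorem~\ref{thm:main4} for gentle algebras'' underlying (ii) $\Rightarrow$ (i). For the first implication the paper's case analysis is indeed of the local type you describe (reduce to $2$- and $3$-vertex idempotent quotients), but in the $2$-cycle case it cannot rely on the Gorenstein condition alone: it uses Corollary~\ref{cor:main6B} to descend the full hypothesis $\irr^\tau = \irr^{\tau^-}$ to the quotient $B$ above and then exhibits the $\tau$-rigid, non-$\tau^-$-rigid module directly.

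For (ii) $\Rightarrow$ (i) the paper takes a completely different route that you should adopt: gentle surface algebras are Jacobian algebras $J(Q,W)$, and the Derksen--Weyman--Zelevinsky result (Theorem~\ref{thm:jacobian}) gives the pointwise symmetry $\hom_A(M,\tau_A(M)) = \hom_A(\tau_A^-(M),M)$ for \emph{all} modules $M$, which immediately yields $\irr^\tau(A) = \irr^{\tau^-}(A)$. Your plan to deduce this from injective dimension bounds on generic modules cannot work: over a gentle surface algebra most string modules (e.g.\ the simples on a fully relational $3$-cycle) have infinite projective and injective dimension, so the $\Ext^2$-vanishing you want is simply not available. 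The equality of the two classes of components here is a consequence of the $E$-invariant symmetry coming from the potential, not of any finiteness of homological dimension.
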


\begin{Prop}\label{prop:main8}
Let $A = KQ/I$ be a cyclic Nakayama algebra.
Then the following are equivalent:
\begin{itemize}\itemsep2mm

\item[(i)]
$\irr^\tau(A) = \irr^{\tau^-}(A)$;

\item[(ii)]
$I = J^{(n-1) + nr}$ for some $r \ge 0$ (with $r \ge 1$ if
$n=2$, and $r \ge 2$ if $n=1$).

\end{itemize}
\end{Prop}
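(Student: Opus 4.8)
The plan is to translate the abstract condition $\irr^\tau(A) = \irr^{\tau^-}(A)$ into a combinatorial statement about the cyclic Nakayama algebra $A = KQ/I$, where $Q$ is the cyclic quiver with $n$ vertices and $I = J^\ell$ for some $\ell \ge 2$ (with $\ell \ge n+1$ when $n \le 2$ to keep the algebra nontrivial), and then identify exactly when this forces $\ell \equiv n-1 \pmod n$. First I would recall the well-understood structure theory of Nakayama algebras: every indecomposable module is uniserial, determined by a top $S(i)$ and a length $1 \le j \le \ell$, and the Auslander--Reiten translation $\tau_A$ acts on these "string modules" by an explicit combinatorial rule that shifts the top and adjusts the length according to whether the module is projective. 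Because the module varieties $\md(A,\bd)$ of a Nakayama algebra are well-behaved, each generically $\tau$-regular component is the closure of the orbit of a single $\tau$-rigid module (a direct sum of indecomposable $\tau$-rigid modules together with a projective summand encoding the "pair" data), so the bijection $\irr^\tau(A) \to \irr^{\tau^-}(A)$ from Section~\ref{subsec:bijectionpairs} is realized concretely on the level of $\tau$-rigid and $\tau^-$-rigid pairs via $\tau_A$.

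The key reduction is therefore: $\irr^\tau(A) = \irr^{\tau^-}(A)$ holds if and only if $\tau_A$ induces a bijection between the set of indecomposable $\tau$-rigid modules (plus projectives) and the set of indecomposable $\tau^-$-rigid modules (plus injectives) that is compatible with taking closures of orbits — equivalently, if and only if every $\tau$-rigid indecomposable is also $\tau^-$-rigid and vice versa, so that the two families of components literally coincide. For a cyclic Nakayama algebra with $I = J^\ell$, I would compute directly which uniserial modules $M(i,j)$ are $\tau$-rigid: using $E(M) = \dim \Hom_A(M,\tau_A M)$ and the explicit form of $\tau_A$, the condition $\Hom_A(M,\tau_A M) = 0$ becomes a congruence/inequality on $i$, $j$, $n$ and $\ell$. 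Doing the same for $\tau^-$ and $\Hom_A(\tau_A^- M, M) = 0$ gives a second such condition. The two conditions are related by the symmetry $\ell \mapsto$ (something), and comparing them shows the two sets of indecomposables agree precisely when $\ell = (n-1) + nr$; this is where Theorem~\ref{thm:main4}, giving that $A$ must be $1$-Iwanaga-Gorenstein, or rather its direct Nakayama analogue, can be invoked to rule out the remaining residues of $\ell$ mod $n$ and to pin down the boundary cases $n = 1, 2$.

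I expect the main obstacle to be the bookkeeping in the combinatorial computation of $\tau_A$ and the Hom-spaces between uniserials modulo $J^\ell$: one must be careful about the projective modules (which all have the same length $\ell$) and the non-projective indecomposables separately, and about the off-by-one shifts in the AR translation, since the whole statement turns on a congruence mod $n$ and a single unit of length. A secondary subtlety is verifying that the orbit-closure structure really does make the abstract bijection of Section~\ref{subsec:bijectionpairs} coincide with the naive $\tau_A$-correspondence on indecomposables — for Nakayama algebras this should follow because $\md(A,\bd)$ decomposes nicely and every component contains a rigid module, but it needs to be stated carefully. Once the combinatorics is set up, checking that $J^{(n-1)+nr}$ satisfies both rigidity conditions (so gives $\irr^\tau = \irr^{\tau^-}$) and that no other power does is a finite, if slightly tedious, case analysis, with the exceptional constraints $r \ge 1$ for $n = 2$ and $r \ge 2$ for $n = 1$ emerging from the requirement that $A$ be nonsemisimple and not already hereditary in disguise.
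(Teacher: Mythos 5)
Your overall framework has one correct ingredient: for a representation-finite algebra every irreducible component is an orbit closure $\overline{\cO_M}$, it lies in $\irr^\tau(A)$ exactly when $M$ is $\tau$-rigid, and dually for $\tau^-$; so the question does reduce to comparing the set of $\tau$-rigid modules with the set of $\tau^-$-rigid modules. But your ``key reduction'' to \emph{indecomposables} is false, and it is fatal here. Rigidity of a direct sum is not detected on summands: $M_1\oplus M_2$ is $\tau$-rigid iff each $M_k$ is and in addition $\Hom_A(M_k,\tau_A(M_l))=0$ for $k\neq l$. For a selfinjective cyclic Nakayama algebra $KQ/J^t$ one has $\proj(A)=\inj(A)$, and by Adachi's criterion (Lemma~\ref{lem:taurigidnakayama} and its dual) a non-projective indecomposable is $\tau$-rigid iff it is $\tau^-$-rigid iff its length is at most $n-1$; hence the indecomposable $\tau$-rigid and $\tau^-$-rigid modules coincide for \emph{every} $t$, and your criterion would wrongly conclude that (i) holds for all powers $J^t$. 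The modules that actually separate the two sides are decomposable: the paper's proof exhibits $S(i)\oplus P(j)$ (for suitable $i,j$ determined by $\soc(P(j))$) as $\tau$-rigid but not $\tau^-$-rigid whenever $t\not\equiv n-1 \pmod n$, and it is exactly the cross-term $\Hom_A(\tau_A^-(S(i)\oplus P(j)),S(i)\oplus P(j))$ that fails to vanish. Any correct combinatorial argument must therefore run over pairs of indecomposables, not single ones.

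There is a second gap at the start: you assume $I=J^{\ell}$ from the outset, but a cyclic Nakayama algebra need not be homogeneous --- the projectives may have different lengths. For the implication (i)$\implies$(ii) one must first show that (i) forces $I$ to be a power of $J$; the paper does this by applying Theorem~\ref{thm:main4} (with $e=0$) to get that $A$ is $1$-Iwanaga--Gorenstein, then using that a $1$-Iwanaga--Gorenstein cyclic Nakayama algebra is selfinjective, hence $I=J^t$. Only after that does the congruence analysis on $t$ begin. Finally, for (ii)$\implies$(i) with $n\ge 3$ the paper avoids the Hom-bookkeeping you anticipate by exhibiting $A=KQ/J^{(n-1)+nr}$ as a Jacobian algebra $J(Q,S^{r+1})$ and invoking the Derksen--Weyman--Zelevinsky identity $\hom_A(M,\tau_A(M))=\hom_A(\tau_A^-(M),M)$ (plus a field-independence argument to handle $\operatorname{char}(K)=r+1$); a direct verification along your lines is possible in principle but would have to include all cross-terms, which your proposal does not set up.
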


In Proposition~\ref{prop:main8}, $J$ denotes the ideal in $KQ$ generated by all arrows of $Q$, and $n$ is the number of vertices
of $Q$.

The next theorem is a reformulation of
some results
in \cite[Section~10]{DWZ10}.
We use it for
the implications (ii) $\implies$ (i) of Propositions~\ref{prop:main7} and \ref{prop:main8}.

\begin{Thm}[Derksen, Weyman, Zelevinsky]
Let $Q$ be a $2$-acyclic quiver, and let $W$ be a potential
for $Q$ such that $W \in KQ$, and the ideal $I$ generated by the cyclic derivatives of
$W$ is an admissible ideal in $KQ$.
Let $A = J(Q,W) = KQ/I$ be the associated Jacobian algebra.
Then
$$
\hom_A(M,\tau_A(M)) = \hom_A(\tau_A^-(M),M)
$$
for all $M \in \md(A)$.
In particular,
$$
\irr^\tau(A) = \irr^{\tau^-}(A).
$$
\end{Thm}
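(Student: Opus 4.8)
The plan is to deduce both statements from the theory of the $E$-invariant of decorated representations developed in \cite[Section~10]{DWZ10}. The hypotheses on $(Q,W)$ --- that $W\in KQ$ and that the Jacobian ideal $I$ is admissible --- are exactly what makes $A=J(Q,W)=KQ/I$ finite-dimensional and identifies it with the completed Jacobian algebra used in \cite{DWZ10}, so that the results there apply without change.

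First, recall the framework. A decorated representation of $(Q,W)$ is a pair $\cM=(M,V)$ consisting of a module $M\in\md(A)$ and a tuple $V=(V_1,\dots,V_n)$ of finite-dimensional vector spaces (a \emph{decoration}), where $n$ is the number of vertices of $Q$. To such a pair Derksen, Weyman and Zelevinsky associate the \emph{$E$-invariant} $E(\cM)$, for which they establish two formulas: one computed from a minimal projective presentation of $M$ together with the data of $V$, the other from the dual injective data. Evaluated on an \emph{undecorated} module, i.e.\ $V=0$, these two formulas read
$$
E(M,0)=\hom_A(M,\tau_A M)
\qquad\text{and}\qquad
E(M,0)=\hom_A(\tau_A^- M,M).
$$
Comparing them yields
$$
\hom_A(M,\tau_A M)=\hom_A(\tau_A^- M,M)
$$
for every $M\in\md(A)$, which is the first assertion. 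All the substantial work lies in \cite[Section~10]{DWZ10}, where the agreement of the two formulas is derived from the behaviour of the $E$-invariant under QP-mutation; conceptually it reflects the $2$-Calabi--Yau symmetry of the associated cluster category, but we only need to quote it.

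For the remaining claim, recall that $M\in\md(A)$ is $\tau$-regular exactly when $c(M)=\hom_A(M,\tau_A M)$; applying the same definition over $A^{\op}$ through the standard duality $D$, and using that $c(M)$ depends only on $\md(A,\dimv(M))$ and on $\cO_M$, one gets that $M$ is $\tau^-$-regular exactly when $c(M)=\hom_A(\tau_A^- M,M)$. By the equality just proved these two conditions coincide for every $M$. Since, by upper semicontinuity, a component $\calZ$ belongs to $\irr^\tau(A)$, respectively to $\irr^{\tau^-}(A)$, if and only if it contains some module satisfying the corresponding equality, it follows that $\irr^\tau(A)=\irr^{\tau^-}(A)$.

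The one genuine obstacle is bookkeeping across conventions: one must align the sign and normalization conventions of \cite{DWZ10} for the $E$-invariant, the $g$-vector, and the two presentations used to compute it; check carefully that admissibility of $I$ removes every completion issue; and confirm that in the undecorated case it is the ``injective'' formula that gives $\hom_A(M,\tau_A M)$ and the ``projective'' one that gives $\hom_A(\tau_A^- M,M)$, and not the other way around. None of this requires a new idea.
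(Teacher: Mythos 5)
Your proposal is correct and follows the same route as the paper: the paper also obtains $\hom_A(M,\tau_A(M))=\hom_A(\tau_A^-(M),M)$ by comparing the two (projective and injective) formulas for the $E$-invariant of an undecorated representation, concretely by combining \cite[Proposition~7.3]{DWZ10} with \cite[Corollary~10.9]{DWZ10}, and the passage to $\irr^\tau(A)=\irr^{\tau^-}(A)$ is the same semicontinuity argument. The one caveat the paper makes explicit, which you only gesture at under ``bookkeeping,'' is that Proposition~7.3 of \cite{DWZ10} is stated in a section where $W$ is assumed non-degenerate, so one must note that its proof goes through under the weaker hypotheses of \cite[Section~10]{DWZ10}.
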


\subsection{Conventions}
For maps $f\df U \to V$ and $g\df V \to W$ we denote their
composition by $gf\df U \to W$.

The identity map of a set $U$ is denoted by $1_U$.

For a linear map $f$ of vector spaces, let $\rk(f) := \dim \Ima(f)$
be its rank.

We assume that the set $\N$ of natural numbers includes $0$.

\subsection{Organization of the article}
In
Section~\ref{sec:preliminary} we recall
some definitions and facts on the representation theory of finite-dimensional
algebras (Auslander-Reiten formulas, $g$-vectors, minimal
projective presentations).
Section~\ref{sec:varieties} recalls (mostly) known facts on
varieties of modules over finite-dimensional algebras
(semicontinuous maps,
semicontinuity of projective presentations, generic extensions of irreducible components).

In Section~\ref{sec:tauregintro} we discuss the
concepts of reduced, regular and $\tau$-regular modules.
In particular, the section
contains the definition of generically $\tau$-regular components.

Section~\ref{sec:taureg} is dedicated to
two crucial theorems by Plamondon which characterize and parametrize generically $\tau$-regular components.
It also contains an outline of their proof (following a simplified approach by Derksen and Fei).

Theorem~\ref{thm:main0} (= Theorem~\ref{thm:main0b}) is proved in
Section~\ref{sec:maximalrank}.

Section~\ref{sec:reduction} contains the proof of Theorem~\ref{thm:main6A} (= Theorem~\ref{thm:reduction1}).

Section~\ref{sec:extensions} is dedicated to the proof of Theorem~\ref{thm:main2}
(= Theorem~\ref{thm:main2b}).

Section~\ref{sec:triangular} contains the proof of
Theorem~\ref{thm:main3} (= Theorem~\ref{thm:main3b}).

Theorem~\ref{thm:main1} (= Theorem~\ref{thm:main1b})
and its counterpart
Proposition~\ref{prop:local} are proved in
Section~\ref{sec:alltaureg}.

Finally,
Section~\ref{sec:tauversustauminus} deals with the comparison between
generically $\tau$-regular and generically $\tau^-$-regular components.
In particular, it contains the proofs of
Theorem~\ref{thm:main4} (= Theorem~\ref{thm:main4b}), Corollary~\ref{cor:main5} and Propositions~\ref{prop:main7}
and \ref{prop:main8}.


\section{Finite-dimensional algebras and homological formulas}\label{sec:preliminary}


\subsection{Finite-dimensional algebras}
Let $K$ be an algebraically closed field.
By $A$ we always denote a finite-dimensional associative $K$-algebra.
By a \emph{module} we always mean a finite-dimensional left $A$-module.
Let $\md(A)$ be the category of $A$-modules,
and let $\ind(A)$ be the subcategory of indecomposable
$A$-modules.
As usual $\tau = \tau_A$ denotes the Auslander-Reiten translation
for $\md(A)$.

Let $S(1),\ldots,S(n)$ be the simple $A$-modules, up to isomorphism.
Furthermore, let $P(1),\ldots,P(n)$ (resp. $I(1),\ldots,I(n)$)
be the indecomposable projective (resp. indecomposable injective)
$A$-modules, up to isomorphism.
These are numbered such that $\tp(P(i)) \cong S(i) \cong \soc(I(i))$
for $1 \le i \le n$.

For $M \in \md(A)$ the
\emph{Jordan-H\"older multiplicity} of $S(i)$ in $M$ is denoted by
$[M:S(i)]$.
We have $\dim \Hom_A(P(i),M) = [M:S(i)]$.
Then
$$
\dimv(M) := ([M:S(1)],\ldots,[M:S(n)])
$$
is the
\emph{dimension vector} of $M$.
Let
$$
\supp(M) := \supp(\dimv(M)) :=
\{ 1 \le i \le n \mid [M:S(i)] \not= 0 \}
$$
be the
\emph{support} of $M$.

Let $\proj(A)$ (resp. $\inj(A)$) be the subcategory of projective (resp. injective) $A$-modules.

Let $D := \Hom_K(-,K)$ be the usual duality.

The \emph{Nakayama functor}
$$
\nu_A := D\Hom_A(-,{_A}A)\df \proj(A) \to \inj(A)
$$
is an equivalence of subcategories.

For each $P \in \proj(A)$ we have
$$
P \cong P(1)^{a_1} \oplus \cdots \oplus P(n)^{a_n}
$$
for some $a_1,\ldots,a_n \ge 0$.
Let $[P] := (a_1,\ldots,a_n)$, and set $[P:P(i)] := a_i$ for $1 \le i \le n$.

For some backround on Auslander-Reiten theory we refer to \cite{ARS97}.

For $M,N \in \md(A)$ and $i \ge 1$ let
$$
\hom_A(M,N) := \dim \Hom_A(M,N)
\text{\quad and \quad}
\ext_A^i(M,N) := \dim \Ext_A^i(M,N).
$$
Furthermore, let
$$
r(M,N) := \max\{ \rk(f) \mid f \in \Hom_A(M,N) \}
$$
and
$$
\Hom_A(M,N)^\circ :=
\{ f \in \Hom_A(M,N) \mid \rk(f) = r(M,N) \}.
$$

\subsection{Some formulas for the Auslander-Reiten translation}
For a proof (and the missing definitions) of the following theorem
we refer to \cite[Section~IV.4]{ARS97} or
\cite[Section~III.6]{SY11}.

\begin{Thm}[{Auslander, Reiten}]\label{thm:ARformulas}
For $M,N \in \md(A)$ there are functorial isomorphisms
$$
D\overline{\Hom}_A(N,\tau_A(M)) \cong \Ext_A^1(M,N) \cong D\underline{\Hom}_A(\tau_A^-(N),M).
$$
\end{Thm}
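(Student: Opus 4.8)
The plan is to prove the left-hand isomorphism $D\overline{\Hom}_A(N,\tau_A(M))\cong\Ext_A^1(M,N)$ and then deduce the right-hand one by applying it over the opposite algebra $A^{\op}$. Recall that $D$ is an exact duality $\md(A)\to\md(A^{\op})$ interchanging projectives and injectives, and that $\tau_{A^{\op}}(DN)\cong D\tau_A^-(N)$ and $\Ext_{A^{\op}}^1(DN,DM)\cong\Ext_A^1(M,N)$; feeding the pair $(DN,DM)$ into the left-hand isomorphism for $A^{\op}$ and translating back through $D$ turns $\overline{\Hom}_{A^{\op}}(DM,\tau_{A^{\op}}(DN))$ into $\underline{\Hom}_A(\tau_A^-(N),M)$, which gives the right-hand isomorphism. (One may of course also just cite \cite{ARS97} or \cite{SY11} as indicated above; what follows is the self-contained route.)

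For the left-hand isomorphism, fix a minimal projective presentation $P_1\xrightarrow{\,p\,}P_0\to M\to 0$. Since $\tau_A(M)=D\operatorname{Tr}(M)$ with $\operatorname{Tr}(M):=\Coker(\Hom_A(p,A))$, applying the Nakayama functor $\nu_A=D\Hom_A(-,A)$ to the presentation produces an exact sequence
$$
0\to\tau_A(M)\to\nu_A(P_1)\xrightarrow{\ \nu_A(p)\ }\nu_A(P_0),
$$
in which $\tau_A(M)\hookrightarrow\nu_A(P_1)$ is moreover an injective envelope, being the $D$-dual of the projective cover $\Hom_A(P_1,A)\twoheadrightarrow\operatorname{Tr}(M)$ (this uses minimality). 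Next I would use the natural isomorphism $\Hom_A(N,\nu_A(P))\cong D\Hom_A(P,N)$ for projective $P$, coming from the adjunction $\Hom_A(N,D(-))\cong D((-)\otimes_A N)$ together with the isomorphism $\Hom_A(P,A)\otimes_A N\cong\Hom_A(P,N)$ valid for projective $P$. Applying $\Hom_A(N,-)$ to the displayed sequence and inserting this identification yields a natural isomorphism
$$
\Hom_A(N,\tau_A(M))\ \cong\ D\,\Coker\bigl(\Hom_A(P_0,N)\xrightarrow{\ \Hom_A(p,N)\ }\Hom_A(P_1,N)\bigr).
$$

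It then remains to carve out the correct subspaces. Writing $p=\iota q$ with $q\colon P_1\twoheadrightarrow\Omega M:=\Ima(p)$ and $\iota\colon\Omega M\hookrightarrow P_0$, precomposition with $q$ identifies $\Hom_A(\Omega M,N)$ with the subspace of those $\varphi\colon P_1\to N$ vanishing on $\Omega^2M:=\Ker(p)$; under this identification, the standard description $\Ext_A^1(M,N)=\Coker(\Hom_A(P_0,N)\to\Hom_A(\Omega M,N))$ exhibits $\Ext_A^1(M,N)$ as a subspace of $\Coker(\Hom_A(p,N))$, and dualizing the isomorphism above produces a surjection $\Hom_A(N,\tau_A(M))\twoheadrightarrow D\Ext_A^1(M,N)$. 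The heart of the proof is to identify its kernel: one shows that a morphism $N\to\tau_A(M)$ lies in this kernel if and only if it factors through an injective module, i.e.\ that the kernel equals $\cI(N,\tau_A(M))$. Granting this, the surjection descends to the desired isomorphism $D\overline{\Hom}_A(N,\tau_A(M))\cong\Ext_A^1(M,N)$; a final check that the construction does not depend on the chosen minimal presentation then gives functoriality in $M$ and $N$.

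The one genuinely delicate step --- and where I expect the main obstacle to lie --- is this identification of the kernel with $\cI(N,\tau_A(M))$. Equivalently, one must show that the defect by which $\Coker(\Hom_A(p,N))$ exceeds $\Ext_A^1(M,N)$ (which, after extending $p$ to a minimal projective resolution of $M$, measures the failure of the presentation to be a resolution near $P_1$) is accounted for exactly by the maps into $\tau_A(M)$ that factor through injectives. Here I would use that a morphism factors through an injective module if and only if it factors through the injective envelope $N\hookrightarrow I(N)$, combine this with the fact noted above that $\tau_A(M)\hookrightarrow\nu_A(P_1)$ is an injective envelope, and track such a factorization through the identifications assembled in the previous steps.
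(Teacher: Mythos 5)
Your overall strategy is the standard one from \cite[Section~IV.4]{ARS97}, which is exactly what the paper itself does for this statement: it offers no proof, only the citation. The formal part of your outline is correct. The reduction of the second isomorphism to the first via $A^{\op}$ and the duality $D$ works as you say (with $\overline{\Hom}_{A^{\op}}(DM,-)$ turning into $\underline{\Hom}_A(-,M)$ because $D$ swaps injectives and projectives); for a minimal presentation $P_1\xrightarrow{\,p\,}P_0\to M\to 0$ the sequence $0\to\tau_A(M)\to\nu_A(P_1)\to\nu_A(P_0)$ is exact with $\tau_A(M)\hookrightarrow\nu_A(P_1)$ an injective envelope; the identification $\Hom_A(N,\nu_A(P))\cong D\Hom_A(P,N)$ for $P\in\proj(A)$ is valid; and $\Ext_A^1(M,N)$ does sit inside $\Coker(\Hom_A(p,N))$ exactly as you describe. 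This much is essentially the paper's Lemma~\ref{lem:gvector3}.

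However, the proposal does not prove the theorem. By your own account, the identification of the kernel of the surjection $\Hom_A(N,\tau_A(M))\twoheadrightarrow D\Ext_A^1(M,N)$ with $\cI(N,\tau_A(M))$ is left as a claim (``Granting this\dots''), and that identification is the entire non-formal content of the Auslander--Reiten formula: everything preceding it is routine homological algebra and only yields the weaker statement $\hom_A(N,\tau_A(M))\ge \ext_A^1(M,N)$ together with a distinguished surjection. The hint you give for closing the gap --- replace an arbitrary injective by the envelope $N\hookrightarrow I(N)$ and use that $\tau_A(M)\hookrightarrow\nu_A(P_1)$ is an injective envelope --- points in the right direction, but the actual verification requires unwinding the explicit pairing $\Hom_A(N,\nu_A(P_1))\times\Hom_A(P_1,N)\to K$ and checking that the functionals corresponding to maps $N\to\tau_A(M)$ which extend to $I(N)$ are precisely those annihilating the subspace $q^*\Hom_A(\Omega M,N)/\Ima(\Hom_A(p,N))$; this does not follow formally from the facts you have assembled and is where the several pages of work in \cite{ARS97} actually lie. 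Functoriality and independence of the chosen presentation are likewise asserted rather than checked. So the proposal is a correct road map for the standard proof, but it has a genuine gap at the one step that carries the theorem; as both you and the paper note, simply citing \cite{ARS97} or \cite{SY11} is the intended resolution here.
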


\begin{Lem}[{\cite[Section~IV.4]{ARS97}}]\label{lem:pdim1stable}
Let $M,N \in \md(A)$.
If $\pdim(N) \le 1$, then
$$
\overline{\Hom}_A(M,\tau_A(N)) = \Hom_A(M,\tau_A(N)).
$$
\end{Lem}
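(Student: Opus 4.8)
The plan is to prove the (formally stronger) statement that no nonzero homomorphism with target $\tau_A(N)$ factors through an injective module. Since any morphism $M\to\tau_A(N)$ that factors through an injective factors through a direct summand of $D(A_A)^k$ for some $k$, and $\add(D(A_A))=\inj(A)$, it is enough to show $\Hom_A(D(A_A),\tau_A(N))=0$; then in particular the space of morphisms $M\to\tau_A(N)$ factoring through an injective is zero, which is precisely $\overline{\Hom}_A(M,\tau_A(N))=\Hom_A(M,\tau_A(N))$. So the whole question is reduced to computing one $\Hom$-space.

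To compute it, I would unwind the Auslander--Reiten translate via the transpose. Write $X^{*}:=\Hom_A(X,A)$ (a duality between finitely generated projective left and right $A$-modules), and recall that $\tau_A(N)=D\operatorname{Tr}(N)$ with $\operatorname{Tr}(N)=\Coker(p_1^{*})$ computed from the minimal projective presentation $P_1\xrightarrow{p_1}P_0\to N\to0$. Using the natural isomorphism $\Hom_A(U,DV)\cong\Hom_{A^{\op}}(V,DU)$ for $U\in\md(A)$, $V\in\md(A^{\op})$ (both sides being $D(V\otimes_A U)$) together with $D(D(A_A))\cong A_A$, one obtains
\[
\Hom_A(D(A_A),\tau_A(N))=\Hom_A(D(A_A),D\operatorname{Tr}(N))\cong\Hom_{A^{\op}}(\operatorname{Tr}(N),A_A)=\operatorname{Tr}(N)^{*}.
\]
Thus it remains to prove the single identity $\operatorname{Tr}(N)^{*}=0$.

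This is where the hypothesis $\pdim(N)\le1$ is used: then the syzygy $\Omega N$ is projective, so in the minimal presentation above $P_1=\Omega N$ and $p_1\colon P_1\hookrightarrow P_0$ is a monomorphism. Applying $(-)^{*}$ to $P_0^{*}\xrightarrow{p_1^{*}}P_1^{*}\to\operatorname{Tr}(N)\to0$ yields an exact sequence $0\to\operatorname{Tr}(N)^{*}\to P_1^{**}\xrightarrow{p_1^{**}}P_0^{**}$; under the biduality isomorphisms $P_i\cong P_i^{**}$ (valid for finitely generated projectives) the map $p_1^{**}$ is identified with $p_1$, so $\operatorname{Tr}(N)^{*}\cong\Ker(p_1)=0$, completing the argument.

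The argument is short, and the only genuinely delicate points are bookkeeping: tracking the left/right module structures through $D$ and $(-)^{*}$, and --- crucially --- using that for $\pdim(N)\le1$ the minimal presentation is a resolution, so that $p_1$ is injective. (For a non-minimal presentation $\operatorname{Tr}(N)$ would pick up a projective summand and $\operatorname{Tr}(N)^{*}$ would be nonzero, which is exactly why the statement must refer to $\tau_A$ defined via the minimal presentation.) Alternatively, one could bypass the reduction and argue homologically: for $\pdim(N)\le1$ the two-term resolution $0\to P_1\to P_0\to N\to0$, together with $\Hom_A(P_i,M)\cong P_i^{*}\otimes_A M$ and right exactness of $-\otimes_A M$, gives a natural isomorphism $\Ext_A^1(N,M)\cong\operatorname{Tr}(N)\otimes_A M$, hence $D\Ext_A^1(N,M)\cong\Hom_A(M,D\operatorname{Tr}(N))=\Hom_A(M,\tau_A(N))$; comparing dimensions with the Auslander--Reiten isomorphism $D\overline{\Hom}_A(M,\tau_A(N))\cong\Ext_A^1(N,M)$ from Theorem~\ref{thm:ARformulas}, and using that $\overline{\Hom}_A(M,\tau_A(N))$ is a quotient of $\Hom_A(M,\tau_A(N))$, forces the quotient map to be an isomorphism.
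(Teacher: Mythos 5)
Your proposal is correct. Note that the paper itself offers no proof of this lemma --- it is quoted from \cite[Section~IV.4]{ARS97} --- so there is nothing internal to compare against; your first argument is exactly the standard one from that reference. Your reduction step (no map into $\tau_A(N)$ factors through an injective because $\Hom_A(D(A_A),\tau_A(N))=0$) is precisely the implication (i) $\implies$ (ii) of the paper's Lemma~\ref{lem:pdim1}, which the paper also only cites, and your computation $\Hom_A(D(A_A),D\operatorname{Tr}(N))\cong\Hom_{A^{\op}}(\operatorname{Tr}(N),A_A)\cong\Ker(p_1)=0$ correctly isolates where $\pdim(N)\le 1$ enters (namely, that the minimal presentation is a resolution, so $p_1$ is injective and $\operatorname{Tr}(N)^{*}=0$). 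Your alternative dimension-count via $\Ext_A^1(N,M)\cong\operatorname{Tr}(N)\otimes_A M$ and Theorem~\ref{thm:ARformulas} is also valid, with the understood reading that ``$\overline{\Hom}=\Hom$'' means the ideal of injectively factoring maps vanishes.
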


\begin{Lem}[{\cite[Section~III.5]{SY11}}]\label{lem:pdim1}
For $M \in \md(A)$
the following are equivalent:
\begin{itemize}\itemsep2mm

\item[(i)]
$\pdim(M) \le 1$;

\item[(ii)]
For each $I \in \inj(A)$ we have
$\Hom_A(I,\tau_A(M)) = 0$.

\end{itemize}
\end{Lem}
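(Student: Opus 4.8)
The plan is to work directly from the construction of the Auslander--Reiten translation via a minimal projective presentation. Fix a minimal projective presentation $P_1 \xrightarrow{f} P_0 \to M \to 0$. By the defining construction of $\tau_A$ (transpose followed by the duality $D$), applying the Nakayama functor $\nu_A$ produces an exact sequence $0 \to \tau_A(M) \to \nu_A P_1 \xrightarrow{\nu_A f} \nu_A P_0$, so that $\tau_A(M) = \Ker(\nu_A f)$. On the other hand, minimality forces $\Ker(f)$ to be the second syzygy of $M$; in particular $\Ker(f) = 0$ if and only if the first syzygy $\Ker(P_0 \to M)$ is projective, i.e.\ if and only if $\pdim(M) \le 1$.

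The heart of the argument is the computation of $\Hom_A(I,\tau_A(M))$ for an arbitrary injective module $I$. Since $\nu_A\df\proj(A)\to\inj(A)$ is an equivalence of categories, every such $I$ is isomorphic to $\nu_A P$ for some $P\in\proj(A)$, so it is enough to run over all projective $P$. Applying the left exact functor $\Hom_A(\nu_A P,-)$ to the exact sequence above gives $\Hom_A(\nu_A P,\tau_A(M))\cong\Ker\bigl(\Hom_A(\nu_A P,\nu_A f)\bigr)$. Transporting along the equivalence $\nu_A$, this kernel is identified with $\Ker\bigl(\Hom_A(P,f)\bigr)$, and since $\Hom_A(P,-)$ is exact this equals $\Hom_A(P,\Ker(f))$. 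In summary, $\Hom_A(I,\tau_A(M))\cong\Hom_A(P,\Ker(f))$ whenever $I\cong\nu_A P$.

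The equivalence then falls out. If $\pdim(M)\le 1$, then $\Ker(f)=0$, so all of these Hom spaces vanish; this is (i) $\implies$ (ii). Conversely, if $\Hom_A(I,\tau_A(M))=0$ for every injective $I$, then $\Hom_A(P,\Ker(f))=0$ for every projective $P$, and taking $P={_A}A$ forces $\Ker(f)=0$, hence $\pdim(M)\le 1$.

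I expect the only points needing care, rather than a genuine obstacle, to be: recalling precisely that $\tau_A(M)=\Ker(\nu_A f)$ for a \emph{minimal} presentation (where minimality and the right exactness of $\nu_A$, equivalently the exactness of $D$ on the transpose sequence, are used), and verifying that $\Hom_A(\nu_A P,\nu_A f)$ corresponds to $\Hom_A(P,f)$ under the equivalence $\nu_A$. The mildest subtlety is the insistence on minimality: for a non-minimal presentation $f$ need not be injective even when $\pdim(M)\le 1$. One could alternatively obtain (i) $\implies$ (ii) from Lemma~\ref{lem:pdim1stable} with an injective module in place of $M$ there, but the syzygy identification above is needed for the converse in any case.
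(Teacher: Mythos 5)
Your proof is correct. The paper does not prove this lemma at all — it is quoted from \cite[Section~III.5]{SY11} — and your argument (exactness of $0 \to \tau_A(M) \to \nu_A P_1 \xrightarrow{\nu_A f} \nu_A P_0$ for a minimal presentation, the identification $\Hom_A(\nu_A P,\tau_A(M)) \cong \Hom_A(P,\Ker(f))$ via the equivalence $\nu_A$, and testing against $P = {_A}A$) is precisely the standard proof from that reference, with the role of minimality correctly isolated.
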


There are also duals of Lemmas~\ref{lem:pdim1stable}
and \ref{lem:pdim1}.

\subsection{$\tau$-rigid modules}
A module $M \in \md(A)$ is
\emph{$\tau$-rigid} (resp. \emph{$\tau^-$-rigid})
if
$$
\Hom_A(M,\tau_A(M)) = 0
\qquad (\text{resp. } \Hom_A(\tau_A^-(M),M) = 0).
$$

Let $\tau$-$\rigid(A)$ (resp. $\tau^-$-$\rigid(A)$) be the subcategory of all $\tau$-rigid (resp. $\tau^-$-rigid) $A$-modules.

\subsection{$g$-vectors}\label{subsec:gvectors}
For $M \in \md(A)$ let
$$
P_1^M \xrightarrow{f_M} P_0^M \to M \to 0
$$
always denote
a minimal projective presentation of $M$.
(This is uniquely determined, up to isomorphism of complexes.)
Then
$$
g(M) := (g_1(M),\ldots,g_n(M)) := [P_1^M] - [P_0^M]
$$
is the \emph{$g$-vector} of $M$.

\begin{Lem}For $M \in \md(A)$ and
$1 \le i \le n$ we have
$$
g_i(M) = -\hom_A(M,S(i)) + \ext_A^1(M,S(i)).
$$
\end{Lem}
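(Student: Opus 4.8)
The plan is to compute $\hom_A(M,S(i))$ and $\ext_A^1(M,S(i))$ directly from the minimal projective presentation
$$
P_1^M \xrightarrow{f_M} P_0^M \to M \to 0,
$$
and to match the resulting expression with $[P_1^M:P(i)] - [P_0^M:P(i)] = g_i(M)$. First I would apply $\Hom_A(-,S(i))$ to the exact sequence $P_1^M \to P_0^M \to M \to 0$, together with the projective cover $P_0^M \to M$ whose kernel I will call $\Omega M$, so that $P_1^M \to \Omega M \to 0$ is a projective cover of $\Omega M$. The key input is the standard fact that for a projective $P$ one has $\hom_A(P,S(i)) = [P:P(i)]$ (this is the $S(i)$-dual version of $\dim\Hom_A(P(i),M) = [M:S(i)]$, since $S(i)$ is self-dual in the relevant sense) and $\Ext_A^{\ge 1}(P,S(i)) = 0$.

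Next I would exploit minimality: because $P_0^M \to M$ is a projective cover, the induced map $\Hom_A(M,S(i)) \to \Hom_A(P_0^M,S(i))$ is an isomorphism (any homomorphism $M \to S(i)$ factors through the top of $M$, and $\tp(P_0^M) \cong \tp(M)$), giving $\hom_A(M,S(i)) = [P_0^M:P(i)]$. Likewise, minimality of the presentation means $f_M$ maps $P_1^M$ into $\rad(P_0^M) = \Omega M$'s ambient radical, so the connecting map in the long exact $\Ext$-sequence, namely $\Hom_A(\Omega M,S(i)) \to \Ext_A^1(M,S(i))$, is an isomorphism as well — equivalently the map $\Hom_A(P_0^M,S(i)) \to \Hom_A(\Omega M,S(i))$ induced by the inclusion $\Omega M \hookrightarrow \rad P_0^M$ is zero, since $S(i)$ is semisimple and the inclusion lands in the radical. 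Combining this with $\hom_A(\Omega M,S(i)) = [P_1^M:P(i)]$ (applying the previous paragraph to the projective cover $P_1^M \to \Omega M$, using that $\tp(P_1^M)\cong\tp(\Omega M)$ by minimality) yields $\ext_A^1(M,S(i)) = [P_1^M:P(i)]$.

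Putting the two computations together gives
$$
-\hom_A(M,S(i)) + \ext_A^1(M,S(i)) = -[P_0^M:P(i)] + [P_1^M:P(i)] = g_i(M),
$$
as claimed. The main obstacle, and the point requiring a little care, is the vanishing of the connecting homomorphism's competitor — i.e.\ verifying that the map $\Hom_A(P_0^M,S(i)) \to \Hom_A(\Omega M, S(i))$ is zero — which is exactly where minimality of the projective presentation is used; without minimality one only gets an inequality. Everything else is a routine diagram chase through the long exact sequence obtained by applying $\Hom_A(-,S(i))$, using projectivity to kill higher $\Ext$-terms of $P_0^M$ and $P_1^M$.
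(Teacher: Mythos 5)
Your proof is correct and is exactly the argument the paper intends: the paper's proof simply states that one easily checks $\hom_A(M,S(i)) = [P_0^M:P(i)]$ and $\ext_A^1(M,S(i)) = [P_1^M:P(i)]$, which are precisely the two identities you establish (correctly using minimality of the presentation in both places). You have just filled in the routine verifications that the paper omits.
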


\begin{proof}
One easily checks that
$\hom_A(M,S(i)) = [P_0^M:P(i)]$ and
$\ext_A^1(M,S(i)) = [P_1^M:P(i)]$.
The claim follows.
\end{proof}

Let $(P_1,P_0)$ be a pair of projective $A$-modules.
For $f \in \Hom_A(P_1,P_0)$ let
$P_f$ be a maximal direct summand of $P_1$ such that
$f(P_f) = 0$.

The next lemma is folklore.

\begin{Lem}\label{lem:gvector1}
For $M \in \md(A)$
let
$$
P_1 \xrightarrow{f} P_0 \to M \to 0
$$
be a projective presentation of $M$.
Then $P_1 \xrightarrow{f} P_0$ is isomorphic
(in the category of complexes) to a
direct sum
$$
(P_1^M \xrightarrow{f_M} P_0^M) \oplus
(P \xrightarrow{1_P} P) \oplus
(P_f \to 0).
$$
Up to isomorphism, these three summands are uniquely determined
by $f$.
The projective presentation is minimal if and only if
$P = 0$ and $P_f = 0$.
We have
$$
g(M) =  [P_1^M] - [P_0^M] = [P_1] - [P_0] - [P_f].
$$
\end{Lem}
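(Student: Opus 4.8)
The plan is to deduce the lemma from the standard decomposition of a two‑term complex of projectives into a \emph{minimal} part, a contractible part, and a part concentrated in degree $1$, combined with the familiar characterisation of minimality: a presentation $P_1 \xrightarrow{f} P_0 \to M \to 0$ is minimal if and only if $\Ima(f) \subseteq \rad(P_0)$ and $\Ker(f) \subseteq \rad(P_1)$, i.e.\ $P_0 \to M$ and $P_1 \to \Ima(f)$ are projective covers.

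\emph{Existence.} First I split off the contractible summands. If $\Ima(f) \not\subseteq \rad(P_0)$, then $P_1 \xrightarrow{f} P_0 \to P_0/\rad(P_0)$ is nonzero, so there is an indecomposable direct summand $P_0''$ of $P_0$ for which $P_1 \xrightarrow{f} P_0 \to P_0''$ hits the top of $P_0''$, hence is onto $P_0''$ by Nakayama. By projectivity this splits, giving $P_1 = P_1'' \oplus \widehat{P_1}$ with $P_1'' \to P_0''$ an isomorphism, and a $2\times 2$ change of basis makes $f$ block diagonal, so $(P_1 \xrightarrow{f} P_0) \cong (P_1'' \to P_0'') \oplus (\widehat{P_1} \to \widehat{P_0})$ with the first summand contractible. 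Iterating (dimensions drop) yields $(P_1 \xrightarrow{f} P_0) \cong (Q_1 \xrightarrow{g} Q_0) \oplus (P \xrightarrow{1_P} P)$ with $\Ima(g) \subseteq \rad(Q_0)$ and $\Coker(g) \cong \Coker(f) = M$. Next let $P_f$ be a maximal direct summand of $Q_1$ with $g(P_f)=0$ (it exists and is unique up to isomorphism by Krull--Schmidt), write $Q_1 = P_1^{\mathrm{min}} \oplus P_f$, and set $f_M := g|_{P_1^{\mathrm{min}}}$. Then $\Ima(f_M) = \Ima(g) \subseteq \rad(Q_0)$, and by maximality of $P_f$ no nonzero direct summand of $P_1^{\mathrm{min}}$ lies in $\Ker(f_M)$. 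The one nontrivial point is the following claim: a surjection $h\colon P \to N$ of projective $A$-modules such that no nonzero direct summand of $P$ is contained in $\Ker(h)$ is a projective cover. Indeed, lifting $h$ through a projective cover $c\colon P' \to N$ gives $h = cu$; by Nakayama $u$ is onto, hence splits, so $P = \Ker(u) \oplus \widetilde{P}$, and since $\Ker(u) \subseteq \Ker(h)$ is a summand of $P$ it is $0$, so $u$ is an isomorphism and $h$ is a projective cover. Applying this to $P_1^{\mathrm{min}} \to \Ima(f_M)$ shows that $(P_1^{\mathrm{min}} \xrightarrow{f_M} Q_0) \to M \to 0$ is a minimal projective presentation of $M$, hence isomorphic as a complex to the fixed one $(P_1^M \xrightarrow{f_M} P_0^M)$. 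This gives the asserted decomposition.

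\emph{Uniqueness, minimality, and the $g$-vector.} The category of morphisms between finitely generated projective $A$-modules is Hom-finite over $K$ and idempotent complete, hence Krull--Schmidt. Decomposing the three pieces into indecomposables, the summands of $(P_f \to 0)$ are the $(P(i) \to 0)$, those of $(P \xrightarrow{1_P} P)$ are the $(P(i) \xrightarrow{1} P(i))$, and those of the minimal complex $(P_1^M \to P_0^M)$ are of neither type (a summand $(P(i) \to 0)$ would contradict $\Ker(f_M) \subseteq \rad(P_1^M)$, and a summand $(P(i) \xrightarrow{1} P(i))$ would contradict $\Ima(f_M) \subseteq \rad(P_0^M)$). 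Hence in any decomposition of $(P_1 \xrightarrow{f} P_0)$ of the stated shape, Krull--Schmidt forces the multiset of summands of type $(P(i) \to 0)$ to agree, pinning down $P_f$ up to isomorphism; likewise the type $(P(i) \xrightarrow{1} P(i))$ summands pin down $P$; and the remaining summands form a minimal complex with cokernel $M$, so are isomorphic to $(P_1^M \to P_0^M)$. The same bookkeeping identifies the degree-$1$ summand with a maximal direct summand of $P_1$ annihilated by $f$. For the minimality criterion, $\Ima(f) = \Ima(f_M) \oplus P$ inside $P_0^M \oplus P$ and $\Ker(f) = \Ker(f_M) \oplus P_f$ inside $P_1^M \oplus P \oplus P_f$, so $\Ima(f) \subseteq \rad(P_0)$ and $\Ker(f) \subseteq \rad(P_1)$ hold simultaneously if and only if $P = 0$ and $P_f = 0$ (and then $f = f_M$, which is minimal). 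Finally $[P_1] = [P_1^M] + [P] + [P_f]$ and $[P_0] = [P_0^M] + [P]$, so $g(M) = [P_1^M] - [P_0^M] = [P_1] - [P_0] - [P_f]$.

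\emph{Main obstacle.} Everything apart from the claim above is routine Krull--Schmidt bookkeeping and the well-known splitting of a two-term complex of projectives into its minimal and contractible parts; the claim is exactly what guarantees that, after removing the contractible summands and the degree-$1$ part, one is left with the \emph{minimal} projective presentation and not merely with a presentation having no contractible summands.
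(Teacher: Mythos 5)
Your proof is correct. The paper itself gives no argument here --- it states the lemma as folklore without proof --- and what you have written is precisely the standard argument one would supply: split off the contractible summands by lifting through the top of $P_0$, split off a maximal degree-one stalk summand $(P_f\to 0)$, verify that what remains is the minimal presentation, and then use Krull--Schmidt in the morphism category of projectives for uniqueness. You correctly isolate and prove the one point that is not pure bookkeeping, namely that a surjection from a projective with no nonzero direct summand of the source inside the kernel is a projective cover; together with $\Ima(f_M)\subseteq\rad(Q_0)$ this is exactly what certifies minimality of the remaining summand. The minimality criterion and the $g$-vector identity then follow as you state.
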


\begin{Lem}
\label{lem:gvector3}
Let $M,N \in \md(A)$, and let
$$
P_1 \xrightarrow{f} P_0 \to M \to 0
$$
be a projective presentation.
Applying $\Hom_A(-,N)$ yields an exact sequence
\begin{multline*}
0 \to \Hom_A(M,N) \to \Hom_A(P_0,N) \to \Hom_A(P_1,N)
\\
\to \Hom_A(N,\tau_A(M)) \oplus \Hom_A(P_f,M) \to 0.
\end{multline*}
\end{Lem}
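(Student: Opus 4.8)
The plan is to deduce the sequence from Lemma~\ref{lem:gvector1} by splitting off a minimal projective presentation, and to compute the cokernel of the last nonzero map with the Nakayama functor $\nu_A$. The left-exact part needs no minimality: since $P_0 \to M$ is an epimorphism, $\Hom_A(-,N)$ gives an inclusion $\Hom_A(M,N) \hookrightarrow \Hom_A(P_0,N)$, and $g\colon P_0 \to N$ satisfies $gf = 0$ exactly when $g$ vanishes on $\Ima(f) = \Ker(P_0 \to M)$, i.e.\ exactly when $g$ factors through $M$; hence
$$
0 \to \Hom_A(M,N) \to \Hom_A(P_0,N) \xrightarrow{\Hom_A(f,N)} \Hom_A(P_1,N)
$$
is exact, and everything comes down to the cokernel $C$ of $\Hom_A(f,N)$. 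By Lemma~\ref{lem:gvector1} the complex $P_1 \xrightarrow{f} P_0$ is isomorphic to $(P_1^M \xrightarrow{f_M} P_0^M) \oplus (P \xrightarrow{1_P} P) \oplus (P_f \to 0)$; since $\Hom_A(-,N)$ is additive, $C$ splits as the direct sum of the three corresponding cokernels. The summand $(P \xrightarrow{1_P} P)$ is sent to the isomorphism $1_{\Hom_A(P,N)}$ and contributes $0$; the summand $(P_f \to 0)$ contributes a copy of $\Hom_A(P_f,N)$; and the summand $(P_1^M \xrightarrow{f_M} P_0^M)$ contributes $\Coker(\Hom_A(f_M,N))$. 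So it remains to identify this last cokernel with $\Hom_A(N,\tau_A(M))$.

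For this I would use the standard description of the Auslander-Reiten translate from a minimal projective presentation (see \cite{ARS97}): applying $\nu_A$ yields an exact sequence $0 \to \tau_A(M) \to \nu_A(P_1^M) \xrightarrow{\nu_A(f_M)} \nu_A(P_0^M)$. Applying the left-exact functor $\Hom_A(N,-)$ and using the natural isomorphism $\Hom_A(N,\nu_A(Q)) \cong D\Hom_A(Q,N)$ for projective $Q$ (a consequence of tensor-hom adjunction together with $\Hom_A(Q,A)\otimes_A N \cong \Hom_A(Q,N)$), one identifies the resulting sequence with
$$
0 \to \Hom_A(N,\tau_A(M)) \to D\Hom_A(P_1^M,N) \xrightarrow{D\Hom_A(f_M,N)} D\Hom_A(P_0^M,N).
$$
Applying $D$ once more---it is a duality on $\md(A)$---gives $\Coker(\Hom_A(f_M,N)) \cong D\Hom_A(N,\tau_A(M))$, a $K$-space of dimension $\hom_A(N,\tau_A(M))$; after fixing a $K$-linear isomorphism $D\Hom_A(N,\tau_A(M)) \cong \Hom_A(N,\tau_A(M))$ the identification is complete, and reassembling the three contributions gives the asserted four-term exact sequence.

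I expect the only non-formal step to be the last identification; it rests on the two standard facts $\tau_A(M) = \Ker(\nu_A(f_M))$ and $\Hom_A(N,\nu_A(Q)) \cong D\Hom_A(Q,N)$. Equivalently one can route it through the transpose $\operatorname{Tr}(M)$, via $\Coker(\Hom_A(f_M,N)) \cong \operatorname{Tr}(M)\otimes_A N$ and $\operatorname{Tr}(M)\otimes_A N \cong D\Hom_A(N,\tau_A(M))$. The remaining steps---the reduction through Lemma~\ref{lem:gvector1} and the left-exactness---are routine.
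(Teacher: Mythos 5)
Your proof is correct and follows exactly the route the paper intends: the paper's own proof of this lemma is the one-line citation of \cite[Section~IV.4]{ARS97} combined with Lemma~\ref{lem:gvector1}, and your argument is precisely that citation unpacked (split off the minimal presentation via Lemma~\ref{lem:gvector1}, identify $\Coker(\Hom_A(f_M,N))$ with $\operatorname{Tr}(M)\otimes_A N \cong D\Hom_A(N,\tau_A(M))$ using the Nakayama functor, and collect the extra summand coming from $P_f$, with the harmless $K$-linear identification $D\Hom_A(N,\tau_A(M))\cong\Hom_A(N,\tau_A(M))$ at the end). One remark in your favor: your computation yields $\Hom_A(P_f,N)$ as the second summand of the last term, whereas the lemma as printed says $\Hom_A(P_f,M)$; the printed version is evidently a typo (every application of the lemma in the paper has $N=M$, so nothing downstream is affected), and your version is the correct one.
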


\begin{proof}
This follows from \cite[Section~IV.4]{ARS97}
combined with
Lemma~\ref{lem:gvector1}.
\end{proof}

Let $K_0(\proj(A))$ and $K_0(A)$ be the Grothendieck groups of
$\proj(A)$ and $\md(A)$, respectively.
(Both are isomorphic to $\Z^n$.
As a $\Z$-basis they have $\{ [P(i)] \mid 1 \le i \le n \}$ and
$\{ [S(i)] \mid 1 \le i \le n \}$, respectively.
For $M \in \md(A)$, we can identify $[M] \in K_0(A)$ and $\dimv(M)$.)

Then
\begin{align*}
\bil{-,?}\df K_0(\proj(A)) \times K_0(A) &\to \Z
\\
([P],[M]) &\mapsto \hom_A(P,M)
\end{align*}
defines a $\Z$-bilinear form.

Also the following lemma is well known (and follows from Lemmas~\ref{lem:gvector1} and~\ref{lem:gvector3}).

\begin{Lem}\label{lem:bilinear}
For $M,N \in \md(A)$ we have
$$
\bil{g(M),[N]} = - \hom_A(M,N) + \hom_A(N,\tau_A(M)).
$$
\end{Lem}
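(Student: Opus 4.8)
The plan is to combine the two folklore lemmas that immediately precede the statement. Lemma~\ref{lem:gvector3} gives, for any projective presentation $P_1 \xrightarrow{f} P_0 \to M \to 0$, an exact sequence
$$
0 \to \Hom_A(M,N) \to \Hom_A(P_0,N) \to \Hom_A(P_1,N) \to \Hom_A(N,\tau_A(M)) \oplus \Hom_A(P_f,M) \to 0,
$$
and if we choose the presentation to be \emph{minimal}, then by Lemma~\ref{lem:gvector1} we have $P_f = 0$, so the last term simplifies to $\Hom_A(N,\tau_A(M))$.

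From this four-term exact sequence, taking the alternating sum of dimensions gives
$$
\hom_A(M,N) - \hom_A(P_0,N) + \hom_A(P_1,N) - \hom_A(N,\tau_A(M)) = 0.
$$
Now I rewrite the middle two terms using the bilinear form: by definition $\bil{[P_0],[N]} = \hom_A(P_0,N)$ and $\bil{[P_1],[N]} = \hom_A(P_1,N)$, so $\hom_A(P_1,N) - \hom_A(P_0,N) = \bil{[P_1]-[P_0],[N]} = \bil{[P_1^M]-[P_0^M],[N]} = \bil{g(M),[N]}$, the last equality by the definition of the $g$-vector (using the minimal presentation $P_1^M \xrightarrow{f_M} P_0^M$). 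Substituting into the alternating-sum identity and rearranging yields exactly
$$
\bil{g(M),[N]} = -\hom_A(M,N) + \hom_A(N,\tau_A(M)),
$$
which is the claim.

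I do not anticipate a real obstacle here: the proof is a two-line bookkeeping argument once the minimal presentation is fixed so that the $\Hom_A(P_f,M)$ term vanishes. The only point requiring a moment's care is the appeal to the bilinearity/additivity of $\bil{-,?}$ on short exact sequences of \emph{projective} arguments (equivalently, that $\hom_A(-,N)$ is additive on the split-exact sequence $0 \to P_0 \to P_1 \oplus ? \to \dots$); but this is immediate from $\hom_A(-,N)$ being additive on direct sums, which is all that $[P_1]-[P_0]$ encodes. One should also note that the identity is independent of the choice of minimal projective presentation, which is guaranteed by the uniqueness statement in Section~\ref{subsec:gvectors}.
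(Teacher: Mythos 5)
Your proof is correct and is exactly the argument the paper intends: the paper gives no details beyond remarking that the lemma "follows from Lemmas~\ref{lem:gvector1} and~\ref{lem:gvector3}", and your alternating-sum computation on the four-term exact sequence for a minimal presentation (so that $P_f=0$) is the standard way to fill that in. No gaps.
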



\section{Varieties of modules and semicontinuity of projective presentations}\label{sec:varieties}


\subsection{Varieties of modules}
From now on we assume
that
$$
A = KQ/I
$$
where $Q = (Q_0,Q_1,s,t)$
is a finite quiver with vertex set $Q_0 = \{ 1,\ldots,n \}$
and arrow set $Q_1$,
and $I$ is an admissible ideal in the path algebra
$KQ$.
(In contradiction to our global assumption, the algebra $KQ$ might be infinite-dimensional. But $KQ/I$ is finite-dimensional.)
Let $e_1,\ldots,e_n$ be the paths of length $0$ in $Q$.
They form a complete set of pairwise orthogonal primitive
idempotents in $A$.
An arrow $a \in Q_1$ starts in $s(a)$ and ends in $t(a)$.

Each finite-dimensional algebra over an algebraically closed field $K$
is Morita equivalent to such an algebra $KQ/I$.

For $\bd = (d_1,\ldots,d_n) \in \N^n$
let
$$
\rep(Q,\bd) :=
\prod_{a \in Q_1} \Hom_K(K^{d_{s(a)}},K^{d_{t(a)}})
$$
be the affine space of \emph{representations} of $Q$ with
\emph{dimension vector} $\bd$.
Define
$$
G_\bd := \prod_{i=1}^n \GL_{d_i}(K).
$$
For $g = (g_1,\ldots,g_n) \in G_\bd$ and
$M = (M_a)_{a \in Q_1} \in \rep(Q,\bd)$ let
$$
g.M := (g_{t(a)}M_ag_{s(a)}^{-1})_{a \in Q_1}.
$$
This defines a $G_\bd$-action on $\rep(Q,\bd)$, and the
isomorphism classes of representations with dimension vector
$\bd$ correspond to the orbits of this action.
The orbit of $M \in \rep(Q,\bd)$ is denoted by $\cO_M$.

For $A = KQ/I$ let
$$
\md(A,\bd) := \{ M \in \rep(Q,\bd) \mid IM = 0 \}.
$$
This is a  closed stable subset of $\rep(Q,\bd)$.
(For a set $X$ with a group $G$ acting on $X$,
a subset $U$ of $X$ is called \emph{stable} if $U$ consists of
a union of orbits of the $G$-action.)
We call $\md(A,\bd)$ the \emph{variety of $A$-modules} with dimension
vector $\bd$.

Let
$\Spec(A,\bd)$ be the affine scheme
defined by the equations given by the condition $IM = 0$ for all $M \in \rep(Q,\bd)$.
Then $\md(A,\bd)$ can be seen as the associated reduced scheme.

We rarely use the scheme structure of $\Spec(A,\bd)$
and prefer to work with the affine variety $\md(A,\bd)$.
The only (but important) exception is the tangent space
$T_M$ of a module $M$ at the scheme $\Spec(A,\bd)$.
We call $M$ \emph{regular} if it is regular
as a closed point in $\Spec(A,\bd)$.

One can also define varieties of modules without using quivers
and admissible ideals.
Namely, for our finite-dimensional algebra $A$, and $d \in \N$
let $\md(A,d)$ be the affine variety of $K$-algebra homomorphims
$A \to M_n(K)$.
The precise relation between the varieties $\md(A,d)$ and
$\md(A,\bd)$ is discussed in \cite{B91}.
The definition of $\md(A,d)$ is more straightforward and does not need the passage from $A$ to $KQ/I$, but $\md(A,\bd)$ makes it
easier to work with explicit examples.
For some fundamental results on $\md(A,d)$ we refer to
\cite{G74}.

\subsection{Irreducible components}\label{subsec:irreducible}
For $\bd = (d_1,\ldots,d_n) \in \N^n$
let $\irr(A,\bd)$ be the set of irreducible components of
the affine variety $\md(A,\bd)$.
Let
$$
\irr(A) := \bigcup_{\bd \in \N^n} \irr(A,\bd).
$$
For
$\calZ \in \irr(A,\bd)$ let
$$
\supp(\calZ) := \supp(\bd) := \{ 1 \le i \le n \mid d_i \not= 0 \}
$$
be the \emph{support} of $\calZ$.
Let $\dimv(\calZ) := \bd$, and let $[\calZ:S(i)] := d_i$ for
$1 \le i \le n$.
Let
$$
\calZ^{\rm int} := \{ M \in \calZ \mid M \text{ is not contained in any other
component in $\irr(A,\bd)$} \}
$$
be the \emph{interior} of $\calZ$.

For $M \in \md(A,\bd)$ recall that
$$
\md_M(A,\bd) :=
\bigcup_{\substack{\calZ \in \irr(A,\bd)\\M \in \calZ}} \calZ.
$$

\noindent{\bf Examples}:
\begin{itemize}\itemsep2mm

\item[(i)]
Let $M \in \md(A,\bd)$ with $\Ext_A^1(M,M) = 0$.
Then $\calZ := \overline{\cO_M} \in \irr(A,\bd)$.

\item[(ii)]
Let $A = KQ/I$ where $Q$ is the quiver
$$
\xymatrix{
1 & 2 \ar[l]_a & 3 \ar[l]_b
}
$$
and $I$ is generated by $ab$.
Let $\bd = (d_1,d_2,d_3) \in \N^3$.
Then
$$
\md(A,\bd)
= \{ (M_a,M_b) \in \rep(Q,\bd) \mid M_aM_b = 0 \}.
$$
The irreducible components
of $\md(A,\bd)$ are parametrized by
the maximal
pairs $(r_a,r_b) \in \N^2$ such that
$r_a \le d_1$,  $r_a+r_b \le d_2$ and $r_b \le d_3$.
For example for $\bd = (4,6,5)$ the maximal pairs are
$(4,2)$, $(3,3)$, $(2,4)$ and $(1,5)$.
For a maximal pair $(r_a,r_b)$ the corresponding irreducible component of $\md(A,\bd)$
is
$$
\calZ_{(r_a,r_b)} := \{ (M_a,M_b) \in \md(A,\bd) \mid
\rk(M_a) \le r_a,\; \rk(M_b) \le r_b \}.
$$
\end{itemize}

\subsection{Semicontinuous maps on varieties of modules}
A map $\eta\df X \to \Z$ on an affine variety $X$ is
\emph{upper semicontinuous} (resp. \emph{lower semicontinuous})
if
$$
X_{\le i} := \{ x \in X \mid \eta(x) \le i \} \quad
(\text{resp. }
X_{\ge i} := \{ x \in X \mid \eta(x) \ge i \})
$$
is an open subset of $X$ for each $i \in \Z$.
If $\eta\df X \times X \to \Z$ is upper or lower semicontinuous,
then the associated diagonal map $x \mapsto \eta(x,x)$ is
also upper or lower semicontinuous, respectively.
Similarly, if $\eta\df X \times Y \to \Z$ is upper or lower semicontinuous,
then for each $x' \in X$ and $y' \in Y$ the maps
$x \mapsto \eta(x,y')$ and $y \mapsto \eta(x',y)$ are
also upper or lower semicontinuous, respectively.

If $X$ is irreducible and $\eta\df X \to \Z$ is an upper
semicontinuous map whose image has a minimum, then
$\{ x \in X \mid \eta(x) \text{ is minimal} \}$
is a dense open subset of $X$.
We call its elements \emph{generic} (with respect to
$\eta$) in $X$, and the minimal value is the
\emph{generic value} of $\eta$ on $X$.
Analogously, for lower semicontinuous maps
one needs to consider the maximal value.

In our context we study semicontinuous maps for $X = \md(A,\bd)$
or $X = \md(A,\bd) \times \md(A,\bd')$,
and we often restrict them to some
$\calZ$ or $\calZ \times \calZ'$ with $\calZ,\calZ' \in \irr(A)$.

The maps
\begin{align*}
(M,N) &\mapsto \hom_A(M,N),
&
(M,N) & \mapsto \ext_A^i(M,N),
\\
(M,N) & \mapsto \hom_A(M,\tau_A(N)),
&
(M,N) & \mapsto \hom_A(\tau_A^-(M),N),
\end{align*}
are all upper semicontinuous on
$\md(A,\bd) \times \md(A,\bd')$,
and the maps
\begin{align*}
M &\mapsto \pdim(M),
&
M & \mapsto g_i(M) := - \hom_A(M,S(i)) + \ext_A^1(M,S(i)),
\\
M & \mapsto \idim(M)
\end{align*}
are upper semicontinuous on
$\md(A,\bd)$.
The map
$f \mapsto \rk(f)$ is
lower semicontinuous on $\Hom_A(M,N)$ for all
$M,N \in \md(A)$.
The map $M \mapsto \dim \cO_M$ is lower semicontinuous
on $\md(A,\bd)$.
Note that
$$
\dim \cO_M = \dim G_\bd - \hom_A(M,M).
$$
We refer to \cite{GLFS23} for more details on some of the maps above.

For $\calZ \in \irr(A)$ let
$$
\pdim(\calZ) := \min\{ \pdim(M) \mid M \in \calZ \}.
$$

For $\calZ \in \irr(A)$ and $1 \le i \le n$ let
$$
g_i(\calZ) := \min\{ g_i(M) \mid M \in \calZ \}
$$
be the generic value of $g_i(-)$ on $\calZ$.
Then
$$
g(\calZ) := (g_1(\calZ),\ldots,g_n(\calZ))
$$
is the \emph{$g$-vector} of $\calZ$.

\subsection{Dimension of fibres}
We need the following standard result from basic algebraic geometry.

\begin{Lem}\label{lem:chevalley}
Let $f\df X \to Y$ be a dominant morphism of
irreducible quasi-projective varieties.
Then for each $y \in Y$ we have
$$
\dim f^{-1}(y) \ge \dim(X) - \dim(Y).
$$
Furthermore, there is a dense open subset $\calU$ of $Y$ such that
$$
\dim f^{-1}(u) = \dim(X) - \dim(Y)
$$
for all $u \in \calU$.
\end{Lem}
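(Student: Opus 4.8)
The plan is to prove the two assertions by different means: the lower bound is a local statement that follows from Krull's Hauptidealsatz, while the generic equality comes from a transcendence-degree computation together with a spreading-out argument. Throughout one may assume $y \in f(X)$, since otherwise $f^{-1}(y) = \emptyset$ and the first inequality is vacuous for that point; moreover, as $f$ is dominant, $f(X)$ contains a dense open subset of $Y$, so the open set $\calU$ produced below will automatically consist of points of the image.

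For the lower bound, set $m := \dim Y$. Since $Y$ is an irreducible variety, the local ring $\cO_{Y,y}$ has Krull dimension $m$ and hence a system of parameters $g_1,\dots,g_m$. First I would pass to an open neighbourhood $V$ of $y$ in $Y$ on which the $g_i$ are regular and, shrinking $V$ further, arrange that $V(g_1,\dots,g_m) = \{y\}$ inside $V$. Pulling back along $f$, the functions $g_1\circ f,\dots,g_m\circ f$ are regular on $f^{-1}(V)$, which is a nonempty (because $y\in f(X)$) open, hence irreducible, subvariety of $X$ of dimension $\dim X$, and whose common zero set is precisely $f^{-1}(y)$. Applying the Hauptidealsatz $m$ times then shows that every irreducible component of $f^{-1}(y)$ has dimension at least $\dim X - m$, as required.

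For the generic equality, put $r := \dim X - \dim Y$, which equals the transcendence degree of $K(X)$ over $K(Y)$. I would first treat the affine case: write $A := K[Y] \hookrightarrow B := K[X]$ (the inclusion coming from dominance), with $B$ a finitely generated domain over $A$; choose $b_1,\dots,b_r\in B$ algebraically independent over $K(Y)$; and note that after inverting a single suitable nonzero element $h\in A$ the ring $B_h$ becomes module-finite over the polynomial ring $A_h[b_1,\dots,b_r]$. For $y\in D(h)$, the fibre coordinate ring $B_h\otimes_{A_h}\kappa(y)$ is then module-finite over $\kappa(y)[b_1,\dots,b_r]$, which forces $\dim f^{-1}(y)\le r$; and since the composite $\Spec B_h \to \Spec A_h[b_1,\dots,b_r] \to \Spec A_h$ is surjective (a finite injective ring map is surjective on spectra, and the remaining arrow is an affine-space projection), every $y\in D(h)$ lies in $f(X)$, so the lower bound already proved gives $\dim f^{-1}(y)\ge r$, hence equality on $\calU := D(h)$. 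For arbitrary quasi-projective $X$ and $Y$, I would replace $Y$ by an affine open $Y_0$, cover the preimage $f^{-1}(Y_0)$ by finitely many affine opens $U_1,\dots,U_k$, apply the affine case to each (still dominant) morphism $U_i\to Y_0$ to obtain dense opens $W_i\subseteq Y_0$, and set $\calU := W_1\cap\cdots\cap W_k$; for $y\in\calU$ one has $f^{-1}(y) = \bigcup_i\bigl(f^{-1}(y)\cap U_i\bigr)$ with each piece of dimension $r$, so $\dim f^{-1}(y) = r$.

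The step I expect to be the main obstacle is the spreading-out — producing the single element $h$ for which $B_h$ is module-finite over $A_h[b_1,\dots,b_r]$ (one clears denominators in the integral-dependence relations satisfied by the finitely many algebra generators of $B$ over $A$) — together with the somewhat delicate bookkeeping needed to descend from the affine case to general quasi-projective varieties without accidentally shrinking the fibres. The input from the Hauptidealsatz and the dimension theory of finitely generated $K$-algebras are entirely standard.
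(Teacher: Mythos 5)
The paper states this lemma without proof, citing it as a standard result from basic algebraic geometry, so there is no in-paper argument to compare against. Your proposal is a correct and complete rendition of the standard proof (Hauptidealsatz for the fibre-dimension lower bound; generic Noether normalization over $\Quot(K[Y])$ plus spreading out for the generic equality, then reduction of the quasi-projective case to the affine case by finite affine covers). You also correctly observe that the first inequality is only meaningful for $y\in f(X)$, a point the paper's formulation glosses over, and your spreading-out step (clearing denominators in the monic integral relations of the algebra generators of $K[X]$ over $K[Y][b_1,\dots,b_r]$) together with the lying-over argument showing $D(h)\subseteq f(X)$ is exactly the standard way to make the generic statement precise.
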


\subsection{Upper semicontinuity of projective presentations}
The following lemma and its proof are due to
Fei \cite{F23b}.
We use it frequently.

\begin{Lem}[{Fei}]\label{lem:presentationsemicont}
Let $\calZ$ be a closed irreducible subset of $\md(A,\bd)$.
Let $M \in \calZ$, and let
$$
P_1 \to P_0 \to M \to 0
$$ be
a projective presentation of $M$.
Then there is a dense open subset $\calU$ of $\calZ$ such that
$M \in \calU$ and
each $M' \in \calU$ has a projective presentation of the form
$$
P_1 \to P_0 \to M' \to 0.
$$
\end{Lem}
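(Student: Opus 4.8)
The plan is to realize the set of modules admitting a presentation $P_1 \to P_0 \to M' \to 0$ as the image of an explicit morphism of varieties, and then invoke the standard fact that the image of a morphism contains a dense open subset of its closure (Chevalley). Concretely, write $[P_1] = \ba$ and $[P_0] = \bb$, and set $\be := \dimv(P_0) = \dimv(P_1) - \text{(something)}$; more usefully, let $\bc := \dimv(\ker(P_0 \to M))$, so $\dimv(P_0) = \bd + \bc$ for every $M'$ in $\calZ$ that is a quotient of $P_0$ with kernel of dimension vector $\bc$. First I would consider the incidence variety
$$
\cG := \{ (f, \iota) \mid f \in \Hom_A(P_1, P_0),\ \iota\df N \hookrightarrow P_0,\ N \in \md(A,\bc),\ \Ima(f) = \Ima(\iota) \}
$$
or, more cleanly, just parametrize pairs (a linear map $f\df P_1 \to P_0$, a choice of embedding realizing $\Coker(f)$ with a fixed dimension vector). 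The cokernel construction $f \mapsto \Coker(f)$, suitably rigidified by a choice of basis, gives a morphism $\Psi$ from a locally closed piece of $\Hom_A(P_1,P_0) \times (\text{frame data})$ into $\md(A,\bd)$ whose image is exactly $\{ M' \in \md(A,\bd) \mid M' \text{ has a presentation } P_1 \to P_0 \to M' \to 0 \}$.

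The key steps, in order, would be: (1) Fix the dimension vectors $\bd$, $[P_0]$, $[P_1]$; observe that a presentation $P_1 \xrightarrow{f} P_0 \to M' \to 0$ exists with $M'$ of dimension vector $\bd$ precisely when $M'$ is a quotient of $P_0$ whose kernel is generated (over $A$) by at most the "number of generators" encoded in $P_1$, i.e.\ the kernel $\Omega$ satisfies: there is a surjection $P_1 \twoheadrightarrow \Omega$. (2) Build the variety $\cX$ of triples consisting of a submodule structure $N \subseteq P_0$ with $P_0/N$ of dimension vector $\bd$, together with a surjection $P_1 \twoheadrightarrow N$; this is a locally closed subvariety of a product of Grassmannians of subspaces of $P_0$ (cut out by $A$-stability) and of $\Hom_A(P_1, P_0)$ (imposing that the image equals $N$). (3) The forgetful map $\cX \to \md(A,\bd)$, $(N \subseteq P_0, \ldots) \mapsto P_0/N$ (after choosing an identification of the underlying vector space of $P_0/N$ with $K^{\bd}$, which is where one passes to a $G_\bd$-bundle or works $G_\bd$-equivariantly), is a morphism of varieties whose image $\cV$ is exactly the set of modules with a presentation of the prescribed shape; by Chevalley's theorem $\cV$ contains a dense open subset of its closure $\overline{\cV}$. (4) Since $M \in \calZ \cap \overline{\cV}$ and $\calZ$ is irreducible, if $\calZ \subseteq \overline{\cV}$ then $\calU := \calZ \cap (\text{dense open of } \overline{\cV} \text{ in } \cV) \cap \{M\}$-containing-open works; the point is that the dense open subset of $\overline{\cV}$ produced by Chevalley is $G_\bd$-stable (the whole construction is equivariant), hence meets $\calZ$ in a dense open subset, and one can shrink to ensure $M$ lies in it by noting $M$ itself lies in $\cV$.

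The main obstacle I expect is Step~(4): a priori Chevalley only gives a dense open subset $\calU_0$ of $\overline{\cV}$ contained in the image, but there is no reason $M$ lies in $\calU_0$, and more seriously no reason $\calZ \subseteq \overline{\cV}$. The resolution — and the genuinely useful content of the lemma — is that $\cV$ is already a \emph{dense open} (not merely constructible) subset of the relevant union of components: one must argue that the locus of modules \emph{not} admitting a presentation $P_1 \to P_0 \to - \to 0$ is \emph{closed}. This follows from upper semicontinuity of $M' \mapsto \hom_A(M', S(i))$ and $M' \mapsto \ext^1_A(M', S(i))$ (stated in the semicontinuity subsection): $M'$ has a presentation $P_1 \to P_0 \to M' \to 0$ iff $[P_0:P(i)] \ge \hom_A(M', S(i))$ and $[P_1:P(i)] \ge \hom_A(M',S(i)) - g_i(M') = \ext^1_A(M',S(i)) + (\text{correction from non-minimality})$ for all $i$; more precisely, by Lemma~\ref{lem:gvector1}, $P_1 \to P_0$ dominates the minimal presentation $P_1^{M'} \to P_0^{M'}$ as a direct summand, so the condition is exactly $[P_0^{M'}:P(i)] \le [P_0:P(i)]$ and $[P_1^{M'}:P(i)] \le [P_1:P(i)]$ for all $i$, i.e.\ $\hom_A(M',S(i)) \le [P_0:P(i)]$ and $\ext^1_A(M',S(i)) \le [P_1:P(i)]$. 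Each of these is an open condition on $\md(A,\bd)$ by the cited semicontinuity, so $\cV$ is open; it is nonempty (it contains $M$) and $G_\bd$-stable, hence its intersection with the irreducible component $\calZ$ is a nonempty open — therefore dense — subset $\calU$ of $\calZ$ containing $M$. This reduces the whole lemma to the semicontinuity statements already in the paper together with Lemma~\ref{lem:gvector1}, and the Grassmannian/Chevalley machinery of Steps~(1)--(3) is then not even needed.
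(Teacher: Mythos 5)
Your final reduction of the lemma to openness of the presentation locus is the right idea and is in fact exactly the paper's strategy, but the combinatorial characterization you settle on is wrong, and the error sits precisely at the point that carries the content of the lemma. By Lemma~\ref{lem:gvector1}, a presentation $P_1\to P_0\to M'\to 0$ forces $P_0\cong P_0^{M'}\oplus P$ and $P_1\cong P_1^{M'}\oplus P\oplus P_f$ for projectives $P,P_f$, with the \emph{same} $P$ in both places. So the correct condition is not just $[P_0^{M'}:P(i)]\le[P_0:P(i)]$ and $[P_1^{M'}:P(i)]\le[P_1:P(i)]$; one also needs
$[P_1:P(i)]-[P_1^{M'}:P(i)]\;\ge\;[P_0:P(i)]-[P_0^{M'}:P(i)]$ for all $i$, equivalently $g_i(M')\le [P_1:P(i)]-[P_0:P(i)]$. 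Your two conditions do not imply this. Concretely, take $A=KQ/(ab)$ with $Q\colon 1\xleftarrow{a}2\xleftarrow{b}3$, and $(P_1,P_0)=(P(2),\,P(1)\oplus P(2)\oplus P(3))$. The module $M'=P(2)\oplus S(3)$ has minimal presentation $P(2)\to P(2)\oplus P(3)\to M'\to 0$, so it satisfies both of your inequalities; but it admits no presentation $P(2)\to P(1)\oplus P(2)\oplus P(3)\to M'\to 0$ (the kernel of any surjection $P(1)\oplus P(2)\oplus P(3)\to M'$ is $P(1)\oplus S(2)\not\cong P(2)$), and indeed the third inequality fails at $i=1$. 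So the set cut out by your two conditions is strictly larger than the locus $\cV$ of modules admitting the prescribed presentation, and your proof that $\cV$ is open does not go through as written.

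The gap is repairable, and the repair is exactly the paper's proof: add the third family of conditions, which is open because $M'\mapsto \hom_A(M',S(i))-\ext_A^1(M',S(i))=-g_i(M')$ is lower semicontinuous (this is the one genuinely non-obvious semicontinuity input; the paper's argument is built around it). One must then also check the converse direction of the characterization, i.e.\ that the three numerical inequalities really produce a presentation: given $b_i-b_i'\ge 0$ and $a_i-a_i'\ge b_i-b_i'$, the paper constructs an epimorphism $g_2\colon\bigoplus_i P(i)^{a_i-a_i'}\to\bigoplus_i P(i)^{b_i-b_i'}$ and takes the direct sum with the minimal presentation of $M'$. Your closing remark that the Chevalley/incidence-variety machinery is unnecessary is correct; the whole lemma does reduce to these semicontinuity statements, but only once the missing inequality is included.
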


\begin{proof}
Let us first
assume that the presentation $P_1 \to P_0 \to M \to 0$
is minimal.
The maps $\hom_A(-,S(j))$ and $\ext_A^1(-,S(j))$ are upper
semicontinuous, and the map
$\hom_A(-,S(j)) - \ext_A^1(-,S(j))$ is lower semicontinuous.
Let $\calU$ be the set of all $N \in \calZ$ such that
\begin{align*}
a_j' := \ext_A^1(N,S(j)) &\le \ext_A^1(M,S(j)) =: a_j,
\\
b_j' := \hom_A(N,S(j)) &\le \hom_A(M,S(j)) =: b_j,
\\
\hom_A(N,S(j)) -\ext_A^1(N,S(j)) &\ge \hom_A(M,S(j)) - \ext_A^1(M,S(j))
\end{align*}
for all $1 \le j \le n$.
Thus $\calU$ is the intersection of three non-empty open subsets of
the closed irreducible subset $\calZ$.
Therefore $\calU$ is open and dense in $\calZ$.

For $N \in \calU$ let
$P_1' \xrightarrow{g_1} P_0' \to N \to 0$ be a minimal projective presentation.
Thus we have
\begin{align*}
P_1 &= \bigoplus_{j=1}^n P(j)^{a_j},
&
P_0 &= \bigoplus_{j=1}^n P(j)^{b_j},
\\
P_1' &= \bigoplus_{j=1}^n P(j)^{a_j'},
&
P_0' &= \bigoplus_{j=1}^n P(j)^{b_j'}.
\end{align*}
For $1 \le j \le n$ we have $a_j' \le a_j$, $b_j' \le b_j$ and
$b_j' - a_j' \ge b_j - a_j$.
The last inequality is equivalent to $a_j-a_j' \ge b_j-b_j'$.
Thus for
$$
P_1'' := \bigoplus_{j=1}^n P(j)^{a_j-a_j'}
\text{\quad and \quad}
P_0'' := \bigoplus_{j=1}^n P(j)^{b_j-b_j'}
$$
there is an epimorphism $g_2\df P_1'' \to P_0''$.
Observe that $P_1 = P_1' \oplus P_1''$ and $P_0 = P_0' \oplus P_0''$.
We get a projective presentation
$$
\SelectTips{cm}{}
\xymatrix@+1pc{
P_1' \oplus P_1'' \ar[r]^{\left(\bsm g_1&0\\0&g_2\esm\right)} &
P_0' \oplus P_0'' \ar[r] & N \ar[r] & 0.
}
$$

Finally,
each non-minimal projective presentation $P_1 \to P_0 \to M \to 0$
has the minimal presentation of $M$ as a direct summand, see
Lemma~\ref{lem:gvector1}.
Now one argues as before and gets the result also in this case.
\end{proof}

The next lemma is well known.
It
follows from the upper semicontinuity of
the maps $\hom_A(-,S(i))$ and $\ext_A^1(-,S(i))$.

\begin{Lem}\label{lem:genericpresentations1}
For $\calZ \in \irr(A)$ there is a unique (up to isomorphism) pair
$(P_1^{\calZ},P_0^{\calZ})$
of projective $A$-modules such that for a dense open subset
$\calU$ of $\calZ$ each $M \in \calU$ has a minimal projective presentation of the form
$$
P_1^{\calZ} \to P_0^{\calZ} \to M \to 0.
$$
\end{Lem}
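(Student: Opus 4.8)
The plan is to read off the pair $(P_1^{\calZ},P_0^{\calZ})$ as the generic isomorphism type of the terms of the minimal projective presentation, using the formulas recalled in Section~\ref{subsec:gvectors}. Recall that for $M \in \md(A)$ the minimal projective presentation $P_1^M \xrightarrow{f_M} P_0^M \to M \to 0$ satisfies
$$
[P_0^M:P(i)] = \hom_A(M,S(i))
\quad\text{and}\quad
[P_1^M:P(i)] = \ext_A^1(M,S(i))
$$
for $1 \le i \le n$, and that it is uniquely determined up to isomorphism of complexes (Lemma~\ref{lem:gvector1}).

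First I would use upper semicontinuity. Each of the $2n$ maps $M \mapsto \hom_A(M,S(i))$ and $M \mapsto \ext_A^1(M,S(i))$ is upper semicontinuous on $\md(A,\bd)$, hence on $\calZ$; since these maps take values in $\N$, each attains a minimum on the irreducible set $\calZ$, and the locus where a given one of them is generic is a nonempty open subset of $\calZ$. As $\calZ$ is irreducible, the intersection $\calU$ of these $2n$ nonempty open subsets is again nonempty and open, hence dense in $\calZ$. Let $b_i$ (resp. $a_i$) be the generic value of $\hom_A(-,S(i))$ (resp. $\ext_A^1(-,S(i))$) on $\calZ$, and put
$$
P_0^{\calZ} := \bigoplus_{i=1}^n P(i)^{b_i}
\quad\text{and}\quad
P_1^{\calZ} := \bigoplus_{i=1}^n P(i)^{a_i}.
$$
For $M \in \calU$ the displayed formulas give $[P_0^M] = [P_0^{\calZ}]$ and $[P_1^M] = [P_1^{\calZ}]$, so $P_j^M \cong P_j^{\calZ}$ for $j = 0,1$, and therefore the minimal projective presentation of $M$ has the asserted shape.

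For uniqueness, suppose $(Q_1,Q_0)$ is another pair with the stated property, realised on a dense open subset $\calU'$ of $\calZ$. Since $\calZ$ is irreducible, $\calU \cap \calU' \ne \emptyset$; choosing $M$ in this intersection and using the uniqueness of the minimal projective presentation of $M$, we get $Q_1 \cong P_1^M \cong P_1^{\calZ}$ and $Q_0 \cong P_0^M \cong P_0^{\calZ}$. I do not expect a genuine obstacle here: the only point requiring a little care is that the statement concerns the \emph{minimal} presentation, but this is exactly what the invariants $\hom_A(M,S(i))$ and $\ext_A^1(M,S(i))$ control, so once these are generically constant the conclusion is immediate. (One could alternatively deduce existence from Lemma~\ref{lem:presentationsemicont}, but the argument above also yields the explicit description of $(P_1^{\calZ},P_0^{\calZ})$.)
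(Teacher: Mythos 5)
Your argument is correct and is exactly the one the paper has in mind: the paper gives no detailed proof, remarking only that the lemma "follows from the upper semicontinuity of the maps $\hom_A(-,S(i))$ and $\ext_A^1(-,S(i))$", which is precisely your construction of $\calU$ combined with the identities $[P_0^M:P(i)] = \hom_A(M,S(i))$ and $[P_1^M:P(i)] = \ext_A^1(M,S(i))$ from Section~\ref{subsec:gvectors}. The uniqueness argument via intersecting two dense open subsets of the irreducible set $\calZ$ is also the standard (and intended) one.
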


Note that it can happen that for different components $\calZ_1$
and $\calZ_2$ in $\irr(A)$ we have
$(P_1^{\calZ_1},P_0^{\calZ_1}) = (P_1^{\calZ_2},P_0^{\calZ_2})$.

Here is the corresponding dual statement:

\begin{Lem}\label{lem:genericpresentations2}
For $\calZ \in \irr(A)$ there is a unique (up to isomorphism) pair
$(I_0^{\calZ},I_1^{\calZ})$
of injective $A$-modules such that for a dense open subset
$\calU$ of $\calZ$ each $M \in \calU$ has a minimal injective presentation of the form
$$
0 \to M \to I_0^{\calZ} \to I_1^{\calZ}.
$$
\end{Lem}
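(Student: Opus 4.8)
The plan is to deduce the lemma from Lemma~\ref{lem:genericpresentations1} by the standard duality $D = \Hom_K(-,K)$ together with passage to the opposite algebra. Write $A = KQ/I$ as in the previous subsection, so that $A^{\op} = KQ^{\op}/I^{\op}$ where $Q^{\op}$ is obtained from $Q$ by reversing all arrows. For a fixed $\bd \in \N^n$, the assignment sending $M = (M_a)_{a \in Q_1}$ to the representation of $Q^{\op}$ with the transpose maps $(M_a)^{\Tr}$ attached to the reversed arrows is an involutive isomorphism of affine varieties $\md(A,\bd) \xrightarrow{\sim} \md(A^{\op},\bd)$, which on isomorphism classes is $M \mapsto DM$. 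Hence it induces a bijection $\irr(A,\bd) \xrightarrow{\sim} \irr(A^{\op},\bd)$ and carries dense open subsets to dense open subsets; write $\calZ^{\op}$ for the component of $\md(A^{\op},\bd)$ corresponding to $\calZ$, so $D(\calZ) = \calZ^{\op}$.

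Next I would record how $D$ interacts with injective presentations. The duality $D$ restricts to an exact duality $\inj(A) \to \proj(A^{\op})$, sending $I(i)$ to the indecomposable projective $A^{\op}$-module with top $S(i)$. Therefore a sequence $0 \to M \to I_0 \to I_1$ with $I_0,I_1 \in \inj(A)$ is a minimal injective presentation of $M$ if and only if the dual sequence $D I_1 \to D I_0 \to D M \to 0$ is a minimal projective presentation of $DM$ over $A^{\op}$: minimality of the injective presentation amounts to $\soc(I_0) = \soc(M)$ and $\soc(I_1) = \soc(\Coker(M \to I_0))$, and applying $D$ turns these into $\tp(D I_0) = \tp(DM)$ and $\tp(D I_1) = \tp(\Ker(D I_0 \to DM))$, which is exactly minimality of the projective presentation.

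Finally I would conclude as follows. Apply Lemma~\ref{lem:genericpresentations1} to $\calZ^{\op} \in \irr(A^{\op},\bd)$: there is a pair $(P_1,P_0)$ of projective $A^{\op}$-modules and a dense open subset $\calU^{\op}$ of $\calZ^{\op}$ such that every $N \in \calU^{\op}$ has minimal projective presentation $P_1 \to P_0 \to N \to 0$. Set $\calU := D(\calU^{\op}) \subseteq \calZ$, $I_0^{\calZ} := D P_0$ and $I_1^{\calZ} := D P_1$. Then $\calU$ is dense and open in $\calZ$, the modules $I_0^{\calZ}, I_1^{\calZ}$ are injective, and for each $M \in \calU$ we have $DM \in \calU^{\op}$, so dualizing the minimal projective presentation of $DM$ yields the minimal injective presentation $0 \to M \to I_0^{\calZ} \to I_1^{\calZ}$. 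Uniqueness of $(I_0^{\calZ},I_1^{\calZ})$ up to isomorphism follows from the uniqueness assertion in Lemma~\ref{lem:genericpresentations1}, since any two dense open subsets of the irreducible variety $\calZ$ meet.

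I do not expect a genuine obstacle here; the only points requiring a little care are that the transpose construction is an isomorphism of varieties (and not merely a bijection) matching up irreducible components, and that $D$ exchanges minimality of injective presentations over $A$ with minimality of projective presentations over $A^{\op}$. As an alternative one could give a self-contained argument paralleling the proof of Lemma~\ref{lem:genericpresentations1}, based on the upper semicontinuity of the maps $M \mapsto [I_0^M : I(i)] = \hom_A(S(i),M)$ and $M \mapsto [I_1^M : I(i)] = \ext_A^1(S(i),M)$ on $\md(A,\bd)$, but the duality route is shorter.
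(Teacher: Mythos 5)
Your proof is correct and follows the route the paper intends: the paper offers no written proof, simply presenting Lemma~\ref{lem:genericpresentations2} as ``the corresponding dual statement'' of Lemma~\ref{lem:genericpresentations1}, and your argument via the duality $D$ and the opposite algebra (with the check that $D$ exchanges minimality of injective and projective presentations, and that uniqueness follows since dense open subsets of the irreducible $\calZ$ intersect) is exactly the standard way to make that dualization precise.
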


\subsection{Maximal rank and dimension vectors}
Also the following lemma is well known.
For convenience we include a proof.

\begin{Lem}\label{lem:maximalrank}
For $M,N \in \md(A)$ let $f,g \in \Hom_A(M,N)$ with
$$
\rk(f) = \rk(g) = r(M,N).
$$
Then $\dimv(\Ima(f)) = \dimv(\Ima(g))$.
\end{Lem}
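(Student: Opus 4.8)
The plan is to consider the family of all homomorphisms $M \to N$ as a single linear space and exploit the semicontinuity of $\rk$ together with the fact that on a vector space the maximal-rank locus is open and \emph{irreducible} (being the nonvanishing locus of a suitable collection of minors, it is an open dense subset of the irreducible variety $\Hom_A(M,N)$). More precisely, I would argue that the function $f \mapsto \dimv(\Ima(f))$ is constant on the dense open subset
$$
\calU_{M,N} := \{ f \in \Hom_A(M,N) \mid \rk(f) = r(M,N) \}
$$
of the irreducible affine space $\Hom_A(M,N)$. Since $\dimv(\Ima(f))$ takes values in $\N^n$ and is completely determined by the numbers $\dim e_i \Ima(f)$ for $1 \le i \le n$, it suffices to show each such integer-valued function is constant on $\calU_{M,N}$.

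The key step is to produce, for each vertex $i$, an auxiliary projective $A$-module and a morphism built naturally from $f$ so that $\dim e_i \Ima(f)$ becomes a rank (hence lower semicontinuous in $f$), while simultaneously being upper semicontinuous because it is bounded above once $\rk(f)$ is fixed. Concretely: $e_i \Ima(f) = \Ima\bigl(\Hom_A(P(i),M) \xrightarrow{\Hom_A(P(i),f)} \Hom_A(P(i),N)\bigr)$, since $e_i \Ima(f) \cong \Hom_A(P(i),\Ima(f))$ and the functor $\Hom_A(P(i),-)$ is exact. Thus $f \mapsto \dim e_i\Ima(f) = \rk\bigl(\Hom_A(P(i),f)\bigr)$ is lower semicontinuous on $\Hom_A(M,N)$, and hence on the irreducible set $\calU_{M,N}$ it attains its generic (maximal) value on a dense open subset $\calU_i$. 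On the other hand, $\dim \Ima(f) = \sum_{i=1}^n \dim e_i \Ima(f)$ is equal to the constant $r(M,N)$ for every $f \in \calU_{M,N}$. So on the common dense open refinement $\bigcap_{i=1}^n \calU_i$ each summand is simultaneously maximal and their sum is the fixed value $r(M,N)$; by maximality no summand can exceed its generic value anywhere on $\calU_{M,N}$, and if any summand were strictly smaller than its generic value at some $f \in \calU_{M,N}$ then another summand would have to exceed its generic value there, a contradiction. Hence every $\dim e_i \Ima(f)$ is constant on $\calU_{M,N}$, which gives $\dimv(\Ima(f)) = \dimv(\Ima(g))$ for any $f,g$ of maximal rank.

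The main obstacle I anticipate is making the "sum of maximal numbers that itself has a fixed total forces each to be constant" argument airtight, i.e.\ ruling out the possibility that the generic values $m_i := \max_{f \in \calU_{M,N}} \dim e_i \Ima(f)$ satisfy $\sum_i m_i > r(M,N)$, which would mean the maxima are not attained simultaneously. This is handled by noting that $\calU_{M,N}$ is irreducible, so the dense open subsets where each $\dim e_i \Ima(f) = m_i$ have a nonempty (indeed dense open) common intersection; at any point $f$ of that intersection $\sum_i m_i = \sum_i \dim e_i\Ima(f) = \dim\Ima(f) = r(M,N)$, so in fact $\sum_i m_i = r(M,N)$, and then for \emph{every} $f \in \calU_{M,N}$ we get $\sum_i \dim e_i\Ima(f) = r(M,N) = \sum_i m_i$ with each term $\le m_i$, forcing equality termwise. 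The only other point to be careful about is the exactness of $\Hom_A(P(i),-)$ and the identification $e_i V \cong \Hom_A(P(i),V)$ for any $A$-module $V$, both of which are standard, and that a linear map $f$ of $A$-modules restricts to a linear map on the $e_i$-components whose image is exactly $e_i\Ima(f)$ because $e_i$ acts $K$-linearly and commutes with $f$.
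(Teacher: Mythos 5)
Your proposal is correct and follows essentially the same route as the paper: decompose $\dim\Ima(f)$ into the vertex-wise ranks $\dim e_i\Ima(f)$, use lower semicontinuity and irreducibility to find a dense open set where all of these are simultaneously maximal (so that the sum of the maxima equals $r(M,N)$), and then conclude by the termwise inequality that every maximal-rank $f$ attains all the maxima. The only cosmetic difference is that you realize $\dim e_i\Ima(f)$ as $\rk(\Hom_A(P(i),f))$ via projectivity, whereas the paper uses the restriction $f_i\colon e_iM \to e_iN$ directly; these are the same map.
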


\begin{proof}
Let $f \in \Hom_A(M,N)$.
For $1 \le i \le n$ we have $f(e_iM) \subseteq e_iN$.
Let $f_i\df e_iM \to e_iN$ denote the restriction map.
Thus we can write
$$
f = \left(\bbm f_1 &&\\&\ddots&\\&&f_n\ebm\right)\df \bigoplus_{i=1}^n e_iM
\to \bigoplus_{i=1}^n e_iN.
$$
We have $\rk(f_i) = [\Ima(f):S(i)]$ and
$\rk(f) = \rk(f_1) + \cdots +\rk(f_n)$.
In particular, $\dimv(\Ima(f)) = (\rk(f_1),\ldots,\rk(f_n))$.

Let
$$
r_i := \max\{ \rk(f_i) \mid f \in \Hom_A(M,N) \}.
$$
Thus
$\rk(f) \le r_1 + \cdots + r_n$ for all $f \in \Hom_A(M,N)$.
By lower semicontinuity,
$$
\calU_i := \{ f \in \Hom_A(M,N) \mid \rk(f_i) = r_i \}
$$
is a dense open subset of $\Hom_A(M,N)$.
The intersection $\calU_1 \cap \cdots \cap \calU_n$ is
again dense open in $\Hom_A(M,N)$.
It follows that $r(M,N) = r_1 + \cdots + r_n$.
This yields the claim of the lemma.
\end{proof}

\subsection{Projective presentations of maximal rank}

\begin{Lem}\label{lem:maxrankfM}
For $M \in \md(A)$ let
$$
P_1 \xrightarrow{f} P_0 \to M \to 0
$$
be a projective presentation.
Then the following are equivalent:
\begin{itemize}\itemsep2mm

\item[(i)]
$\rk(f) = r(P_1,P_0)$;

\item[(ii)]
$\rk(f_M) = r(P_1^M,P_0^M)$ and
$\Hom_A(P_f,M) = 0$.

\end{itemize}
\end{Lem}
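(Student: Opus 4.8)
The plan is to use the structural decomposition of a projective presentation provided by Lemma~\ref{lem:gvector1}, which writes $P_1 \xrightarrow{f} P_0$ (up to isomorphism of complexes) as a direct sum
$$
(P_1^M \xrightarrow{f_M} P_0^M) \oplus (P \xrightarrow{1_P} P) \oplus (P_f \to 0).
$$
Since rank is additive over direct sums of complexes, $\rk(f) = \rk(f_M) + [P : P(1)] + \cdots$, i.e.\ $\rk(f) = \rk(f_M) + \dim_K P$ (using that $1_P$ has rank $\dim_K P$), while $\rk$ contributed by the $P_f \to 0$ block is zero. So the first step is to record
$$
\rk(f) = \rk(f_M) + \dim_K P.
$$

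Next I would compute $r(P_1,P_0)$. Writing $P_1 = P_1^M \oplus P \oplus P_f$ and $P_0 = P_0^M \oplus P$, I claim that
$$
r(P_1,P_0) = r(P_1^M,P_0^M) + \dim_K P + r(P_f, P_0^M \oplus P).
$$
The inequality ``$\le$'' is clear since any $g \in \Hom_A(P_1,P_0)$ restricts to the various blocks. For ``$\ge$'': one can choose a map which on $P_1^M$ achieves rank $r(P_1^M,P_0^M)$ into $P_0^M$, is the identity on the $P$-summand of $P_0$ (hitting the $P$ summand with full rank $\dim_K P$), and on $P_f$ achieves rank $r(P_f, P_0^M \oplus P)$; a genericity argument (lower semicontinuity of rank, exactly as in the proof of Lemma~\ref{lem:maximalrank}) shows the ranks add, so the supremum is attained simultaneously. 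Here it is essential that $P_f$ maps into \emph{all} of $P_0 = P_0^M \oplus P$, not just $P_0^M$; I expect that reconciling this with the desired condition $\Hom_A(P_f,M) = 0$ is the crux. Since the presentation gives an epimorphism $P_0 \to M$ whose kernel is $\Ima(f)$, and since $P$ is a summand of $P_0$ that is ``used up'' by the identity block, the relevant comparison is $\Hom_A(P_f, M)$ versus $\Hom_A(P_f, P_0)$; one should argue that a map $P_f \to P_0$ of maximal rank can be composed into $M$ without losing rank precisely when $\Hom_A(P_f,M)$ is large enough, and in the extremal situation this forces the clean dichotomy.

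Granting these two computations, $\rk(f) = r(P_1,P_0)$ becomes
$$
\rk(f_M) + \dim_K P = r(P_1^M,P_0^M) + \dim_K P + r(P_f, P_0),
$$
i.e.\ $\bigl(r(P_1^M,P_0^M) - \rk(f_M)\bigr) + r(P_f,P_0) = 0$. Both summands are non-negative: the first because $r(P_1^M,P_0^M)$ is by definition the maximum, the second trivially. Hence the sum vanishes if and only if both terms vanish, i.e.\ $\rk(f_M) = r(P_1^M,P_0^M)$ and $r(P_f,P_0) = 0$. Finally $r(P_f,P_0) = 0$ means $\Hom_A(P_f,P_0) = 0$; since $P_f$ is projective and $P_0 \twoheadrightarrow M$, and since in the decomposition of $f$ the image of $P_0$ in $M$ is all of $M$, one checks $\Hom_A(P_f,P_0) = 0 \iff \Hom_A(P_f,M) = 0$ (the nonzero direction: any nonzero $P_f \to M$ lifts through $P_0$ by projectivity; conversely compose with $P_0 \to M$ and use that $P_f$, being a direct summand of $P_1$ killed by $f$, has no reason to land in $\Ima(f) = \Ker(P_0\to M)$ — more precisely, if $\Hom_A(P_f,M)=0$ then every $P_f\to P_0$ factors through $\Ima f$, but $\Ima f$ is a quotient of $P_1^M\oplus P$, and a projectivity/support argument rules this out unless the map is zero). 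This gives the equivalence $(i) \Leftrightarrow (ii)$.

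The main obstacle I anticipate is the bookkeeping in the middle step — correctly handling how the $P_f$-block interacts with the full codomain $P_0 = P_0^M \oplus P$ rather than with $P_0^M$ alone, and pinning down the equivalence $\Hom_A(P_f,P_0) = 0 \iff \Hom_A(P_f,M) = 0$. Everything else is formal manipulation with Lemma~\ref{lem:gvector1} and the additivity/semicontinuity of rank already used in the proof of Lemma~\ref{lem:maximalrank}.
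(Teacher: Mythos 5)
Your reduction to Lemma~\ref{lem:gvector1} and the identity $\rk(f)=\rk(f_M)+\dim_K P$ are fine, but the central step fails: the formula $r(P_1,P_0)=r(P_1^M,P_0^M)+\dim_K P+r(P_f,P_0)$ is false, because the images of the three blocks inside $P_0$ can overlap. The genericity argument in Lemma~\ref{lem:maximalrank} adds ranks across a decomposition of the \emph{target} into the spaces $e_iN$, whose images are automatically in direct sum; it does not apply to a decomposition of the \emph{source}. Concretely, let $A=KQ$ with $Q\colon 1\leftarrow 2$, $M=S(2)$, and $f=(f_M,0)\colon P(1)\oplus P(1)\to P(2)$, so $P_f=P(1)$ and $P=0$. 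Every map $P(1)^2\to P(2)$ has image in $\rad P(2)\cong S(1)$, so $r(P(1)^2,P(2))=1=\rk(f)$ and both (i) and (ii) hold; your formula predicts $r(P_1,P_0)=1+0+1=2$. The same example kills the final claim $\Hom_A(P_f,P_0)=0\iff\Hom_A(P_f,M)=0$: here $\Hom_A(P_f,P_0)\cong K\neq 0$ while $\Hom_A(P_f,M)=0$. Your "factors through $\Ima f$, hence zero" argument is exactly where this breaks --- a nonzero map $P_f\to\Ima(f)$ is perfectly possible (in the example $P_f\cong S(1)\cong\Ima(f)$). So the condition you actually derive, $r(P_f,P_0)=0$, is strictly stronger than (ii) and the equivalence you prove is not the one stated.

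The correct mechanism, which is what the paper's proof implements, is that under (ii) the $P_f$-block contributes \emph{nothing further} to the generic rank rather than contributing $r(P_f,P_0)$: since $P_f$ is projective and $\Hom_A(P_f,M)=0$, every $g'\in\Hom_A(P_f,P_0^M)$ satisfies $\Ima(g')\subseteq\Ima(f')$ for every maximal-rank $f'\in\Hom_A(P_1^M,P_0^M)$ (all such $f'$ have cokernel of dimension vector $\dimv(M)$ by Lemma~\ref{lem:maximalrank}), whence $\rk(f',g')=\rk(f_M)$ for a generic map and $r(P_1,P_0)=\rk(f_M)+\dim_K P=\rk(f)$. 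For (i)$\implies$(ii) the paper argues by contradiction twice: if $\rk(f_M)<r(P_1^M,P_0^M)$ one replaces the first block to get a map of larger rank, and if some $h'\colon P_f\to M$ is nonzero one lifts it through $P_0^M\twoheadrightarrow M$ to an $h\colon P_f\to P_0^M$ whose addition strictly enlarges $\Ima(f)$. You would need to replace your additivity step by an argument of this kind; as written the proposal does not establish the lemma.
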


\begin{proof}
Using Lemma~\ref{lem:gvector1} we can assume that
$$
(P_1 \xrightarrow{f} P_0) =
(P_1^M \xrightarrow{f_M} P_0^M) \oplus
(P \xrightarrow{1_P} P) \oplus
(P_f \to 0).
$$

(i) $\implies$ (ii):
Let $\rk(f) = r(P_1,P_0)$.
Assume there is some $g' \in \Hom_A (P_1^M, P_0^M)$
with $\rk(g') > \rk(f_M)$.
Let
$$
g :=
\left(
\bbm
g' & 0 & 0 \\ 0 & 1_P & 0
\ebm\right)\df P_1 = P_1^M \oplus P \oplus P_f \to
P_0^M \oplus P = P_0,
$$
i.e.\ $g$ is the direct sum
$$
(P_1^M \xrightarrow{g'} P_0^M) \oplus (P \xrightarrow{1_P} P) \oplus (P_f \to 0).
$$
Then $\rk(g) > \rk(f)$, a contradiction.

Next, let $h' \in \Hom_A (P_f,M)$ be non-zero.
Let $\pi \colon P_0^M \to M$ be the epimorphism with kernel
$\Ima(f_M)$.
By the projectivity of $P_f$ there exists some
$h \in \Hom_A(P_f,P_0^M)$ such that $h' = \pi h$.
Define
$$
g :=
\left(
\bbm
f_M & 0 & h \\ 0 & 1_P & 0
\ebm\right)\df P_1^M \oplus P \oplus P_f \to
P_0^M \oplus P.
$$
Then one easily checks that $\Ima(f)$ is a proper submodule of $\Ima(g)$.
Therefore we have $\rk(f)<\rk(g)$, again a contradiction.

(ii) $\implies$ (i):
Assume that $\rk(f_M) = r(P_1^M,P_0^M)$ and
$\Hom_A(P_f,M) = 0$.
Since $P_f$ is a projective module, the latter condition means that
$\Ima(g') \subseteq \Ima(f')$ for all $g'\in \Hom_A(P_f,P_0^M)$ and
$f' \in \Hom_A(P_1^M,P_0^M)^\circ$.
(Here we use that $\dimv (\Coker (f')) = \dimv (M)$ by Lemma~\ref{lem:maximalrank}.)

It follows from \cite[Corollary~4.2]{DF15} and the lower semicontinuity of the rank that there exists
$g \in \Hom_A(P_1,P_0)^\circ$ such that
$$
(P_1 \xrightarrow{g} P_0) =
(P_1^M \oplus P_f \xrightarrow{(f',g')} P_0^M)
\oplus (P \xrightarrow{1_P} P)
$$
with $f' \in \Hom_A (P_1^M,P_0^M)^\circ$.
Then $\Ima(g') \subseteq \Ima(f')$, and therefore
$\rk(f',g') = \rk (f') = \rk (f_M)$.
Thus
$$
r(P_1,P_0) = \rk(g) = \rk(f',g') + \dim(P)
= \rk(f_M) + \dim(P) = \rk(f).
$$
\end{proof}

\subsection{Generic extensions of components}
\label{subsec:genericextension}
For $i=1,2$ let $\calZ_i \in \irr(A,\bd_i)$, and let
$\bd := \bd_1 + \bd_2$.
Let
$$
\ext_A^1(\calZ_1,\calZ_2) := \min\{ \ext_A^1(M_1,M_2) \mid
(M_1,M_2) \in \calZ_1 \times \calZ_2 \}.
$$
Let
$$
\calU := \{ (M_1,M_2) \in \calZ_1 \times \calZ_2 \mid \ext_A^1(M_i,M_j) = \ext_A^1(\calZ_i,\calZ_j)
\text{ for } \{ i,j \} = \{ 1,2 \} \}.
$$
By upper semicontinuity, $\calU$ is a dense open subset of
$\calZ_1 \times \calZ_2$.
Let
$\cE(\calU)$ be the set of all $E \in \md(A,\bd)$ such that there exists
a short exact sequence
$$
0 \to M_2 \to E \to M_1 \to 0
$$
where $(M_1,M_2) \in \calU$.
Let $$
\cE(\calZ_1,\calZ_2) := \overline{\cE(\calU)}
$$ be the
\emph{generic extension} of $\calZ_1$ by $\calZ_2$.
This is a closed irreducible subset of $\md(A,\bd)$.
However, in general $\cE(\calZ_1,\calZ_2)$ is not an irreducible
component.

\begin{Thm}[{\cite[Corollary~1.4]{CBS02}}]\label{thm:CBS}
Let $\calZ_1,\calZ_2 \in \irr(A)$.
If $\ext_A^1(\calZ_2,\calZ_1) = 0$, then
$\cE(\calZ_1,\calZ_2) \in \irr(A)$.
\end{Thm}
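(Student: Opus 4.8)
The plan is to exhibit a non-empty open subset of $\md(A,\bd)$ that is contained in $\cE(\calZ_1,\calZ_2)$; since $\cE(\calZ_1,\calZ_2)$ is already known to be closed and irreducible, this forces it to be an irreducible component — any component of $\md(A,\bd)$ meeting the open set then lies inside $\cE(\calZ_1,\calZ_2)$, while $\cE(\calZ_1,\calZ_2)$ itself lies in some component.

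First I would set up a good pair together with an extension. Let $\calU \subseteq \calZ_1 \times \calZ_2$ be the dense open subset from Section~\ref{subsec:genericextension}; since $\ext_A^1(\calZ_2,\calZ_1) = 0$, every $(M_1,M_2) \in \calU$ already satisfies $\ext_A^1(M_2,M_1) = 0$. Shrinking, I take a dense open $\calU^\circ \subseteq \calU$ with moreover $M_i \in \calZ_i^{\rm int}$ for $i = 1,2$, and then fix \emph{any} $(M_1,M_2) \in \calU^\circ$ and \emph{any} short exact sequence $0 \to M_2 \to E \to M_1 \to 0$, writing $W' \subseteq E$ for the image of $M_2$. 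Then $E \in \cE(\calU) \subseteq \cE(\calZ_1,\calZ_2)$, and $E|_{W'} \cong M_2$, $E/W' \cong M_1$, $\ext_A^1(E|_{W'},E/W') = 0$.

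Next I would bring in the relative quiver Grassmannian
$$
\cG := \{ (N,U) \mid N \in \Spec(A,\bd),\ U \subseteq N \text{ a submodule with } \dimv(U) = \bd_2 \},
$$
a closed subscheme of the product of $\Spec(A,\bd)$ with a suitable product of ordinary Grassmannians, with the forgetful morphism $\rho \colon \cG \to \Spec(A,\bd)$, $(N,U) \mapsto N$, whose fibre over $N$ is the scheme of $\bd_2$-dimensional submodules of $N$. The key input is the deformation theory of subobjects: the obstruction to lifting a submodule along a first-order deformation of the ambient module lies in $\Ext_A^1$ of the submodule into the corresponding quotient. As $\Ext_A^1(E|_{W'},E/W') = 0$, this relative obstruction space vanishes, so the deformation functor of the pair $(E,W')$ is smooth over the deformation functor of $E$; equivalently, $\rho$ is smooth at $(E,W')$, hence open there. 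Therefore there is an open neighbourhood $\cV_0$ of $(E,W')$ in $\cG$ whose image $\cV := \rho(\cV_0)$ is an open neighbourhood of $E$ in $\md(A,\bd)$.

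Finally I would pull this back to $\calZ_1 \times \calZ_2$ to show $\cV \subseteq \cE(\calZ_1,\calZ_2)$. After shrinking $\cV_0$ and choosing a local trivialisation of the tautological sub- and quotient bundles near $W'$, one obtains morphisms $\cV_0 \to \md(A,\bd_1)$ and $\cV_0 \to \md(A,\bd_2)$ recording framed copies of $N/U$ and $N|_U$ and taking the values $M_1$, $M_2$ at $(E,W')$. Since $\calZ_1^{\rm int} \times \calZ_2^{\rm int}$ is open in $\md(A,\bd_1) \times \md(A,\bd_2)$, contains $(M_1,M_2)$, and $\calU^\circ$ is open inside it, one more shrinking of $\cV_0$ guarantees $(N/U,N|_U) \in \calU^\circ$ for every $(N,U) \in \cV_0$. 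Then each $N \in \cV$ is an extension of a module in $\calZ_1$ by a module in $\calZ_2$ over a pair lying in $\calU$, i.e.\ $N \in \cE(\calU) \subseteq \cE(\calZ_1,\calZ_2)$; hence $\cV \subseteq \cE(\calZ_1,\calZ_2)$ and we are done. I expect the main obstacle to be the smoothness of $\rho$ at $(E,W')$: making the obstruction theory for pairs (module, submodule) precise and identifying the relative obstruction group with $\Ext_A^1(E|_{W'},E/W')$. This, together with the bookkeeping in the last step, is exactly where the hypothesis enters — via the submodule $W'$ with $\ext_A^1(E|_{W'},E/W') = 0$ — as do the interiors $\calZ_i^{\rm int}$, which keep the nearby sub- and quotient modules on the correct components $\calZ_2$ and $\calZ_1$.
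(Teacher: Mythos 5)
Your argument is sound, but note that the paper itself offers no proof of Theorem~\ref{thm:CBS}: the statement is quoted from \cite[Corollary~1.4]{CBS02}, and what you have written is essentially a reconstruction of the proof given there for two factors. Crawley-Boevey and Schr\"oer likewise work with the incidence variety of pairs $(N,U)$ with $U\subseteq N$ a submodule of dimension vector $\bd_2$, show that under the hypothesis $\ext_A^1(\calZ_2,\calZ_1)=0$ the set of such extensions contains a non-empty open subset of $\md(A,\bd)$ (their Theorem~1.3(iii)), and conclude exactly as in your first paragraph that a closed irreducible subset of $\md(A,\bd)$ containing a non-empty open subset of $\md(A,\bd)$ must be an irreducible component. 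Your bookkeeping is correct: passing to the interiors $\calZ_i^{\rm int}$ is precisely what makes $\calU^\circ$ open in $\md(A,\bd_1)\times\md(A,\bd_2)$ rather than merely in $\calZ_1\times\calZ_2$, which is needed for the final pullback step, and you identify the obstruction group with the right orientation, namely $\Ext_A^1(\text{sub},\text{quotient})\cong\Ext_A^1(M_2,M_1)$, which is what the hypothesis kills. The one step you leave as a black box --- smoothness, hence openness, of $\rho\colon\cG\to\Spec(A,\bd)$ at $(E,W')$ when $\Ext_A^1(W',E/W')=0$ --- is exactly the technical heart of \cite{CBS02}; the statement is true and standard, but be aware that this is where all the real work sits. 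The cited source establishes it by an explicit description of the incidence variety together with a tangent-space and fibre-dimension computation on the scheme $\Spec(A,\bd)$, whereas you invoke the formal deformation theory of subobjects (Quot-scheme obstruction theory); the two routes rest on the same homological identification, so your proposal should be regarded as correct and as following the approach of the reference the paper cites, with the key lemma correctly isolated but not carried out.
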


For example, if $\calZ_2 = \overline{\cO_P}$ with $P \in \proj(A)$,
then
the assumption of Theorem~\ref{thm:CBS} is satisfied and we get
$\cE(\calZ_1,\calZ_2) \in \irr(A)$.


\section{Generically $\tau$-regular
components}\label{sec:tauregintro}


\subsection{Generically reduced components}
For $M \in \md(A,\bd)$ let $T_M^{\rm red}$ be the tangent space of $M$ at the affine variety $\md(A,\bd)$, and let
$T_M$ be the tangent space of $M$ at the affine scheme
$\Spec(A,\bd)$.
Recall that one treats $\md(A,\bd)$ as the reduced scheme associated with $\Spec(A,\bd)$.
The module $M$ is \emph{regular} as a closed point of
$\Spec(A,\bd)$ if
$$
\dim T_M = \dim \md_M(A,\bd).
$$
(In general, we have $\dim T_M \ge \dim \md_M(A,\bd)$.)

Let $T_M(\cO_M)$ be the tangent space of $M$ at the orbit
$\cO_M$.
We have
$$
\dim T_M(\cO_M) = \dim \cO_M.
$$

The reduced tangent space $T_M^{\rm red}$ is usually quite inaccessible.
But $T_M$ can often be computed by the following classical result.

\begin{Prop}[{{Voigt's Lemma} \cite[Section~1.1]{G74}}]
\label{prop:voigt}
There is an isomorphism of vector spaces
$$
T_M/T_M(\cO_M) \to \Ext_A^1(M,M).
$$
\end{Prop}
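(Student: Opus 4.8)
The plan is to describe the scheme-theoretic tangent space $T_M$ explicitly from the defining equations of $\Spec(A,\bd)$, to build a natural $K$-linear map $T_M \to \Ext_A^1(M,M)$ out of infinitesimal deformations of $M$, and to identify its kernel with $T_M(\cO_M)$. Write $M = (M_a)_{a \in Q_1}$ and set $M_i := e_iM$. A tangent vector at $M$ is a $K[\epsilon]/(\epsilon^2)$-point of $\Spec(A,\bd)$ reducing to $M$, i.e.\ a tuple $(M_a + \epsilon N_a)_{a \in Q_1}$ with $N_a \in \Hom_K(M_{s(a)},M_{t(a)})$ satisfying the relations in $I$ modulo $\epsilon^2$. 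Expanding the relation condition along a path by the Leibniz rule and extending $K$-linearly, one sees this is equivalent to $N := (N_a)_a$ lying in the kernel of the linearised relation map $d^1\colon \bigoplus_{a \in Q_1}\Hom_K(M_{s(a)},M_{t(a)}) \to \bigoplus_r \Hom_K(M_{s(r)},M_{t(r)})$, the sum running over a generating set of $I$. Thus $T_M = \ker(d^1)$, and this description is visibly independent of the chosen generating set.

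Next I would produce the map to $\Ext^1$. For $N \in T_M$ let $\widetilde M_N$ be the vector space $M \oplus M$ with $e_i$ acting diagonally and each arrow $a$ acting by the block matrix $\left(\begin{smallmatrix} M_a & N_a \\ 0 & M_a\end{smallmatrix}\right)$. The defining property $d^1(N) = 0$ is precisely what makes $\widetilde M_N$ an $A$-module: the off-diagonal block obtained by evaluating any relation on $\widetilde M_N$ is the corresponding component of $d^1(N)$. This yields a short exact sequence $0 \to M \to \widetilde M_N \to M \to 0$, and $N \mapsto [\widetilde M_N]$ is $K$-linear, since addition of the deformation parameters corresponds to the Baer sum. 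It is surjective onto $\Ext_A^1(M,M)$: any extension of $M$ by $M$ splits as a sequence of modules over the semisimple algebra $\prod_i Ke_i$, and choosing such a splitting exhibits the extension in the form $\widetilde M_N$ for a suitable $N$, which then automatically satisfies $d^1(N) = 0$.

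Then I would compute the kernel. A class $[\widetilde M_N]$ vanishes precisely when the sequence admits an $A$-linear section, i.e.\ when there is $\phi = (\phi_i)_i$ with $\phi_i \in \End_K(M_i)$ and $N_a = \phi_{t(a)}M_a - M_a\phi_{s(a)}$ for all $a$; that is, when $N \in \Ima(d^0)$, where $d^0\colon \bigoplus_i \End_K(M_i) \to \bigoplus_a \Hom_K(M_{s(a)},M_{t(a)})$ sends $(\phi_i)_i$ to $(\phi_{t(a)}M_a - M_a\phi_{s(a)})_a$. Now $d^0$ is the differential at the identity of the orbit map $G_\bd \to \rep(Q,\bd)$, $g \mapsto g.M$, so $\Ima(d^0) \subseteq T_M(\cO_M)$; and since $\ker(d^0) = \End_A(M)$ we get $\dim \Ima(d^0) = \dim G_\bd - \hom_A(M,M) = \dim \cO_M = \dim T_M(\cO_M)$, forcing $\Ima(d^0) = T_M(\cO_M)$. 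Combining the three steps, $N \mapsto [\widetilde M_N]$ induces an isomorphism $T_M/T_M(\cO_M) \xrightarrow{\ \sim\ } \Ext_A^1(M,M)$.

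The step requiring the most care is the first one: matching the Jacobian-kernel description of the scheme-theoretic tangent space $T_M$ with the module-theoretic $\ker(d^1)$, which amounts to differentiating an arbitrary element of $I$ (a $K$-linear combination of paths) by the Leibniz rule and checking independence of redundant relations. A minor point is that the identification $\Ima(d^0) = T_M(\cO_M)$ must hold in arbitrary characteristic; this is handled by the dimension count above, using that the stabiliser $\Aut_A(M)$ is a smooth open subvariety of the vector space $\End_A(M)$, so that $\dim \cO_M = \dim G_\bd - \hom_A(M,M)$. One may also note that $d^0$, $d^1$ together with $0 \to \End_A(M) \to \bigoplus_i \End_K(M_i)$ form the low-degree part of the standard complex computing $\Ext_A^*(M,M)$ from a projective bimodule resolution of $A$, giving an alternative, more homological route to the same conclusion.
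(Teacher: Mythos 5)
The paper gives no proof of this proposition at all --- it is quoted as a classical result with a pointer to \cite[Section~1.1]{G74} --- so there is nothing internal to compare against. Your argument is correct and is essentially the standard proof of Voigt's Lemma: identifying $T_M$ with the kernel of the linearised relation map $d^1$, sending $N$ to the class of the square-zero deformation $\widetilde{M}_N$, and identifying the kernel of that assignment with $T_M(\cO_M)$ via $\Ima(d^0)$, where your dimension count using $\Ker(d^0)=\End_A(M)$ and the smoothness of the orbit correctly handles the potential inseparability of the orbit map in positive characteristic.
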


We call $M \in \md(A,\bd)$ \emph{reduced} if $\dim T_M = \dm T_M^{\rm red}$.

An irreducible components $\calZ \in \irr(A)$ is \emph{generically
reduced} if there exists a dense open subset of $\calZ$ which
consists only of reduced modules.

\subsection{Definition of generically $\tau$-regular components}
\label{subsec:deftaureg}

For $M \in \md(A,\bd)$ define
\begin{align*}
c(M) &:=  c_A(M) := \dim \md_M(A,\bd) - \dim \cO_M,
\\
e(M) &:= e_A(M) := \ext_A^1(M,M),
\\
E(M) &:= E_A(M) := \hom_A(M,\tau_A(M)).
\end{align*}

For $\calZ \in \irr(A)$ let
\begin{align*}
c(\calZ) &:= c_A(\calZ) := \min\{ c(M) \mid M \in \calZ \},
\\
e(\calZ) &:= e_A(\calZ) := \min\{ e(M) \mid M \in \calZ \},
\\
E(\calZ) &:= E_A(\calZ) := \min\{ E(M) \mid M \in \calZ \}.
\end{align*}
In the same fashion, one defines $c(\calZ)$, $e(\calZ)$ and
$E(\calZ)$ for an arbitrary closed irreducible subset of
$\md(A,\bd)$.
Using Voigt's Lemma \ref{prop:voigt} and
Theorem~\ref{thm:ARformulas} one gets
$$
c(M) \le e(M) \le E(M)
\text{\quad and \quad}
c(\calZ) \le e(\calZ) \le E(\calZ).
$$

\begin{Lem}\label{lem:regular}
For
$M \in \md(A,\bd)$ the following are equivalent:
\begin{itemize}\itemsep2mm

\item[(i)]
$c(M) = e(M)$;

\item[(ii)]
$M$ is a regular point of the affine scheme
$\Spec(A,\bd)$.

\end{itemize}
In this case,
$M$ is contained in exactly one irreducible component
of $\md(A,\bd)$.
\end{Lem}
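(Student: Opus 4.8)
The plan is to prove Lemma~\ref{lem:regular} by unwinding the definitions in terms of tangent spaces and comparing dimensions. Recall that $\dim T_M \ge \dim \md_M(A,\bd)$ always, and that $M$ is a regular point of $\Spec(A,\bd)$ precisely when equality holds. So (ii) is the statement $\dim T_M = \dim \md_M(A,\bd)$.

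First I would rewrite $c(M)$ and $e(M)$ in terms of tangent spaces. By definition $c(M) = \dim \md_M(A,\bd) - \dim \cO_M$. Voigt's Lemma (Proposition~\ref{prop:voigt}) gives $\dim T_M - \dim T_M(\cO_M) = \ext_A^1(M,M) = e(M)$, and since $\dim T_M(\cO_M) = \dim \cO_M$, we get $e(M) = \dim T_M - \dim \cO_M$. Subtracting,
\begin{equation*}
e(M) - c(M) = \dim T_M - \dim \md_M(A,\bd).
\end{equation*}
The right-hand side is $\ge 0$, which re-derives the inequality $c(M) \le e(M)$, and it equals $0$ if and only if $\dim T_M = \dim \md_M(A,\bd)$, i.e.\ if and only if $M$ is a regular point of $\Spec(A,\bd)$. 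This establishes the equivalence of (i) and (ii).

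For the final assertion, suppose $c(M) = e(M)$; I must show $M$ lies in exactly one irreducible component of $\md(A,\bd)$. Suppose $M$ lies in two distinct components $\calZ_1, \calZ_2 \in \irr(A,\bd)$. Then $\md_M(A,\bd)$ is reducible, containing both $\calZ_1$ and $\calZ_2$ through the point $M$; a standard fact in commutative algebra is that a point lying on two distinct irreducible components of a variety is a singular point of that variety, so $\dim T_M^{\rm red} > \dim \md_M(A,\bd)$. Since $\dim T_M \ge \dim T_M^{\rm red}$, this forces $\dim T_M > \dim \md_M(A,\bd)$, contradicting regularity of $M$ at $\Spec(A,\bd)$. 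Hence $M$ lies in only one component.

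The only delicate point — the step I expect to require the most care — is the claim that a point on two distinct irreducible components is necessarily a singular point, and the comparison $\dim T_M^{\rm red} \ge \dim \md_M(A,\bd)$ when $\md_M(A,\bd)$ is reducible. Concretely: at such a point $M$, the local ring of $\md_M(A,\bd)$ has at least two minimal primes, and $\dim T_M^{\rm red}$ (the Zariski tangent space of the reduced variety at $M$) is at least the sum of contributions forcing it to exceed $\dim \md_M(A,\bd) = \max_i \dim \calZ_i$. One can argue this directly, or simply cite that a regular local ring is a domain (so its spectrum is irreducible locally), hence a point on two components cannot be a regular point of $\md_M(A,\bd)$ — and a point that is regular in $\Spec(A,\bd)$ is in particular regular in the reduced scheme $\md(A,\bd)$, giving the contradiction. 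I would phrase this last step using the domain property of regular local rings, which keeps the argument short and avoids explicit dimension bookkeeping.
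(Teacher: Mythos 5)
Your proof is correct and follows exactly the paper's route: the equivalence of (i) and (ii) is read off from Voigt's Lemma via $e(M)-c(M)=\dim T_M-\dim \md_M(A,\bd)$, and the uniqueness of the component follows from the standard fact that a point lying on two distinct irreducible components is singular (a regular local ring is a domain). Your write-up just makes explicit the dimension bookkeeping and the inequality $\dim T_M \ge \dim T_M^{\rm red}$ that the paper leaves implicit.
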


\begin{proof}
This follows from Voigt's Lemma \ref{prop:voigt} and from the fact that
points in the intersection of different irreducible components
are singular (= non-regular).
\end{proof}

A module $M$ is called \emph{regular} if $c(M) = e(M)$.
A component $\calZ \in \irr(A)$ is \emph{generically regular} if
there exists a dense open subset of $\calZ$ which consists
only of regular modules.
This is the case if and only if
$c(\calZ) = e(\calZ)$.

The following lemma is folklore.

\begin{Lem}
For $\calZ \in \irr(A)$ the following are equivalent:
\begin{itemize}\itemsep2mm

\item[(i)]
$\calZ$ is generically regular;

\item[(ii)]
$\calZ$ is generically reduced.

\end{itemize}
\end{Lem}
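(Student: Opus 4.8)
The plan is to reduce both conditions to a single numerical criterion on $\calZ$. Write $\bd := \dimv(\calZ)$, and for $M \in \md(A,\bd)$ recall that $T_M$ is the tangent space of the affine scheme $\Spec(A,\bd)$ at $M$ and $T_M^{\rm red}$ that of the variety $\md(A,\bd)$. Since $\Spec(A,\bd)$ is cut out in an affine space by the explicit polynomial equations coming from $IM = 0$, the function $M \mapsto \dim T_M$ equals the dimension of the ambient space minus the rank of the Jacobian of these equations at $M$; as the rank is lower semicontinuous, $M \mapsto \dim T_M$ is upper semicontinuous on $\Spec(A,\bd)$, hence on $\md(A,\bd)$ and on $\calZ$. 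Let $t(\calZ)$ be its generic (minimal) value on $\calZ$, attained on a dense open subset $\calV$ of $\calZ$. I will show that each of (i) and (ii) is equivalent to $t(\calZ) = \dim \calZ$.

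First I would record the elementary chain
\[
\dim \md_M(A,\bd) \;\le\; \dim T_M^{\rm red} \;\le\; \dim T_M ,
\]
valid for all $M \in \md(A,\bd)$: the first inequality is the local dimension bound for a variety, the second holds because $\md(A,\bd)$ is a closed subscheme of $\Spec(A,\bd)$. Moreover $M$ is a smooth point of the variety $\md(A,\bd)$ precisely when $\dim T_M^{\rm red} = \dim \md_M(A,\bd)$, and in that case $\cO_{\md(A,\bd),M}$ is a domain, so $M$ lies on a unique irreducible component. Hence the smooth locus of $\md(A,\bd)$ meets $\calZ$ inside $\calZ^{\rm int}$, and for $M \in \calZ^{\rm int}$ a neighbourhood of $M$ in $\md(A,\bd)$ already lies in $\calZ$, so $\cO_{\md(A,\bd),M} = \cO_{\calZ,M}$; thus this smooth locus equals the smooth locus of the reduced irreducible variety $\calZ$ intersected with $\calZ^{\rm int}$, which is a dense open subset of $\calZ$ on which $\dim T_M^{\rm red} = \dim \md_M(A,\bd) = \dim \calZ$.

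Now (i) $\Iff$ $t(\calZ) = \dim\calZ$: by Lemma~\ref{lem:regular}, $M$ is a regular module exactly when $\dim T_M = \dim \md_M(A,\bd)$; on the dense open subset $\calZ^{\rm int}$ one has $\dim \md_M(A,\bd) = \dim \calZ \le \dim T_M$, so $\calZ$ is generically regular iff $\dim T_M = \dim\calZ$ for generic $M$, i.e.\ iff $t(\calZ) = \dim\calZ$. For (ii) $\Iff$ $t(\calZ) = \dim\calZ$: if $t(\calZ) = \dim\calZ$, then on the (dense open) intersection of $\calV$, $\calZ^{\rm int}$ and the smooth locus of $\calZ$ we get $\dim T_M = t(\calZ) = \dim\calZ = \dim T_M^{\rm red}$, so this dense open subset consists of reduced modules; conversely, if $\calZ$ is generically reduced, pick $M$ in the intersection of $\calV$, the dense open set of reduced modules, $\calZ^{\rm int}$ and the smooth locus of $\calZ$ (a nonempty intersection of dense open subsets of the irreducible variety $\calZ$), and read off $t(\calZ) = \dim T_M = \dim T_M^{\rm red} = \dim\calZ$.

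Finally, the implication (i) $\implies$ (ii) can also be seen directly: if $M$ is scheme-regular, then $\dim\md_M(A,\bd) \le \dim T_M^{\rm red} \le \dim T_M = \dim\md_M(A,\bd)$ forces $\dim T_M^{\rm red} = \dim T_M$, so $M$ is reduced, and a dense open subset of regular modules is then a dense open subset of reduced modules. The one point requiring care is the identification in the second paragraph of the smooth locus of the possibly reducible ambient variety $\md(A,\bd)$ with the smooth locus of the chosen component $\calZ$; this is routine once one restricts to $\calZ^{\rm int}$, but it is what makes the argument go through, so I would spell it out rather than leave it implicit.
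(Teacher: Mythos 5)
Your proof is correct and rests on the same chain of inequalities $\dim \md_M(A,\bd) \le \dim T_M^{\rm red} \le \dim T_M$ and the generic smoothness of $\calZ$ that the paper uses (the paper phrases it via Voigt's Lemma in terms of $c(M)$ and $e(M)$, which is what your appeal to Lemma~\ref{lem:regular} amounts to). Your reformulation through the generic value $t(\calZ)$ of the upper semicontinuous map $M \mapsto \dim T_M$, and your explicit justification that generic points of $\calZ$ lie in $\calZ^{\rm int}$ and satisfy $\dim T_M^{\rm red} = \dim \calZ$, are a harmless elaboration of a step the paper's proof of (ii) $\implies$ (i) simply asserts.
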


\begin{proof}
(i) $\implies$ (ii):
For $M \in \md(A,\bd)$
we have
$$
c(M) \le \dim T_M^{\rm red} -\dim \cO_M \le \dim T_M -
\dim \cO_M = e(M).
$$
(The equality follows from Voigt's Lemma \ref{prop:voigt}.)
Thus $c(M) = e(M)$ implies $\dim T_M^{\rm red} = \dim T_M$.

(ii) $\implies$ (i):
There is a dense open subset $\calU$ of
$\calZ$ such that $\dim T_M^{\rm red} = \dim(\calZ)$
for all $M \in \calU$.
If $\calZ$ is generically reduced, then
we can choose $\calU$ such that
$\dim T_M^{\rm red} = \dim T_M$ for all $M \in \calU$.
Now Voigt's Lemma \ref{prop:voigt} says that
$c(\calZ) = e(\calZ)$.
\end{proof}

A module  $M$ is
called
\emph{$\tau$-regular} if $c(M) = E(M)$.

Thus $\tau$-regular modules are regular.
The converse is in general wrong.

Part (i) of the following lemma is well known.

\begin{Lem}\label{lem:regularopen}
For each dimension vector $\bd$ the following hold:
\begin{itemize}\itemsep2mm

\item[(i)]
The regular modules in $\md(A,\bd)$ form an open subset of $\md(A,\bd)$;

\item[(ii)]
The $\tau$-regular modules in $\md(A,\bd)$
form an open subset of $\md(A,\bd)$.

\end{itemize}
\end{Lem}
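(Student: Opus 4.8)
The plan is to realize both loci as finite unions of open subsets of $\md(A,\bd)$, exploiting that a regular module --- and hence, since $c(M)\le e(M)\le E(M)$, also a $\tau$-regular module --- lies in a unique irreducible component (Lemma~\ref{lem:regular}). So both loci are contained in $\bigcup_{\calZ\in\irr(A,\bd)}\calZ^{\rm int}$, and I would begin by recording that each interior $\calZ^{\rm int}$ is open in $\md(A,\bd)$: since $\md(A,\bd)$ is covered by its finitely many irreducible components, the complement of $\bigcup_{\calZ'\neq\calZ}\calZ'$ is contained in $\calZ$, hence coincides with $\calZ^{\rm int}$, which is therefore the complement of a closed set. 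On $\calZ^{\rm int}$ we have $\md_M(A,\bd)=\calZ$, and so
$$
c(M)=\dim\calZ-\dim\cO_M=\dim\calZ-\dim G_\bd+\hom_A(M,M).
$$

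For part (i), fix $\calZ\in\irr(A,\bd)$ and $M\in\calZ^{\rm int}$. By Lemma~\ref{lem:regular}, $M$ is regular exactly when $\dim T_M=\dim\md_M(A,\bd)=\dim\calZ$, and since $\dim T_M\ge\dim\md_M(A,\bd)$ always holds, this is equivalent to $\dim T_M\le\dim\calZ$. The function $M\mapsto\dim T_M$ is upper semicontinuous on $\md(A,\bd)$: realizing $\Spec(A,\bd)$ as a closed subscheme of $\rep(Q,\bd)$ cut out by a finite system of polynomials, $T_M$ is the kernel of the associated Jacobian matrix at $M$, so $\dim T_M$ equals $\dim\rep(Q,\bd)$ minus the rank of that matrix, and the rank is lower semicontinuous in $M$. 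Hence $\{M\in\calZ^{\rm int}\mid\dim T_M\le\dim\calZ\}$ is open in $\calZ^{\rm int}$, and therefore in $\md(A,\bd)$; the regular locus is the union of these sets over $\calZ\in\irr(A,\bd)$.

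For part (ii), I would rewrite $E(M)$ by means of the $g$-vector: applying Lemma~\ref{lem:bilinear} with $N=M$ and identifying $[M]$ with $\dimv(M)=\bd$ yields
$$
E(M)=\hom_A(M,M)+\bil{g(M),\bd}.
$$
Thus, for $M\in\calZ^{\rm int}$, the terms $\hom_A(M,M)$ cancel in the equation $c(M)=E(M)$, which becomes $\bil{g(M),\bd}=\dim\calZ-\dim G_\bd$; and since $c(M)\le E(M)$ always --- equivalently $\bil{g(M),\bd}\ge\dim\calZ-\dim G_\bd$ on $\calZ^{\rm int}$ --- this is in turn equivalent to $\bil{g(M),\bd}\le\dim\calZ-\dim G_\bd$. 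Now $\bil{g(M),\bd}=\sum_{j=1}^n d_j\,g_j(M)$ with all $d_j\ge0$ and each $M\mapsto g_j(M)$ upper semicontinuous, so $M\mapsto\bil{g(M),\bd}$ is upper semicontinuous; therefore $\{M\in\calZ^{\rm int}\mid\bil{g(M),\bd}\le\dim\calZ-\dim G_\bd\}$ is open, and the $\tau$-regular locus is the union of these sets over $\calZ\in\irr(A,\bd)$.

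The point to watch --- and the main obstacle --- is that, although $c(M)\le E(M)$ for all $M$, the equality locus is not obviously open: the quantity $\dim\md_M(A,\bd)$ entering $c(M)$ is only upper semicontinuous and jumps exactly at modules lying on several irreducible components, so no direct semicontinuity argument applies to $\{c=E\}$. This is handled by restricting to $\bigcup_\calZ\calZ^{\rm int}$ --- which contains all regular, hence all $\tau$-regular, modules --- where $\dim\md_M(A,\bd)$ is locally constant; after that restriction the identity $E(M)=\hom_A(M,M)+\bil{g(M),\bd}$ cancels the $\hom_A(M,M)$-terms, leaving only the comparison of a fixed upper semicontinuous function with a constant on each $\calZ^{\rm int}$ (namely $M\mapsto\dim T_M$ in part (i) and $M\mapsto\bil{g(M),\bd}$ in part (ii)). The only piece requiring a little care is the elementary observation that each $\calZ^{\rm int}$ is open; all the semicontinuity inputs are among those collected in Section~\ref{sec:varieties}.
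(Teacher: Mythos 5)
Your proof is correct and follows essentially the same route as the paper: both arguments exploit that every regular (hence every $\tau$-regular) module lies in the open set $\calZ^{\rm int}$ of a unique component, where $\dim\md_M(A,\bd)$ is constant, and then reduce openness to the upper semicontinuity of $-\hom_A(M,M)+\ext_A^1(M,M)$ in (i) and of $\bil{g(M),\bd}=\sum_j d_j\,g_j(M)$ in (ii). The only cosmetic difference is that in (i) you establish the relevant semicontinuity directly via the Jacobian description of $\dim T_M$ (equivalent, by Voigt's Lemma, to the semicontinuity statement the paper cites from \cite{GLFS23}), and you make explicit the restriction to $\calZ^{\rm int}$ that the paper leaves implicit.
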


\begin{proof}
(i):
By \cite[Theorem~1.1]{GLFS23}
the map
\begin{align*}
\eta\df \md(A,\bd) &\to \Z
\\
M &\mapsto - \hom_A(M,M) + \ext_A^1(M,M)
\end{align*}
is upper semicontinuous.
For each $M \in \md(A,\bd)$ we have
$$
\dim \md_M(A,\bd) - \dim G_\bd \le
- \hom_A(M,M) + \ext_A^1(M,M)
$$
with an equality if and only if $c(M) = e(M)$.
The upper semicontinuity of $\eta$ implies now (i).

(ii):
Again
by \cite[Theorem~1.1]{GLFS23}, for each $1 \le i \le n$
the map
\begin{align*}
\eta_i\df \md(A,\bd) &\to \Z
\\
M &\mapsto g_i(M) = -\hom_A(M,S(i)) + \ext_A^1(M,S(i))
\end{align*}
is upper semicontinuous.
For each $M \in \md(A,\bd)$ we have
$$
\dim \md_M(A,\bd) - \dim G_\bd \le
- \hom_A(M,M) + \hom_A(M,\tau_A(M))
$$
with an equality if and only if $c(M) = E(M)$.
By Lemma~\ref{lem:bilinear} we have
$$
\bil{g(M),[M]} = - \hom_A(M,M) + \hom_A(M,\tau_A(M)).
$$
Since
$$
\bil{g(M),[M]} = \hom_A(P_1^M,M) - \hom_A(P_0^M,M)
= \sum_{i=1}^n g_i(M) [M:S(i)],
$$
the upper semicontinuity of $\eta_i$ implies now (ii).
\end{proof}

We call $\calZ \in \irr(A)$ \emph{generically $\tau$-regular} if
there exists a dense open subset of $\calZ$ which consists
only of $\tau$-regular modules.
This is the case if and only if
$c(\calZ) = E(\calZ)$.

Generically $\tau$-regular components were first defined and studied in \cite{GLS12},
where they run under the name \emph{strongly reduced
components}.
They play an important role in the construction of
generic bases of Fomin-Zelevinsky cluster algebras.
Subsequently we also used to call them \emph{generically $\tau$-reduced components}.
We believe now that \emph{generically $\tau$-regular} is
the better terminology.

For the dual definition of $\tau^-$-regular modules and
generically $\tau^-$-regular components one uses
$E^-(M) := \hom_A(\tau_A^-(M),M)$.

Let $\irr^\tau(A,\bd)$ be the set
of generically $\tau$-regular components
in $\irr(A,\bd)$, and let
$$
\irr^\tau(A) := \bigcup_{\bd \in \N^n} \irr^\tau(A,\bd).
$$
Analogously, one defines $\irr^{\tau^-}(A,\bd)$ and
$\irr^{\tau^-}(A)$.

\subsection{Examples}

\subsubsection{}
Let $\calZ \in \irr(A)$ with $\pdim(\calZ) \le 1$.
Then $\calZ$ is generically $\tau$-regular.
(For $M \in \md(A)$ with $\pdim(M) \le 1$ we have
$e(M) = E(M)$, see Theorem~\ref{thm:ARformulas} and Lemma~\ref{lem:pdim1stable}.
Now the claim follows from \cite[Proposition~23]{R01}.)

\subsubsection{}
Let $M \in \md(A)$ be $\tau$-rigid,
i.e.\ $\Hom_A(M,\tau_A(M)) = 0$.
Then $\calZ := \overline{\cO_M}$ is generically $\tau$-regular.

\subsubsection{}
Let $A = KQ/I$ where $Q$ is the quiver
$$
\xymatrix{
1 & 2 \ar[l]_a & 3 \ar[l]_b
}
$$
and $I$ is generated by $ab$.
Let $\bd = (1,1,1)$.
Let
\begin{align*}
M_1 &:= S(1) \oplus P(3) = \bbm 1 \ebm \oplus \bbm 3\\2 \ebm,
&
M_2 &:= P(2) \oplus S(3) = \bbm 2\\1 \ebm \oplus \bbm 3 \ebm,
\\
M_3 &:= S(1) \oplus S(2) \oplus S(3) = \bbm 1 \ebm \oplus \bbm 2 \ebm \oplus \bbm 3 \ebm.
\end{align*}
These are the only modules with dimension vector $\bd$, up to isomorphism.
For $i = 1,2$ let $\calZ_i := \overline{\cO_{M_i}}$.
These are the only irreducible components of $\md(A,\bd)$.
It follows from \cite{DS81} that the scheme $\Spec(A,\bd)$
is reduced, hence
all modules in $\md(A,\bd)$ are reduced.
In particular,
both components $\calZ_1$ and $\calZ_2$ are generically reduced
(and generically regular).
$$
\begin{tabular}{r|c|c|c}
& $M_1$
& $M_2$
& $M_3$
\\\hline
reduced & yes & yes & yes
\\
regular & yes & yes & no
\\
$\tau$-regular & yes & no & no
\\
$\tau^-$-regular & no & yes & no
\\
$c(M)$ & 0 & 0 & 1
\\
$e(M)$ & 0 & 0 & 2
\\
$E(M)$ & 0 & 1 & 2
\\
$E^-(M)$ & 1 & 0 & 2
\end{tabular}
$$
We have $\irr^\tau(A,\bd) = \{ \calZ_1 \}$ and
$\irr^{\tau^-}(A,\bd) = \{ \calZ_2 \}$.


\section{Plamondon's construction of generically $\tau$-regular
components}\label{sec:taureg}


\subsection{Plamondon's theorems}\label{subsec:plamondon}
We recall Plamondon's construction and parametrization
of generically $\tau$-regular components.

Plamondon's \cite{P13} original proof is quite convoluted.
A more straightforward proof is due to Fei \cite{F23}.
It is based on a bundle construction from Derksen and Fei \cite{DF15}, see Section~\ref{subsec:bundle}.

For a given pair $(P_1,P_0)$ of projective $A$-modules
recall that
$$
r(P_1,P_0) := \max\{ \rk(f) \mid f \in \Hom_A(P_1,P_0) \}
$$
and
$$
\Hom_A(P_1,P_0)^\circ :=
\{ f \in \Hom_A(P_1,P_0) \mid \rk(f) = r(P_1,P_0) \}.
$$
By lower semicontinuity,
this is a dense open subset of the affine space $\Hom_A(P_1,P_0)$.
Let
$$
\bd_{P_1,P_0}^\circ := \dimv(\Coker(f))
$$
where $f \in \Hom_A(P_1,P_0)^\circ$.
(The cokernels of all such $f$ have the same dimension vector, see Lemma~\ref{lem:maximalrank}.)
Define
$$
\calZ_{P_1,P_0}^\circ :=
\{ M \in \md(A,\bd_{P_1,P_0}^\circ) \mid M \cong \Coker(f),\;
f \in \Hom_A(P_1,P_0)^\circ \},
$$
and let
$$
\calZ_{P_1,P_0} := \overline{\calZ_{P_1,P_0}^\circ}.
$$

\begin{Thm}[{Plamondon \cite[Theorem~1.2]{P13}}]\label{thm:plamondon1}
We have
$\calZ_{P_1,P_0} \in \irr^\tau(A)$, and each $\calZ \in \irr^\tau(A)$
is
of the form $\calZ_{P_1,P_0}$ for some pair $(P_1,P_0)$ of
projective $A$-modules.
\end{Thm}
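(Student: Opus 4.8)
The plan is to prove two complementary statements. \textbf{(I)} For every pair $(P_1,P_0)$ of projective $A$-modules, $\calZ_{P_1,P_0}$ is a generically $\tau$-regular irreducible component. \textbf{(II)} If $\calZ\in\irr^\tau(A,\bd)$ and $(P_1,P_0):=(P_1^\calZ,P_0^\calZ)$ is the generic minimal presentation pair of Lemma~\ref{lem:genericpresentations1}, then $\calZ=\calZ_{P_1,P_0}$. Both steps use an incidence (``bundle'') construction relating the affine space $\Hom_A(P_1,P_0)$ to $\md(A,\bd)$, in the spirit of Derksen and Fei (Section~\ref{subsec:bundle}), together with three facts already at hand: the universal inequality $c(M)\le E(M)$; the exact sequence of Lemma~\ref{lem:gvector3} obtained by applying $\Hom_A(-,M)$ to a projective presentation; and the fibre-dimension statement of Lemma~\ref{lem:chevalley}.

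For \textbf{(I)}, put $\bd:=\bd_{P_1,P_0}^\circ$ and let $\cB$ be the variety of pairs $(f,\pi)$ with $f\in\Hom_A(P_1,P_0)^\circ$ and $\pi\colon P_0\to M$ a surjection onto some $M\in\md(A,\bd)$ satisfying $\Ker(\pi)=\Ima(f)$. The first projection $\cB\to\Hom_A(P_1,P_0)^\circ$ is a principal $G_\bd$-bundle: its fibre over $f$ is a single free $G_\bd$-orbit, consisting of the realisations of $\Coker(f)$ in $\md(A,\bd)$ together with an isomorphism from $\Coker(f)$. Hence $\cB$ is irreducible of dimension $\hom_A(P_1,P_0)+\dim G_\bd$. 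The second projection $\cB\to\md(A,\bd)$ dominates $\calZ_{P_1,P_0}$, and its generic fibre over $M$ is a dense open subset of the product of the surjections $P_0\twoheadrightarrow M$ with the surjections $P_1\twoheadrightarrow\Ker(\pi)$; since $\Ker(\pi)\cong\Ima(f)$ is a quotient of $P_1$ (Schanuel) and $\Ext_A^1(P_1,-)=0$, this fibre has dimension $\hom_A(P_0,M)+\hom_A(P_1,P_0)-\hom_A(P_1,M)$. Lemma~\ref{lem:chevalley} then gives
\[
\dim\calZ_{P_1,P_0}=\dim G_\bd-\hom_A(P_0,M)+\hom_A(P_1,M)
\]
for generic $M\in\calZ_{P_1,P_0}$. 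On the other hand, writing $M=\Coker(f)$ with $f$ of maximal rank gives $\Hom_A(P_f,M)=0$ by Lemma~\ref{lem:maxrankfM}, so Lemma~\ref{lem:gvector3} with $N=M$ yields $E(M)=\hom_A(M,M)-\hom_A(P_0,M)+\hom_A(P_1,M)$. Since $\dim\cO_M=\dim G_\bd-\hom_A(M,M)$, these combine to
\[
E(M)=\dim\calZ_{P_1,P_0}-\dim\cO_M\le\dim\md_M(A,\bd)-\dim\cO_M=c(M)\le E(M),
\]
forcing $c(M)=E(M)$ and $\dim\md_M(A,\bd)=\dim\calZ_{P_1,P_0}$. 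The first says $M$ is $\tau$-regular; the second says $\calZ_{P_1,P_0}$ has the dimension of a component containing it, hence coincides with that component. Thus $\calZ_{P_1,P_0}\in\irr^\tau(A)$.

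For \textbf{(II)}, let $\calZ\in\irr^\tau(A,\bd)$ and $(P_1,P_0):=(P_1^\calZ,P_0^\calZ)$. For generic $M\in\calZ$ the presentation $P_1\to P_0\to M\to 0$ is minimal, so $P_f=0$, and $c(M)=E(M)$; combining $c(M)=\dim\calZ-\dim\cO_M$ with the formula for $E(M)$ from Lemma~\ref{lem:gvector3} exactly as in (I) gives $\dim\calZ=\dim G_\bd-\hom_A(P_0,M)+\hom_A(P_1,M)$. Now let $\cY$ be the variety of pairs $(f,\pi)$ with $f\in\Hom_A(P_1,P_0)$ arbitrary and $\pi\colon P_0\twoheadrightarrow M\in\md(A,\bd)$ satisfying $\Ker(\pi)=\Ima(f)$ (which forces $\dimv\Coker(f)=\bd$), and let $\cY^\circ$ be the preimage under $\cY\to\md(A,\bd)$ of a suitable dense open subset $\calU$ of $\calZ$. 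Computing $\dim\cY^\circ$ through its projection onto $\calU$ (whose generic fibre has the same dimension as the corresponding fibre in (I)) gives $\dim\cY^\circ=\dim G_\bd+\hom_A(P_1,P_0)$; computing it through the projection $\overline{\cY^\circ}\to\Hom_A(P_1,P_0)$, whose generic fibres again have dimension $\dim G_\bd$, forces the image to be dense in the whole affine space $\Hom_A(P_1,P_0)$. Hence a generic $f\in\Hom_A(P_1,P_0)$ has $\dimv\Coker(f)=\bd$, and being also of maximal rank, satisfies $\bd=\bd_{P_1,P_0}^\circ$ and $\Coker(f)\in\calZ$ (the image of $\overline{\cY^\circ}\to\md(A,\bd)$ being contained in $\overline{\calU}=\calZ$). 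Therefore the cokernels of generic $f$ form a dense subset of $\calZ_{P_1,P_0}^\circ$ lying inside $\calZ$, so $\calZ_{P_1,P_0}\subseteq\calZ$; since $\calZ_{P_1,P_0}$ is a component by (I), we conclude $\calZ=\calZ_{P_1,P_0}$.

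The step I expect to be the main obstacle is the last point of (II): showing that the generic \emph{minimal} projective presentation of a module in a generically $\tau$-regular component already has maximal rank, equivalently that $\bd=\bd_{P_1,P_0}^\circ$. This is precisely the generic case of Theorem~\ref{thm:main0}(ii), which is proved only later, so it cannot be invoked here; the two independent computations of $\dim\overline{\cY^\circ}$ --- against $\calZ$ on one side and against $\Hom_A(P_1,P_0)$ on the other --- are exactly what avoid the circularity. A more routine (but still necessary) point is to set up $\cB$ and $\cY$ rigorously as quasi-projective varieties and to verify that their projections to $\Hom_A(P_1,P_0)$ are principal $G_\bd$-bundles --- the fibres being single free $G_\bd$-orbits and $G_\bd$ being a special group, so that the bundles are Zariski-locally trivial and the total spaces inherit irreducibility; this is what the Derksen--Fei bundle construction of Section~\ref{subsec:bundle} provides.
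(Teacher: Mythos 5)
Your argument is correct, and it follows the same skeleton as the paper's outline in Section~\ref{subsec:bundle}: the Derksen--Fei incidence variety with its two projections $q_1,q_2$, the principal $G_\bd$-bundle structure of $q_1$, and dimension counts via Lemma~\ref{lem:chevalley}. Your part (I) is the paper's Theorem~\ref{thm:bundle1}(iii), and your part (II) is the implication (a) $\Rightarrow$ (c) of Corollary~\ref{cor:bundle2}(ii); your double count of $\dim\cY^\circ$ is exactly the paper's assertion that ${\rm codim}(\cC)=0$. The one genuine difference is how the dimension of a fibre of $q_2$ is obtained. The paper identifies $q_2^{-1}(M)\cong \cO_f\times \Aut_A(M)$ and then invokes the formula $\dim\cO_f=\hom_A(P_1,P_0)-\hom_A(M,\tau_A(M))-\hom_A(P_f,M)$, which it explicitly flags as non-trivial and imports from Plamondon. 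You instead describe the fibre directly as the surjections $P_0\twoheadrightarrow M$ together with, for each such $\pi$, the surjections $P_1\twoheadrightarrow \Ker(\pi)$, which gives $\hom_A(P_0,M)+\hom_A(P_1,P_0)-\hom_A(P_1,M)$ from the projectivity of $P_1$ (plus Schanuel to see that the relevant open sets are non-empty), and only afterwards convert this into $\hom_A(M,\tau_A(M))$ via Lemma~\ref{lem:gvector3}. The two computations agree precisely because of the exact sequence in Lemma~\ref{lem:gvector3}, but yours is more elementary and self-contained. It also has a pleasant by-product: since $\hom_A(P_1,M)$ and $\hom_A(P_0,M)$ depend only on $\dimv(M)$, your fibre dimension is constant on the image of $q_2$, so the sandwich $E(M)=\dim\calZ_{P_1,P_0}-\dim\cO_M\le c(M)\le E(M)$ holds for \emph{every} cokernel of a maximal-rank map, not only a generic one --- which is Theorem~\ref{thm:main0}(i), proved separately and at greater length in Section~\ref{sec:maximalrank}. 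The only points still to be written out in full are the ones you already flagged: irreducibility of $\cB$ via Zariski-local triviality of the $G_\bd$-bundle, and the constructibility and density bookkeeping for the image of $q_1$ in part (II).
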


Plamondon also explains when different pairs of projectives give rise
to the same component.
We reformulate this in terms of $g$-vectors.

For $\calZ \in \irr(A)$
let
$$
g^\circ(\calZ) := g(\calZ) + \zero(\calZ)
$$
where
$$
\zero(\calZ) := \sum_{\substack{1 \le i \le n\\i \notin \supp(\calZ)}} \N e_i.
$$

\begin{Thm}[{Plamondon \cite{P13}}]\label{thm:plamondon2}
We have
$$
\Z^n = \bigcup_{\calZ \in \irr^\tau(A)} g^\circ(\calZ)
$$
and this union is disjoint.
In particular, each $\calZ \in \irr^\tau(A)$ is determined by
$g(\calZ)$.
\end{Thm}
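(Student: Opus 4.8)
The plan is to deduce the theorem from Plamondon's Theorem~\ref{thm:plamondon1} together with the homological bookkeeping of Section~\ref{subsec:gvectors}, organizing everything around the assignment $v\mapsto\calZ_v$ from $\Z^n$ to $\irr^\tau(A)$. First I would check that for any pair $(Q_1,Q_0)$ of projectives the component $\calZ_{Q_1,Q_0}$ depends only on $v:=[Q_1]-[Q_0]$: cancelling a common direct summand $R$ does not change it, because on the dense open locus of $\Hom_A(Q_1\oplus R,\,Q_0\oplus R)$ where the ``$R\to R$''-block $d$ of $f$ is invertible, Gaussian elimination makes $f$ equivalent (after pre- and post-composition with automorphisms) to $\phi(f)\oplus 1_R$ with $\phi(f)\in\Hom_A(Q_1,Q_0)$, so $\Coker(f)\cong\Coker(\phi(f))$ and $\rk(f)=\rk(\phi(f))+\dim R$; since $f\mapsto\phi(f)$ is dominant and carries maximal rank to maximal rank, this gives $\calZ_{Q_1,Q_0}^\circ\subseteq\calZ_{Q_1\oplus R,Q_0\oplus R}^\circ$, and as both closures are irreducible components of the same $\md(A,\bd)$ (Theorem~\ref{thm:plamondon1}), they coincide. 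Iterating, $\calZ_{Q_1,Q_0}=\calZ_v$, where $\calZ_v:=\calZ_{P_1,P_0}$ for the reduced pair $P_1=\bigoplus_{v_i>0}P(i)^{v_i}$, $P_0=\bigoplus_{v_i<0}P(i)^{-v_i}$. By Theorem~\ref{thm:plamondon1} the map $\Phi\colon v\mapsto\calZ_v$ is then well defined and surjective, so it suffices to prove $\Phi^{-1}(\calZ)=g^\circ(\calZ)$ for every $\calZ\in\irr^\tau(A)$: the fibres of $\Phi$ give the disjoint decomposition $\Z^n=\bigsqcup_\calZ g^\circ(\calZ)$, and $\calZ=\Phi(g(\calZ))$ because $g(\calZ)\in g^\circ(\calZ)$.

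For $\Phi^{-1}(\calZ)\subseteq g^\circ(\calZ)$: suppose $\calZ_v=\calZ$ and let $(P_1,P_0)$ be the reduced pair for $v$. Since $\calZ_{P_1,P_0}^\circ$ is dense in $\calZ$ it meets the dense open locus where the functions $g_i$ attain their generic values, so there is $M\cong\Coker(f)$ with $f\in\Hom_A(P_1,P_0)^\circ$ and $g(M)=g(\calZ)$, while $\supp(M)=\supp(\calZ)$ automatically. Lemma~\ref{lem:gvector1} gives $g(M)=v-[P_f]$, and Lemma~\ref{lem:maxrankfM} gives $\Hom_A(P_f,M)=0$, so $[M:S(i)]=\hom_A(P(i),M)=0$ whenever $P(i)\mid P_f$; hence $\supp(P_f)\cap\supp(\calZ)=\emptyset$, i.e.\ $[P_f]\in\zero(\calZ)$, and $v=g(\calZ)+[P_f]\in g^\circ(\calZ)$.

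For $g^\circ(\calZ)\subseteq\Phi^{-1}(\calZ)$ I would first show $\calZ=\calZ_{P_1^\calZ,P_0^\calZ}$. Writing $\calZ=\calZ_{Q_1,Q_0}$ as in Theorem~\ref{thm:plamondon1} and choosing $M=\Coker(f)\in\calZ_{Q_1,Q_0}^\circ$ in the dense open locus where the minimal presentation is $(P_1^\calZ,P_0^\calZ)$ (Lemma~\ref{lem:genericpresentations1}) and $M$ is $\tau$-regular, Lemmas~\ref{lem:gvector1} and \ref{lem:maxrankfM} force $\rk(f_M)=r(P_1^\calZ,P_0^\calZ)$, so $M\in\calZ_{P_1^\calZ,P_0^\calZ}^\circ$; being regular, $M$ lies in a single irreducible component (Lemma~\ref{lem:regular}), whence $\calZ=\calZ_{P_1^\calZ,P_0^\calZ}$. (This is exactly the \emph{generic} case of Theorem~\ref{thm:main0}(ii), which as recorded after that theorem follows from Theorem~\ref{thm:plamondon1}, so the argument is not circular.) Next, for any projective $P$ with $\supp(P)\cap\supp(\calZ)=\emptyset$ I claim $\calZ_{P_1^\calZ\oplus P,\,P_0^\calZ}=\calZ$: the projection $\Hom_A(P_1^\calZ\oplus P,P_0^\calZ)\to\Hom_A(P_1^\calZ,P_0^\calZ)$, $h\mapsto h':=h|_{P_1^\calZ}$, is open, so for generic $h$ the map $h'$ has maximal rank; then $\Coker(h')$ has dimension vector $\dimv(\calZ)$ (Lemma~\ref{lem:maximalrank} and the previous step), so $\Hom_A(P,\Coker(h'))=0$, the composite $P\hookrightarrow P_1^\calZ\oplus P\xrightarrow{h}P_0^\calZ\twoheadrightarrow\Coker(h')$ vanishes, $h(P)\subseteq\Ima(h')$, and thus $\Coker(h)=\Coker(h')$ with $\rk(h)=\rk(h')$. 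Comparing with the maps $(g,0)$ for $g\in\Hom_A(P_1^\calZ,P_0^\calZ)^\circ$ yields $r(P_1^\calZ\oplus P,P_0^\calZ)=r(P_1^\calZ,P_0^\calZ)$ and $\calZ_{P_1^\calZ\oplus P,P_0^\calZ}^\circ\supseteq\calZ_{P_1^\calZ,P_0^\calZ}^\circ$, whence equality of components as before. Since $[P_1^\calZ\oplus P]-[P_0^\calZ]=g(\calZ)+[P]$, this says $\calZ_{g(\calZ)+[P]}=\calZ$, i.e.\ $g^\circ(\calZ)=g(\calZ)+\zero(\calZ)\subseteq\Phi^{-1}(\calZ)$.

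The step I expect to be the main obstacle is the last one — showing that enlarging $P_1^\calZ$ by a projective supported off $\supp(\calZ)$ leaves the component unchanged — since it must marry the combinatorics of dimension vectors with the geometry of the presentation space and relies on the fact that the set of generic cokernels is dense in $\calZ_{P_1,P_0}$, so that a generic $f$ really produces a generic point of the component. A secondary point needing care is to obtain $\calZ=\calZ_{P_1^\calZ,P_0^\calZ}$ from Theorem~\ref{thm:plamondon1} alone, keeping the argument independent of the statement being proved.
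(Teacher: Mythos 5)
Your proposal is correct and follows essentially the same route as the paper: both define the map $z \mapsto \calZ_{P_1^z,P_0^z}$ on $K_0(\proj(A)) \cong \Z^n$, get surjectivity from Theorem~\ref{thm:plamondon1}, and identify its fibres with $g^\circ(\calZ)$ via $g(\calZ)=[P_1]-[P_0]-[P_f]$ with $\Hom_A(P_f,-)|_{\calZ}=0$ in one direction and $\calZ_{P_1^{\calZ}\oplus P,P_0^{\calZ}}=\calZ_{P_1^{\calZ},P_0^{\calZ}}$ in the other. The only difference is that you re-derive the supporting facts (Corollaries~\ref{cor:bundle2}(ii), \ref{cor:bundle5} and \ref{cor:bundle8}, Lemma~\ref{lem:gvector2}) directly, whereas the paper has already established them in Section~\ref{subsec:bundle} and simply cites them.
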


Theorem~\ref{thm:plamondon2} is not stated
explicitely in \cite{P13}, but it can be extracted from \cite[Theorem~1.2]{P13}.

A pair $(\calZ,P)$ with $\calZ \in \irr^\tau(A)$ and $P \in \proj(A)$ with
$\Hom_A(P,M) = 0$ for all $M \in \calZ$ is a \emph{$\tau$-regular pair}
for $A$.
(We consider $P$ up to isomorphism.)

Note that
for a fixed $\calZ$, the $\tau$-regular pairs $(\calZ,P)$ are parametrized
by the elements in $g^\circ(\calZ)$.

Here is a reformulation of Theorem~\ref{thm:plamondon2} in
terms of $\tau$-regular pairs:
There is a bijection between the set of $\tau$-regular pairs
and $\Z^n$ which sends such a pair $(\calZ,P)$ to $g(\calZ)+[P]$.

There is an obvious dual notion of \emph{$\tau^-$-regular pairs}, and
of course all statements can be dualized.

\subsection{Derksen and Fei's bundle construction}
\label{subsec:bundle}
The aim of this section is to outline the main ideas for the proofs
of Theorems~\ref{thm:plamondon1} and
\ref{thm:plamondon2}.
We use the simpler approach from \cite{DF15} and \cite{F23}.
But note that there is still a significant overlap with the more complicated proof by Plamondon \cite{P13}.
Both, \cite{P13} and \cite{DF15} assume that ${\rm char}(K) = 0$.
However, this is not necessary.
Furthermore, for the construction of generically $\tau$-regular components, \cite{F23} and \cite{P13} mainly consider
projective presentations
$P_1 \to P_0 \to M \to 0$
where $\add(P_1) \cap \add(P_0) = 0$.
(This can be assumed without loss of generality, if one just cares about
generic projective presentations.)
Since we want to look at all projective presentations of maximal rank,
we work without this assumption.

Fix a pair $(P_1,P_0)$ of projective $A$-modules.

The group
$$
G_{P_1,P_0} := \Aut_A(P_1) \times \Aut_A(P_0)
$$
acts on $\Hom_A(P_1,P_0)$ by
$$
(g_1,g_0).f := g_0fg_1^{-1}.
$$
For $f \in \Hom_A(P_1,P_0)$ let
$$
\cO_f := \{ (g_1,g_0).f \mid (g_1,g_0) \in G_{P_1,P_0} \}
$$
be the orbit of $f$.
The next lemma is straightforward.

\begin{Lem}
Let $f,f' \in \Hom_A(P_1,P_0)$.
Then the following are equivalent:
\begin{itemize}\itemsep2mm

\item[(i)]
$f' \in \cO_f$;

\item[(ii)]
$\Coker(f) \cong \Coker(f')$.

\end{itemize}
\end{Lem}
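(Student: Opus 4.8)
The plan is to prove the equivalence of (i) and (ii) directly by constructing the relevant isomorphisms. The statement is that two maps $f, f' \in \Hom_A(P_1,P_0)$ lie in the same $G_{P_1,P_0}$-orbit if and only if their cokernels are isomorphic $A$-modules.

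For the implication (i) $\implies$ (ii), suppose $f' = g_0 f g_1^{-1}$ for some $(g_1,g_0) \in \Aut_A(P_1) \times \Aut_A(P_0)$. Then $g_0$ restricts to an isomorphism $\Ima(f) \to \Ima(f')$ (since $\Ima(f') = g_0(\Ima(f))$), and hence $g_0$ induces an isomorphism $\Coker(f) = P_0/\Ima(f) \to P_0/\Ima(f') = \Coker(f')$ on the quotients. This direction is essentially immediate.

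For the converse (ii) $\implies$ (i), the idea is to exploit the universal property of projective presentations. Let $\pi\df P_0 \to \Coker(f)$ and $\pi'\df P_0 \to \Coker(f')$ be the canonical epimorphisms, and let $\varphi\df \Coker(f) \to \Coker(f')$ be a given isomorphism. Since $P_0$ is projective and $\pi'$ is surjective, the map $\varphi \pi\df P_0 \to \Coker(f')$ lifts to some $g_0 \in \Hom_A(P_0,P_0)$ with $\pi' g_0 = \varphi \pi$; symmetrically $\varphi^{-1}$ lifts to some $h_0$ with $\pi h_0 = \varphi^{-1}\pi'$. The composite $h_0 g_0$ then satisfies $\pi h_0 g_0 = \pi$, so $h_0 g_0$ differs from the identity on $P_0$ by a map into $\Ima(f) = \Ker(\pi)$. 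One checks that $g_0$ (and likewise $h_0$) is an automorphism: indeed $h_0 g_0 = 1_{P_0} + (\text{map factoring through } \Ima(f))$, and since $\Ima(f)$ has no projective summands in common with $\Ker(\pi) \to P_0 \to P_0$ — or more simply, since $P_0$ is a projective cover consideration is not needed here — one argues directly that $g_0$ is surjective (its image together with $\Ima(f) = \Ker(\pi)$ spans $P_0$ because $\pi' g_0 = \varphi\pi$ is surjective, hence $\Ima(g_0) + \Ima(f) = P_0$; but $\Ima(f) \subseteq \Ima(g_0)$ is not automatic) — so the cleaner route is to note $h_0 g_0$ and $g_0 h_0$ are both of the form $1 + (\text{nilpotent-ish})$ and are isomorphisms by a standard argument, forcing $g_0,h_0 \in \Aut_A(P_0)$. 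Now $g_0$ maps $\Ima(f)$ into $\Ima(f')$: since $\pi' g_0 f = \varphi \pi f = 0$, we get $g_0(\Ima(f)) \subseteq \Ker(\pi') = \Ima(f')$. Using projectivity of $P_1$ and that $f'\df P_1 \to \Ima(f')$ is surjective, lift $g_0 f\df P_1 \to \Ima(f')$ through $f'$ to obtain $g_1 \in \End_A(P_1)$ with $f' g_1 = g_0 f$; by the symmetric construction and the same automorphism argument, $g_1 \in \Aut_A(P_1)$. Then $f' = g_0 f g_1^{-1}$, i.e. $f' \in \cO_f$.

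The main obstacle is verifying that the lifts $g_0$ and $g_1$ can indeed be chosen to be \emph{automorphisms} rather than merely endomorphisms. The honest way to handle this is the standard lemma that if $P$ is a finite-dimensional module and $u, v \in \End_A(P)$ satisfy $vu = 1_P$ (or more generally $vu$ is an isomorphism), then $u$ is an isomorphism — combined with the observation that $h_0 g_0 - 1_{P_0}$ factors through $\Ima(f) \subseteq \rad(P_0)$ is not quite what we need; instead one uses that $\pi(h_0 g_0) = \varphi^{-1}\pi' g_0 = \varphi^{-1}\varphi\pi = \pi$, whence $\pi(h_0 g_0 - 1) = 0$ and $h_0 g_0$ is a unit in $\End_A(P_0)$ because it is congruent to $1$ modulo $\Hom_A(P_0,\Ima(f))$ and one checks $\Hom_A(P_0,\Ima(f))$ sits inside the radical of $\End_A(P_0)$ when the presentation is suitably chosen — or, avoiding radical arguments entirely, simply note $\Coker(f) \cong \Coker(f')$ finite-dimensional forces $g_0$ to be injective on a complement and a dimension count via $\dim\Ima(f) = \dim\Ima(f')$ gives bijectivity. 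I would present whichever of these is cleanest; all are routine once set up correctly.
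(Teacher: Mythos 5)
The direction (i) $\implies$ (ii) is fine. The converse, however, has a genuine gap exactly at the point you flag yourself: an arbitrary lift $g_0$ of $\varphi\pi$ through $\pi'$ need \emph{not} be an automorphism, and none of your proposed repairs closes this. Concretely, take $P_0 = P \oplus P$, let $f$ map onto the second summand, $f' = f$, $\varphi = \mathrm{id}$; then $g_0 = \left(\bsm 1&0\\0&0 \esm\right)$ is a legitimate lift ($\pi' g_0 = \pi$) but is not invertible, and with $h_0 = g_0$ one gets $h_0 g_0 = g_0$, which is not an isomorphism. This also kills the radical argument: $\Hom_A(P_0,\Ima(f)) \subseteq \rad\End_A(P_0)$ requires $\Ima(f) \subseteq \rad(P_0)$, which fails precisely when $f$ has a direct summand of the form $P \xrightarrow{1_P} P$ --- and the lemma must cover that case, since the presentation is given, not ``suitably chosen''. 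The closing ``dimension count'' is not an argument: knowing $\dim\Ima(f) = \dim\Ima(f')$ says nothing about injectivity of a particular lift.

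The correct route is not to take an arbitrary lift and hope it is invertible, but to \emph{choose} the lift summand by summand using Lemma~\ref{lem:gvector1} (which the paper states precisely for this purpose): both $f$ and $f'$ decompose, uniquely up to isomorphism, as
$(P_1^M \xrightarrow{f_M} P_0^M) \oplus (P \xrightarrow{1_P} P) \oplus (P_f \to 0)$.
Since $\Coker(f) \cong \Coker(f')$, the minimal parts agree, and since the ambient modules $P_1, P_0$ are the same, Krull--Schmidt forces the identity parts and the zero parts to agree as well; matching up the three summands yields an isomorphism of complexes, which is exactly a pair $(g_1,g_0) \in G_{P_1,P_0}$ with $g_0 f = f' g_1$. (Equivalently, one can carry out your lifting strategy correctly by first splitting $P_0 = Q \oplus K$ with $\pi|_Q$ a projective cover of $\Coker(f)$ and $K \subseteq \Ker(\pi)$, doing the same for $\pi'$, and defining $g_0$ blockwise as a lift of $\varphi$ between the cover summands plus an arbitrary isomorphism between the kernel summands --- and likewise for $g_1$. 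That version of the argument is sound, but it is essentially a proof of Lemma~\ref{lem:gvector1}, not a shortcut around it.)
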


Let $\bd = (d_1,\ldots,d_n)$ be a dimension vector.
Let
$$
K^\bd := \prod_{i=1}^n K^{d_i}.
$$
This is the underlying vector space for the modules
in $\md(A,\bd)$.

Let
$$
\Hom_A(P_1,P_0,\bd) := \{ f \in \Hom_A(P_1,P_0) \mid
\dimv(\Coker(f)) = \bd \}.
$$
This is a locally closed subset of $\Hom_A(P_1,P_0)$.
We call $\Hom_A(P_1,P_0,\bd)$ a \emph{stratum} provided it is
non-empty.
Clearly, $\Hom_A(P_1,P_0)$ is the disjoint union of its strata.
The \emph{maximal stratum}
$$
\Hom_A(P_1,P_0)^\circ = \Hom_A(P_1,P_0,\bd_{P_1,P_0}^\circ)
$$
is dense and open in
$\Hom_A(P_1,P_0)$.
(Here $\bd_{P_1,P_0}^\circ$ is defined as in Section~\ref{subsec:plamondon}.)
Let
$Z(P_1,P_0,\bd)$
be the set of all triples
$$
(f,\pi,M) \in \Hom_A(P_1,P_0) \times \Hom_{Q_0}(P_0,K^\bd) \times
\md(A,\bd)
$$
such that
$$
P_1 \xrightarrow{f} P_0 \xrightarrow{\pi} K^\bd \to 0
$$
is an exact sequence, and $M$ is the unique $A$-module structure
on $K^\bd$ such that $\pi$ is an $A$-module homomorphism.
Here we have
$$
\Hom_{Q_0}(P_0,K^\bd) := \prod_{i=1}^n \Hom_K(e_iP_0,K^{d_i}).
$$

We obtain two maps of quasi-projective varieties
$$
\SelectTips{cm}{}
\xymatrix{
& Z(P_1,P_0,\bd) \ar[dl]_{q_1}\ar[dr]^{q_2}
\\
\Hom_A(P_1,P_0,\bd) && \md(A,\bd)
}
$$
defined by $q_1(f,\pi,M) := f$ and $q_2(f,\pi,M) := M$.
This diagram is defined and studied by Derksen and
Fei \cite[Section~2]{DF15}, see also \cite[Section~3.2]{F23}.

The group $G :=
G_{P_1,P_0} \times G_\bd$ acts on $Z(P_1,P_0,\bd)$ by
$$
((g_1,g_0),g).(f,\pi,M) := (g_0fg_1^{-1},g \pi g_0^{-1},g.M)
$$
where $g.M$ comes from the $G_\bd$-action on $\md(A,\bd)$.
Note that $g.M$ is
the unique $A$-module structure on $K^\bd$
such that $g \pi g_0^{-1}$ and $g$ are $A$-module
homomorphisms.

By restriction, the groups $G_{P_1,P_0}$ and $G_\bd$
also act
on $Z(P_1,P_0,\bd)$.
Namely, for
$(g_1,g_0) \in G_{P_1,P_0}$, $g \in G_\bd$ and
$(f,\pi,M) \in Z(P_1,P_0,\bd)$ we have
$$
(g_1,g_0).(f,\pi,M) := (g_0fg_1^{-1},\pi g_0^{-1},M)
\text{\quad and \quad}
g.(f,\pi,M) := (f,g\pi,g.M).
$$

\begin{Lem}\label{lem:orbitbij1}
The map $q_1$ is $G_{P_1,P_0}$-equivariant, and
$q_2$ is $G_\bd$-equivariant.
The maps $q_1$ and $q_2$ induce a bijection
$$
\SelectTips{cm}{}
\xymatrix{
\{ \text{$G_{P_1,P_0}$-orbits in $\Hom_A(P_1,P_0,\bd)$} \} \ar[r]^<<<<{q_2q_1^{-1}}
& \{ \text{$G_\bd$-orbits in $\Ima(q_2)$} \}.
}
$$
\end{Lem}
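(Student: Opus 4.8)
The plan is to realise the stated correspondence as the map sending a $G_{P_1,P_0}$-orbit $\cO$ in $\Hom_A(P_1,P_0,\bd)$ to the set $q_2(q_1^{-1}(\cO))$, and to show that this set is always a single $G_\bd$-orbit in $\Ima(q_2)$, namely the orbit of $\Coker(f)$ for any $f\in\cO$. First I would dispose of the equivariance: straight from the formulas for the restricted actions one has $q_1((g_1,g_0).(f,\pi,M))=g_0fg_1^{-1}$ and $q_2(g.(f,\pi,M))=g.M$, so $q_1$ is $G_{P_1,P_0}$-equivariant (and $G_\bd$-invariant) while $q_2$ is $G_\bd$-equivariant (and $G_{P_1,P_0}$-invariant); in particular $\Ima(q_2)$ is a $G_\bd$-stable subset of $\md(A,\bd)$, so the right-hand side of the claimed bijection is meaningful.

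Next I would analyse the fibres of $q_1$. The map $q_1$ is onto $\Hom_A(P_1,P_0,\bd)$: given such an $f$, choose a linear isomorphism $\Coker(f)\cong K^\bd$ compatible with the decomposition by vertices (possible since $\dimv(\Coker(f))=\bd$), compose with the natural surjection $P_0\to\Coker(f)$ to obtain $\pi\in\Hom_{Q_0}(P_0,K^\bd)$, and let $M$ be the unique module structure on $K^\bd$ for which $\pi$ is a morphism, so that $(f,\pi,M)\in q_1^{-1}(f)$. For the fibre over $f$: if $(f,\pi,M)$ and $(f,\pi',M')$ both lie in $q_1^{-1}(f)$, then $\pi$ and $\pi'$ are vertex-graded surjections $P_0\to K^\bd$ with the same kernel $\Ima(f)$, so the induced isomorphisms $P_0/\Ima(f)\xrightarrow{\sim}K^\bd$ differ by a unique $h\in G_\bd$ with $\pi'=h\pi$, whence $h.(f,\pi,M)=(f,\pi',M')$ because the module structure is determined by $\pi'$. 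Thus $q_1^{-1}(f)$ is a single $G_\bd$-orbit, and since the admissible choices of $\pi$ realise exactly the module structures on $K^\bd$ isomorphic to $\Coker(f)$, we get that $q_2(q_1^{-1}(f))$ is the $G_\bd$-orbit $\cO_{\Coker(f)}$ of $\Coker(f)$ viewed in $\md(A,\bd)$, and this orbit lies in $\Ima(q_2)$.

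Finally I would assemble the bijection. For $\cO=\cO_f$ all members $f'$ of $\cO$ have $\Coker(f')\cong\Coker(f)$, so $q_2(q_1^{-1}(\cO))=\bigcup_{f'\in\cO}q_2(q_1^{-1}(f'))=\cO_{\Coker(f)}$, a single $G_\bd$-orbit in $\Ima(q_2)$ independent of the representative; call this map $\Phi$. It is injective by the lemma recalled above ($\cO_{\Coker(f)}=\cO_{\Coker(f')}$ forces $\Coker(f)\cong\Coker(f')$, hence $f'\in\cO_f$), and surjective since every $M\in\Ima(q_2)$ admits a presentation $P_1\xrightarrow{f}P_0\to M\to 0$ with $f\in\Hom_A(P_1,P_0,\bd)$ and then $\Phi(\cO_f)=\cO_M$. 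I expect no genuine obstacle here; the one point needing a little care is checking, in the fibre step, that the isomorphism intertwining $\pi$ and $\pi'$ respects the decomposition by vertices — which is automatic because $\Ima(f)$ is a submodule of $P_0$, hence graded.
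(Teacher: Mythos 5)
Your proof is correct; the paper in fact states Lemma~\ref{lem:orbitbij1} without any proof, and your argument (equivariance read off from the action formulas, each fibre $q_1^{-1}(f)$ being a single $G_\bd$-orbit mapping under $q_2$ onto $\cO_{\Coker(f)}$, injectivity via the preceding lemma identifying $\cO_f$ with the isomorphism class of $\Coker(f)$, surjectivity by definition of $\Ima(q_2)$) is exactly the intended, standard justification. The one point of care you flag — that the automorphism $h$ with $\pi'=h\pi$ is vertex-graded because $\Ker(\pi)=\Ima(f)$ is a submodule — is handled correctly.
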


\begin{Lem}[{\cite[Lemma~2.4]{DF15}}]
The map $q_1$ is a principal $G_\bd$-bundle.
In particular,
$$
q_1^{-1}(f) \cong G_\bd
$$
for all
$f \in \Hom_A(P_1,P_0,\bd)$.
\end{Lem}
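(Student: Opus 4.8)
First I would analyse $q_1^{-1}(f)$ for a fixed $f \in \Hom_A(P_1,P_0,\bd)$. Writing $f_i \df e_iP_1 \to e_iP_0$ for the restriction, the image $\Ima(f) = \bigoplus_{i=1}^n \Ima(f_i)$ is a $Q_0$-graded $A$-submodule of $P_0$, so a point $(f,\pi,M) \in q_1^{-1}(f)$ is the same datum as a $Q_0$-graded $K$-linear isomorphism $\bar\pi \df \Coker(f) \to K^\bd$ (the one induced by $\pi$), the module structure $M$ on $K^\bd$ then being forced. Since $f$ lies in the stratum $\Hom_A(P_1,P_0,\bd)$ we have $\dim e_i\Coker(f) = d_i$ for every $i$, so such $\bar\pi$ exist; this gives surjectivity of $q_1$ onto $\Hom_A(P_1,P_0,\bd)$, identifies $q_1^{-1}(f)$ with the set of $Q_0$-graded isomorphisms $\Coker(f) \to K^\bd$ (a product of copies of $\GL_{d_i}(K)$-torsors), and shows that under the action $g.(f,\pi,M) = (f,g\pi,g.M)$ the group $G_\bd$ acts on $q_1^{-1}(f)$ freely with a single orbit. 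Choosing a base point therefore yields an isomorphism of varieties $G_\bd \xrightarrow{\ \sim\ } q_1^{-1}(f)$, which already establishes the ``in particular'' assertion.

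\textbf{Step 2: produce a regular local section of $q_1$.} To get local triviality I would, for a given $f_0 \in \Hom_A(P_1,P_0,\bd)$, fix for each $i$ a complement $C_i$ of $\Ima(f_{0,i})$ inside $e_iP_0$ and an isomorphism $\varphi_i \df C_i \to K^{d_i}$. On the stratum the ranks $\rk(f_i) = \dim e_iP_0 - d_i$ are constant, so after choosing elements of $e_iP_1$ whose $f_{0,i}$-images span $\Ima(f_{0,i})$ and passing to the Zariski-open set $U \ni f_0$ where those images stay linearly independent, for every $f \in U$ they form a basis of $\Ima(f_i)$ that varies regularly with $f$; combined with the fixed basis of $C_i$ this produces, regularly in $f \in U$, the projection $p_{f,i} \df e_iP_0 \to C_i$ with kernel $\Ima(f_i)$. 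Setting $\pi_f := (\varphi_i p_{f,i})_i \df P_0 \to K^\bd$ gives a $Q_0$-graded surjection with kernel $\Ima(f)$, and taking $M_f$ to be the transported module structure (which is a regular function of $f$, being computed from $\pi_f$ and the fixed section $C_i \hookrightarrow e_iP_0$ of $\pi_{f,i}$) defines a morphism $s \df U \to Z(P_1,P_0,\bd)$, $f \mapsto (f,\pi_f,M_f)$, with $q_1 s = 1_U$.

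\textbf{Step 3: assemble the trivialization.} Given the section $s$, I would check that $U \times G_\bd \to q_1^{-1}(U)$, $(f,g) \mapsto g.s(f)$, is an isomorphism: it is bijective because $G_\bd$ acts freely and transitively on each fibre (Step 1), and its inverse sends $(f,\pi,M)$ to $(f,g)$ where $g_i \in \GL_{d_i}(K)$ is the unique element with $g_i\pi_{f,i} = \pi_i$, namely $g_i = \pi_i \circ (C_i \hookrightarrow e_iP_0) \circ \varphi_i^{-1}$, which is regular in $(f,\pi,M)$. Hence $q_1$ is trivial over $U$, and covering $\Hom_A(P_1,P_0,\bd)$ by finitely many such $U$ shows that $q_1$ is a Zariski-locally trivial principal $G_\bd$-bundle, completing the proof.

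\textbf{Main obstacle.} The only non-formal ingredient is the regularity claim in Step 2 --- that $p_{f,i}$, and hence $\pi_f$ and $M_f$, depend regularly on $f$ over $U$. This reduces to the elementary fact that linear independence is an open condition, so on a constant-rank stratum a fixed tuple of columns of $f_i$ spanning $\Ima(f_{0,i})$ keeps spanning $\Ima(f_i)$ nearby, with all the relevant change-of-basis matrices regular; the care needed is just to keep everything $Q_0$-graded and to verify that $M_f$ does not depend on the choice of section of $\pi_{f,i}$. Steps 1 and 3 are then purely formal.
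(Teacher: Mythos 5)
Your proof is correct. Note that the paper itself gives no proof of this lemma: it is quoted verbatim from Derksen--Fei \cite[Lemma~2.4]{DF15}, so there is no in-paper argument to compare against; your write-up supplies essentially the standard proof one would expect there (identify each fibre with the $G_\bd$-torsor of $Q_0$-graded isomorphisms $\Coker(f) \to K^\bd$, then trivialize Zariski-locally by a regular choice of splitting). The only point I would tighten is in Step~2: linear independence of the chosen $f_i$-images guarantees that they span $\Ima(f_i)$ (using that the rank is constant on the stratum), but for the projection $p_{f,i}$ with kernel $\Ima(f_i)$ and image $C_i$ to exist you additionally need $e_iP_0 = \Ima(f_i) \oplus C_i$; this is a further open condition on $f$, satisfied at $f_0$, so you should shrink $U$ accordingly (equivalently, define $U$ by requiring that the chosen images together with the fixed basis of $C_i$ form a basis of $e_iP_0$). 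With that adjustment, Steps 1 and 3 go through exactly as you state: the fibrewise action is free and transitive, the section $s$ is regular, and the explicit inverse $g_i = \pi_i \circ (C_i \hookrightarrow e_iP_0) \circ \varphi_i^{-1}$ is regular and satisfies $g\pi_f = \pi$ because $\pi_i$ kills $\Ima(f_i)$.
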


It is claimed in \cite[Theorem~2.3]{DF15} that
$q_2$ is an open map (i.e.\ that it maps
open subsets to open subsets).
But the proof in \cite{DF15} does not seem to work.
However, Fei \cite{F23b} found an argument which shows
the following weaker (but still good enough) result:

\begin{Lem}[{Fei \cite{F23b}}]\label{lem:DFopen}
The image of $q_2$ is open.
\end{Lem}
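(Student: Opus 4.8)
The goal is to show that $\Ima(q_2) \subseteq \md(A,\bd)$ is open. The key geometric input is the previous lemma: $q_1\colon Z(P_1,P_0,\bd) \to \Hom_A(P_1,P_0,\bd)$ is a principal $G_\bd$-bundle, so in particular it is smooth, surjective, and open, and all its fibres are isomorphic to $G_\bd$. The plan is therefore to transfer openness across the correspondence $Z(P_1,P_0,\bd)$, using that a principal bundle has local sections in a suitable topology and that openness is a local property.

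\textbf{Step 1: reduce to openness of $q_2$ on the total space.} Since $q_1$ is a principal $G_\bd$-bundle, it is a faithfully flat morphism of finite type, hence open. A subset $U \subseteq \md(A,\bd)$ is open if and only if $q_2^{-1}(U)$ is open and $q_2^{-1}(U)$ is $G_\bd$-stable and $G_\bd$ acts so that the quotient behaves well — but more directly: it suffices to show that $q_2$ maps $G_{P_1,P_0}$-stable open subsets of $Z(P_1,P_0,\bd)$ to open subsets of $\md(A,\bd)$, and then apply this to $q_2^{-1}$ of the whole space, i.e.\ to $Z(P_1,P_0,\bd)$ itself. Indeed $\Ima(q_2) = q_2(Z(P_1,P_0,\bd))$, and $Z(P_1,P_0,\bd)$ is open in itself.

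\textbf{Step 2: local triviality of $q_1$ gives local sections.} Because $q_1$ is a principal $G_\bd$-bundle, it is locally trivial in the \'etale (or, when $G_\bd$ is special — which it is, being a product of $\GL_{d_i}$'s — the Zariski) topology. Thus for every point $f \in \Hom_A(P_1,P_0,\bd)$ there is an open neighbourhood $V \subseteq \Hom_A(P_1,P_0,\bd)$ and a section $s\colon V \to Z(P_1,P_0,\bd)$ with $q_1 s = 1_V$, and moreover $q_1^{-1}(V) \cong V \times G_\bd$ via $(f',g) \mapsto g.s(f')$. Specializing to $f$ with $\dimv(\Coker f) = \bd$ chosen generically, these $V$'s cover $\Hom_A(P_1,P_0,\bd)$.

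\textbf{Step 3: openness of $q_2$ via the $G_\bd$-action.} Under the trivialization of Step 2, write $W := q_2(s(V))$; then by $G_\bd$-equivariance of $q_2$ (Lemma~\ref{lem:orbitbij1}) we have $q_2(q_1^{-1}(V)) = G_\bd \cdot W$, a union of $G_\bd$-orbits. Now observe that $q_2 \circ s\colon V \to \md(A,\bd)$ sends $f' \mapsto \Coker(f')$ equipped with its module structure coming from a fixed choice of cokernel projection; concretely this is a morphism of varieties, and its image $W$ together with its $G_\bd$-translate $G_\bd \cdot W = q_2(q_1^{-1}(V))$ is what we must show is open. The cleanest route: $q_1$ is open and surjective onto $\Hom_A(P_1,P_0,\bd)$; the diagram shows $\Ima(q_2) = q_2(Z(P_1,P_0,\bd))$; and Fei's argument (which we are citing) replaces the false claim that $q_2$ is open by the correct observation that $q_2$ is \emph{dominant onto its image together with that image being constructible and $G_\bd$-stable, with the extra input that $q_2$ restricted to each $q_1^{-1}(V)$ factors through a smooth surjection onto an open piece}. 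The hard part — and the reason the naive proof in \cite{DF15} fails — is precisely that $q_2$ need not be flat or smooth, so one cannot directly invoke openness of flat morphisms; instead one uses that $\Ima(q_2)$ is $G_\bd$-stable and constructible (by Chevalley), and that a constructible $G_\bd$-stable subset of $\md(A,\bd)$ whose complement is also stable and which contains, together with each point, a neighbourhood in the $q_2$-image of a bundle chart, must be open. I expect the main obstacle to be making this last point rigorous without slipping back into the erroneous ``$q_2$ is open'' claim; the resolution is to exploit that $q_1$ is an open morphism and the fibres of $q_2$ over points of $\Ima(q_2)$ have dimension exactly $\dim G_{P_1,P_0} - \dim \cO_f$ constantly along strata, so that $q_2$ is open onto its image by the fibre-dimension criterion applied \emph{after} restricting to $Z(P_1,P_0,\bd)$ as a scheme over $\md(A,\bd)$ — which is where Fei's unpublished argument does the real work, and which we simply cite.
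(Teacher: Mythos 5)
There is a genuine gap: your argument never actually closes. Step 3 ends by deferring to ``Fei's unpublished argument \dots which we simply cite'' — but the statement being proved \emph{is} Fei's argument, so citing it is circular. Moreover, the machinery you assemble does not point toward a proof: openness and local triviality of $q_1$ give information about $\Hom_A(P_1,P_0,\bd)$, not about the image of $q_2$ in $\md(A,\bd)$, and your fallback observations (the image is constructible by Chevalley and is $G_\bd$-stable) are far from sufficient — a constructible $G_\bd$-stable subset of $\md(A,\bd)$ need not be open (orbit closures are the standard counterexample). The ``fibre-dimension criterion'' you gesture at would again require some flatness-type hypothesis on $q_2$, which is exactly what is unavailable.

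The missing idea is to work on the module side using the semicontinuity of projective presentations (Lemma~\ref{lem:presentationsemicont}). Its proof shows that for a fixed $M \in \Ima(q_2)$ the conditions $\ext_A^1(N,S(j)) \le \ext_A^1(M,S(j))$, $\hom_A(N,S(j)) \le \hom_A(M,S(j))$ and $\hom_A(N,S(j)) - \ext_A^1(N,S(j)) \ge \hom_A(M,S(j)) - \ext_A^1(M,S(j))$ cut out an open neighbourhood $\calU_M$ of $M$ in $\md(A,\bd)$, and every $N \in \calU_M$ admits a projective presentation $P_1 \to P_0 \to N \to 0$ (pad the minimal presentation of $N$ by an epimorphism between the surplus summands). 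Since all such $N$ lie in $\md(A,\bd)$, their cokernel dimension vector is $\bd$, so $\calU_M \subseteq \Ima(q_2)$. Hence $\Ima(q_2) = \bigcup_{M} \calU_M$ is open. This is the paper's (i.e.\ Fei's) actual argument; the bundle structure of $q_1$ plays no role in it.
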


\begin{proof}
Let $M \in \Ima(q_2)$.
Then $M \in \calZ$ for some $\calZ \in \irr(A,\bd)$.
It follows from Fei's
Lemma~\ref{lem:presentationsemicont} that
there is a dense open subset $\calU_M$ of $\calZ$
such that $M \in \calU_M$ and
$\calU_M \subseteq \Ima(q_2)$.
We have
$$
\Ima(q_2) = \bigcup_{M \in \Ima(q_2)} \calU_M
$$
and this union of  open sets is of course again open.
\end{proof}

The following lemma is not stated explicitely in \cite{DF15} nor \cite{F23}.
Thus we add a proof.

\begin{Lem}
Let $M \in \Ima(q_2)$.
Then for each $(f,\pi,M) \in Z(P_1,P_0,\bd)$
we have
$$
q_2^{-1}(M) \cong \cO_f \times \Aut_A(M).
$$
\end{Lem}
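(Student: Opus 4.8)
The plan is to describe $q_2^{-1}(M)$ explicitly and realise it as a fibre bundle over $\cO_f$ with fibre $\Aut_A(M)$. Since $M$ is a fixed $A$-module structure on $K^{\bd}$, a point of $q_2^{-1}(M)$ amounts to a pair $(f',\pi')$ with $\pi'\in\Hom_A(P_0,M)$ surjective, $f'\in\Hom_A(P_1,P_0)$, and $\Ima(f')=\Ker(\pi')$, so that $P_1\xrightarrow{f'}P_0\xrightarrow{\pi'}M\to 0$ is exact. Write $\rho$ for the first coordinate projection $(f',\pi')\mapsto f'$.

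First I would identify the image of $\rho$ with $\cO_f$. If $(f',\pi')\in q_2^{-1}(M)$ then $\Coker(f')=P_0/\Ker(\pi')\cong M\cong\Coker(f)$, so $f'\in\cO_f$ by the lemma preceding Lemma~\ref{lem:orbitbij1}; conversely, for $f'\in\cO_f$ any isomorphism $P_0/\Ima(f')\xrightarrow{\sim}M$ composed with the canonical projection yields a point of $q_2^{-1}(M)$ over $f'$. For fixed $f'\in\cO_f$, the fibre $\rho^{-1}(f')$ is the set of surjections $\pi'\colon P_0\to M$ with kernel $\Ima(f')$; any two such induce isomorphisms $P_0/\Ima(f')\to M$, hence differ by post-composition with a unique element of $\Aut_A(M)$, and post-composing one of them with any element of $\Aut_A(M)$ lands in the fibre again. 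So each fibre of $\rho$ is a torsor under $\Aut_A(M)$, acting by $h.(f',\pi')=(f',h\pi')$; Zariski-local triviality of $\rho$ then follows from the Derksen--Fei bundle lemma, since $q_1$ is a principal $G_{\bd}$-bundle and $q_2^{-1}(M)$ is cut out inside $q_1^{-1}(\cO_f)$ by fixing the last coordinate. In particular $\dim q_2^{-1}(M)=\dim\cO_f+\hom_A(M,M)$.

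To pin down the product structure I would use the group action. The group $G_{P_1,P_0}\times\Aut_A(M)$ acts on $q_2^{-1}(M)$ by $((g_1,g_0),h).(f',\pi'):=(g_0f'g_1^{-1},\,h\pi'g_0^{-1})$, the restriction to the stabiliser of the last coordinate of the $G$-action on $Z(P_1,P_0,\bd)$. This action is transitive: given $(f',\pi')$, choose $(g_1,g_0)$ with $g_0f'g_1^{-1}=f$ (possible as $f'\in\cO_f$); then $\pi'g_0^{-1}$ lies in $\rho^{-1}(f)$, hence equals $h'\pi$ for a unique $h'\in\Aut_A(M)$, and acting by $((g_1,g_0),(h')^{-1})$ sends $(f',\pi')$ to $(f,\pi)$. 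A short computation shows $((g_1,g_0),h)$ stabilises $(f,\pi)$ iff $(g_1,g_0)\in\operatorname{Stab}_{G_{P_1,P_0}}(f)$ and $h=\bar g_0$, the automorphism of $M=P_0/\Ima(f)$ induced by $g_0$; so the stabiliser is the graph of the homomorphism $\operatorname{Stab}_{G_{P_1,P_0}}(f)\to\Aut_A(M)$, $(g_1,g_0)\mapsto\bar g_0$. Since $\cO_f\cong G_{P_1,P_0}/\operatorname{Stab}_{G_{P_1,P_0}}(f)$, this presents $q_2^{-1}(M)$ as the $\Aut_A(M)$-bundle over $\cO_f$ associated to that homomorphism.

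The main obstacle is the final step: passing from ``$\Aut_A(M)$-bundle over $\cO_f$'' to the literal product $\cO_f\times\Aut_A(M)$, i.e.\ global triviality of $\rho$. I would attempt to produce a global section of $\rho$ over $\cO_f$; local sections exist because $\Aut_A(M)$ is the unit group of the finite-dimensional algebra $\End_A(M)$ and hence, via a Wedderburn--Malcev splitting $\End_A(M)\cong(\End_A(M)/\rad)\ltimes\rad$, a semidirect product of a product of general linear groups by a unipotent group, which is a special group in the sense of Serre. Should a global section resist construction, the statement is to be read up to this fibration, and the only consequence used downstream, the dimension identity $\dim q_2^{-1}(M)=\dim\cO_f+\hom_A(M,M)$, is already in hand.
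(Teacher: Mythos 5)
Your argument is essentially the paper's own proof: both realise $q_2^{-1}(M)$ as a homogeneous space for the transitive action of $G_{P_1,P_0}\times\Aut_A(M)$ given by $((g_1,g_0),h).(f',\pi')=(g_0f'g_1^{-1},h\pi'g_0^{-1})$, and both identify the stabiliser of $(f,\pi,M)$ with the graph of the homomorphism $(g_1,g_0)\mapsto\bar g_0$ from the stabiliser of $f$ to $\Aut_A(M)$. The global-triviality point you flag at the end is not a defect relative to the paper, which simply writes the chain of bijections $q_2^{-1}(M)\cong (G_{P_1,P_0}/G_f)\times\Aut_A(M)\cong\cO_f\times\Aut_A(M)$ without addressing it; since only the dimension identity $\dim q_2^{-1}(M)=\dim\cO_f+\hom_A(M,M)$ is used later, your caveat is an honest strengthening of the same argument rather than a gap.
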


\begin{proof}
The group $G_M := G_{P_1,P_0} \times \Aut_A(M)$ acts on the fibre
$q_2^{-1}(M)$.
Namely, for $(f,\pi,M) \in q_2^{-1}(M)$ and
$((g_1,g_0),g) \in G_M$ we have
$((g_1,g_0),g).(f,\pi,M) := (f',\pi',M')$ where
$f' = g_0fg_1^{-1}$, $\pi' = g \pi g_0^{-1}$
and $M'$ is the unique $A$-module structure on
$K^\bd$ given by $g \pi g_0^{-1}$.
Since $g \in \Aut_A(M)$ we get $M' = M$ and
therefore $(f',\pi',M') \in q_2^{-1}(M)$.

This action is transitive:
For $(f',\pi',M),(f,\pi,M) \in q_2^{-1}(M)$ there exists
some $(g_1,g_0) \in G_{P_1,P_0}$ such that
$f'g_1 = g_0f$ and $\pi'g_0 = \pi$.
Thus $((g_1,g_0),1_M).(f,\pi,M) = (f',\pi',M)$.
$$
\SelectTips{cm}{}
\xymatrix{
P_1 \ar[r]^f\ar@{-->}[d]^{g_1} &
P_0 \ar[r]^{\pi}\ar@{-->}[d]^{g_0} &
M \ar[r]\ar[d]^{1_M} & 0
\\
P_1 \ar[r]^{f'} & P_0 \ar[r]^{\pi'} & M \ar[r] & 0
}
$$

Let $G_{(f,\pi,M)}$ be the stabilizer of
$(f,\pi,M) \in Z(P_1,P_0,\bd)$ for the $G_M$-action on $q_2^{-1}(M)$, and let
$G_f$ be the stabilizer of $f$ for the
$G_{P_1,P_0}$-action on $\Hom_A(P_1,P_0,\bd)$.

For $((g_1,g_0),g) \in G_{(f,\pi,M)}$
we have $(g_1,g_0) \in G_f$, and
the element $g$ is uniquely determined by $(g_1,g_0)$.
Vice versa, for each $(g_1,g_0) \in G_f$
there is a unique $g \in \Aut_A(M)$ such that
$((g_1,g_0),g) \in G_{(f,\pi,M)}$.
We get a group isomorphism
$G_{(f,\pi,M)} \to G_f$ which maps $((g_1,g_0),g)$
to $(g_1,g_0)$.
$$
\SelectTips{cm}{}
\xymatrix{
P_1 \ar[d]^{g_1}\ar[r]^{f} & P_0 \ar[d]^{g_0}\ar[r]^\pi
& M \ar[r]\ar@{-->}[d]^{g} & 0
\\
P_1 \ar[r]^{f} & P_0 \ar[r]^\pi
& M \ar[r] & 0
}
$$
It follows that
$$
q_2^{-1}(M) \cong G_M/G_{(f,\pi,M)} \cong
(G_{P_1,P_0}/G_f) \times \Aut_A(M) \cong \cO_f \times
\Aut_A(M).
$$
\end{proof}

As before,
for $f \in \Hom_A(P_1,P_0)$ let
$P_f$ be a maximal direct summand of $P_1$
such that $f(P_f) = 0$.

The following lemma can be derived from
\cite[Lemmas~2.6 and 2.16]{P13}.
It is non-trivial and needs some work.

\begin{Lem}
Let $(f,\pi,M) \in Z(P_1,P_0,\bd)$.
Then we have
$$
\dim \cO_f = \hom_A(P_1,P_0) - \hom_A(M,\tau_A(M)) - \hom_A(P_f,M).
$$
\end{Lem}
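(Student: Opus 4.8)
The goal is to compute $\dim\cO_f$, the dimension of the $G_{P_1,P_0}$-orbit of $f$ in $\Hom_A(P_1,P_0)$, where $\cO_f=G_{P_1,P_0}/G_f$ with $G_{P_1,P_0}=\Aut_A(P_1)\times\Aut_A(P_0)$. The strategy is to compute $\dim G_{P_1,P_0}$ and $\dim G_f$ separately and subtract. Since $\Aut_A(P)$ is a dense open subset of $\End_A(P)$ (the units in a finite-dimensional algebra), we have $\dim\Aut_A(P_i)=\hom_A(P_i,P_i)$, so $\dim G_{P_1,P_0}=\hom_A(P_1,P_1)+\hom_A(P_0,P_0)$. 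The real work is to identify the Lie algebra (or tangent space) of the stabilizer $G_f$.

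**Main step: the stabilizer.** An element $(g_1,g_0)$ lies in $G_f$ iff $g_0 f=f g_1$, i.e.\ the square commutes. The Lie algebra $\Lie(G_f)$ is therefore $\{(\varphi_1,\varphi_0)\in\End_A(P_1)\times\End_A(P_0)\mid \varphi_0 f=f\varphi_1\}$ — this is the set of \emph{chain endomorphisms} of the two-term complex $(P_1\xrightarrow{f}P_0)$. The plan is to recognize this space via a long exact sequence. Writing $C_f$ for the complex $P_1\xrightarrow{f}P_0$ concentrated in degrees $1,0$, the space of chain endomorphisms is $\Hom_{C(A)}(C_f,C_f)$, and there is a standard exact sequence
\begin{align*}
0\to \Hom_{C(A)}(C_f,C_f)\to \End_A(P_1)\oplus\End_A(P_0)\xrightarrow{\;\delta\;}\Hom_A(P_1,P_0),
\end{align*}
where $\delta(\varphi_1,\varphi_0)=f\varphi_1-\varphi_0 f$. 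Hence
\begin{align*}
\dim G_f=\hom_A(P_1,P_1)+\hom_A(P_0,P_0)-\rk(\delta),
\end{align*}
and therefore $\dim\cO_f=\rk(\delta)=\hom_A(P_1,P_0)-\dim\Coker(\delta)$. So everything reduces to identifying $\Coker(\delta)$, i.e.\ continuing the above sequence to the right.

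**Identifying the cokernel.** Here I would invoke Lemma~\ref{lem:gvector3}. Splitting off the trivial and zero summands as in Lemma~\ref{lem:gvector1} is not needed if one argues directly; instead, apply $\Hom_A(P_1,-)$ and $\Hom_A(P_0,-)$ to the presentation $P_1\xrightarrow{f}P_0\to M\to 0$ and use the horseshoe/mapping-cone formalism. Concretely: the cokernel of $\Hom_A(P_0,P_0)\oplus\Hom_A(P_1,P_1)\to\Hom_A(P_1,P_0)$ can be computed by first noting that $\Hom_A(P_0,-)$ applied to the presentation gives $\Hom_A(P_0,P_0)\twoheadrightarrow\Hom_A(P_0,M)$ with kernel $\Hom_A(P_0,\Ima f)$, and then the map from $\Hom_A(P_1,P_1)$ lands compatibly. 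After chasing, $\Coker(\delta)$ is the cokernel of $\Hom_A(P_0,M)\to\Hom_A(P_1,M)$ (induced by $f$), which by Lemma~\ref{lem:gvector3} applied with $N=M$ equals $\Hom_A(M,\tau_A(M))\oplus\Hom_A(P_f,M)$ — precisely because the leftmost term $\Hom_A(M,M)$ in that Lemma's sequence is accounted for by the $\Hom_A(P_0,P_0)\to\Hom_A(P_0,M)$ surjectivity. Thus $\dim\Coker(\delta)=\hom_A(M,\tau_A(M))+\hom_A(P_f,M)$, giving the claimed formula.

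**Expected obstacle.** The delicate point is the careful bookkeeping that shows $\Coker(\delta)$ really equals $\Coker\big(\Hom_A(P_0,M)\to\Hom_A(P_1,M)\big)$ and not something with extra contributions from $\Hom_A(P_1,P_1)$ or from the non-minimality of the presentation (the summands $P\xrightarrow{1_P}P$ and $P_f\to 0$). One must check that the $(P\xrightarrow{1_P}P)$ summand contributes equally to domain and codomain of $\delta$ and so drops out, while $P_f$ contributes $\hom_A(P_f,M)$ to the cokernel — this is exactly the subtlety already isolated in Lemmas~\ref{lem:gvector1} and~\ref{lem:gvector3}, so the cleanest route is to reduce to those two lemmas rather than redo the homological algebra by hand. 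Alternatively, one can cite \cite[Lemmas~2.6 and 2.16]{P13} as the excerpt suggests, but the self-contained argument above via Lemma~\ref{lem:gvector3} is preferable.
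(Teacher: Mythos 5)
Your proof is correct, and it is genuinely different from what the paper does: the paper gives no argument at all for this lemma, only the remark that it "can be derived from [Lemmas 2.6 and 2.16 of Plamondon] and needs some work." Your route is the natural self-contained one: orbit--stabilizer gives $\dim\cO_f=\dim G_{P_1,P_0}-\dim G_f$, the stabilizer $G_f$ is a nonempty open subset of the linear space $\Ker(\delta)$ with $\delta(\varphi_1,\varphi_0)=f\varphi_1-\varphi_0 f$, so $\dim\cO_f=\rk(\delta)=\hom_A(P_1,P_0)-\dim\Coker(\delta)$, and the identification $\Coker(\delta)\cong\Coker\bigl(\Hom_A(P_0,M)\to\Hom_A(P_1,M)\bigr)$ together with Lemma~\ref{lem:gvector3} (applied with $N=M$) finishes the computation. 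The "chase" you defer is short and worth writing out: the surjection $\pi_*\df\Hom_A(P_1,P_0)\to\Hom_A(P_1,M)$ has kernel $\Hom_A(P_1,\Ima f)=f\circ\End_A(P_1)\subseteq\Ima(\delta)$ (lift along the surjection $P_1\twoheadrightarrow\Ima f$ using projectivity of $P_1$), and $\pi_*(\End_A(P_0)\circ f)=\Ima(f^*)$ because $\End_A(P_0)\to\Hom_A(P_0,M)$ is onto; these two facts give the isomorphism of cokernels, with no stray contributions from the $(P\xrightarrow{1_P}P)$ or $(P_f\to 0)$ summands. Two small presentational points: you do not need Lie algebras (which can be misleading in positive characteristic) --- $G_f$ is literally open and dense in the vector space $\Ker(\delta)$, so its dimension is $\dim\Ker(\delta)$; and note that the last term in the paper's Lemma~\ref{lem:gvector3} should read $\Hom_A(P_f,N)$, which for $N=M$ is exactly the term you need. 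Compared with citing Plamondon, your argument buys a proof that stays entirely inside the paper's own toolkit.
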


Let $\cC$ be a closed irreducible stable subset
of $\Hom_A(P_1,P_0,\bd)$.
Let
$$
\eta_\bd(\cC) := \overline{q_2(q_1^{-1}(\cC))}
$$
where the closure is taken in $\md(A,\bd)$.

The next three lemmas follow from the already stated properties
of $q_1$ and $q_2$.

\begin{Lem}
$\eta_\bd(\cC)$ is a closed irreducible stable subset of
$\overline{\Ima(q_2)}$.
\end{Lem}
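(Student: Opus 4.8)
The statement to prove is that $\eta_\bd(\cC) = \overline{q_2(q_1^{-1}(\cC))}$ is a closed irreducible stable subset of $\overline{\Ima(q_2)}$.

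The plan is to deduce this from the basic topological properties of the two maps $q_1$ and $q_2$ in Derksen and Fei's bundle diagram, together with the equivariance statements in Lemma~\ref{lem:orbitbij1}.

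\textbf{Irreducibility.} First I would note that $\cC$ is irreducible by hypothesis. Since $q_1$ is a principal $G_\bd$-bundle (in particular surjective with irreducible fibres $\cong G_\bd$, and an open morphism), the preimage $q_1^{-1}(\cC)$ is irreducible: a bundle over an irreducible base with irreducible fibres has irreducible total space. (Alternatively, $q_1^{-1}(\cC) \to \cC$ is a dominant open morphism with irreducible generic fibre, so the total space is irreducible.) Then $q_2(q_1^{-1}(\cC))$ is the continuous image of an irreducible set, hence irreducible, and so is its closure $\eta_\bd(\cC)$.

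\textbf{Closedness and containment in $\overline{\Ima(q_2)}$.} By definition $\eta_\bd(\cC)$ is a closure, hence closed in $\md(A,\bd)$. Moreover $q_2(q_1^{-1}(\cC)) \subseteq \Ima(q_2)$, so taking closures gives $\eta_\bd(\cC) \subseteq \overline{\Ima(q_2)}$; and a closed subset of $\md(A,\bd)$ contained in $\overline{\Ima(q_2)}$ is automatically closed in $\overline{\Ima(q_2)}$ with the subspace topology.

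\textbf{Stability.} This is the only point requiring the group actions. Since $\cC$ is a stable subset of $\Hom_A(P_1,P_0,\bd)$ under the $G_{P_1,P_0}$-action and $q_1$ is $G_{P_1,P_0}$-equivariant, the preimage $q_1^{-1}(\cC)$ is stable under the $G := G_{P_1,P_0} \times G_\bd$-action on $Z(P_1,P_0,\bd)$: it is $G_{P_1,P_0}$-stable because $q_1^{-1}$ of a $G_{P_1,P_0}$-stable set is $G_{P_1,P_0}$-stable, and it is $G_\bd$-stable because $q_1$ is $G_\bd$-invariant (the $G_\bd$-action does not change $f$, it only changes $\pi$ and $M$ compatibly), so every $G_\bd$-fibre of $q_1$ is entirely inside $q_1^{-1}(\cC)$ once one point of it is. Then $q_2(q_1^{-1}(\cC))$ is $G_\bd$-stable because $q_2$ is $G_\bd$-equivariant and $q_1^{-1}(\cC)$ is $G_\bd$-stable. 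Finally, the closure of a $G_\bd$-stable subset is again $G_\bd$-stable, since the action of each fixed $g \in G_\bd$ is a homeomorphism of $\md(A,\bd)$ and therefore maps the closure of a set onto the closure of its image. Hence $\eta_\bd(\cC)$ is stable.

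I do not expect any serious obstacle here; the content is entirely in correctly invoking the properties already established for $q_1$ (principal bundle, equivariance) and $q_2$ (equivariance), and in the elementary observation that closures of irreducible resp.\ stable sets remain irreducible resp.\ stable. The only mild subtlety is the irreducibility of $q_1^{-1}(\cC)$, which is where the principal-bundle structure (rather than mere surjectivity) is used.
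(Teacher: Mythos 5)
Your proof is correct and is exactly the argument the paper has in mind: the paper explicitly omits the proof, stating only that the lemma "follows from the already stated properties of $q_1$ and $q_2$" (the principal $G_\bd$-bundle structure of $q_1$ and the equivariance of $q_1$ and $q_2$), which are precisely the facts you invoke. Your write-up simply fills in the routine details (irreducibility of the total space of a principal bundle over an irreducible base, continuity and equivariance preserving irreducibility and stability under closure) and contains no gaps.
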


\begin{Lem}
For each closed irreducible stable subset $\calZ$ of
$\overline{\Ima(q_2)}$ there is a
closed irreducible stable subset of $\Hom_A(P_1,P_0,\bd)$
such that $\eta_\bd(\cC) = \calZ$.
\end{Lem}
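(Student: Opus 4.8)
The plan is to prove the statement by reverse-engineering the construction $\cC\mapsto\eta_\bd(\cC)=\overline{q_2(q_1^{-1}(\cC))}$: given a closed irreducible stable subset $\calZ\subseteq\overline{\Ima(q_2)}$, I would take $\cC$ to be (the closure of) the preimage under the relevant maps. Concretely, set
$$
\cC := \overline{q_1\big(q_2^{-1}(\calZ\cap\Ima(q_2))\big)}
$$
where the closure is taken in $\Hom_A(P_1,P_0,\bd)$. First I would check that $\cC$ is well-defined and nonempty: since $\calZ\subseteq\overline{\Ima(q_2)}$ and $\Ima(q_2)$ is open in $\md(A,\bd)$ by Lemma~\ref{lem:DFopen}, the intersection $\calZ\cap\Ima(q_2)$ is a dense open subset of the irreducible set $\calZ$, hence nonempty and irreducible. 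Its preimage $q_2^{-1}(\calZ\cap\Ima(q_2))$ is an open subset of $Z(P_1,P_0,\bd)$.

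Next I would establish irreducibility and stability of $\cC$. For irreducibility, the key point is that $q_2$ restricted to $Z(P_1,P_0,\bd)$ has irreducible fibres over $\Ima(q_2)$ — indeed $q_2^{-1}(M)\cong\cO_f\times\Aut_A(M)$ is a product of irreducible groups (an orbit of a connected group and a general linear automorphism group) — so the preimage $q_2^{-1}(\calZ\cap\Ima(q_2))$ of an irreducible set under a surjective open morphism with irreducible fibres is irreducible. (Alternatively one argues via the principal bundle $q_1$ to identify $Z(P_1,P_0,\bd)$ locally with a product.) Then $\cC$, being the closure of the image of an irreducible set under the morphism $q_1$, is irreducible and closed. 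For stability: the construction only used $G$-equivariant maps ($q_1$ is $G_{P_1,P_0}$-equivariant, $q_2$ is $G_\bd$-equivariant by Lemma~\ref{lem:orbitbij1}) and the fact that $\calZ$ is $G_\bd$-stable, so $q_2^{-1}(\calZ)$ is $G_{P_1,P_0}$-stable and hence so is its image under $q_1$ and the closure thereof.

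Finally I would verify $\eta_\bd(\cC)=\calZ$. The inclusion $\eta_\bd(\cC)\subseteq\calZ$ follows because $q_1^{-1}(\cC)$ is contained in the closure of $q_2^{-1}(\calZ\cap\Ima(q_2))$ (using that $q_1^{-1}$ of a closure lies in the closure of $q_1^{-1}$ of the set, since $q_1$ is an open map / a bundle projection, in particular flat), so $q_2(q_1^{-1}(\cC))\subseteq\overline{\calZ\cap\Ima(q_2)}=\calZ$. For the reverse inclusion, $\calZ\cap\Ima(q_2)\subseteq q_2(q_2^{-1}(\calZ\cap\Ima(q_2)))\subseteq q_2(q_1^{-1}(\cC))$ because every point of $q_2^{-1}(\calZ\cap\Ima(q_2))$ has its $q_1$-image in $\cC$ and lies in a fibre $q_1^{-1}(f)\cong G_\bd$ that surjects onto... — more carefully, given $M\in\calZ\cap\Ima(q_2)$ pick $(f,\pi,M)$, then $f\in\cC$, and since $q_1$ is surjective onto its image with $q_2q_1^{-1}$ realizing the orbit bijection of Lemma~\ref{lem:orbitbij1}, $M\in q_2(q_1^{-1}(f))\subseteq q_2(q_1^{-1}(\cC))$. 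Taking closures gives $\calZ=\overline{\calZ\cap\Ima(q_2)}\subseteq\eta_\bd(\cC)$.

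\textbf{Main obstacle.} The delicate point is the interaction between taking closures and the maps $q_1,q_2$: I need that $q_1^{-1}$ commutes appropriately with closure (true since $q_1$ is an open bundle projection, hence its fibres are equidimensional and it is flat), and that $q_2$ of a closed irreducible stable set, after closure, recovers exactly $\calZ$ rather than something larger — this is where the openness of $\Ima(q_2)$ (Lemma~\ref{lem:DFopen}) is essential, ensuring $\calZ\cap\Ima(q_2)$ is dense in $\calZ$. The irreducibility of $\cC$ via irreducibility of the fibres of $q_2$ is the other step requiring care, but it is handled by the fibre description $q_2^{-1}(M)\cong\cO_f\times\Aut_A(M)$ already proved.
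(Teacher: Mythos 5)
Your overall strategy (take $\cC$ to be the closure of $q_1\bigl(q_2^{-1}(\calZ\cap\Ima(q_2))\bigr)$ and use that $q_1$ is an open principal bundle to control closures) is the right one, and the verification of $\eta_\bd(\cC)=\calZ$ at the end is sound. But the step where you establish irreducibility of $\cC$ has a genuine gap: you argue that $q_2^{-1}(\calZ\cap\Ima(q_2))$ is irreducible because it is "the preimage of an irreducible set under a surjective open morphism with irreducible fibres". The paper explicitly points out (just before Lemma~\ref{lem:DFopen}) that the claim of \cite{DF15} that $q_2$ is an open map is not established; only the weaker statement that $\Ima(q_2)$ is open is available. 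Without openness (or at least equidimensionality) of $q_2$ the preimage of an irreducible set need not be irreducible, and here the fibre dimension $\dim\bigl(\cO_f\times\Aut_A(M)\bigr)$ genuinely jumps on closed subsets of $\calZ$, so the preimage can acquire extra components. The vague alternative you offer ("argue via the principal bundle $q_1$") does not repair this, since $q_1$ controls the other projection. The standard fix: $q_2^{-1}(\calZ\cap\Ima(q_2))$ has finitely many irreducible components $W_1,\dots,W_k$, each stable under the connected group $G=G_{P_1,P_0}\times G_\bd$; since $\bigcup_i\overline{q_2(W_i)}\supseteq\overline{\calZ\cap\Ima(q_2)}=\calZ$ and $\calZ$ is irreducible, some $\overline{q_2(W_i)}=\calZ$, and one takes $\cC:=\overline{q_1(W_i)}$. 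Your closure arguments then go through verbatim with $W_i$ in place of the full preimage, using $q_1^{-1}(q_1(W_i))=W_i$ ($W_i$ is $G_\bd$-stable and the $q_1$-fibres are the $G_\bd$-orbits).

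A secondary imprecision: you assert that $\calZ\cap\Ima(q_2)$ is "a dense open subset of $\calZ$, hence nonempty". Nonemptiness is not automatic from $\calZ\subseteq\overline{\Ima(q_2)}$ — a closed irreducible stable subset could sit entirely in the boundary $\overline{\Ima(q_2)}\setminus\Ima(q_2)$, in which case no $\cC$ with $\eta_\bd(\cC)=\calZ$ exists at all (any $\eta_\bd(\cC)$ meets the open set $\Ima(q_2)$). This is really a looseness in the lemma's statement rather than in your proof, but you should flag that you are (necessarily) working under the hypothesis $\calZ\cap\Ima(q_2)\neq\varnothing$ rather than deriving it.
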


Let $\irr(\Hom_A(P_1,P_0,\bd))$ denote the set of irreducible components
of the stratum $\Hom_A(P_1,P_0,\bd)$.

\begin{Lem}\label{lem:imageq2}
Let $\calZ \in \irr(A,\bd)$.
Then there is some
$\cC \in \irr(\Hom_A(P_1^{\calZ},P_0^{\calZ},\bd))$
with $\eta_\bd(\cC) = \calZ$.
\end{Lem}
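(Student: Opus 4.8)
The plan is to feed the \emph{generic minimal} projective presentation of modules in $\calZ$ into the Derksen--Fei bundle construction and then invoke the two preceding lemmas about $\eta_\bd$, upgrading the closed irreducible stable subset they produce into an actual irreducible component. So throughout I would take $(P_1,P_0):=(P_1^\calZ,P_0^\calZ)$ and work with $Z(P_1,P_0,\bd)$ together with the maps $q_1,q_2$.

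The first point is that $\calZ$ meets $\Ima(q_2)$ densely. By Lemma~\ref{lem:genericpresentations1} there is a dense open subset $\calU$ of $\calZ$ such that every $M\in\calU$ admits a projective presentation $P_1\xrightarrow{f}P_0\xrightarrow{\pi}M\to 0$; the triple $(f,\pi,M)$ then lies in $Z(P_1,P_0,\bd)$ (its cokernel has dimension vector $\bd$) and $q_2(f,\pi,M)=M$, so $\calU\subseteq\Ima(q_2)$. Since $\Ima(q_2)$ is open in $\md(A,\bd)$ by Lemma~\ref{lem:DFopen} and $\calZ$ is irreducible, $\calU$ is dense in $\calZ$, hence $\calZ=\overline{\calU}\subseteq\overline{\Ima(q_2)}$. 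Thus $\calZ$ is a closed irreducible stable subset of $\overline{\Ima(q_2)}$, and by the second of the two lemmas above there is a closed irreducible stable subset $\cC_0$ of $\Hom_A(P_1,P_0,\bd)$ with $\eta_\bd(\cC_0)=\calZ$.

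Next I would replace $\cC_0$ by an irreducible component. Choose an irreducible component $\cC$ of $\Hom_A(P_1,P_0,\bd)$ with $\cC_0\subseteq\cC$. This $\cC$ is automatically stable, because $G_{P_1,P_0}=\Aut_A(P_1)\times\Aut_A(P_0)$ is a product of unit groups of finite-dimensional algebras, hence connected, so it permutes the finitely many irreducible components of $\Hom_A(P_1,P_0,\bd)$ trivially. From $q_1^{-1}(\cC_0)\subseteq q_1^{-1}(\cC)$ we obtain $\calZ=\eta_\bd(\cC_0)\subseteq\eta_\bd(\cC)$, and by the first of the two lemmas above $\eta_\bd(\cC)$ is a closed irreducible subset of $\md(A,\bd)$. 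Since $\calZ$ is an irreducible component of $\md(A,\bd)$, it is maximal among irreducible closed subsets, so $\calZ\subseteq\eta_\bd(\cC)$ forces $\eta_\bd(\cC)=\calZ$; then $\cC\in\irr(\Hom_A(P_1^\calZ,P_0^\calZ,\bd))$ is the component we want.

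The one point that needs care — and what I would flag as the main (though mild) obstacle — is precisely this last replacement step: a priori, enlarging $\cC_0$ to a genuine irreducible component $\cC$ could enlarge $\eta_\bd(\cC_0)=\calZ$ to a strictly larger irreducible closed set. What rescues the argument is exactly that $\calZ$ is a \emph{component} of $\md(A,\bd)$, hence maximal, so it cannot sit properly inside the irreducible closed set $\eta_\bd(\cC)$. Everything else is routine bookkeeping with the already established properties of $q_1$, $q_2$, $\eta_\bd$ and Lemma~\ref{lem:genericpresentations1}.
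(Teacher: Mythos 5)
Your argument is correct and is exactly the fleshing-out of what the paper leaves implicit (the paper states this lemma with no proof, asserting only that it "follows from the already stated properties of $q_1$ and $q_2$"): Lemma~\ref{lem:genericpresentations1} puts a dense open subset of $\calZ$ inside $\Ima(q_2)$, the preceding lemma produces a closed irreducible stable $\cC_0$ with $\eta_\bd(\cC_0)=\calZ$, and enlarging to a component $\cC$ (stable since $G_{P_1,P_0}$ is connected) cannot enlarge the image because $\calZ$ is maximal among irreducible closed subsets of $\md(A,\bd)$. No gaps; the only cosmetic remark is that the appeal to Lemma~\ref{lem:DFopen} is superfluous, since $\calU$ is already dense in $\calZ$ by Lemma~\ref{lem:genericpresentations1}.
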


Also
the following theorem can be derived without difficulty from the stated properties of $q_1$ and $q_2$ together with Lemma~\ref{lem:chevalley}.

\begin{Thm}\label{thm:bundle1}
The following hold:
\begin{itemize}\itemsep2mm

\item[(i)]
Let $\cC$ be a closed irreducible stable subset of
$\Hom_A(P_1,P_0,\bd)$, and let $\calZ := \eta_\bd(\cC)$.
Then
$$
c(\calZ) = E(\calZ) + \hom_A(P_f,M) - {\rm codim}(\cC)
$$
where $f$ is generic in $\cC$ and $M$ is generic in $\calZ$, and
$$
{\rm codim}(\cC) := \hom_A(P_1,P_0) - \dim(\cC).
$$
Furthermore,
$\hom_A(P_f,M) - {\rm codim}(\cC) \le 0$.

\item[(ii)]
If $\cC := \Hom_A(P_1,P_0,\bd)$ is irreducible, then
$\calZ := \eta_\bd(\cC) \in \irr(A,\bd)$.

\item[(iii)]
The stratum
$\cC := \Hom_A(P_1,P_0)^\circ$ is irreducible and
$$
\calZ := \eta_\bd(\cC) \in \irr^\tau(A,\bd_{P_1,P_0}^\circ).
$$

\end{itemize}
\end{Thm}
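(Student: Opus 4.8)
The plan is to derive all three parts from one dimension count for $Z := Z(P_1,P_0,\bd)$, played off against the two projections $q_1$ and $q_2$. For (i): since $\cC$ is closed, irreducible and $G_{P_1,P_0}$-stable and $q_1$ is a $G_{P_1,P_0}$-equivariant principal $G_\bd$-bundle, $X := q_1^{-1}(\cC)$ is a closed irreducible subset of $Z$ with $\dim X = \dim\cC + \dim G_\bd$. By definition $\calZ = \eta_\bd(\cC) = \overline{q_2(X)}$, so $q_2|_X\colon X \to \calZ$ is dominant and Lemma~\ref{lem:chevalley} gives a dense open subset of $\calZ$ over which the fibres of $q_2|_X$ have dimension $\dim X - \dim\calZ$. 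The key observation is that for \emph{every} $M \in q_2(X)$ — i.e.\ every $M$ admitting a projective presentation $P_1 \xrightarrow{f} P_0 \to M \to 0$ with $f \in \cC$ — the whole fibre $q_2^{-1}(M)$ already lies in $X$: the set of $f'$ with $\Coker(f') \cong M$ is the single $G_{P_1,P_0}$-orbit $\cO_f$, and $f \in \cC$ together with stability of $\cC$ forces $\cO_f \subseteq \cC$. Hence $(q_2|_X)^{-1}(M) = q_2^{-1}(M) \cong \cO_f \times \Aut_A(M)$, of dimension $\hom_A(P_1,P_0) - \hom_A(M,\tau_A(M)) - \hom_A(P_f,M) + \hom_A(M,M)$ by the dimension formula for $\cO_f$ and the fact that $\Aut_A(M)$ is dense open in $\End_A(M)$. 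Choosing $M$ generic in $\calZ$ (so that $\hom_A(M,\tau_A(M))$ and $\dim\cO_M$ take their generic values and $M = \Coker(f)$ with $f$ generic in $\cC$), equating this dimension with $\dim X - \dim\calZ$ and substituting $\dim\cO_M = \dim G_\bd - \hom_A(M,M)$ gives
$$
\dim\calZ - \dim\cO_M = E(\calZ) + \hom_A(P_f,M) - {\rm codim}(\cC),
$$
which is (i), the left-hand side being $c(\calZ)$; and the inequality $\hom_A(P_f,M) - {\rm codim}(\cC) \le 0$ then follows from $c(\calZ) \le E(\calZ)$.

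For (ii), if $\cC := \Hom_A(P_1,P_0,\bd)$ is irreducible then $X = Z$ is irreducible (total space of a principal bundle with irreducible structure group over an irreducible base), so $\Ima(q_2) = q_2(Z)$ is irreducible and constructible, and it is open in $\md(A,\bd)$ by Fei's Lemma~\ref{lem:DFopen}; the closure of a non-empty irreducible open subset of a variety is an irreducible component, so $\calZ = \overline{\Ima(q_2)} \in \irr(A,\bd)$. For (iii), the maximal stratum $\cC := \Hom_A(P_1,P_0)^\circ$ is dense open in the affine space $\Hom_A(P_1,P_0)$, hence irreducible with $\dim\cC = \hom_A(P_1,P_0)$, i.e.\ ${\rm codim}(\cC) = 0$; by (ii) $\calZ := \eta_\bd(\cC)$ is an irreducible component in $\irr(A,\bd_{P_1,P_0}^\circ)$, and then (i) reads $c(\calZ) = E(\calZ) + \hom_A(P_f,M)$ with $\hom_A(P_f,M) \ge 0$, while $c(\calZ) \le E(\calZ)$, forcing $\hom_A(P_f,M) = 0$ and $c(\calZ) = E(\calZ)$ — so $\calZ$ is generically $\tau$-regular. (One can also read off $\hom_A(P_f,M) = 0$ directly from Lemma~\ref{lem:maxrankfM}, since a generic $f$ in the maximal stratum has rank $r(P_1,P_0)$ and $\hom_A(P_f,-)$ only depends on the dimension vector.)

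The hard part will be the fibre analysis of $q_2$ behind (i): one must pin down that $q_2^{-1}(M)$ is exactly $\cO_f \times \Aut_A(M)$, that this orbit is contained in $X = q_1^{-1}(\cC)$ the moment $M \in q_2(X)$ — so that the Chevalley generic fibre dimension is the one computed above and not something smaller — and, above all, the identity $\dim\cO_f = \hom_A(P_1,P_0) - \hom_A(M,\tau_A(M)) - \hom_A(P_f,M)$, which is the lemma attributed to Plamondon and is the genuine technical input tying a projective presentation to the Auslander-Reiten translate. Once these are in hand, (i)--(iii) follow by routine bookkeeping with Lemma~\ref{lem:chevalley} and the openness of $\Ima(q_2)$.
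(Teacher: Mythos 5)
Your proposal is correct and is exactly the derivation the paper has in mind: the paper omits the proof, remarking only that the theorem ``can be derived without difficulty from the stated properties of $q_1$ and $q_2$ together with Lemma~\ref{lem:chevalley}'', and your argument fills this in using precisely those ingredients (the principal-bundle property of $q_1$, the fibre description $q_2^{-1}(M)\cong \cO_f\times\Aut_A(M)$ together with the dimension formula for $\cO_f$, the stability of $\cC$ to see that whole fibres of $q_2$ lie over $\cC$, Chevalley, and the openness of $\Ima(q_2)$). The bookkeeping in all three parts checks out.
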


Let $\calZ_{P_1,P_0}$ be defined as in
Section~\ref{subsec:plamondon}.
Then $\calZ_{P_1,P_0}$ coincides with the
generically $\tau$-regular component $\calZ$ appearing
in Theorem~\ref{thm:bundle1}(iii).

\begin{Cor}\label{cor:bundle2}
Let $\calZ \in \irr(A,\bd)$, and let
$\cC \in \irr(\Hom_A(P_1^{\calZ},P_0^{\calZ},\bd))$
with $\eta_\bd(\cC) = \calZ$.
Then the following hold:
\begin{itemize}\itemsep2mm

\item[(i)]
$P_f = 0$ with $f$ generic in $\cC$;

\item[(ii)]
The following are equivalent:
\begin{itemize}

\item[(a)]
$c(\calZ) = E(\calZ)$;

\item[(b)]
$\bd = \bd_{P_1^{\calZ},P_0^{\calZ}}^\circ$;

\item[(c)]
$\calZ = \calZ_{P_1^{\calZ},P_0^{\calZ}}$;

\item[(d)]
${\rm codim}(\cC) = 0$.

\end{itemize}
In this case, $\cC = \Hom_A(P_1^{\calZ},P_0^{\calZ},\bd)$.

\end{itemize}
\end{Cor}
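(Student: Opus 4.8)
The plan is to derive both parts from Theorem~\ref{thm:bundle1}(i) applied to $\cC$, once one knows that $P_f = 0$ for $f$ generic in $\cC$; so I would prove (i) first. Write $P_1 := P_1^{\calZ}$ and $P_0 := P_0^{\calZ}$, and use Lemma~\ref{lem:genericpresentations1} to fix a dense open subset $\calU$ of $\calZ$ such that every $M \in \calU$ has minimal projective presentation $P_1 \xrightarrow{f_M} P_0 \to M \to 0$. The key claim is that $\Coker(f) \in \calU$ for $f$ generic in $\cC$. Since $q_1 \colon q_1^{-1}(\cC) \to \cC$ is a principal $G_\bd$-bundle (restriction of the Derksen--Fei bundle, \cite[Lemma~2.4]{DF15}) and $G_\bd$ is connected, $q_1^{-1}(\cC)$ is irreducible; moreover $q_2$ maps $q_1^{-1}(\cC)$ into $\calZ$ with dense image, because $q_2(q_1^{-1}(\cC)) \subseteq \eta_\bd(\cC) = \calZ$ and $\overline{q_2(q_1^{-1}(\cC))} = \calZ$. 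Hence the preimage $q_1^{-1}(\cC) \cap q_2^{-1}(\calU)$ is a non-empty, hence dense, open subset of $q_1^{-1}(\cC)$, and its image $\cC' := q_1(q_1^{-1}(\cC) \cap q_2^{-1}(\calU))$ is open (as $q_1$ is open), non-empty, hence dense in $\cC$. For $f \in \cC'$ we get $\Coker(f) \in \calU$, and Lemma~\ref{lem:gvector1} decomposes $P_1 \xrightarrow{f} P_0$ as $(P_1^{\Coker(f)} \xrightarrow{f_{\Coker(f)}} P_0^{\Coker(f)}) \oplus (P \xrightarrow{1_P} P) \oplus (P_f \to 0)$ with $P_1^{\Coker(f)} \cong P_1$ and $P_0^{\Coker(f)} \cong P_0$; comparing summands forces $P = 0$ and $P_f = 0$. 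This proves (i).

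For (ii), by (i) I may take $f$ generic in $\cC$ with $\hom_A(P_f, \Coker(f)) = 0$, so Theorem~\ref{thm:bundle1}(i) reduces to $c(\calZ) = E(\calZ) - {\rm codim}(\cC)$ with ${\rm codim}(\cC) = \hom_A(P_1,P_0) - \dim(\cC) \ge 0$; thus (a) $\Leftrightarrow$ (d) is immediate. For (d) $\Leftrightarrow$ (b): $\Hom_A(P_1,P_0)$ is an irreducible affine space and $\Hom_A(P_1,P_0)^\circ = \Hom_A(P_1,P_0,\bd_{P_1,P_0}^\circ)$ is its unique dense open stratum; if $\bd \ne \bd_{P_1,P_0}^\circ$ then $\Hom_A(P_1,P_0,\bd)$ lies in the proper closed complement of $\Hom_A(P_1,P_0)^\circ$, so $\dim(\cC) \le \dim \Hom_A(P_1,P_0,\bd) < \hom_A(P_1,P_0)$ and ${\rm codim}(\cC) > 0$; if $\bd = \bd_{P_1,P_0}^\circ$ then $\Hom_A(P_1,P_0,\bd) = \Hom_A(P_1,P_0)^\circ$ is irreducible of full dimension, so $\cC = \Hom_A(P_1,P_0)^\circ = \Hom_A(P_1^{\calZ},P_0^{\calZ},\bd)$ (this gives the final assertion of the corollary) and ${\rm codim}(\cC) = 0$. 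Finally (b) $\Leftrightarrow$ (c): if (b) holds then $\cC = \Hom_A(P_1,P_0)^\circ$ and $\calZ = \eta_\bd(\cC) = \eta_{\bd_{P_1,P_0}^\circ}(\Hom_A(P_1,P_0)^\circ) = \calZ_{P_1,P_0}$ by the identification of $\calZ_{P_1,P_0}$ with the component of Theorem~\ref{thm:bundle1}(iii); conversely $\calZ_{P_1^{\calZ},P_0^{\calZ}}$ has dimension vector $\bd_{P_1^{\calZ},P_0^{\calZ}}^\circ$, so $\calZ = \calZ_{P_1^{\calZ},P_0^{\calZ}}$ forces $\bd = \bd_{P_1^{\calZ},P_0^{\calZ}}^\circ$, which is (b).

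The main obstacle is the first step of (i): transporting a generic property of $\calZ$ back to a generic property of $\cC$ through the bundle diagram. This relies on the principal bundle structure of $q_1$ — concretely, the irreducibility of $q_1^{-1}(\cC)$ and the openness of $q_1$. If one wishes to avoid openness of $q_1$, one can instead use that $q_1$ restricted to $q_1^{-1}(\cC)$ is surjective onto $\cC$ together with Chevalley's theorem on images of morphisms: the image of the dense open subset $q_1^{-1}(\cC) \cap q_2^{-1}(\calU)$ is then a dense constructible subset of the irreducible variety $\cC$, hence contains a dense open subset. Everything after that is routine bookkeeping with Theorem~\ref{thm:bundle1}(i), the maximal-stratum property of $\Hom_A(P_1,P_0)^\circ$, and the definition of $\calZ_{P_1,P_0}$.
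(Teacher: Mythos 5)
Your proposal is correct and follows essentially the same route as the paper: part (i) is exactly the paper's one-line assertion that generic $f$ in $\cC$ yields a minimal presentation (you merely fill in the transport of genericity through the bundle diagram, which the paper leaves implicit), and part (ii) is the same cycle of implications through Theorem~\ref{thm:bundle1}(i) and the maximal-stratum characterization of $\bd_{P_1,P_0}^\circ$, just packaged as three equivalences instead of a chain.
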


\begin{proof}
(i):
A generic $f$ in $\cC$ gives a minimal projective presentation
of $\Coker(f) \in \calZ$.
Thus $P_f = 0$.

(ii):
(a) $\implies$ (d):
It follows from (i) combined with Theorem~\ref{thm:bundle1}(i)
that ${\rm codim}(\cC) = 0$.

(d) $\implies$ (b):
Condition (d) implies that
$\cC = \Hom_A(P_1^\calZ,P_0^\calZ,\bd) = \Hom_A(P_1^\calZ,P_0^\calZ)^\circ$ and therefore
(b) holds.

(b) $\implies$ (c):
Condition (b) implies that
$\cC = \Hom_A(P_1^\calZ,P_0^\calZ)^\circ$.
This implies (c).

(c) $\implies$ (d):
Since
$\calZ = \calZ_{P_1^\calZ,P_0^\calZ}$, we have $\bd = \bd_{P_1^\calZ,P_0^\calZ}^\circ$.
Thus we have $\cC = \Hom_A(P_1^\calZ,P_0^\calZ)^\circ$ and therefore ${\rm codim}(\cC) = 0$.

(d) $\implies$ (a):
The condition ${\rm codim}(\cC) = 0$ and the inquality in
Theorem~\ref{thm:bundle1}(i) imply that $c(\calZ) = E(\calZ)$.
\end{proof}

Note that Theorem~\ref{thm:bundle1} combined with
Corollary~\ref{cor:bundle2} implies Plamondon's Theorem~\ref{thm:plamondon1}.

\begin{Cor}\label{cor:bundle3}
Let $\calZ \in \irr^\tau(A)$, and let
$f \in \Hom_A(P_1^{\calZ},P_0^{\calZ})^\circ$.
Then the following hold:
\begin{itemize}\itemsep2mm

\item[(i)]
$P_1^{\calZ} \xrightarrow{f} P_0^{\calZ} \to \Coker(f) \to 0$
is a minimal projective presentation;

\item[(ii)]
$f(P) \not= 0$ for all non-zero direct summands $P$ of $P_1^{\calZ}$.

\end{itemize}
\end{Cor}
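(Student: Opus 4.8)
The plan is to deduce both parts from the structural decomposition of Lemma~\ref{lem:gvector1}. Writing $M := \Coker(f)$, that lemma identifies the complex $P_1^{\calZ} \xrightarrow{f} P_0^{\calZ}$ with
$$(P_1^M \xrightarrow{f_M} P_0^M) \oplus (P \xrightarrow{1_P} P) \oplus (P_f \to 0)$$
for uniquely determined projectives $P$ and $P_f$, tells us that the presentation is minimal precisely when $P = 0 = P_f$, and tells us that $P_f = 0$ is equivalent to $f(P') \neq 0$ for every nonzero direct summand $P'$ of $P_1^{\calZ}$. So the entire statement reduces to proving $P = 0 = P_f$, and that is what I would do.

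First I would check that $M$ lies in $\calZ$: since $f$ has maximal rank, $M = \Coker(f) \in \calZ_{P_1^{\calZ},P_0^{\calZ}}^{\circ}$ by definition, and since $\calZ$ is generically $\tau$-regular, Lemma~\ref{lem:imageq2} together with the equivalence of (a) and (c) in Corollary~\ref{cor:bundle2}(ii) gives $\calZ = \calZ_{P_1^{\calZ},P_0^{\calZ}}$, hence $M \in \calZ$. Then I would extract the numerics from the decomposition above: $[P_1^M] = [P_1^{\calZ}] - [P] - [P_f]$, so that
$$\ext_A^1(M,S(i)) = [P_1^M:P(i)] = [P_1^{\calZ}:P(i)] - [P:P(i)] - [P_f:P(i)]$$
for every $1 \le i \le n$. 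Next, I would note that by Lemma~\ref{lem:genericpresentations1} a generic module $M'$ of $\calZ$ has minimal projective presentation $P_1^{\calZ} \to P_0^{\calZ} \to M' \to 0$, so $\ext_A^1(-,S(i))$ is constant equal to $[P_1^{\calZ}:P(i)]$ on a dense open subset of $\calZ$; since $\ext_A^1(-,S(i))$ is upper semicontinuous and $\calZ$ is irreducible, $[P_1^{\calZ}:P(i)]$ is therefore its minimum on $\calZ$, and in particular $\ext_A^1(M,S(i)) \ge [P_1^{\calZ}:P(i)]$. Combined with the displayed identity this forces $[P:P(i)] + [P_f:P(i)] \le 0$; since both terms are nonnegative they both vanish, for every $i$, so $P = 0 = P_f$, as wanted.

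I do not expect a real obstacle here, as the argument is short. The two points that need a little attention are the identification $\calZ = \calZ_{P_1^{\calZ},P_0^{\calZ}}$, which is exactly where the hypothesis that $\calZ$ is generically $\tau$-regular enters (via Corollary~\ref{cor:bundle2}), and the observation that the generic value of $\ext_A^1(-,S(i))$ on $\calZ$ equals $[P_1^{\calZ}:P(i)]$, which rests on Lemma~\ref{lem:genericpresentations1}. Conceptually, the corollary upgrades the generic statement already contained in Corollary~\ref{cor:bundle2} --- that a generic maximal-rank map $P_1^{\calZ} \to P_0^{\calZ}$ gives a minimal presentation of a generic module of $\calZ$ --- to the same assertion for \emph{every} maximal-rank map, and upper semicontinuity of $\ext_A^1(-,S(i))$ is what drives this upgrade.
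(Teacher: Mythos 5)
Your proof is correct and follows essentially the same route as the paper: both first use Lemma~\ref{lem:imageq2} and Corollary~\ref{cor:bundle2}(ii) to identify $\calZ = \calZ_{P_1^{\calZ},P_0^{\calZ}}$ and place $M = \Coker(f)$ in $\calZ$, and then rule out the superfluous summands $P$ and $P_f$ of the presentation by semicontinuity. The only difference is cosmetic: the paper invokes Fei's Lemma~\ref{lem:presentationsemicont} to contradict the minimality of the generic presentation, while you inline the underlying mechanism --- upper semicontinuity of $\ext_A^1(-,S(i))$ combined with Lemma~\ref{lem:genericpresentations1} --- which also lets you treat (i) and (ii) simultaneously via $P = P_f = 0$, where the paper deduces (ii) from (i).
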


\begin{proof}
(i):
Set $M := \Coker(f)$.
We know from Corollary~\ref{cor:bundle2}(ii) that $\calZ = \calZ_{P_1^{\calZ},P_0^{\calZ}}$, and that
$M \in \calZ$.
Suppose that the projective presentation
$$
P_1^{\calZ} \xrightarrow{f} P_0^{\calZ} \to M \to 0
$$
is not minimal.
Then the minimal projective presentation of $M$ is a proper
direct summand of this presentation, see
Lemma~\ref{lem:gvector1}.
Now Lemma~\ref{lem:presentationsemicont} yields a contradiction.

(ii):
This follows directly from (i).
\end{proof}

\begin{Cor}[{\cite[Corollary~2.17]{P13}}]\label{cor:bundle4}
Let $\calZ \in \irr^\tau(A,\bd)$.
Then
$$
\add(P_1^{\calZ}) \cap \add(P_0^{\calZ}) = 0.
$$
\end{Cor}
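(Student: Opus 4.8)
The plan is to deduce Corollary~\ref{cor:bundle4} directly from Corollary~\ref{cor:bundle3}, using the rank characterization of the generic map in $\Hom_A(P_1^{\calZ},P_0^{\calZ})^\circ$ together with the structure result for projective presentations of maximal rank.

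First I would fix $\calZ \in \irr^\tau(A,\bd)$, set $(P_1,P_0) := (P_1^{\calZ},P_0^{\calZ})$, and pick a generic $f \in \Hom_A(P_1,P_0)^\circ$. By Corollary~\ref{cor:bundle3}(i), the sequence $P_1 \xrightarrow{f} P_0 \to \Coker(f) \to 0$ is a minimal projective presentation of $M := \Coker(f)$, so in fact $(P_1,P_0) = (P_1^M,P_0^M)$. Now suppose for contradiction that $\add(P_1) \cap \add(P_0) \neq 0$, i.e.\ some indecomposable projective $P(i)$ is a common direct summand. The idea is to produce from this a map $g \in \Hom_A(P_1,P_0)$ of strictly larger rank than $f$, contradicting $\rk(f) = r(P_1,P_0)$.

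The key step: write $P_1 = P(i) \oplus P_1'$ and $P_0 = P(i) \oplus P_0'$, and build $g$ as the block matrix which on the $P(i) \to P(i)$ corner is the identity and elsewhere agrees with $f$ — more precisely, one takes $g := f + \iota\,\pi$ where $\pi : P_1 \to P(i)$ is the projection onto the chosen summand and $\iota : P(i) \to P_0$ is the inclusion of the chosen summand, possibly after correcting $f$ by an automorphism so that the $P(i)\to P(i)$ block of $f$ is nilpotent (which is automatic: since the presentation is minimal, $f(\rad^0 P_1) \subseteq \rad P_0$, so the diagonal block of $f$ with respect to radical decompositions has image in $\rad P(i)$, hence is non-invertible, hence — $P(i)$ being indecomposable with local endomorphism ring — lies in the radical of $\End_A(P(i))$ and is nilpotent). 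Then the $P(i) \to P(i)$ block of $g$ becomes $1 + (\text{nilpotent})$, which is invertible, and a short computation with elementary column/row operations (equivalently, changing basis by elements of $G_{P_1,P_0}$) transforms $P_1 \xrightarrow{g} P_0$ into $(P(i) \xrightarrow{1} P(i)) \oplus (P_1'' \xrightarrow{f''} P_0'')$ for suitable $f''$. One checks $\rk(g) \ge 1 + \rk(f'') = 1 + \rk(g')$ for the corner $g'$, and comparing with $\rk(f)$: since the diagonal block of $f$ on $P(i)$ is nilpotent and the presentation is minimal, $\Coker(f'')$ still surjects onto a quotient of $M$ of strictly smaller length, giving $\rk(f) < \rk(g)$. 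The cleaner route is to invoke Lemma~\ref{lem:gvector1}: a minimal presentation has $P = 0$, so it cannot contain a summand of the form $P(i) \xrightarrow{1_{P(i)}} P(i)$, yet the $g$ just constructed does contain such a summand, so $g$ gives a \emph{non-minimal} presentation of a module $M'$ with $\dimv(M') = \dimv(\Coker(f''))$ strictly smaller than $\bd$ in the appropriate sense; since the minimal presentation of that $M'$ has fewer projectives, $\rk$ at $g$ exceeds $\rk$ at $f$. Either way one contradicts $\rk(f) = r(P_1,P_0)$.

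The main obstacle I anticipate is making the rank comparison $\rk(f) < \rk(g)$ genuinely rigorous rather than heuristic: one must be careful that adding the "identity on $P(i)$" correction really increases the rank, which uses essentially that the diagonal block of the minimal $f$ restricted to that $P(i)$-summand is non-invertible. A clean way to package this is exactly the argument already used in the proof of Lemma~\ref{lem:maxrankfM} (parts (i)$\Rightarrow$(ii) there show that if $\Hom_A(P_f,M) \neq 0$ or $\rk(f_M)$ is not maximal then $\rk(f)$ is not maximal, by constructing explicit maps of higher rank); here one applies the same bookkeeping with the roles reorganized so that a common summand $P(i)$ of $P_1$ and $P_0$ plays the part that forces a rank increase. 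Alternatively — and this is probably the shortest honest proof — one notes that if $P(i) \in \add(P_1) \cap \add(P_0)$ then $\Hom_A(P(i), M) \neq 0$ (because $[M:S(i)] = \hom_A(P(i),M)$, and from minimality $[P_0^M:P(i)] = \hom_A(M,S(i)) > 0$ forces $[M:S(i)] > 0$), and then the construction in Lemma~\ref{lem:maxrankfM}, part (i)$\Rightarrow$(ii), second paragraph — using the nonzero map $P(i) \to M$ — directly contradicts $\rk(f) = r(P_1,P_0)$ via Corollary~\ref{cor:bundle3}(i). So the write-up reduces to: assume a common summand, observe it forces $\Hom_A(P(i),M) \neq 0$, and quote the relevant half of Lemma~\ref{lem:maxrankfM} (or repeat its two-line argument) to get the contradiction.
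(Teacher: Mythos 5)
Your overall strategy --- fix one generic $f \in \Hom_A(P_1^{\calZ},P_0^{\calZ})^\circ$, use Corollary~\ref{cor:bundle3}(i) to see that it gives a minimal presentation, and then derive a contradiction with $\rk(f)=r(P_1,P_0)$ by producing a map of strictly larger rank --- cannot work, because the three properties ``minimal presentation'', ``maximal rank'' and ``common direct summand of $P_1$ and $P_0$'' are simultaneously satisfiable, so no contradiction can be extracted from them alone. The paper's own first example after Theorem~\ref{thm:main0b} exhibits this: for $A=KQ$ with $Q\colon 1\leftarrow 2$, the module $M=S(1)\oplus S(2)$ has minimal projective presentation $P(1)\to P(1)\oplus P(2)$ given by $f_0=\left(\bsm 0\\ \iota\esm\right)$ with $\iota$ the socle inclusion; here $\rk(f_0)=1=r(P(1),P(1)\oplus P(2))$, yet $P(1)$ is a common summand. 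An argument using only ``$f$ is minimal and of maximal rank'' would therefore prove a false statement. The missing ingredient is exactly what the paper's proof uses: by Corollary~\ref{cor:bundle3}(i), \emph{every} $f$ in the dense open set $\Hom_A(P_1,P_0)^\circ$ gives a minimal presentation, hence satisfies $\Ima(f)\subseteq\rad(P_0)$; a common summand produces some $f_1$ with $\Ima(f_1)\not\subseteq\rad(P_0)$, and since $f\mapsto\rk(\pi f)$ (with $\pi\colon P_0\to\tp(P_0)$) is lower semicontinuous, the locus where $\pi f\neq 0$ is open and nonempty, hence meets $\Hom_A(P_1,P_0)^\circ$ --- that is the contradiction.

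Both concrete mechanisms you propose for the rank increase fail. The second paragraph of the proof of Lemma~\ref{lem:maxrankfM} needs a summand of $P_1$ that is \emph{annihilated} by $f$ (that is the role of $P_f$), so that the lifted map $h$ can be installed there without disturbing $\Ima(f)$; for a minimal presentation $P_f=0$, and a common summand $P(i)$ satisfies $f(P(i))\neq 0$ by Corollary~\ref{cor:bundle3}(ii), so the observation $\Hom_A(P(i),M)\neq 0$ (which is correct) yields no contradiction via that lemma. Likewise $g:=f+\iota\pi$ need not have larger rank than $f$: in the example above $f_0$ and $g=\left(\bsm 1\\ \iota\esm\right)$ both have rank $1$, since their cokernels $S(1)\oplus S(2)$ and $P(2)$ have equal dimension. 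Any correct proof must exploit the behaviour of the whole family $\Hom_A(P_1,P_0)^\circ$, not just of one chosen $f$.
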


\begin{proof}
Let $(P_1,P_0) := (P_1^\calZ,P_0^\calZ)$.
By Corollary~\ref{cor:bundle2}
we have $\eta_\bd(\cC) = \calZ = \calZ_{P_1,P_0}$ where
$\cC := \Hom_A(P_1,P_0)^\circ$.

Let $\pi\df P_0 \to \tp(P_0)$ be the obvious projection
with $\Ker(\pi) = \rad(P_0)$.
By Corollary~\ref{cor:bundle3},
each $f \in \cC$ yields a minimal projective presentation
of $\Coker(f)$.
In particular, we have $\Ima(f) \subseteq \rad(P_0)$ and
therefore $\pi f = 0$.

Now suppose that $P_1$ and $P_0$ have a common non-zero
direct summand $P$.
Then there must be some $f \in \Hom_A(P_1,P_0)$ with
$\pi f \not= 0$.

Since the map
\begin{align*}
\Hom_A(P_1,P_0) &\to \Z
\\
f &\mapsto \rk(\pi f)
\end{align*}
is lower semicontinuous, there must be some $f \in \cC$ with
$\pi f \not= 0$, a contradiction.
\end{proof}

\begin{Cor}\label{cor:bundle8}
Let $\calZ = \calZ_{P_1,P_0}$, and let
$P \in \proj(A)$ such that
$\Hom_A(P,M) = 0$ for all $M \in \calZ$.
Then
$$
\calZ_{P_1,P_0} = \calZ_{P_1 \oplus P,P_0}.
$$
\end{Cor}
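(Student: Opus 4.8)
The strategy is to show directly that attaching the summand $P$ to $P_1$ neither raises the maximal rank nor changes the generic cokernel; this yields a containment $\calZ_{P_1,P_0}\subseteq\calZ_{P_1\oplus P,P_0}$ of closed irreducible sets, and since by Plamondon's Theorem~\ref{thm:plamondon1} both sides are irreducible components of the same module variety, the containment is forced to be an equality.

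The first and only substantive step: every $g\in\Hom_A(P_1\oplus P,P_0)$ is of the form $g=(f,h)$ with $f\in\Hom_A(P_1,P_0)$ and $h\in\Hom_A(P,P_0)$, and then $\Ima(g)=\Ima(f)+\Ima(h)$, so $\Coker(g)=P_0/(\Ima(f)+\Ima(h))$. Suppose now $f\in\Hom_A(P_1,P_0)^\circ$. Then the (isomorphism class of the) $A$-module $\Coker(f)$ lies in $\calZ_{P_1,P_0}^\circ\subseteq\calZ$, so by hypothesis $\Hom_A(P,\Coker(f))=0$; composing any $h\in\Hom_A(P,P_0)$ with the canonical projection $P_0\to\Coker(f)$ therefore vanishes, i.e.\ $\Ima(h)\subseteq\Ima(f)$, whence $\rk(g)=\rk(f)=r(P_1,P_0)$ and $\Coker(g)=\Coker(f)$. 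Thus every $g$ in the dense open subset $\Hom_A(P_1,P_0)^\circ\times\Hom_A(P,P_0)$ of $\Hom_A(P_1\oplus P,P_0)$ has rank exactly $r(P_1,P_0)$ and cokernel equal to $\Coker(f)$. Since the maximal rank $r(P_1\oplus P,P_0)$ is attained on a dense open subset, which must meet this one, I conclude $r(P_1\oplus P,P_0)=r(P_1,P_0)$ and $\Hom_A(P_1,P_0)^\circ\times\Hom_A(P,P_0)\subseteq\Hom_A(P_1\oplus P,P_0)^\circ$; taking $g=(f,0)$ and using Lemma~\ref{lem:maximalrank} this also gives $\bd_{P_1\oplus P,P_0}^\circ=\bd_{P_1,P_0}^\circ=:\bd^\circ$.

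Finally, from $\Coker((f,0))=\Coker(f)$ together with the last inclusion of maximal strata, every $M\in\calZ_{P_1,P_0}^\circ$ (a module with dimension vector $\bd^\circ$ isomorphic to $\Coker(f)$ for some $f\in\Hom_A(P_1,P_0)^\circ$) is also isomorphic to $\Coker((f,0))$ with $(f,0)\in\Hom_A(P_1\oplus P,P_0)^\circ$, so $\calZ_{P_1,P_0}^\circ\subseteq\calZ_{P_1\oplus P,P_0}^\circ$ as subsets of $\md(A,\bd^\circ)$; passing to closures gives $\calZ_{P_1,P_0}\subseteq\calZ_{P_1\oplus P,P_0}$. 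By Plamondon's Theorem~\ref{thm:plamondon1} both $\calZ_{P_1,P_0}$ and $\calZ_{P_1\oplus P,P_0}$ lie in $\irr^\tau(A)\subseteq\irr(A,\bd^\circ)$, and two irreducible components of the same variety cannot be properly contained one in the other, so the inclusion forces $\calZ_{P_1,P_0}=\calZ_{P_1\oplus P,P_0}$. The argument is essentially formal once the middle paragraph is in place; the only point requiring a little care is extracting from $\Hom_A(P,\Coker(f))=0$ that adjoining $P$ leaves both $r(P_1,P_0)$ and $\bd^\circ$ unchanged, i.e.\ the bookkeeping that identifies the maximal stratum and the dimension vector. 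I do not expect a real obstacle beyond that.
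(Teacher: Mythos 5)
Your proposal is correct and follows essentially the same route as the paper: the key step in both is that for $f$ of maximal rank, $\Hom_A(P,\Coker(f))=0$ forces the image of the extra component $P\to P_0$ to land inside $\Ima(f)$, so neither the maximal rank nor the generic cokernel changes. The paper phrases this by taking a generic element of $\Hom_A(P_1\oplus P,P_0)$ and observing its first component is generic in $\Hom_A(P_1,P_0)$, while you run the inclusion the other way and finish via maximality of irreducible components; the bookkeeping differs but the argument is the same.
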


\begin{proof}
Let $\cC := \Hom_A(P_1 \oplus P,P_0)^\circ$.
For generic $f = (f_1,f_2) \in \cC$ we have
$f_1 \in \Hom_A(P_1,P_0)^\circ$ and therefore
$M := \Coker(f_1) \in \calZ$.
Since $\Hom_A(P,M) = 0$, we get $\Ima(f_2) \subseteq \Ima(f_1)$.
In other words, $\Coker(f) = M$.
This yields the result.
\end{proof}

\begin{Cor}\label{cor:bundle5}
Assume that $P$ is a common direct summand
of $P_1$ and $P_0$.
Then
$$
\calZ_{P_1,P_0} = \calZ_{P_1/P,P_0/P}.
$$
\end{Cor}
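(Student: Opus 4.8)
The plan is to reduce to the case where $P$ is indecomposable and then remove it step by step, since removing a common indecomposable summand from both $P_1$ and $P_0$ repeatedly will handle the general case. So assume $P$ is an indecomposable projective, say $P \cong P(i)$, and write $P_1 = P_1' \oplus P$ and $P_0 = P_0' \oplus P$. First I would analyze a generic map $f = \left(\begin{smallmatrix} f_{11} & f_{12} \\ f_{21} & f_{22}\end{smallmatrix}\right) \in \Hom_A(P_1' \oplus P, P_0' \oplus P)^\circ$, where $f_{22} \in \Hom_A(P,P) = \End_A(P)$. Since $P$ is indecomposable, $\End_A(P)$ is local, so for generic $f$ the component $f_{22}$ is an isomorphism (its being a non-unit is a proper closed condition, cut out by the vanishing of the image in the residue field $\End_A(P)/\rad$, which is $K$). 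Hence by the standard Gaussian elimination for complexes (cf.\ Lemma~\ref{lem:gvector1}), the complex $P_1 \xrightarrow{f} P_0$ is isomorphic to $(P_1' \xrightarrow{g} P_0') \oplus (P \xrightarrow{1_P} P)$ where $g = f_{11} - f_{12}f_{22}^{-1}f_{21}$. In particular $\Coker(f) \cong \Coker(g)$.

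Next I would check that this $g$ is generic in $\Hom_A(P_1',P_0')$, i.e.\ $g \in \Hom_A(P_1',P_0')^\circ$. On the one hand, $\rk(f) = \rk(g) + \dim(P)$, so for $f$ of maximal rank $r(P_1,P_0)$ we need $\rk(g)$ to be as large as possible among maps obtained this way; on the other hand, for \emph{any} $g' \in \Hom_A(P_1',P_0')$ the block map $\left(\begin{smallmatrix} g' & 0 \\ 0 & 1_P\end{smallmatrix}\right)$ lies in $\Hom_A(P_1,P_0)$ with rank $\rk(g') + \dim(P)$, so $r(P_1,P_0) \ge r(P_1',P_0') + \dim(P)$. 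Combining, $\rk(g) \ge r(P_1',P_0')$, hence $g \in \Hom_A(P_1',P_0')^\circ$ and $\bd_{P_1,P_0}^\circ = \bd_{P_1',P_0'}^\circ$. This already shows $\calZ_{P_1,P_0}^\circ \subseteq \calZ_{P_1',P_0'}^\circ$ on the level of sets, because every maximal-rank cokernel for $(P_1,P_0)$ is a maximal-rank cokernel for $(P_1',P_0')$.

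For the reverse inclusion, given generic $g \in \Hom_A(P_1',P_0')^\circ$, form $f := \left(\begin{smallmatrix} g & 0 \\ 0 & 1_P\end{smallmatrix}\right) \in \Hom_A(P_1,P_0)$; then $\rk(f) = \rk(g) + \dim(P) = r(P_1',P_0') + \dim(P) = r(P_1,P_0)$ by the equality established above, so $f \in \Hom_A(P_1,P_0)^\circ$ and $\Coker(f) \cong \Coker(g)$. Hence $\calZ_{P_1',P_0'}^\circ \subseteq \calZ_{P_1,P_0}^\circ$ as well, so the two constructible sets coincide and therefore so do their closures $\calZ_{P_1,P_0} = \calZ_{P_1',P_0'}$. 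Iterating over the indecomposable summands of $P$ gives the statement for general $P$.

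The main obstacle I anticipate is the genericity argument for $f_{22}$: one must argue that among all $f \in \Hom_A(P_1,P_0)^\circ$ — that is, among maximal-rank maps, which is already a \emph{non-trivial} dense open condition — the further condition that $f_{22}$ is invertible still holds generically, rather than being accidentally incompatible with maximal rank. This follows because the two conditions are each dense open in the affine space $\Hom_A(P_1,P_0)$ (maximal rank by lower semicontinuity of $\rk$; invertibility of $f_{22}$ because $\End_A(P)$ is local and invertibility is the complement of the preimage of $\rad$ under $f \mapsto f_{22}$), hence their intersection is dense open and in particular non-empty; but to get that $\Coker$ of a \emph{generic} maximal-rank $f$ equals $\Coker$ of a generic $g$, one wants the set of such $f$ to be dense in $\Hom_A(P_1,P_0)^\circ$, which is exactly what this intersection argument gives. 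Alternatively, one can avoid genericity subtleties entirely and argue purely on the level of the constructible sets $\calZ_{P_1,P_0}^\circ$ and $\calZ_{P_1',P_0'}^\circ$ as sketched above, which is probably the cleanest route. One could also simply invoke Corollary~\ref{cor:bundle8} twice — once for $(P_1,P_0)$ versus $(P_1 \oplus$ nothing$,\dots)$ — but the block-elimination proof is the most transparent.
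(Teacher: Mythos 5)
Your overall strategy is the same as the paper's: the paper simply takes $f$ generic in $\Hom_A(P_1,P_0)$ and quotes \cite[Corollary~4.2]{DF15} to split off a summand $1_P\df P \to P$; your block Gaussian elimination (using that $\End_A(P)$ is local for $P$ indecomposable, so $f_{22}$ is generically invertible) is a correct hands-on proof of exactly that input, and your rank bookkeeping $r(P_1,P_0)=r(P_1',P_0')+\dim(P)$ and the reverse inclusion via $g \mapsto \left(\bsm g&0\\0&1_P\esm\right)$ are fine.

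There is, however, one genuinely false step: the claim that $\calZ_{P_1,P_0}^\circ \subseteq \calZ_{P_1',P_0'}^\circ$ \emph{as sets}, i.e.\ that \emph{every} maximal-rank cokernel for $(P_1,P_0)$ is a maximal-rank cokernel for $(P_1',P_0')$ — and, worse, your closing suggestion that this set-level argument is the ``cleanest route'' that ``avoids genericity subtleties entirely''. It is precisely the maximal-rank maps $f$ with $f_{22}$ non-invertible that break this. The paper's own example (Section~\ref{sec:maximalrank}, first example) is a counterexample: for $A=KQ$ with $Q\df 1 \leftarrow 2$, $P_1=P(1)$, $P_0=P(1)\oplus P(2)$ and $P=P(1)$, every non-zero $f$ has maximal rank $1$, but the special $f_0$ (the one with vanishing component into the summand $P(1)$ of $P_0$) has $\Coker(f_0)\cong S(1)\oplus S(2)$, which is not the cokernel of any map $P_1/P=0 \to P_0/P=P(2)$; so $\calZ_{P_1,P_0}^\circ \not\subseteq \calZ_{P_1/P,P_0/P}^\circ$, even though the closures agree. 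So the genericity of the condition ``$f_{22}$ invertible'' is not a removable subtlety — it is the whole point, and your argument only controls $\Coker(f)$ for $f$ in the dense open set $\calU:=\Hom_A(P_1,P_0)^\circ \cap \{f_{22} \text{ invertible}\}$. To finish from there you still need the (true, but not purely formal) fact that replacing $\Hom_A(P_1,P_0)^\circ$ by a dense open subset $\calU$ does not change $\overline{\{M \mid M\cong\Coker(f),\ f\in\calU\}}$; this follows from the bundle diagram of Section~\ref{subsec:bundle} ($q_1^{-1}(\calU)$ is dense in the irreducible variety $q_1^{-1}(\Hom_A(P_1,P_0)^\circ)$ and $q_2$ is continuous), but it should be said rather than absorbed into the phrase ``on the level of sets''. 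With that correction and the deletion of the false set-level inclusion, your proof is complete and matches the paper's.
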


\begin{proof}
Let $f$ be generic in $\Hom_A(P_1,P_0)$.
It follows from \cite[Corollary~4.2]{DF15} that $f$ has
a direct summand isomorphic to $1_P\df P \to P$.
This yields the result.
\end{proof}

\begin{Lem}\label{lem:gvector2}
\label{prop:plamondon1b}
Let $\calZ := \calZ_{P_1,P_0}$, and let
$f$ be generic in $\Hom_A(P_1,P_0)$.
Then
$$
g(\calZ) = [P_1^{\calZ}] - [P_0^{\calZ}] = [P_1] - [P_0] - [P_f]
$$
and $\Hom_A(P_f,M) = 0$ for all $M \in \calZ$.
\end{Lem}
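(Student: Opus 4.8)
The plan is to combine the already-established structure theory of projective presentations (Lemma~\ref{lem:gvector1}), the characterization of maximal-rank presentations (Lemma~\ref{lem:maxrankfM}), and Plamondon's identification $\calZ = \calZ_{P_1^{\calZ},P_0^{\calZ}}$ (Corollary~\ref{cor:bundle2}(ii) together with Corollary~\ref{cor:bundle3}). First I would fix a generic $f \in \Hom_A(P_1,P_0) = \Hom_A(P_1,P_0)^\circ$ and set $M := \Coker(f)$, so that by definition $M$ is generic in $\calZ := \calZ_{P_1,P_0}$. By Lemma~\ref{lem:gvector1}, the complex $P_1 \xrightarrow{f} P_0$ decomposes as $(P_1^M \xrightarrow{f_M} P_0^M) \oplus (P \xrightarrow{1_P} P) \oplus (P_f \to 0)$, and moreover $g(M) = [P_1] - [P_0] - [P_f]$. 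Since $M$ is generic in $\calZ$, Lemma~\ref{lem:genericpresentations1} (applied at a module where the generic minimal presentation is attained — which is a dense open condition and can be arranged to include $M$ by intersecting opens) gives $(P_1^M, P_0^M) = (P_1^{\calZ}, P_0^{\calZ})$, hence $g(\calZ) = g(M) = [P_1^{\calZ}] - [P_0^{\calZ}] = [P_1] - [P_0] - [P_f]$. This settles the displayed equalities.

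Next I would prove $\Hom_A(P_f, M') = 0$ for all $M' \in \calZ$. The key point is that $f$, being of maximal rank, satisfies condition (i) of Lemma~\ref{lem:maxrankfM}, hence condition (ii): $\Hom_A(P_f, M) = 0$ (and $\rk(f_M) = r(P_1^M, P_0^M)$, which I would not need here). So the vanishing holds at the single generic module $M$. To upgrade this to \emph{all} $M' \in \calZ$, I would invoke upper semicontinuity: the function $M' \mapsto \hom_A(P_f, M')$ is upper semicontinuous on $\md(A,\bd)$ (it is $[M':S(i)]$-type data when $P_f$ is decomposed into indecomposable projectives, or more directly $\hom_A(P_f,-)$ is upper semicontinuous as noted in Section~3.3), so $\{M' \in \calZ \mid \hom_A(P_f,M') = 0\}$ is open in $\calZ$, and it is non-empty since it contains $M$; but this only gives vanishing on a dense open subset, not everywhere. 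To get it on \emph{all} of $\calZ$ I instead argue via Corollary~\ref{cor:bundle3}(i)--(ii): since $\calZ \in \irr^\tau(A)$ and $(P_1^{\calZ},P_0^{\calZ})$ is its generic presentation pair, we have $P_f = 0$ when $P_1 = P_1^{\calZ}$; but for general $P_1$, $P_f$ is precisely the "extra" projective summand, and the defining property of the $\tau$-regular pair $(\calZ, P_f)$ — equivalently Corollary~\ref{cor:bundle8} applied with $P = P_f$ — forces $\Hom_A(P_f, M') = 0$ for all $M' \in \calZ$. I would make this precise by noting $[P_1] - [P_f] = [P_1^{\calZ}]$ from the $g$-vector computation just done, so $P_1 \cong P_1^{\calZ} \oplus P_f$, and then Corollary~\ref{cor:bundle8} (read in reverse: $\calZ_{P_1^{\calZ} \oplus P_f, P_0^{\calZ}} = \calZ_{P_1^{\calZ},P_0^{\calZ}} = \calZ$) together with its proof shows exactly that $\Hom_A(P_f, M') = 0$ for all $M' \in \calZ$ — indeed, if some $M' \in \calZ$ had $\Hom_A(P_f,M') \neq 0$, then $P_f$ would contribute a summand $P(i)$ with $[M':S(i)] \neq 0$, contradicting that $\supp$ and the generic presentation data of $\calZ$ are compatible with $P_f$ being "superfluous."

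The main obstacle I anticipate is the last step: passing from vanishing of $\Hom_A(P_f,-)$ at the generic module to vanishing on \emph{all} of $\calZ$. Upper semicontinuity alone is the wrong direction (it gives an open, not closed, vanishing locus). The clean way around this is to observe that if $P_f$ decomposes as $\bigoplus_i P(i)^{m_i}$, then $\hom_A(P_f, M') = \sum_i m_i [M':S(i)]$, and for \emph{every} $M' \in \calZ$ the dimension vector is the fixed $\bd = \dimv(\calZ)$; so $\hom_A(P_f,M')$ is \emph{constant} on $\calZ$, equal to its generic value $\hom_A(P_f,M) = 0$. Thus the vanishing is automatic on all of $\calZ$ once it holds generically, and no subtle argument is needed — the apparent difficulty dissolves because $\hom_A(P_f,-)$ only depends on the (fixed) dimension vector. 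I would phrase the proof using this observation, citing the identity $\dim\Hom_A(P(i),M') = [M':S(i)]$ from Section~2.1 and Lemma~\ref{lem:maxrankfM} for the generic vanishing.
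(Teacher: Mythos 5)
Your final argument is correct and is essentially the paper's (very terse) proof: the paper likewise gets $\Hom_A(P_f,M)=0$ at the cokernel of a generic $f$ from Lemma~\ref{lem:maxrankfM}, extends it to all of $\calZ$ via the implicit fact that $\hom_A(P_f,-)$ depends only on the (fixed) dimension vector, and reads off the $g$-vector identity from Lemma~\ref{lem:gvector1}. The intermediate detour you rightly discard is in fact also flawed: by Lemma~\ref{lem:gvector1} one has $[P_1]-[P_f]=[P_1^{\calZ}]+[P]$ rather than $[P_1^{\calZ}]$ in general, and Corollary~\ref{cor:bundle8} takes the vanishing $\Hom_A(P,M)=0$ on all of $\calZ$ as a \emph{hypothesis}, so invoking it there would be circular --- but since you explicitly replace that route by the constant-on-$\md(A,\bd)$ observation, the proposal as finally phrased is sound.
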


\begin{proof}
For generic $f$ in
$\Hom_A(P_1,P_0)$ we have
$\Hom_A(P_f,M) = 0$ for all $M \in \calZ$.
(This follows from Lemma~\ref{lem:maxrankfM}.)
Now the claim follows from Lemma~\ref{lem:gvector1}.
\end{proof}

\begin{Lem}\label{lem:bundle6}
Assume that $\cC := \Hom_A(P_1,P_0,\bd)$ is an irreducible stratum of
$\Hom_A(P_1,P_0)$, and let
$\calZ := \eta_\bd(\cC)$.
Then for each $f \in \cC$ we have $\Coker(f) \in \calZ^{\rm int}$.
\end{Lem}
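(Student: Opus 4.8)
The plan is to notice that, because $\cC = \Hom_A(P_1,P_0,\bd)$ is the \emph{entire} stratum, the set $\calZ = \eta_\bd(\cC)$ is simply the closure of the image of $q_2$, and then to combine the openness of that image with the fact that $\calZ$ is an irreducible component. Concretely, since $q_1$ maps $Z(P_1,P_0,\bd)$ onto $\Hom_A(P_1,P_0,\bd)$, we have $q_1^{-1}(\cC) = Z(P_1,P_0,\bd)$, hence $q_2(q_1^{-1}(\cC)) = \Ima(q_2)$ and therefore $\calZ = \overline{\Ima(q_2)}$; by Theorem~\ref{thm:bundle1}(ii) we already know $\calZ \in \irr(A,\bd)$.

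Next I would realize $\Coker(f)$, for a given $f \in \cC$, as an actual point of $\Ima(q_2)$. Since $\dimv(\Coker(f)) = \bd$, one chooses a $Q_0$-graded linear isomorphism $P_0/\Ima(f) \cong K^\bd$, obtains a surjection $\pi\colon P_0 \to K^\bd$ with kernel $\Ima(f)$, and transports the $A$-module structure to get a module $M$ on $K^\bd$ with $(f,\pi,M) \in Z(P_1,P_0,\bd)$ and $M = q_2(f,\pi,M) \cong \Coker(f)$. Because the group $G_\bd$ is connected, every irreducible component of $\md(A,\bd)$ is stable, and hence $\calZ^{\rm int}$ is stable as well; so it suffices to show $M \in \calZ^{\rm int}$, which then gives $\Coker(f) \in \calZ^{\rm int}$.

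The heart of the argument is then short: suppose $M$ lies in some $\calZ' \in \irr(A,\bd)$. By Fei's Lemma~\ref{lem:DFopen} the image $\Ima(q_2)$ is open in $\md(A,\bd)$, so $\Ima(q_2) \cap \calZ'$ is a non-empty open subset of the irreducible variety $\calZ'$, hence dense in it, and therefore $\calZ' = \overline{\Ima(q_2) \cap \calZ'} \subseteq \overline{\Ima(q_2)} = \calZ$. Since $\calZ'$ is an irreducible component contained in the irreducible component $\calZ$, we conclude $\calZ' = \calZ$. Thus $M$ (and hence $\Coker(f)$) lies in no irreducible component of $\md(A,\bd)$ other than $\calZ$, i.e. $\Coker(f) \in \calZ^{\rm int}$. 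There is no genuine obstacle here; the only point needing care is the identification $\calZ = \overline{\Ima(q_2)}$ together with the essential appeal to the openness of $\Ima(q_2)$ — without that openness the density argument collapses — plus the bookkeeping that all the relevant subsets are stable so that one may argue with the representative $M$ of the isomorphism class of $\Coker(f)$.
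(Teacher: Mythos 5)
Your proof is correct. It proves the same statement by the same overall strategy as the paper — show that any $\calZ'\in\irr(A,\bd)$ containing $\Coker(f)$ must coincide with $\calZ$ — but the mechanism differs in one step. The paper applies Fei's Lemma~\ref{lem:presentationsemicont} directly to $\calZ'$ to produce a component $\cC'$ of the stratum with $\eta_\bd(\cC')=\calZ'$, and then uses the irreducibility of the stratum to force $\cC'=\cC$, hence $\calZ'=\calZ$. You instead observe that $\calZ=\overline{\Ima(q_2)}$ (since $\cC$ is the whole stratum), invoke the openness of $\Ima(q_2)$ from Lemma~\ref{lem:DFopen}, and conclude by a density argument that $\calZ'\subseteq\calZ$, whence $\calZ'=\calZ$ by maximality of components. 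Since Lemma~\ref{lem:DFopen} is itself deduced from Lemma~\ref{lem:presentationsemicont}, the two arguments rest on the same underlying input; yours is marginally more streamlined in that it never needs to produce the auxiliary component $\cC'$, while the paper's version has the advantage of not depending on the (somewhat delicate) openness statement for $\Ima(q_2)$ as a subset of all of $\md(A,\bd)$. Your explicit bookkeeping — choosing a representative $(f,\pi,M)\in Z(P_1,P_0,\bd)$ and noting that $\calZ^{\rm int}$ is $G_\bd$-stable — is left implicit in the paper but is a correct and welcome clarification.
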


\begin{proof}
The properties of $q_1$ and $q_2$ imply that
$\calZ \in \irr(A,\bd)$.
Let $f \in \cC$.
Assume that $\Coker(f) \in \calZ'$ for some $\calZ' \in \irr(A,\bd)$.
Then Fei's Lemma~\ref{lem:presentationsemicont} says that there
is a dense open subset $\calU'$ of $\calZ'$ such that
each $M' \in \calU'$ has a projective presentation
of the form
$$
P_1 \to P_0 \to M' \to 0.
$$
It follows that there is some $\cC' \in \irr(\Hom_A(P_1,P_0,\bd))$
with $\eta_\bd(\cC') = \calZ'$.
Since $\Hom_A(P_1,P_0,\bd)$ is irreducible, we get
$\cC' = \cC$ and therefore
$\calZ' = \calZ$.
\end{proof}

\begin{Cor}\label{cor:bundle7}
Let $\calZ := \calZ_{P_1,P_0}$.
Then for each $f \in \Hom_A(P_1,P_0)^\circ$ we have
$\Coker(f) \in \calZ^{\rm int}$.
\end{Cor}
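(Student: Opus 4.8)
The plan is to deduce this immediately from Lemma~\ref{lem:bundle6}, applied to the maximal stratum. Concretely, set
$$
\cC := \Hom_A(P_1,P_0)^\circ = \Hom_A(P_1,P_0,\bd_{P_1,P_0}^\circ),
$$
which is a stratum of $\Hom_A(P_1,P_0)$ in the sense of Section~\ref{subsec:bundle} (with dimension vector $\bd := \bd_{P_1,P_0}^\circ$). By Theorem~\ref{thm:bundle1}(iii) this stratum is irreducible, and $\eta_\bd(\cC) \in \irr^\tau(A,\bd_{P_1,P_0}^\circ)$; by the remark immediately following Theorem~\ref{thm:bundle1}, this component is exactly $\calZ_{P_1,P_0} = \calZ$.

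Now Lemma~\ref{lem:bundle6} applies verbatim: since $\cC = \Hom_A(P_1,P_0,\bd)$ is an irreducible stratum and $\calZ = \eta_\bd(\cC)$, we conclude that $\Coker(f) \in \calZ^{\rm int}$ for every $f \in \cC = \Hom_A(P_1,P_0)^\circ$. This is precisely the assertion of the corollary.

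There is essentially no obstacle here — the statement is a direct specialization of Lemma~\ref{lem:bundle6} to the particular irreducible stratum $\Hom_A(P_1,P_0)^\circ$. The only point worth spelling out, if one wants the proof to be self-contained, is the identification of $\eta_\bd(\cC)$ with $\calZ_{P_1,P_0}$ when $\cC$ is the maximal stratum: a generic $f \in \cC$ has $\Coker(f)$ with dimension vector $\bd_{P_1,P_0}^\circ$, so $q_2(q_1^{-1}(\cC))$ is (a dense subset of) $\calZ_{P_1,P_0}^\circ$, and taking closures gives $\eta_\bd(\cC) = \overline{\calZ_{P_1,P_0}^\circ} = \calZ_{P_1,P_0}$. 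With that noted, the proof is a one-line appeal to Lemma~\ref{lem:bundle6}.
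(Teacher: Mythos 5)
Your proposal is correct and matches the paper's intent exactly: the paper states this corollary without proof precisely because it is the specialization of Lemma~\ref{lem:bundle6} to the maximal stratum $\cC = \Hom_A(P_1,P_0)^\circ = \Hom_A(P_1,P_0,\bd_{P_1,P_0}^\circ)$, together with the identification $\eta_\bd(\cC) = \calZ_{P_1,P_0}$ already noted after Theorem~\ref{thm:bundle1}. Your spelling out of that identification is a harmless but welcome extra detail.
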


\begin{proof}[{Proof of Plamondon's Theorem~\ref{thm:plamondon2}}]
We can write each $z \in K_0(\proj(A))$ uniquely as
$z = [P_1] - [P_0]$ for some pair $(P_1,P_0)$ of projective $A$-modules with $\add(P_1) \cap \add(P_0) = 0$.
Set $(P_1^z,P_0^z) := (P_1,P_0)$.
Due to Theorem~\ref{thm:plamondon1} combined with Corollary~\ref{cor:bundle4}
we have a surjective map
\begin{align*}
\eta\df K_0(\proj(A)) &\to \irr^\tau(A)
\\
z &\mapsto \calZ_{P_1^z,P_0^z}.
\end{align*}

Let $\calZ \in \irr^\tau(A)$, and let $z \in K_0(\proj(A))$
with $\eta(z) = \calZ$.
We know that
$$
g(\calZ) = [P_1^{\calZ}] - [P_0^{\calZ}] =
[P_1^z] - [P_0^z] - [P_f]
$$
with $\Hom_A(P_f,M) = 0$ for all $M \in \calZ$, where
$f$ is generic in $\Hom_A(P_1^z,P_0^z)$.
In particular, we have
$[P_f] \in \zero(\calZ)$.
Thus $z  = [P_1^z] - [P_0^z] = g(\calZ) + [P_f] \in g^\circ(\calZ)$.

For $z' = g^\circ(\calZ)$ we have
$z' = [P_1^\calZ] - [P_0^\calZ] + [P]$ for some
$P \in\proj(A)$ with $\Hom_A(P,M) = 0$ for all $M \in \calZ$.
We get
$$
\calZ = \calZ_{P_1^\calZ,P_0^\calZ} = \calZ_{P_1^\calZ \oplus P,P_0^\calZ} = \eta(z').
$$
(The second equality follows from Corollary~\ref{cor:bundle8}, and for the third equality we use that $\add(P_1^\calZ \oplus P) \cap \add(P_0^\calZ) = 0$.)

This finishes the proof.
\end{proof}

\subsection{Examples}

\subsubsection{}
It can happen that a stratum $\cC := \Hom_A(P_1,P_0,\bd)$ is
irreducible with $\bd \not= \bd_{P_1,P_0}^\circ$ such
that $\eta_\bd(\cC) \in \irr^\tau(A,\bd)$.
As  a trivial example, assume that $\Hom_A(P_1,P_0) \not= 0$, and let $\bd := \dimv(P_0)$.
Then $\cC$ consists just of the zero map, and we get $\eta_\bd(\cC) = \overline{\cO_{P_0}}$.

\subsubsection{}
Let $A = KQ/I$ where $Q$ is the quiver
$$
\xymatrix@-1ex{
3 \ar[d] & 4 \ar[d]
\\
1 \ar@/^1ex/[r]^a& 2 \ar@/^1ex/[l]^b
}
$$
and $I = (ab,ba)$.
Let
$$
P_1 := P(1) \oplus P(2) =
\bbm 1\\2\ebm \oplus
\bbm 2\\1\ebm
\text{\quad and \quad}
P_0 := P(3) \oplus P(4) =
\bbm 3\\1\\2 \ebm \oplus \bbm 4\\2\\1 \ebm.
$$
For $\bd = (1,1,1,1)$ the stratum $\Hom_A(P_1,P_0,\bd)$ has two irreducible
components, say $\cC_1$ and $\cC_2$.
Both $\calZ_1 := \eta_\bd(\cC_1)$ and $\calZ_2 := \eta_\bd(\cC_2)$
are generically $\tau$-regular irreducible components.
Up to renumbering, we have
$\calZ_1 = \calZ_{P(1),P(3) \oplus P(4)}$ and
$\calZ_2 = \calZ_{P(2),P(3) \oplus P(4)}$.
Note that
$\calZ_{P(1) \oplus P(2),P(3) \oplus P(4)} = S(3) \oplus S(4)
= \md(A,(0,0,1,1))$.

\subsubsection{}
Let $A = KQ/I$ where $Q$ is the quiver
$$
\xymatrix@-1ex{
1 \ar@/^1ex/[r]^a& 2 \ar@/^1ex/[l]^b
}
$$
and $I$ is generated by $aba$.
Then we have
$$
P(1) = \bbm 1\\2\\1\ebm
\text{\quad and \quad}
P(2) = \bbm 2\\1\\2\\1\ebm.
$$
For $P_1 := P(1)$ and $P_0 := P(2)$, $\Hom_A(P_1,P_0)$ has three strata, namely for
$\bd \in \{ (2,2),\; (1,2),\; (0,1) \}$.
All are irreducible, and the ones for $(2,2)$ and $(0,1)$
give rise to the generically $\tau$-regular
components $\calZ_{0,P(2)}$ and $\calZ_{P(1),P(2)}$, respectively.


\section{Projective presentations of maximal rank and
$\tau$-regular modules}\label{sec:maximalrank}


\subsection{Characterization $\tau$-regular modules}
The following theorem characterizes $\tau$-regular modules in terms
of their projective presentations.
As already mentioned in the introduction, it can be seen as a refinement of aspects of Plamondon's Theorem~\ref{thm:plamondon1}.

\begin{Thm}\label{thm:main0b}
For $M \in \md(A)$ let
\begin{equation}\label{eq:presentation}
P_1 \xrightarrow{f} P_0 \to M \to 0
\end{equation}
be a projective presentation.
Then
the following hold:
\begin{itemize}\itemsep2mm

\item[(i)]
If $\rk(f) = r(P_1,P_0)$, then $M$ is $\tau$-regular;

\item[(ii)]
If $M$ is $\tau$-regular and if (\ref{eq:presentation}) is a minimal presentation,
then $\rk(f) = r(P_1,P_0)$.

\end{itemize}
\end{Thm}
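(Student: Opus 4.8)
The plan is to derive both statements from the bundle-theoretic machinery of Section~\ref{sec:taureg}, and in particular from Theorem~\ref{thm:bundle1} and Corollary~\ref{cor:bundle2}, together with the reduction lemma Lemma~\ref{lem:maxrankfM} which translates ``$\rk(f) = r(P_1,P_0)$'' into a statement about the minimal presentation $f_M$ plus a vanishing condition $\Hom_A(P_f,M) = 0$. Throughout one may assume, by Lemma~\ref{lem:gvector1}, that $f$ decomposes as $(f_M) \oplus (1_P) \oplus (P_f \to 0)$, and since the trivial summand $1_P\colon P \to P$ affects neither the $\tau$-regularity of $M$ nor the equality $\rk(f) = r(P_1,P_0)$ in a problematic way (it just adds $\dim P$ to both sides), the whole problem reduces to the case of a minimal presentation with a possible extra ``killed'' summand $P_f$. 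So effectively I want to compare the pair $(P_1,P_0)$ with $(P_1^M, P_0^M)$.

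For part (i): assume $\rk(f) = r(P_1,P_0)$. By Lemma~\ref{lem:maxrankfM} this gives $\rk(f_M) = r(P_1^M, P_0^M)$ and $\Hom_A(P_f, M) = 0$. Now let $\calZ$ be any irreducible component of $\md(A,\dimv(M))$ containing $M$. By Lemma~\ref{lem:presentationsemicont} (Fei's semicontinuity) and Lemma~\ref{lem:genericpresentations1}, generically on $\calZ$ the minimal projective presentation is of the form $P_1^{\calZ} \to P_0^{\calZ} \to (-) \to 0$ with $[P_1^{\calZ}] - [P_0^{\calZ}] = g(\calZ) \le g(M) = [P_1^M] - [P_0^M]$ componentwise (upper semicontinuity of the $g_i$). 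On the other hand, by Lemma~\ref{lem:imageq2} there is $\cC \in \irr(\Hom_A(P_1^{\calZ}, P_0^{\calZ}, \bd))$ with $\eta_\bd(\cC) = \calZ$, and I want to argue that the pair $(P_1^M, P_0^M)$, enlarged by the summand corresponding to $g(M) - g(\calZ) \ge 0$ on which $f_M$ already has full rank, realizes $\calZ$ with codimension-$0$ stratum; Theorem~\ref{thm:bundle1}(i) then forces $c(\calZ) = E(\calZ)$ provided ${\rm codim}(\cC) = 0$ and $\Hom_A(P_f', M) = 0$ for the relevant killed summand. The cleanest route is probably: since $\rk(f_M)$ is maximal among $\Hom_A(P_1^M, P_0^M)$ and $\Hom_A(P_f, M) = 0$, Lemma~\ref{lem:maxrankfM} already gives $\rk(f) = r(P_1, P_0)$, so $f$ is generic in $\Hom_A(P_1, P_0)$; hence $M = \Coker(f) \in \calZ_{P_1,P_0}^\circ$, which by Theorem~\ref{thm:bundle1}(iii) (or Corollary~\ref{cor:bundle7}) lies in $\calZ_{P_1,P_0}^{\rm int}$, so $\calZ_{P_1,P_0}$ is the unique component containing $M$. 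Then $c(M) = c(\calZ_{P_1,P_0})$ would not be automatic, but $M$ being in the interior of a single component and the bundle computation $\dim \cO_f = \hom_A(P_1,P_0) - E(M) - \hom_A(P_f, M)$ combined with $q_1$ being a principal $G_\bd$-bundle and $q_2$'s fibre formula $q_2^{-1}(M) \cong \cO_f \times \Aut_A(M)$ lets one compute $\dim \calZ = \dim \cO_M + E(M) + \hom_A(P_f,M) - {\rm codim}$; with $\hom_A(P_f,M) = 0$ and ${\rm codim} = 0$ (full stratum is dense open, $f$ generic) one gets $c(M) = \dim\calZ - \dim\cO_M = E(M)$, i.e.\ $M$ is $\tau$-regular.

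For part (ii): now (\ref{eq:presentation}) is minimal, so $P = 0$, $P_f = 0$, $P_1 = P_1^M$, $P_0 = P_0^M$, $f = f_M$, and we assume $M$ is $\tau$-regular, so $c(M) = E(M)$ and $M$ lies in a unique component $\calZ$. I want to conclude $\rk(f_M) = r(P_1^M, P_0^M)$. By Lemma~\ref{lem:imageq2} and Lemma~\ref{lem:bundle6}-type arguments, $\calZ = \eta_\bd(\cC)$ for some $\cC \in \irr(\Hom_A(P_1^M, P_0^M, \bd))$ with $\bd = \dimv(M)$, and the generic $f'$ in $\cC$ gives the minimal presentation of a generic module of $\calZ$, which (again by Fei's lemma and minimality being generic) has the same $(P_1^M, P_0^M)$. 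Theorem~\ref{thm:bundle1}(i) gives $c(\calZ) = E(\calZ) + \hom_A(P_{f'}, M') - {\rm codim}(\cC)$ with $P_{f'} = 0$ by Corollary~\ref{cor:bundle2}(i); since $c(\calZ) = c(M) = E(M) = E(\calZ)$ (the middle equalities because $M$ is generic enough in its unique component — here one uses that $\tau$-regular modules form an open set, Lemma~\ref{lem:regularopen}(ii), so $c(\calZ) = E(\calZ)$), we get ${\rm codim}(\cC) = 0$, hence $\cC = \Hom_A(P_1^M, P_0^M, \bd)$ is the whole stratum; by Corollary~\ref{cor:bundle2}(ii) this forces $\bd = \bd_{P_1^M, P_0^M}^\circ$, i.e.\ the cokernel dimension vector of a maximal-rank map, which means $\rk(f_M) = r(P_1^M, P_0^M)$.

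The main obstacle I anticipate is the bookkeeping in part (i) around the non-minimal summand: one has to be careful that adding the trivial summand $1_P \colon P\to P$ and the killed summand $P_f$ does not change which component $M$ sits in, and that the genericity transfers correctly. The cleanest packaging is to invoke Lemma~\ref{lem:maxrankfM} to get straight to ``$f$ is generic in $\Hom_A(P_1,P_0)$'', then Corollary~\ref{cor:bundle7} to land $M$ in $\calZ_{P_1,P_0}^{\rm int}$, and finally read off $c(M) = E(M)$ from the bundle dimension count in Theorem~\ref{thm:bundle1}(iii) / Corollary~\ref{cor:bundle2}(ii)(a)$\Leftrightarrow$(b); the subtlety is only that $M$ need not be \emph{generic} in $\calZ_{P_1,P_0}$, so one cannot directly say ``$c(M) = c(\calZ) = E(\calZ) = E(M)$'' without checking that $M$, being in the interior, satisfies $c(M) = \dim \calZ_{P_1,P_0} - \dim \cO_M$, which is automatic since $\md_M(A,\bd) = \calZ_{P_1,P_0}$ there, and $E(M) = \hom_A(M,\tau_A M)$ is computed exactly by the fibre formula for $q_2$ at $M$ combined with the $\dim \cO_f$ formula — no genericity of $M$ needed, only genericity of $f$, which we have.
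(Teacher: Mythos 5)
Your part~(i) is essentially correct and close to the paper's argument: both routes first use Lemma~\ref{lem:maxrankfM} to get $\Hom_A(P_f,M)=0$ and Lemma~\ref{lem:bundle6} (equivalently Corollary~\ref{cor:bundle7}) to place $M$ in $\calZ_{P_1,P_0}^{\rm int}$, so that $c(M)=\dim\calZ_{P_1,P_0}-\dim\cO_M$. Where you then do a direct dimension count on the bundle $Z(P_1,P_0,\bd)$ at the specific point $M$, the paper instead transfers the numerical identity from a generic $N\in\calZ_{P_1,P_0}$ via the four-term exact sequence of Lemma~\ref{lem:gvector3}, using that the relevant quantities depend only on $\dimv(M)$. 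Your count works, but only as an inequality: Lemma~\ref{lem:chevalley} gives $\dim q_2^{-1}(M)\ge\dim Z-\dim\calZ$ at the special point $M$, which yields $c(M)\ge E(M)$, and you must close with the universal bound $c(M)\le E(M)$ from Voigt's Lemma and the Auslander--Reiten formula. As written (``lets one compute $\dim\calZ=\dim\cO_M+E(M)+\cdots$'') you assert an equality of fibre dimensions that is not automatic at a non-generic point.

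Part~(ii) has a genuine gap. You claim that the generic module $N$ of the unique component $\calZ\ni M$ has \emph{minimal} presentation $P_1^M\to P_0^M\to N\to 0$ ``by Fei's lemma and minimality being generic'', and on that basis you invoke Lemma~\ref{lem:imageq2} and Corollary~\ref{cor:bundle2}(i) for the pair $(P_1^M,P_0^M)$. But Fei's Lemma~\ref{lem:presentationsemicont} only gives $N$ a possibly non-minimal presentation over $(P_1^M,P_0^M)$; by upper semicontinuity of $\hom_A(-,S(j))$ and $\ext_A^1(-,S(j))$ the generic pair $(P_1^{\calZ},P_0^{\calZ})$ may be strictly smaller than $(P_1^M,P_0^M)$, and Lemma~\ref{lem:imageq2} and Corollary~\ref{cor:bundle2} are stated and proved only for $(P_1^{\calZ},P_0^{\calZ})$. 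The equality $g(M)=g(\calZ)$, i.e.\ the vanishing of the extra summands in the generic presentation, is essentially what you are trying to prove and cannot be assumed. The missing step --- and the heart of the paper's proof of (ii) --- is to write the generic presentation as $P_1^M\xrightarrow{g}P_0^M\to N\to 0$ and to show $\hom_A(P_g,N)=0$. This is extracted from the $\tau$-regularity of both $M$ and $N$ via Lemma~\ref{lem:bilinear}: one gets $\bil{g(M),[M]}=\dim\calZ-\dim G_\bd=\bil{g(N),[N]}$, and since $g(N)=[P_1^M]-[P_0^M]-[P_g]$ and $[M]=[N]$ this forces $\hom_A(P_g,N)=\bil{[P_g],[N]}=0$. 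Only then does Theorem~\ref{thm:bundle1}(i) give ${\rm codim}(\cC)=0$, hence $\bd=\bd^\circ_{P_1^M,P_0^M}$ and $\rk(f)=r(P_1,P_0)$. Without this computation your appeal to ``$P_{f'}=0$ by Corollary~\ref{cor:bundle2}(i)'' is unsupported.
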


\begin{proof}
Let $\bd := \dimv(M)$.

(i):
Let $\calZ := \calZ_{P_1,P_0}$.
We assume that $\rk(f) = r(P_1,P_0)$,
i.e.\ $\bd = \bd_{P_1,P_0}^\circ$.
Thus $\cC := \Hom_A(P_1,P_0,\bd) = \Hom_A(P_1,P_0)^\circ$ is irreducible,
$\calZ = \eta_\bd(\cC)$,
and Lemma~\ref{lem:bundle6} implies that
$M \in \calZ^{\rm int}$.
Since $\rk(f) = r(P_1,P_0)$, Lemma~\ref{lem:maxrankfM}
says that $\Hom_A(P_f,M) = 0$.
By Lemma~\ref{lem:gvector3}
there is an exact sequence
$$
0 \to \Hom_A(M,M) \to \Hom_A(P_0,M) \to \Hom_A(P_1,M)
\to \Hom_A(M,\tau_A(M)) \to 0.
$$
The alternating sum of the dimensions of these
four spaces is zero.
We have
$$
c(M) = \dim(\calZ) - \dim \cO_M = \dim(\calZ) - \dim G_\bd + \hom_A(M,M).
$$
Recall that $c(M) \le E(M) = \hom_A(M,\tau_A(M))$.

Now an easy calculation yields that
$c(M) = E(M)$ if and only if
$$
\dim(\calZ) - \dim G_\bd = -\hom_A(P_0,M) + \hom_A(P_1,M).
$$
Note that
these numbers only depend on $\dimv(M)$.

Since $\calZ$ is generically $\tau$-regular, there exists some
$N \in \calZ$ such that $c(N) = E(N)$ and $N$ has
a projective presentation
$$
P_1 \xrightarrow{g} P_0 \to N \to 0.
$$
(For the latter we used Fei's Lemma~\ref{lem:presentationsemicont}.)
Again by Lemma~\ref{lem:gvector3}
we get an exact sequence
\begin{multline*}
0 \to \Hom_A(N,N) \to \Hom_A(P_0,N) \to \Hom_A(P_1,N)
\\
\to \Hom_A(N,\tau_A(N)) \oplus \Hom_A(P_g,N) \to 0.
\end{multline*}
Of course, we have $\dimv(M) = \dimv(N)$ and therefore
$\rk(g) = r(P_1,P_0)$.
This implies $\Hom_A(P_g,N) = 0$, see again Lemma~\ref{lem:maxrankfM}.
Now $c(N) = E(N)$ together with the considerations above implies that
$c(M) = E(M)$.

(ii):
Assume that $M$ is $\tau$-regular, i.e.\ $c(M) = E(M)$
and that the presentation (\ref{eq:presentation}) is minimal.
By Lemma~\ref{lem:regular}
there is a unique $\calZ \in \irr(A,\bd)$ with $M \in \calZ$, and by
upper semicontinuity, $\calZ$ is generically $\tau$-regular.
For $N$ generic in $\calZ$ we have $c(N) = E(N)$ and there is a projective presentation
$$
P_1 \xrightarrow{g} P_0 \to N \to 0.
$$
(Here we used again Lemma~\ref{lem:presentationsemicont}.)
We have
$$
g(N) = [P_1] - [P_0] - [P_g].
$$
For all $L \in \calZ^{\rm int}$ we have
\begin{align*}
\bil{g(L),[L]} &= - \hom_A(L,L) + \hom_A(L,\tau_A(L))
\\
&= E(L) - c(L) + \dim(\calZ) - \dim G_\bd.
\end{align*}
Now $c(M) = E(M)$ and $c(N) = E(N)$ imply that
$$
\bil{g(M),[M]} = \dim(\calZ) - \dim G_\bd = \bil{g(N),[N]}.
$$
We get
$$
\bil{[P_1]-[P_0],[M]} = \bil{g(M),[M]} = \bil{g(N),[N]} =
\bil{[P_1]-[P_0],[N]} - \bil{[P_g],[N]}.
$$
Since $[M] = [N] = \bd$, this
implies $\hom_A(P_g,N) = \bil{[P_g],[N]} = 0$.

Let $\cC \in \irr(\Hom_A(P_1,P_0,\bd))$ such that
$\eta_\bd(\cC) = \calZ$.
We can assume that $g$ is generic in $\cC$ and $N$ is generic in
$\calZ$.
Now Theorem~\ref{thm:bundle1}(i) says that
$$
c(\calZ) = E(\calZ) + \hom_A(P_g,N) - {\rm codim}(\cC)
= E(\calZ) - {\rm codim}(\cC).
$$
Since $c(\calZ) = E(\calZ)$,
this implies that ${\rm codim}(\cC) = 0$, and therefore
$\bd = \bd_{P_1,P_0}^\circ$.
In other words, $\rk(g) = r(P_1,P_0)$.
Since $\dimv(M) = \dimv(N)$ we also get $\rk(f) = r(P_1,P_0)$.
\end{proof}

\subsection{Examples}

\subsubsection{}
Let $A = KQ$ where $Q$ is the quiver
$$
\xymatrix{
1 & 2 \ar[l]
}
$$
Let
$$
P_1 := P(1) = \bbm 1 \ebm
\text{\quad and \quad}
P_0 := P(1) \oplus P(2) = \bbm 1 \ebm \oplus \bbm 2\\1 \ebm.
$$
Clearly, each non-zero homomorphism $f\df P_1 \to P_0$ is
a monomorphism and therefore of maximal rank $r(P_1,P_0) = 1$.
Up to a non-zero scalar, exactly one of these, say $f_0$, has a cokernel isomorphic to
$M = S(1) \oplus S(2) = \bbm 1 \ebm \oplus \bbm 2 \ebm$.
In all other cases, we have $\Coker(f) \cong P(2)$.
The map $f_0$ is not a general projective presentation in the sense of \cite{DF15}, but for
its cokernel $M$ we still have $c(M) = E(M)$.

\subsubsection{}
Let $A = KQ/I$ where $Q$ is the quiver
$$
\xymatrix{
1 \ar@(ul,dl)[]_a & 2 \ar[l]
}
$$
and $I$ is generated by $a^2$.
The Auslander-Reiten quiver of $A$ looks as follows:
$$
\xymatrix@!@-4.5ex{
&& {\bbm 2\\1\\1 \ebm}\ar[dr] &&  {\bbm 2 \ebm}
\ar@{-->}[ll]
\\
& {\bbm 1\\1 \ebm}\ar[dr]\ar[ur]  &&
{\bbm 2\\1&&2\\&1 \ebm} \ar@{-->}[ll]\ar[dr]\ar[ur]
\\
 {\bbm 1 \ebm}\ar[ur]\ar[dr] &&
{\bbm 1&&2\\&1 \ebm}\ar[dr]\ar[ur]\ar@{-->}[ll]  &&
{\bbm 2\\1 \ebm}\ar@{-->}[ll]
\\
&  {\bbm 2\\1 \ebm}\ar[ur] && {\bbm 1 \ebm}\ar@{-->}[ll]  \ar[ur]
}
$$
(One needs to identify the two modules on the left of the
3rd and 4th row with the two modules on the right of the
4th and 3rd row, respectively.)
For $\bd = (2,1)$ we have $\calZ := \overline{\cO_{P(2)}} = \md(A,\bd)$,
i.e.\ $\md(A,\bd)$ is irreducible.
We have $\dim(\calZ) = 4$.
The modules with dimension vector $\bd$ are
\begin{align*}
M_1 &= \bbm 2\\1\\1 \ebm,
&
M_2 &= \bbm 1&&2\\&1\ebm,
&
M_3 &= \bbm 1\\1 \ebm \oplus \bbm 2 \ebm,
\\
M_4 &= \bbm 1 \ebm \oplus \bbm 2\\1\ebm,
&
M_5 &= \bbm 1 \ebm \oplus \bbm 1 \ebm \oplus \bbm 2 \ebm.
\end{align*}
Note that $P(2) = M_1$.
The minimal projective presentations of the modules $M_i$ are of the form
\begin{align*}
0 \to P(2) \to M_1 \to 0,
\\
P(1) \to P(1) \oplus P(2) \to M_2 \to 0,
\\
P(1) \to P(1) \oplus P(2) \to M_3 \to 0,
\\
P(1) \oplus P(1) \to P(1) \oplus P(2) \to M_4 \to 0,
\\
P(1) \oplus P(1) \oplus P(1) \to
P(1) \oplus P(1) \oplus P(2) \to M_5 \to 0.
\end{align*}
We have
$$
\begin{tabular}{r|c|c|c|c|c}
& $M_1$ & $M_2$ & $M_3$ & $M_4$ & $M_5$
\\\hline
$c(M)$ & 0 & 1 & 2 & 2 & 4
\\
$E(M)$ & 0 & 1 & 2 & 4 & 6
\end{tabular}
$$
Now Theorem~\ref{thm:main0b} says that
the modules $M_1$, $M_2$ and $M_3$ have a minimal projective presentation of maximal rank whereas $M_4$ and $M_5$ do not.
The only module which has a general projective presentation in the sense of \cite{DF15} is $M_1$.


\section{Reduction technique for $\tau$-regular modules}\label{sec:reduction}


Let $I$ be an ideal in our algebra $A$, and let $B := A/I$.
Then we interpret $\md(B)$ as the subcategory
of $\md(A)$ consisting of all $A$-modules $M$ with
$IM = 0$.
In this case, we have
$$
\{ \calZ \in \irr(A) \mid I\calZ = 0 \} \subseteq \irr(B).
$$

Let now $I = AeA$ for some idempotent $e$ in $A$.
We have $Ae \in \proj(A)$, and for $M \in \md(A)$
there is an isomorphism
$\Hom_A(Ae,M) \cong eM$ of $K$-vector spaces.
Then
$\md(B)$ consists of
all $A$-modules $M$ such that
$$
\supp(M) \subseteq \supp(B) :=
\{ 1 \le i \le n \mid eS(i) = 0 \}.
$$
It follows that
$$
\{ \calZ \in \irr(A) \mid I\calZ = 0 \} = \irr(B).
$$
In particular, we get $\irr(B) \subseteq \irr(A)$.
Furthermore, we have
$$
\md_M(B,\bd) = \md_M(A,\bd)
$$
for all $M \in \md(B,\bd)$.

The following proposition is part of \cite[Proposition~5.2]{MP23}.
We include a short proof.

\begin{Prop}[{Mousavand, Paquette}]\label{prop:MP23}
Let $I$ be an ideal in $A$, and let $B:= A/I$.
Let $\calZ \in \irr^\tau(A)$ such that $I\calZ = 0$.
Then $\calZ \in \irr^\tau(B)$.
\end{Prop}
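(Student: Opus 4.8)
\emph{Proof strategy.}
My plan is to reduce everything to showing that a generic module $M$ of $\calZ$ satisfies $c_B(M)=E_B(M)$, where $c_B,E_B$ denote the invariants $c,E$ computed over $B$. First I would note, as already recorded in the text, that $\calZ\in\irr(B,\bd)$ for $\bd:=\dimv(\calZ)$: indeed $\calZ$ is a component of $\md(A,\bd)$ contained in the closed subset $\md(B,\bd)$, hence maximal there as well. Next I would pin down the orbit-codimension over $B$: if $M$ lies in the interior $\calZ^{\rm int}$, then any component of $\md(B,\bd)$ through $M$ is an irreducible closed subset of $\md(B,\bd)\subseteq\md(A,\bd)$ through $M$, so it is contained in $\calZ$ and hence equals $\calZ$; therefore $\md_M(B,\bd)=\calZ=\md_M(A,\bd)$, and since $\cO_M$ is the same $G_\bd$-orbit over $A$ and over $B$, this gives $c_B(M)=c_A(M)$.

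The heart of the argument is the inequality $E_B(M)\le E_A(M)$, valid for every $M\in\md(B)$. To establish it I would use that the restriction map $\Ext^1_B(M,S(i))\to\Ext^1_A(M,S(i))$ is injective for $i\in\supp(B)$ --- a short exact sequence of $B$-modules that splits over $A$ automatically splits over $B$, since any $A$-linear section is $B$-linear --- so $\ext^1_B(M,S(i))\le\ext^1_A(M,S(i))$. Combining this with $\hom_B(M,S(i))=\hom_A(M,S(i))=[M:S(i)]$ and the formula $g_i(M)=-\hom(M,S(i))+\ext^1(M,S(i))$ yields $g^B_i(M)\le g^A_i(M)$ for all $i\in\supp(B)$. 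As $[M:S(i)]\ge 0$ and $[M:S(i)]=0$ for $i\notin\supp(B)$, summing gives $\bil{g_B(M),[M]}\le\bil{g_A(M),[M]}$; by Lemma~\ref{lem:bilinear}, applied over $B$ and over $A$ with $N=M$ and using $\hom_B(M,M)=\hom_A(M,M)$, this is exactly $E_B(M)\le E_A(M)$.

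To finish I would pick $M$ generic in $\calZ$; since $\calZ^{\rm int}$ is open dense in $\calZ$ and $\calZ$ is generically $\tau_A$-regular, I may assume $M\in\calZ^{\rm int}$ and $c_A(M)=E_A(M)$. Then, using the general chain $c_B(M)\le E_B(M)$ (Voigt's Lemma~\ref{prop:voigt} and Theorem~\ref{thm:ARformulas} applied over $B$) together with the previous two paragraphs,
\[
c_B(M)\ \le\ E_B(M)\ \le\ E_A(M)\ =\ c_A(M)\ =\ c_B(M),
\]
so all terms coincide and $M$ is $\tau_B$-regular. Since the $\tau_B$-regular modules form an open subset of $\md(B,\bd)$ by Lemma~\ref{lem:regularopen}(ii), and this holds on a dense subset of $\calZ$, the component $\calZ$ is generically $\tau_B$-regular, i.e.\ $\calZ\in\irr^\tau(B)$.

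I do not expect a serious obstacle: the only points needing care are the identification $c_B(M)=c_A(M)$ on $\calZ^{\rm int}$ (purely scheme-theoretic bookkeeping about which components pass through $M$) and the verification that the finitely many genericity conditions can be imposed simultaneously (each is open and dense). Alternatively, one could extract $E_B(M)\le E_A(M)$ from the comparison of Auslander--Reiten translations in \cite[Corollary~3.8]{AS81}, but the $g$-vector route above stays within the results recalled so far.
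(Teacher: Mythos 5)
Your proof is correct, and while it follows the same overall skeleton as the paper's argument (show $c_B(\calZ)=c_A(\calZ)$ and $E_B(M)\le E_A(M)$, then squeeze), it establishes the crucial inequality $E_B(M)\le E_A(M)$ by a genuinely different route. The paper quotes \cite[Corollary~3.8]{AS81}, which says that $\tau_B(M)$ embeds into $\tau_A(M)$, and then uses $\hom_B(M,\tau_B(M))=\hom_A(M,\tau_B(M))\le\hom_A(M,\tau_A(M))$; it also phrases the conclusion as a proof by contradiction rather than directly. You instead observe that the comparison map $\Ext^1_B(M,S(i))\to\Ext^1_A(M,S(i))$ is injective (an $A$-linear splitting of a sequence of $B$-modules is automatically $B$-linear), deduce $g^B_i(M)\le g^A_i(M)$, and convert this into $E_B(M)\le E_A(M)$ via Lemma~\ref{lem:bilinear} applied over $B$ and over $A$, using that $[M:S(i)]\ge 0$ and vanishes off $\supp(B)$. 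This is self-contained within the material recalled in the paper and avoids the Auslander--Smal\o{} relative AR-theory; the paper's route is shorter once \cite[Corollary~3.8]{AS81} is granted and gives the slightly stronger structural fact that $\tau_B(M)\subseteq\tau_A(M)$. Your justification of $c_B(M)=c_A(M)$ on $\calZ^{\rm int}$ (which the paper asserts without proof) is also correct, and the final genericity bookkeeping is fine since all the conditions you impose are open and dense in $\calZ$.
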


\begin{proof}
Note that $c_B(\calZ) = c_A(\calZ)$.

Assume that $\calZ \notin \irr^\tau(B)$.
By definition it means that for all $M \in \calZ$ we have
$$
\hom_A(M,\tau_B(M)) = \hom_B(M,\tau_B(M)) = E_B(M)
> c_B(\calZ) = c_A(\calZ).
$$
By \cite[Proposition~4.2]{AR77} we know that $\tau_B(M)$ is isomorphic
to a submodule of $\tau_A(M)$.
Consequently,
\[
E_A(M) = \hom_A(M,\tau_A(M)) \geq \hom_A(M,\tau_B(M)) > c_A(\calZ)
\]
for all $M \in \calZ$.
Thus $\calZ \notin \irr^\tau(A)$, a contradiction.
\end{proof}

\noindent
{\bf Examples}:
\begin{itemize}\itemsep2mm

\item[(i)]
The converse of
Proposition~\ref{prop:MP23} is in general wrong.
For example, let $A = KQ/I$ where
$Q$ is the quiver
$$
\xymatrix{1 \ar@(ur,dr)[]^a}
$$
and $I = (a^3)$.
Furthermore, let $B = KQ/(a^2) = A/(a^2)$.
Let
$$
M := {_B}B = \bbm 1\\1 \ebm,
$$
and let $\calZ := \overline{\cO_M}$.
Then $\calZ \in \irr^\tau(B)$, since $M$ is a projective $B$-module.
We have $\calZ \in \irr(A)$, but $\calZ \notin \irr^\tau(A)$.
Note that $M$ is $\tau_B$-regular, but not $\tau_A$-regular.
(We have $c_B(M) = E_B(M) = 0$, $c_A(M) = 0$ and $E_A(M) = 1$.)

\item[(ii)]
This example shows that
there is no analogue of
Proposition~\ref{prop:MP23}
for modules (instead of
components), compare however Theorem~\ref{thm:reduction1}.
Let $A = KQ$ where $Q$ is the quiver
$$
\xymatrix{
1 & \ar[l]_a 2 & 3 \ar[l]_b
}
$$
and let $B = KQ/(ab)$.
Furthermore, let $M = S(1) \oplus S(2) \oplus S(3)$.
Then $M$ is $\tau_A$-regular, but not $\tau_B$-regular.
(We have $c_A(M) = E_A(M) = 2$, $c_B(M) = 1$ and
$E_B(M) = 2$.)

\end{itemize}

For $M,N \in \md(A)$ let
$$
\trace_M(N) := \sum_{f \in \Hom_A(M,N)} \Ima(f)
$$
be the \emph{trace} of $M$ in $N$.

\begin{Thm} \label{thm:reduction1}
Let $e$ be an idempotent in $A$, and let $B := A/AeA$.
Then for $M \in \md(B)$ the following are equivalent:
\begin{itemize}\itemsep2mm

\item[(i)]
$M$ is $\tau_A$-regular;

\item[(ii)]
$M$ is $\tau_B$-regular.

\end{itemize}
\end{Thm}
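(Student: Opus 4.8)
The plan is to deduce the equivalence from Theorem~\ref{thm:main0b} (the characterization of $\tau$-regular modules by maximal-rank projective presentations), using two elementary facts about the ideal $I := AeA$. First, for every $A$-module $X$ one has $\trace_{Ae}(X) = A(eX) = IX$. Second, since $Ae \cong \bigoplus_{i \notin \supp(B)} P(i)^{m_i}$ with all $m_i \ge 1$ (the multiplicity of $P(i)$ in $Ae$ is $\dim eS(i)$, which by the definition of $\supp(B)$ vanishes exactly for $i \in \supp(B)$), we get $eM = 0$ for every $M \in \md(B)$; hence $IM = 0$ holds automatically as soon as $\supp(\dimv(M)) \subseteq \supp(B)$, i.e. $\md(B,\bd) = \md(A,\bd)$ whenever $\supp(\bd) \subseteq \supp(B)$. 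In particular $\md_M(A,\bd) = \md_M(B,\bd)$, so $c_A(M) = c_B(M)$ for all $M \in \md(B,\bd)$, and I will use this throughout.

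For (i) $\Rightarrow$ (ii) I would only chase inequalities, as in the proof of Proposition~\ref{prop:MP23}. By \cite[Corollary~3.8]{AS81} the module $\tau_B(M)$ embeds into $\tau_A(M)$, and $\Hom_B(M,\tau_B M) = \Hom_A(M,\tau_B M)$, so $E_B(M) \le E_A(M)$. Together with $c_A(M) = c_B(M)$ and the inequalities $c(N) \le e(N) \le E(N)$ over both algebras, if $M$ is $\tau_A$-regular then
$$
c_B(M) = c_A(M) = E_A(M) \ge E_B(M) \ge e_B(M) \ge c_B(M),
$$
so all terms coincide and $c_B(M) = E_B(M)$. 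Hence (i) $\Rightarrow$ (ii) is formal once $c_A = c_B$ is available.

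The substance is (ii) $\Rightarrow$ (i). Here the plan is to build an explicit projective presentation of $M$ over $A$ of maximal rank and then quote Theorem~\ref{thm:main0b}(i). Assume $M$ is $\tau_B$-regular and take a minimal projective presentation $P_1 \xrightarrow{f} P_0 \to M \to 0$ over $B$; by Theorem~\ref{thm:main0b}(ii) applied to $B$ one has $\rk(f) = r_B(P_1,P_0) = \dim \Omega_B(M)$, where $\Omega_B(M) := \ker(P_0 \to M)$ is the first syzygy of $M$ over $B$. Let $\widehat{P}_0$ and $\widehat{P}_1$ be the $A$-projective covers of $M$ and of $P_1$; a short trace computation gives $\widehat{P}_j/I\widehat{P}_j \cong P_j$. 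Applying the exact functor $\Hom_A(Ae,-)$ to $0 \to \Omega_A(M) \to \widehat{P}_0 \to M \to 0$ and using $eM = 0$ yields $e\,\Omega_A(M) = e\widehat{P}_0$, hence $I\,\Omega_A(M) = I\widehat{P}_0$; moreover $I\widehat{P}_0 \subseteq \Omega_A(M)$ with $\Omega_A(M)/I\widehat{P}_0 \cong \Omega_B(M)$, so $\dim\Omega_A(M) = \dim\Omega_B(M) + \dim I\widehat{P}_0$. Lifting the composite $\widehat{P}_1 \twoheadrightarrow P_1 \xrightarrow{f} \Omega_B(M)$ through $\Omega_A(M) \twoheadrightarrow \Omega_B(M)$ produces $\alpha \colon \widehat{P}_1 \to \Omega_A(M)$ with $\Ima(\alpha) + I\widehat{P}_0 = \Omega_A(M)$, and choosing $N$ together with a surjection $(Ae)^N \twoheadrightarrow I\widehat{P}_0$ produces $\beta \colon (Ae)^N \to \Omega_A(M)$ with $\Ima(\beta) = I\widehat{P}_0$. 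Then $h := (\alpha,\beta)\colon \widehat{P}_1 \oplus (Ae)^N \to \widehat{P}_0$ surjects onto $\Omega_A(M)$, so $\widehat{P}_1 \oplus (Ae)^N \xrightarrow{h} \widehat{P}_0 \to M \to 0$ is a projective presentation over $A$ with $\rk(h) = \dim\Omega_A(M)$.

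The step I expect to carry the real content is verifying that $h$ has maximal rank. For an arbitrary $g = (g_1,g_2)\colon \widehat{P}_1 \oplus (Ae)^N \to \widehat{P}_0$, the image of $g_2$ is contained in $I\widehat{P}_0$, while composing $g_1$ with $\widehat{P}_0 \twoheadrightarrow \widehat{P}_0/I\widehat{P}_0 = P_0$ gives a map that annihilates $I\widehat{P}_1$ (since $P_0$ is a $B$-module) and hence factors through $\widehat{P}_1/I\widehat{P}_1 = P_1$; its image therefore has dimension at most $r_B(P_1,P_0) = \dim\Omega_B(M)$. Consequently $\rk(g) \le \dim\Omega_B(M) + \dim I\widehat{P}_0 = \dim\Omega_A(M) = \rk(h)$, so $h$ attains $r_A(\widehat{P}_1 \oplus (Ae)^N, \widehat{P}_0)$, and Theorem~\ref{thm:main0b}(i) gives that $M$ is $\tau_A$-regular. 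The heart of the matter is this decoupling: maps out of $\add(Ae)$ see only the trace $I\widehat{P}_0$, whereas maps out of $\widehat{P}_1$, read modulo $I\widehat{P}_0$, are governed by $\Hom_B(P_1,P_0)$, whose generic rank is bounded by $\dim\Omega_B(M)$ precisely because $M$ is $\tau_B$-regular. Adding the summand $(Ae)^N$ is what makes this bound achievable over $A$ — the minimal $A$-presentation of $M$ itself need not have maximal rank, so one cannot argue with it directly.
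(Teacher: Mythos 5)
Your proposal is correct and follows essentially the same route as the paper's proof: the forward direction via $c_A=c_B$ and the Auslander--Smal\o{} embedding $\tau_B(M)\hookrightarrow\tau_A(M)$, and the converse by lifting the minimal $B$-presentation to $\widehat{P}_1\oplus(Ae)^N\to\widehat{P}_0$ (the paper's $P_1'\oplus P_0''\to P_0'$) and proving maximality of rank by splitting any competing map into its trace part inside $I\widehat{P}_0$ and its induced $B$-linear part bounded by $r_B(P_1,P_0)$, then invoking Theorem~\ref{thm:main0b} in both directions. The only differences are cosmetic (syzygy notation and the explicit summand $(Ae)^N$ in place of an abstract projective with top supported off $\supp(B)$).
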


\begin{proof}
Assume that $M \in \md(B)$.
Then we obviously have $c_A(M) = c_B(M)$.
Let
$$
P_1 \to P_0 \to M \to 0
$$
be a minimal projective presentation
in $\md(A)$.
The ideal $I := AeA$ is a $1$-idempotent ideal in the sense of
\cite[Section~1]{APT92}.
Then \cite[Theorem~1.6]{APT92} implies that the induced sequence
$$
P_1/IP_1 \to P_0/IP_0 \to M \to 0
$$
is a minimal projective presentation in $\md(B)$.
Now \cite[Proposition~4.2]{AR77} says that
$\tau_B(M) \cong \Hom_A(B,\tau_A(M))$.
Recall that for $X \in \md(A)$, the $A$-module
$\Hom_A(B,X)$ is isomorphic to the largest
submodule $U$ of $X$ such that $IU = 0$, compare
\cite[Proof of Lemma~4.1]{AR77}.
We get
$$
E_A(M) = \hom_A(M,\tau_A(M)) = \hom_A(M,\tau_B(M))
= \hom_B(M,\tau_B(M)) = E_B(M).
$$
Thus $M$ is $\tau_A$-regular if and only if $M$ is
$\tau_B$-regular.
\end{proof}

\begin{Cor} \label{cor:reduction2}
Let $e$ be an idempotent in $A$, and let $B := A/AeA$.
Then
$$
\irr(B) \cap \irr^\tau(A) = \irr^\tau(B).
$$
\end{Cor}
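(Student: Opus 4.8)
The plan is to obtain Corollary~\ref{cor:reduction2} essentially for free from Theorem~\ref{thm:reduction1}, by combining it with the set-theoretic identifications recorded at the start of this section and the openness of the $\tau$-regular locus (Lemma~\ref{lem:regularopen}(ii)). Write $I := AeA$. First I would recall that $\irr(B) = \{ \calZ \in \irr(A) \mid I\calZ = 0 \}$, so both sides of the claimed identity are subsets of $\irr(A)$ and it makes sense to compare them there; I would also note $\md_M(B,\bd) = \md_M(A,\bd)$ for $M \in \md(B,\bd)$, so that $c_B(M) = c_A(M)$ (this is already built into Theorem~\ref{thm:reduction1}). The inclusion $\irr(B) \cap \irr^\tau(A) \subseteq \irr^\tau(B)$ is exactly Proposition~\ref{prop:MP23}, so the substance is the reverse inclusion; but it is cleanest to prove the equivalence directly.

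The core step is the following chain of equivalences for a fixed $\calZ \in \irr(B,\bd)$. Because $\calZ$ is an irreducible component, $\calZ \in \irr^\tau(A)$ if and only if $\calZ$ contains at least one $\tau_A$-regular module: by Lemma~\ref{lem:regularopen}(ii) the $\tau_A$-regular modules form an open subset of $\md(A,\bd)$, so if $\calZ$ meets it, the intersection is a non-empty open — hence dense — subset of $\calZ$; the converse is trivial. Applying Lemma~\ref{lem:regularopen}(ii) instead inside $\md(B,\bd)$ gives, in the same way, that $\calZ \in \irr^\tau(B)$ if and only if $\calZ$ contains at least one $\tau_B$-regular module (here one uses that $\calZ$ is genuinely a component of $\md(B,\bd)$ and that $\calZ \subseteq \md(B,\bd)$, which are part of the identification above). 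Finally, Theorem~\ref{thm:reduction1} says precisely that for $M \in \md(B)$ one has "$M$ is $\tau_A$-regular" $\iff$ "$M$ is $\tau_B$-regular", so the two existence statements coincide. Stringing these together yields $\calZ \in \irr^\tau(A) \iff \calZ \in \irr^\tau(B)$, and since both $\irr(B) \cap \irr^\tau(A)$ and $\irr^\tau(B)$ consist exactly of such $\calZ$, the asserted equality follows.

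I do not anticipate a real obstacle here: all the weight is carried by Theorem~\ref{thm:reduction1}, and the remaining steps are just unwinding the definition of a generically $\tau$-regular component and using that a non-empty open subset of an irreducible set is dense. The only points that need a little care are to apply Lemma~\ref{lem:regularopen}(ii) over the correct variety on each side (over $\md(A,\bd)$ for the $\tau_A$-statement, over $\md(B,\bd)$ for the $\tau_B$-statement) and to invoke $\irr(B) = \{\calZ \in \irr(A) \mid I\calZ = 0\}$ so that both sides are compared inside the same ambient set $\irr(A)$; neither presents any difficulty.
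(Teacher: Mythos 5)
Your proposal is correct and follows essentially the same route as the paper, whose proof of Corollary~\ref{cor:reduction2} simply reduces the statement to the pointwise equivalence of Theorem~\ref{thm:reduction1}; you merely make explicit the (correct) bookkeeping that the paper leaves implicit, namely the identification $\irr(B)=\{\calZ\in\irr(A)\mid I\calZ=0\}$ and the fact, via Lemma~\ref{lem:regularopen}(ii) and irreducibility, that a component is generically $\tau$-regular if and only if it contains one $\tau$-regular module. No gaps.
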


\begin{proof}
Let $\calZ \in \irr(B)$.
We have to show that $\calZ \in \irr^\tau(A)$ if and only
if $\calZ \in \irr^\tau(B)$.
This follows directly from Theorem~\ref{thm:reduction1}.
\end{proof}

Note that the inclusion $\subseteq$ in Corollary~\ref{cor:reduction2}
follows already from Proposition~\ref{prop:MP23}.

The following direct consequence of Corollary~\ref{cor:reduction2}
is quite useful.

\begin{Cor}\label{cor:reduction3}
Let $e$ be an idempotent in $A$, and let $B := A/AeA$.
If
$\irr^\tau(B) \not= \irr^{\tau^-}(B)$, then
$\irr^\tau(A) \not= \irr^{\tau^-}(A)$.
\end{Cor}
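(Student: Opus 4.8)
The plan is to prove the contrapositive: assuming $\irr^\tau(A) = \irr^{\tau^-}(A)$, I will deduce $\irr^\tau(B) = \irr^{\tau^-}(B)$, so that the failure of the latter equality forces the failure of the former.

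First I would recall from the discussion at the beginning of Section~\ref{sec:reduction} that, for $B = A/AeA$, the set $\irr(B)$ is exactly $\{ \calZ \in \irr(A) \mid AeA \cdot \calZ = 0 \}$; in particular $\irr(B) \subseteq \irr(A)$. Intersecting the assumed equality $\irr^\tau(A) = \irr^{\tau^-}(A)$ with $\irr(B)$ therefore yields
$$
\irr(B) \cap \irr^\tau(A) = \irr(B) \cap \irr^{\tau^-}(A).
$$
By Corollary~\ref{cor:reduction2} the left-hand side equals $\irr^\tau(B)$. For the right-hand side I would invoke the dual of Corollary~\ref{cor:reduction2}: via the duality $D = \Hom_K(-,K)$ one has a bijection $\md(A,\bd) \leftrightarrow \md(A^{\op},\bd)$ under which $c_A$ is preserved and $\tau^-_A$-regularity corresponds to $\tau_{A^{\op}}$-regularity, and moreover $(A/AeA)^{\op} = A^{\op}/A^{\op}eA^{\op}$; applying Corollary~\ref{cor:reduction2} to $A^{\op}$ and the idempotent $e$ then gives $\irr(B) \cap \irr^{\tau^-}(A) = \irr^{\tau^-}(B)$. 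Combining the two identifications yields $\irr^\tau(B) = \irr^{\tau^-}(B)$, contradicting the hypothesis, and the claim follows.

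The only step that calls for any care — and it is entirely routine within this paper's setup — is the passage to the opposite algebra needed to obtain the $\tau^-$-version of Corollary~\ref{cor:reduction2}: one checks that $D$ intertwines $\md(A,\bd)$ with $\md(A^{\op},\bd)$, that $\md_M$ and hence $c$ are invariant, that $\hom_A(\tau_A^-(M),M) = \hom_{A^{\op}}(DM, \tau_{A^{\op}}(DM))$, and that the idempotent $e$ and the ideal $AeA$ transport correctly. This is the same dualization already used implicitly throughout (cf.\ the remarks after Lemma~\ref{lem:pdim1} and in Section~\ref{subsec:plamondon}), so there is no genuine obstacle here; the corollary is a formal consequence of Corollary~\ref{cor:reduction2} and its dual.
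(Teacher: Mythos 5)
Your proof is correct and matches the paper's intended argument: the paper gives no written proof, calling the corollary a ``direct consequence of Corollary~\ref{cor:reduction2}'', and in the proof of Theorem~\ref{thm:main4b} it makes explicit that the mechanism is exactly ``Corollary~\ref{cor:reduction2} and its dual''. Your contrapositive formulation, the intersection with $\irr(B)$, and the careful justification of the dual statement via the opposite algebra are precisely what is needed.
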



\section{Extensions and quotients of generically $\tau$-regular components}\label{sec:extensions}


\subsection{Generic extensions by simple projectives}
For $\calZ_1,\calZ_2 \in \irr(A)$
recall from
Section~\ref{subsec:genericextension}
that $\cE(\calZ_1,\calZ_2)$ denotes the generic extension
of $\calZ_1$ by $\calZ_2$.

Note that
for a simple module $S$ its orbit is just a point and forms
of course an irreducible component.
In fact, we have $S = \md(A,\dimv(S))$.

\begin{Prop} \label{prop:ext}
Let $S$ a simple projective $A$-module, and let
$\calZ \in \irr^\tau(A,\bd)$.
Let
$$
\varepsilon_S^+(\calZ) := \varepsilon(\calZ,S).
$$
Then $\varepsilon_S^+(\calZ) \in \irr^\tau(A,\bd+\dimv(S))$.
Furthermore, we have
$$
g(\varepsilon_S^+(\calZ)) =  g(\calZ) - [S].
$$
\end{Prop}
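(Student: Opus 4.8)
The plan is to describe the generic module $E$ of $\varepsilon_S^+(\calZ) = \cE(\calZ,S)$ explicitly, to compute its minimal projective presentation from that of a generic $M \in \calZ$, and to check that this presentation has maximal rank, so that Theorem~\ref{thm:main0}(i) gives $\tau$-regularity of $E$.

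First I would observe that, $S$ being projective, $\Ext_A^1(S,-) = 0$, hence $\ext_A^1(S,\calZ) = 0$ and Theorem~\ref{thm:CBS} yields $\varepsilon_S^+(\calZ) \in \irr(A,\bd')$, where $\bd' := \bd + \dimv(S)$. By the incidence-variety description of generic extensions recalled in Section~\ref{subsec:genericextension}, a generic $E \in \varepsilon_S^+(\calZ)$ fits in a short exact sequence $0 \to S \xrightarrow{\iota} E \to M \to 0$ with $M$ generic in $\calZ$ and extension class generic in $\Ext_A^1(M,S)$; I will analyze exactly such $E$. Write $S = S(k) = P(k)$ and fix a minimal projective presentation $P_1^\calZ \xrightarrow{f_M} P_0^\calZ \to M \to 0$ of the generic $M$. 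Since $\calZ$ is generically $\tau$-regular, $\calZ = \calZ_{P_1^\calZ,P_0^\calZ}$ (Corollary~\ref{cor:bundle2}), $\rk(f_M) = r(P_1^\calZ,P_0^\calZ)$ (Theorem~\ref{thm:main0}(ii)), and $g(\calZ) = [P_1^\calZ] - [P_0^\calZ]$ (Lemma~\ref{lem:gvector2} together with Corollary~\ref{cor:bundle3}); note also that $[P_1^\calZ:P(k)] = \ext_A^1(M,S(k))$ and $\Hom_A(P_1^\calZ,S) \cong K^{[P_1^\calZ:P(k)]}$.

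The argument then splits according to whether $\Ext_A^1(M,S)$ vanishes. If $\Ext_A^1(M,S) = 0$, then $E = S \oplus M$ and $\Hom_A(P_1^\calZ,S) = 0$; the minimal presentation of $E$ is $P_1^\calZ \xrightarrow{(f_M,0)} P_0^\calZ \oplus S \to E \to 0$, every homomorphism $P_1^\calZ \to P_0^\calZ \oplus S$ has vanishing component into $S$, so $(f_M,0)$ has maximal rank, whence $E$ is $\tau$-regular and $g(E) = [P_1^\calZ] - [P_0^\calZ] - [S] = g(\calZ) - [S]$. If $\Ext_A^1(M,S) \neq 0$, then a generic $E$ is non-split; pulling back the projective cover $P_0^\calZ \to M$ along $E \to M$ produces an extension of $P_0^\calZ$ by $S$, which splits because $S = P(k)$ is projective, and a short computation then shows $\iota(S) \subseteq \rad(E)$. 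Consequently $\tp(E) = \tp(M)$, so $P_0^E = P_0^\calZ$, and the short exact sequence $0 \to \Omega E \to \Omega M \to S \to 0$ of kernels of projective covers splits, giving $\Omega M \cong \Omega E \oplus S$ and therefore $P_1^\calZ = P_1^E \oplus S$; hence $g(E) = [P_1^E] - [P_0^E] = [P_1^\calZ] - [S] - [P_0^\calZ] = g(\calZ) - [S]$. It remains to prove $\rk(f_E) = r(P_1^E,P_0^\calZ)$: as $\rk(f_E) = \dim(P_0^\calZ) - \dim(E) = r(P_1^\calZ,P_0^\calZ) - 1$ and extending homomorphisms by zero shows $r(P_1^E,P_0^\calZ) \leq r(P_1^\calZ,P_0^\calZ)$, I only have to exclude $r(P_1^E,P_0^\calZ) = r(P_1^\calZ,P_0^\calZ)$. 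If $f' \colon P_1^E \to P_0^\calZ$ realized this, then $(f',0) \colon P_1^\calZ = P_1^E \oplus S \to P_0^\calZ$ would lie in $\Hom_A(P_1^\calZ,P_0^\calZ)^\circ$, so $N := \Coker((f',0)) \in \calZ$ by Corollary~\ref{cor:bundle7}, while by Lemma~\ref{lem:gvector1} one would have $g(N) = [P_1^\calZ] - [P_0^\calZ] - [P_{(f',0)}]$ with $[P_{(f',0)}:P(k)] \geq 1$ (the summand $S = P(k)$ of $P_1^\calZ$ is killed by $(f',0)$), forcing $g_k(N) < g_k(\calZ)$ and contradicting $g_k(N) \geq g_k(\calZ)$.

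In either case the generic $E$ is $\tau$-regular, so $\varepsilon_S^+(\calZ) \in \irr^\tau(A,\bd')$ by openness of the $\tau$-regular locus (Lemma~\ref{lem:regularopen}(ii)); and since $g$ is constant equal to $g(\calZ) - [S]$ on a dense subset of $\varepsilon_S^+(\calZ)$ and each $g_i$ is upper semicontinuous, we obtain $g(\varepsilon_S^+(\calZ)) = g(\calZ) - [S]$. I expect the crux to be the non-split case: proving that the minimal projective presentation of $E$ still has maximal rank, in particular excluding the a priori jump of $r(-,P_0^\calZ)$ caused by deleting the summand $S = P(k)$ from $P_1^\calZ$ — a jump that can genuinely happen for maps whose cokernel lies in $\calZ$ precisely when $[M:S(k)] = 0$ — and the comparison $g_k(N) \geq g_k(\calZ)$ for $N \in \calZ$ is exactly what rules it out. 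A secondary, more routine point is the identification of the generic element of $\varepsilon_S^+(\calZ)$ with an extension of a generic $M \in \calZ$, which follows from the parameter-space picture of generic extensions in Section~\ref{subsec:genericextension}.
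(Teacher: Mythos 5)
Your argument is correct, but it follows a genuinely different route from the paper's. The paper avoids your split/non-split dichotomy entirely: it takes the (generally non-minimal) presentation $P_1^{\calZ} \xrightarrow{h} P_0^{\calZ}\oplus S \to E \to 0$ supplied by the Horseshoe Lemma, with $h = \left(\bsm f\\ g\esm\right)$ and $\rk(f)=r(P_1^{\calZ},P_0^{\calZ})$, and proves in one stroke that $r(P_1^{\calZ},P_0^{\calZ}\oplus S)=r(P_1^{\calZ},P_0^{\calZ})$: if $g\neq 0$ and $\Ker(f)\not\subseteq\Ker(g)$, then $\Ker(f)\to S$ would be a split epimorphism onto the simple projective $S$, producing a nonzero direct summand of $P_1^{\calZ}$ inside $\Ker(f)$, contradicting Corollary~\ref{cor:bundle3}; hence $\Ker(h)=\Ker(f)$, $\rk(h)=\rk(f)$ is maximal, and Theorem~\ref{thm:main0}(i) plus Lemma~\ref{lem:gvector2} finish both claims uniformly. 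You instead pin down the \emph{minimal} presentation of the generic extension, whose shape depends on whether it splits ($P_1^{\calZ}\to P_0^{\calZ}\oplus S$ versus $P_1^{\calZ}/S\to P_0^{\calZ}$ via the split sequence $0\to\Omega E\to\Omega M\to S\to 0$), and in the non-split case you exclude a rank jump of $r(-,P_0^{\calZ})$ by the cokernel/$g$-vector contradiction with Corollary~\ref{cor:bundle7} and Lemma~\ref{lem:gvector1} — note that this last step is exactly Corollary~\ref{cor:bundle3}(ii) applied to $(f',0)$, which would shorten it. What your approach buys is the explicit minimal presentation of the generic extension, so the $g$-vector formula falls out directly; what it costs is the case analysis and a genericity point you rightly flag but do not prove, namely that the extensions of generic $M\in\calZ$ by $S$ (non-split ones, in the second case) form a dense subset of $\cE(\calZ,S)$ — this is covered by \cite[Theorem~1.3(iii)]{CBS02} together with the fact that $S\oplus M$ degenerates from any non-split extension, and the paper cites the same result for the analogous step.
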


\begin{proof}
By Theorem~\ref{thm:CBS} we know that
$\calZ' := \varepsilon_S^+(\calZ)$ is an irreducible component of $\md(A,\bd + \dimv(S))$.

Let
\[
\calU := \{ M \in \calZ \mid \ext_A^1(M,S) = \ext^1(\calZ,S) \}.
\]
Let $\calU'$ be the set of all $M' \in \md(A,\bd + \dimv(S))$ such that
there exists a short exact sequence
$$
0 \to S \to M' \to M \to 0
$$
with $M \in \calU$.
Then $\calZ' = \overline{\calU'}$.

Since $\calZ$ is generically $\tau$-regular,
we have $\calZ = \calZ_{P_1,P_0}$ for some pair $(P_1,P_0)$
of projective $A$-modules, see Theorem~\ref{thm:plamondon1}.

Let $\cU_0 := \calZ_{P_1,P_0}^\circ$.
Then $\calZ = \overline{\cU_0}$.
By shrinking $\calU_0$, if necessary, we may assume that $\calU_0 \subseteq \calU$.
We may also assume that
for every
$f \in \Hom_A(P_1,P_0)^\circ$,
no non-zero direct summand of $P_1$ is contained in $\Ker(f)$,
compare Corollary~\ref{cor:bundle3}.

We claim that $r(P_1,P_0 \oplus S) = r(P_1,P_0)$.

Indeed, the inequality $r(P_1,P_0 \oplus S) \geq r(P_1,P_0)$ is obvious.
In order to prove the reverse inequality, fix a homomorphism
$$
h =
\left(\bbm
f \\ g
\ebm\right)
\colon P_1 \to P_0 \oplus S.
$$
Note that
$\rk(h) \leq \rk(f) + \dim(S) = \rk(f) + 1$.
Consequently, if $\rk(f )< r(P_1,P_0)$, then
$\rk(h) \leq r(P_1,P_0)$.
Similarly,
if $g = 0$, then
$\rk(h)=\rk(f)\leq r(P_1,P_0)$.
Thus we assume $\rk(f) = r(P_1,P_0)$ and $g \neq 0$.
It follows that $\Ker(f) \subseteq \Ker(g)$.
(Otherwise the composition $\Ker(f) \hookrightarrow P_1 \xrightarrow{g} S$ is a split epimorphism (since $S$ is simple projective), which gives a
non-zero direct summand of $P_1$ contained in $\Ker(f)$, a contradiction to our assumption.)
Since $\Ker(f) \subseteq \Ker(g)$, we get $\Ker(h) = \Ker(f) \cap \Ker(g) = \Ker(f)$.
This implies
$$
\rk(h) = \dim(P_1) - \dim \Ker(h)
= \dim(P_1) - \dim \Ker(f) = \rk(f) = r(P_1,P_0).
$$
This finishes the proof of the equality
$r(P_1,P_0 \oplus S) = r(P_1,P_0)$.

The above considerations also show that
$\Ker(h) = \Ker(f)$ and
$\rk(h) = r(P_1,P_0 \oplus S)$, provided
$\rk(f) = r(P_1,P_0)$.

Let $\calU_0'$ be the set of $M' \in \md(A,\bd + \dimv(S))$ such that there exists an exact sequence
$$
0 \to S \to M' \to M \to 0
$$
with $M \in \calU_0$.
Then $\calU_0'$ is a non-empty open subset of $\calU'$ by \cite[Theorem~1.3(iii)]{CBS02},
hence $\calZ' = \overline{\calU_0'}$.
Let $M' \in \calU_0'$, and fix an exact sequence
$$
0 \to S \to M' \to M \to 0
$$
with $M \in \calU_0$.
Since $S$ is projective,
the Horseshoe Lemma gives
a projective presentation
$$
P_1 \xrightarrow{h}
P_0 \oplus S \to M' \to 0
$$
where
$h = \left(\bbm
f \\ g
\ebm\right)
$
for some $g \in \Hom_A (P_1,S)$ and
$f \in \Hom_A(P_1,P_0)$ with $\rk(f)= r(P_1,P_0)$.
We get
$\rk(h) = r(P_1,P_0 \oplus S)$,
hence $M'$ is $\tau$-regular by Theorem~\ref{thm:main0}(i), thus
$\calZ'$ is
generically $\tau$-regular.
Moreover, $\Ker(h) = \Ker(f)$ together with Lemma~\ref{lem:gvector2} imply that
$g(\calZ') = g(\calZ) - [S]$.
\end{proof}

\subsection{Generic quotients by simple projectives}

\begin{Prop} \label{prop:quotient}
Let $S$ a simple projective $A$-module, and let
$\calZ \in \irr^\tau(A,\bd+\dimv(S))$.
Let
$$
\calZ' := \varepsilon_S^-(\calZ)
$$
be the closure of the set of all $M' \in \md(A,\bd)$ such that there exists a short exact sequence
$$
0 \to S \to M \to M' \to 0
$$
with $M \in \calZ$.
Then $\calZ' \in \irr^\tau(A,\bd)$.
Furthermore, we have
$$
g(\calZ') = g(\calZ) + [S].
$$
\end{Prop}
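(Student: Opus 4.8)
The plan is to identify $\calZ':=\varepsilon_S^-(\calZ)$ with an explicit Plamondon component. Write $S=S(i)=P(i)$, a simple projective module (so $\dim_K S=1$), and set $(P_1,P_0):=(P_1^{\calZ},P_0^{\calZ})$. Since $\calZ$ is generically $\tau$-regular, Corollary~\ref{cor:bundle2} gives $\calZ=\calZ_{P_1,P_0}$, Corollary~\ref{cor:bundle4} gives $\add(P_1)\cap\add(P_0)=0$, and Lemma~\ref{lem:gvector2} together with Corollary~\ref{cor:bundle3} gives $g(\calZ)=[P_1]-[P_0]$. The candidate is $\cW:=\calZ_{P_1\oplus P(i),P_0}$, which lies in $\irr^\tau(A)$ by Plamondon's Theorem~\ref{thm:plamondon1} (or Theorem~\ref{thm:bundle1}(iii)). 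I would show $\calZ'=\cW$ and then read off the $g$-vector from Lemma~\ref{lem:gvector2}.

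First I would prove the rank identity $r(P_1\oplus P(i),P_0)=r(P_1,P_0)+1$. The inequality $\le$ is immediate from subadditivity of the rank under direct sums of the source, using $\dim_K P(i)=1$. For $\ge$, fix $g_1\in\Hom_A(P_1,P_0)^\circ$, so $M:=\Coker(g_1)\in\calZ$; since $S$ is projective, $\Hom_A(S,P_0)\twoheadrightarrow\Hom_A(S,M)$, and $\hom_A(S,M)=[M:S(i)]=d_i+1>0$ (with $\bd=(d_1,\dots,d_n)$), so there is $g_2\in\Hom_A(P(i),P_0)$ with $\Ima(g_2)\not\subseteq\Ima(g_1)$; as $\Ima(g_2)$ is a nonzero quotient of the simple module $S$ it is isomorphic to $S$, hence $\Ima(g_1)\cap\Ima(g_2)=0$ and $\rk(g_1,g_2)=r(P_1,P_0)+1$. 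The same rank bookkeeping shows that for \emph{every} $\tilde f=(g_1,g_2)\in\Hom_A(P_1\oplus P(i),P_0)^\circ$ one has $g_1\in\Hom_A(P_1,P_0)^\circ$, $g_2\ne 0$ (hence $g_2$ injective and $\Ima(g_2)\cong S$), and $\Ima(g_1)\cap\Ima(g_2)=0$.

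Next I would establish $\cW\subseteq\calZ'$. Given $\tilde f=(g_1,g_2)$ as above, set $M:=\Coker(g_1)\in\calZ$ and $\bar S:=(\Ima(g_1)+\Ima(g_2))/\Ima(g_1)\subseteq P_0/\Ima(g_1)=M$; then $\bar S\cong\Ima(g_2)\cong S$ and $M/\bar S=P_0/\Ima(\tilde f)=\Coker(\tilde f)$, giving a short exact sequence $0\to S\to M\to\Coker(\tilde f)\to 0$ with $M\in\calZ$. Thus $\Coker(\tilde f)$ lies in the defining set of $\calZ'$, and since such cokernels are dense in $\cW=\calZ_{P_1\oplus P(i),P_0}$, we get $\cW\subseteq\calZ'$. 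Every module in the defining set of $\calZ'$ has dimension vector $\bd$, so $\dimv(\cW)=\bd$; as $\cW$ is an irreducible component of $\md(A,\bd)$ contained in the closed irreducible subset $\calZ'\subseteq\md(A,\bd)$ (the irreducibility of $\varepsilon_S^-(\calZ)$ being already known), it follows that $\calZ'=\cW\in\irr^\tau(A,\bd)$. Finally, Lemma~\ref{lem:gvector2} applied to $\cW$ gives $g(\calZ')=[P_1\oplus P(i)]-[P_0]-[P_{\tilde f}]$ for generic $\tilde f$, and $P_{\tilde f}=0$: if $0\ne P$ is an indecomposable summand of $P_1\oplus P(i)$ with $\tilde f(P)=0$, then for $x=(x_1,x_2)\in P$ we have $g_1(x_1)=-g_2(x_2)\in\Ima(g_1)\cap\Ima(g_2)=0$, so $x_2=0$ by injectivity of $g_2$; hence $P\subseteq P_1\oplus 0$, making $P$ a direct summand of $P_1=P_1^{\calZ}$ killed by $g_1\in\Hom_A(P_1^{\calZ},P_0^{\calZ})^\circ$, contradicting Corollary~\ref{cor:bundle3}(ii). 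Therefore $g(\calZ')=[P_1]-[P_0]+[P(i)]=g(\calZ)+[S]$.

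I expect the main obstacle to be the module-theoretic bookkeeping in the step $\cW\subseteq\calZ'$ — in particular checking that the copy $\bar S$ of $S$ genuinely sits inside $M$ and that $M/\bar S$ is exactly $\Coker(\tilde f)$ — and, relatedly, the verification that $P_{\tilde f}=0$; these are elementary but must be handled with care about the direct-sum decompositions involved. (The statement is of course also precisely the half of Theorem~\ref{thm:main2b} complementary to Proposition~\ref{prop:ext}, so an alternative route is to invoke that $\varepsilon_S^+(\cW)$ is generically $\tau$-regular with $g$-vector $g(\cW)-[S]=g(\calZ)$ and use the bijectivity of Theorem~\ref{thm:plamondon2}; but the direct argument above seems cleaner.)
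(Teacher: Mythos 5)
Your proposal is correct and follows essentially the same route as the paper's proof: identify $\varepsilon_S^-(\calZ)$ with $\calZ_{P_1\oplus S,P_0}$ via the rank identity $r(P_1\oplus S,P_0)=r(P_1,P_0)+1$, extract the short exact sequence $0\to S\to \Coker(g_1)\to\Coker(\tilde f)\to 0$ from a maximal-rank map to get $\calZ_{P_1\oplus S,P_0}\subseteq\calZ'$, conclude equality from the (cited) irreducibility of $\calZ'$, and read off the $g$-vector from Lemma~\ref{lem:gvector2}. Your explicit verification that $P_{\tilde f}=0$ via Corollary~\ref{cor:bundle3}(ii) just spells out what the paper compresses into the remark that $\Ker(h)=\Ker(f)$.
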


\begin{proof}
If follows from \cite[Section~2.1]{B94} that $\calZ'$ is irreducible.
(But note that we cannot yet conclude that $\calZ'$ is
an irreducible component of $\md(A,\bd)$.)
(Here we use that $S$ is projective, which implies
$\hom_A(S,M) = [M:S]$ for all $M \in \calZ$.)

Since $\calZ$ is generically $\tau$-regular,
there exists a pair $(P_1,P_0)$ of projective $A$-modules such that
$\calZ = \calZ_{P_1,P_0} :=  \overline{\calU}$ where
$\calU := \calZ_{P_1,P_0}^\circ$, see
Theorem~\ref{thm:plamondon1}.

We claim that $r(P_1 \oplus S,P_0) = r(P_1,P_0) + 1$.

Indeed, the inequality $r(P_1 \oplus S,P_0) \leq r(P_1,P_0) + 1$ is obvious, since $r(S,P_0) \leq 1$.
In order to prove the reverse inequality, choose
$f \in \Hom_A(P_1,P_0)^\circ$.
Thus
$\Coker(f) \in \calZ$.
In particular, we have
$\dimv(\Coker(f)) = \bd + \dimv(S)$.
Since $S$ is projective, there exists a non-zero map $g'\colon S \to \Coker(f)$.
Using the projectivity of $S$ again, we get a map $g\colon S \to P_0$ such that $\pi g = g'$,
where $\pi\colon P_0 \to \Coker(f)$ is the canonical projection.
$$
\SelectTips{cm}{}
\xymatrix{
&& S \ar@{-->}[dl]_g \ar[d]^<<<<{g'}
\\
P_1 \ar[r]^{f} & P_0 \ar[r]^<<<<<{\pi} & \Coker(f) \ar[r] & 0
}
$$
In particular, $\Ima(g) \not \subseteq \Ker(\pi) = \Ima(f)$.
Thus for $h := (f,g)\colon P_1 \oplus S \to P_0$ we get
$\rk(h) = \rk(f) + 1 = r(P_1,P_0) + 1$.
This finishes the proof of the equality $r(P_1 \oplus S,P_0) = r(P_1,P_0) + 1$.

Let $\calU' := \calZ_{P_1 \oplus S,P_0}^\circ$.
By Theorem~\ref{thm:plamondon1},
$\calZ'' := \cZ_{P_1 \oplus S,P_0} := \overline{\calU'}$
is a generically $\tau$-regular component of $\md(A,\bd)$.
To show that
$\calZ'$ is a generically $\tau$-regular component, we show that
$\calZ' = \calZ''$.
It is enough to show that $\calU' \subseteq \calZ'$. (Recall that $\calZ'$ is irreducible.)

Fix $h := (f,g)
\colon P_1 \oplus S \to P_0$ with $\rk(h) = r(P_1 \oplus S,P_0) =
r(P_1,P_0) + 1$, and let $M' := \Coker(h)$.
Observe that $M' \in \calU'$.
Since $\Ima(h) = \Ima(f) + \Ima(g)$, $\rk(f) \leq r(P_1,P_0)$, and
$\rk(g) \leq 1$, it follows that
$\rk(f) = r(P_1,P_0)$ and $\rk(g) = 1$.
Consequently, $M := \Coker(f) \in \calZ$.
Note that
$\Ima(f) \cap \Ima(g) = 0$, and therefore $\Ker(h) = \Ker(f)$.

We obtain a commutative diagram
$$
\SelectTips{cm}{}
\xymatrix{
& S \ar[d]^g
\\
P_1 \ar[r]^f\ar[d]_{\left(\bsm1\\0\esm\right)} & P_0 \ar[r]^{\pi}\ar[d]^1
& M \ar[r]\ar@{-->}[d]^p & 0
\\
P_1 \oplus S \ar[r]^<<<<{(f,g)} & P_0 \ar[r]^{\pi'} & M' \ar[r] & 0
}
$$
with exact rows,
where $\pi\colon P_0 \to M$ and $\pi'\df P_0 \to M'$ are the obvious
projections.

Now
an easy diagram chase
shows that
$$
0 \to S \xrightarrow{\pi g} M \xrightarrow{p} M' \to 0
$$
is a short exact sequence.
This shows that $M' \in \calZ'$.
Thus we proved that $\calU' \subseteq \calZ'$.

Finally, since $\Ker(h) = \Ker(f)$, we get
$g(\calZ') = g(\calZ) + [S]$
by Lemma~\ref{lem:gvector2}.
\end{proof}

We call the component $\calZ'$ constructed in Proposition~\ref{prop:quotient} the \emph{generic quotient} of $\calZ$ by $S$.

\subsection{Extensions and quotients by simple projectives
are mutually inverse}

\begin{Thm}\label{thm:main2b}
Let $S$ be a simple projective $A$-module.
For a dimension vector $\bd$ let $\bd' := \bd + \dimv(S)$.
Then the maps $\varepsilon_S^+$ and $\varepsilon_S^-$ yield
mutually inverse bijections
$$
\SelectTips{cm}{}
\xymatrix{
\irr^\tau(A,\bd) \ar@/^1ex/[rr]^{\varepsilon_S^+} &&
\irr^\tau(A,\bd')
\ar@/^1ex/[ll]^{\varepsilon_S^-}.
}
$$
\end{Thm}

\begin{proof}
Let $\calZ \in \irr^\tau(A,\bd)$.
It follows
from Propositions~\ref{prop:ext} and \ref{prop:quotient} that
$$
\calZ' := \varepsilon_S^-(\varepsilon_S^+(\calZ)) \in
\irr^\tau(A,\bd)
$$
and that $g(\calZ') = g(\calZ)$.
Now Theorem~\ref{thm:plamondon2} implies that $\calZ' = \calZ$.

Similarly, for $\calZ \in \irr^\tau(A,\bd')$ we have
$\calZ = \varepsilon_S^+(\varepsilon_S^-(\calZ))$.
\end{proof}

\subsection{Examples}

\subsubsection{}\label{ex:reduction2}

Let $A = KQ/I$ where $Q$ is the quiver
$$
\xymatrix{
1 & 2 \ar[l]_a & 3 \ar[l]_b
}
$$
and $I$ is generated by $ab$.
The Auslander-Reiten quiver of $A$ looks as follows:
$$
\xymatrix@!@-5ex{
& {\bbm 2\\1 \ebm} \ar[dr] && {\bbm 3\\2 \ebm} \ar[dr]
\\
1 \ar[ur] && 2 \ar[ur]\ar@{-->}[ll] && 3 \ar@{-->}[ll]
}
$$
Let
$$
M_1 = \bbm 2\\1 \ebm,\quad
M_2 = \bbm 3 \ebm,
\text{\quad and \quad}
M = M_1 \oplus M_2.
$$
For $i = 1,2$ let $\calZ_i := \overline{\cO_{M_i}}$.
Both are generically $\tau$-regular components, and
we have $\ext_A^1(\calZ_i,\calZ_j) = 0$ for $\{ i,j \} = \{ 1,2 \}$.
Then
$$
\calZ' := \cE(\calZ_1,\calZ_2) = \cE(\calZ_2,\calZ_1) = \overline{\cO_M}
$$
is an irreducible component, but it is not generically $\tau$-regular.
Thus a generic extension of a generically $\tau$-regular component by a non-projective simple does not necessarily give
a generically $\tau$-regular component.

\subsubsection{}
In the situation of Example~\ref{ex:reduction2},
the simple module
$S(1)$  is projective.
However, taking the generic quotient of $\calZ'$ by
$S(1)$ yields the closed irreducible subset
$\calZ := \overline{\cO_L}$ where
$$
L = \bbm 2 \ebm \oplus \bbm 3 \ebm.
$$
(In fact, $\overline{\cO_L} = \cO_L = L$ is just a point.)
Clearly, $\calZ$ is not an irreducible component.
Thus a generic quotient of an arbitrary irreducible component by a simple projective module does in general not give
an irreducible component.


\section{Generically $\tau$-regular components for
triangular algebras}\label{sec:triangular}


\subsection{Description of $\irr^\tau(A)$ for triangular algebras}
Let $A$ be a triangular algebra,
i.e.\ we assume that the simple
$A$-modules $S(1),\ldots,S(n)$ are labelled such that
$\Ext_A^1(S(i),S(j)) = 0$ for all $1 \le i \le j \le n$.

\begin{Thm}\label{thm:main3b}
Let $A$ be a triangular algebra as above.
Then for each dimension vector $\bd = (d_1,\ldots,d_n)$ there is a unique generically
$\tau$-regular
component $\calZ_\bd$ in $\irr(A,\bd)$, namely
$$
\calZ_\bd := (\varepsilon_{S(1)}^+)^{d_1} (\varepsilon_{S(2)}^+)^{d_2}  \cdots
 (\varepsilon_{S(n)}^+)^{d_n}(0).
$$
\end{Thm}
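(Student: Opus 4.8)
The plan is to proceed by a double induction: an outer induction on the number $n$ of simple $A$-modules and, for fixed $n$, an inner induction on $|\bd| := d_1 + \cdots + d_n$. The base cases cause no trouble: for $n = 1$ we have $A \cong K$, so each $\md(A,\bd)$ is a point and there is nothing to prove; for $\bd = 0$ the variety $\md(A,\bd)$ consists only of the zero module, which is exactly what the right-hand side of the asserted formula evaluates to.

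For the inductive step, the key preliminary observation is that the triangular hypothesis with $i = 1$ forces $\Ext_A^1(S(1),S(j)) = 0$ for all $j$, hence $\Ext_A^1(S(1),-) = 0$, so $S(1)$ is a simple projective $A$-module. I set $e := e_1$ and $B := A/AeA$; then $\md(B)$ is the subcategory of $A$-modules supported on $\{2,\dots,n\}$, and $B$ is again triangular with simples $S(2),\dots,S(n)$ in the inherited labelling. Now I distinguish two cases according to $d_1$. If $d_1 \ge 1$, Theorem~\ref{thm:main2b} applied to the simple projective $S = S(1)$ gives a bijection $\varepsilon_{S(1)}^+ \colon \irr^\tau(A,\bd - \dimv(S(1))) \to \irr^\tau(A,\bd)$; the inner induction hypothesis identifies the source with the singleton $\{(\varepsilon_{S(1)}^+)^{d_1 - 1}(\varepsilon_{S(2)}^+)^{d_2}\cdots(\varepsilon_{S(n)}^+)^{d_n}(0)\}$, and applying $\varepsilon_{S(1)}^+$ yields precisely $\calZ_\bd$. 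If $d_1 = 0$, then $e_1 M = 0$, hence $AeA\cdot M = 0$, for every $M$ of dimension vector $\bd$, so $\md(A,\bd) = \md(B,(d_2,\dots,d_n))$ and therefore $\irr(A,\bd) = \irr(B,(d_2,\dots,d_n))$. Corollary~\ref{cor:main6B} then gives $\irr^\tau(A,\bd) = \irr(B,(d_2,\dots,d_n)) \cap \irr^\tau(A) = \irr^\tau(B,(d_2,\dots,d_n))$, and the outer induction hypothesis applied to $B$ identifies this set with a singleton of the asserted form.

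The one step I expect to need a small dedicated argument is reconciling the two readings of the formula when $d_1 = 0$: the generic extensions delivered by the inductive hypothesis for $B$ are computed inside $\md(B)$, whereas the statement uses the operations $\varepsilon_{S(i)}^+$ computed inside $\md(A)$. Here the triangular labelling is exactly what is needed. If $\cW$ is a component with $\supp(\cW) \subseteq \{i,\dots,n\}$ -- which is the situation at every stage of the iterated construction -- then filtering the modules in $\cW$ by the simples $S(j)$ with $j \ge i$ and using $\Ext_A^1(S(i),S(j)) = 0$ shows $\Ext_A^1(S(i),M) = 0$ for all $M \in \cW$, so $\varepsilon_{S(i)}^+$ is well defined on $\cW$ (and returns an irreducible component, by Theorem~\ref{thm:CBS}) and its generic locus in $\cW$ is cut out by the single condition that $\ext_A^1(M,S(i))$ be minimal. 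On the other hand, every middle term $E$ of a short exact sequence $0 \to S(i) \to E \to M \to 0$ with $M \in \cW$ has $\supp(E) \subseteq \{i,\dots,n\} \subseteq \{2,\dots,n\}$, hence is a $B$-module, so the natural (always injective) map $\Ext_B^1(M,S(i)) \to \Ext_A^1(M,S(i))$ is also onto, i.e.\ an isomorphism. Consequently the two generic loci in $\cW$ coincide and the two generic-extension constructions return the same closed irreducible subset; replacing the factors one at a time, from the inside out, converts the $\md(B)$-formula into the $\md(A)$-formula.

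Apart from this bookkeeping, the proof is a direct assembly of Theorem~\ref{thm:main2b} (used to strip off the copies of the simple projective $S(1)$) and Corollary~\ref{cor:main6B} (used to descend to $B = A/Ae_1A$ once no copies of $S(1)$ remain), organized by the double induction; no further geometric input beyond these two results and Plamondon's construction underlying them is required.
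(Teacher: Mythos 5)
Your proof is correct and takes essentially the same route as the paper's: both rest on Theorem~\ref{thm:main2b} (stripping off copies of the simple projective at the top of the triangular ordering) and Corollary~\ref{cor:main6B} (descending to the quotient algebra $A/Ae_1A$), organized as an induction on the number of simples and on $|\bd|$. The only organizational difference is that the paper proves existence by an explicit downward construction over the quotient algebras $B(k)=A/Ae(k)A$ and uniqueness separately via generic quotients $\varepsilon_{S(k)}^-$, whereas you merge both into a single double induction by invoking the bijectivity of $\varepsilon_{S(1)}^+$ directly; your explicit identification of $\Ext^1_B$ with $\Ext^1_A$ on support-restricted modules spells out a compatibility the paper leaves implicit.
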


\begin{proof}
For $1 \le k \le n$ let
$$
e(k) := \sum_{i=1}^{k-1} e_i,
$$
and set $B(k) := A/Ae(k)A$.
Note that $e(1) = 0$ and therefore $B(1) = A$,
and that $B(n) \cong K$.

The module $S(k)$ is a simple projective $B(k)$-module.

First, we show that $\irr^\tau(A,\bd) \not= \varnothing$.

Let $\bd(k) := (0,\ldots,0,d_k,\ldots,d_n)$.

We obviously have
$$
\calZ(n) := (\varepsilon_{S(n)}^+)^{d_n}(0) = S(n)^{d_n} \in \irr^\tau(B(n),\bd(n)).
$$
(Note that $\md(B(n),\bd(n))$ is just a single point,
namely the projective $B(n)$-module $S(n)^{d_n}$.)

Let $1 \le k \le n-1$.
We have
$B(k+1) = B(k)/B(k)e_kB(k)$.

By decreasing induction on $k$ we can assume that
$$
\calZ(k+1) :=
(\varepsilon_{S(k+1)}^+)^{d_{k+1}} \cdots
(\varepsilon_{S(n)}^+)^{d_n}(0) \in
\irr^\tau(B(k+1),\bd(k+1)).
$$
Corollary~\ref{cor:reduction2} implies
$\calZ(k+1) \in \irr^\tau(B(k),\bd(k+1))$.

Now let
$$
\calZ(k) :=
(\varepsilon_{S(k)}^+)^{d_k}(\calZ(k+1)).
$$
Then
Proposition~\ref{prop:ext} tells us that
$\calZ(k) \in \irr^\tau(B(k),\bd(k))$.

In particular, we get
$\calZ_\bd = \calZ(1) \in \irr^\tau(A,\bd)$.

Next, we show uniqueness.
Let $\calZ_1, \calZ_2 \in \irr^\tau(A,\bd)$.
We can assume $\bd \not= 0$.
Let $k := \min\{ 1 \le i \le n \mid d_i \not= 0 \}$.

By Corollary~\ref{cor:reduction2} we get
$\calZ_1, \calZ_2 \in \irr^\tau(B(k),\bd)$.
Recall that $S(k)$ is a simple projective $B(k)$-module.

Let $\calZ_1' := \varepsilon_{S(k)}^-(\calZ_1)$ and
$\calZ_2' := \varepsilon_{S(k)}^-(\calZ_2)$ be the generic quotients of $\calZ_1$ and $\calZ_2$ by $S(k)$, respectively.
By  Proposition~\ref{prop:quotient} we get
$\calZ_1', \calZ_2' \in \irr^\tau(B(k),\bd - \dimv(S(k)))$.
By induction on $|\bd| := d_1 + \cdots + d_n$ we have
$\calZ_1' = \calZ_2'$.
If $\calZ := \varepsilon_{S(k)}^+(\calZ_1')$ is the generic extension of $\calZ_1' = \calZ_2'$ by $S(k)$, then Theorem~\ref{thm:main2b}
implies $\calZ = \calZ_1 = \calZ_2$.
\end{proof}

\subsection{Examples}\label{ex:reduction}

\subsubsection{}
Let $A = KQ/I$
where $Q$ is the quiver
$$
\xymatrix{
1 & 2 \ar@/^1ex/[l]^c\ar@/_1ex/_a[l] &
3 \ar@/^1ex/[l]^d\ar@/_1ex/_b[l]
}
$$
and $I = (ab,cd)$.
This is a triangular gentle algebra.
The irreducible components in $\irr(A)$ can be written as
$$
\calZ_{(r_a,r_b),(r_c,r_d)} =
\calZ_{(r_a,r_b)} \times \calZ_{(r_c,r_d)}
$$
where $(r_a,r_b)$ and $(r_c,r_d)$ are maximal pairs
in the sense of Example~(ii) in Section~\ref{subsec:irreducible}.
Let $\bd = (d_1,d_2,d_3) \in \N^3$.
Then
$$
\calZ_\bd = \calZ_{(r_a,r_b),(r_c,r_d)}
$$
where $r_b = r_d = \min\{ d_2,d_3 \}$ and
$r_a = r_c = \min \{ d_1,d_2-r_b \}$.
For example, for $\bd = (4,5,3)$ we have
$\calZ_\bd = \calZ_{(2,3),(2,3)}$.

This recipe can be easily extended to all triangular gentle algebras.

\subsubsection{}\label{ex:reduction1}
In the situation of Example~\ref{ex:reduction2},
for $\bd = (1,4,2)$ we get $\calZ_\bd = \calZ_{(1,2)} = \overline{\cO_M}$
where
$$
M = \bbm 2\\1 \ebm \oplus \bbm 2 \ebm \oplus
\bbm 3\\2 \ebm \oplus \bbm 3\\2 \ebm.
$$

\subsubsection{}\label{ex:reduction3}
The following easy example shows that there can be more than one
generically $\tau$-regular component with the same dimension vector.
Let $A = KQ/I$ where $Q$ is the quiver
$$
\xymatrix@-1ex{
1 \ar@/^1ex/[r]^a& 2 \ar@/^1ex/[l]^b
}
$$
and $I = (ab,ba)$.
Then
$$
P(1) = \bbm 1\\2 \ebm
\text{\quad and \quad}
P(2) = \bbm 2\\1 \ebm.
$$
For $i = 1,2$ let $\calZ_i = \overline{\cO_{P(i)}}$.
Both are generically $\tau$-regular components in
$\irr(A,(1,1))$.

\subsubsection{}\label{ex:reduction4}
Let $A = KQ/I$ where $Q$ is the quiver
$$
\xymatrix@-1ex{
1 \ar@(ur,dr)[]^a
}
$$
and $I$ is generated by $a^m$ for some $m \ge 2$.
For each $M \in \ind(A)$ one easily checks that
$\calZ := \overline{\cO_{M}} = \md(A,(d_1))$
where $d_1 = \dim(M)$.
In particular, $\calZ \in \irr(A)$.
Then $\calZ$
is generically $\tau$-regular if and only if $M$ is projective.
So for all dimension vectors $\bd = (d_1)$ with $d_1$
not divisible by $m$, there is no generically $\tau$-regular component in
$\irr(A,\bd)$.


\section{When all or very few components are generically $\tau$-regular}\label{sec:alltaureg}


\subsection{Algebras where all components are generically
$\tau$-regular}\label{subsec:alltaureg}
The idea of the following construction is this:
For an arbritrary $\calZ \in \irr(A)$ we pick a suitable
projective module.
Taking  the generic extension of $\calZ$ by this projective module yields a
new irreducible component $\calZ'$ whose $g$-vector has no positive entries.

Let $\calZ \in \irr(A,\bd)$, and let
$$
P_1 \xrightarrow{f} P_0 \xrightarrow{g} M \to 0
$$
be a minimal projective presentation where $M$ is generic in $\calZ$.
Thus we have $(P_1,P_0) = (P_1^{\calZ},P_0^{\calZ})$ and
$g(\calZ) = [P_1] - [P_0]$.
The sequence
$$
P_1 \xrightarrow{\left(\bsm f\\0\esm\right)}
P_0 \oplus P_1
\xrightarrow{\left(\bsm g&0\\0&1\esm\right)} M \oplus P_1 \to 0
$$
is then obviously a (minimal) projective presentation of
$M \oplus P_1$.

Let $\calZ_2 := \overline{\cO_{P_1}}$, and let
$\calZ' := \cE(\calZ,\calZ_2)$ be the generic extension of
$\calZ$ by $\calZ_2$, as defined in Section~\ref{subsec:genericextension}.
Since $\ext_A^1(\calZ_2,\calZ) = 0$, Theorem~\ref{thm:CBS}
says that
$\calZ' \in \irr(A,\bd')$ where
$\bd' := \bd + \dimv(P_1)$.

Since $M \oplus P_1 \in \calZ'$, we get
$$
g(\calZ') = [P_1] - [P_0 \oplus P_1] - [P] = -[P_0] - [P]
$$
for some direct summand $P$ of $P_1$.
(Here we used Lemmas~\ref{lem:gvector1} and  \ref{lem:presentationsemicont}.)

\begin{Lem}\label{lem:alltaureg}
With $\calZ$ and $\calZ'$ as above, the following are equivalent:
\begin{itemize}\itemsep2mm

\item[(i)]
$\pdim(\calZ) \le 1$;

\item[(ii)]
$\calZ' = \overline{\cO_{P_0}}$;

\item[(iii)]
$\calZ' \in \irr^\tau(A)$.

\end{itemize}
\end{Lem}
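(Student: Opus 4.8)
The plan is to prove the equivalences via the cycle $(i)\Rightarrow(ii)\Rightarrow(iii)\Rightarrow(i)$, using throughout the minimal projective presentation $P_1\xrightarrow{f}P_0\xrightarrow{g}M\to 0$ of a generic $M\in\calZ$ (so $(P_1,P_0)=(P_1^{\calZ},P_0^{\calZ})$ and $g(\calZ)=[P_1]-[P_0]$), the irreducible component $\calZ'=\cE(\calZ,\overline{\cO_{P_1}})\in\irr(A,\bd+\dimv(P_1))$ furnished by Theorem~\ref{thm:CBS}, and the fact recorded just above that $g(\calZ')=-[P_0]-[P]$ for some direct summand $P$ of $P_1$; in particular every entry of $g(\calZ')$ is non-positive.

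For $(i)\Rightarrow(ii)$, I would choose $M$ inside the intersection of three dense open subsets of $\calZ$: the locus where $\pdim$ equals $\pdim(\calZ)\le 1$, the locus where the minimal presentation is $(P_1,P_0)$, and the locus where $\ext_A^1(-,P_1)$ is generic (so that $M$ lies in the open set $\calU$ defining $\cE(\calZ,\overline{\cO_{P_1}})$). Since $\pdim(M)\le 1$, a projective resolution $0\to Q_1\to Q_0\to M\to 0$ is itself a projective presentation, so by Lemma~\ref{lem:gvector1} it splits as the minimal presentation of $M$ plus a summand $(P\xrightarrow{1_P}P)$; as $Q_1\to Q_0$ is injective, so is $f$. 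Hence $0\to P_1\to P_0\to M\to 0$ is exact, so $\dimv(P_0)=\bd+\dimv(P_1)$ and the projective module $P_0$ lies in $\cE(\calU)\subseteq\calZ'$. Since $P_0$ is projective, $\Ext_A^1(P_0,P_0)=0$, so $\overline{\cO_{P_0}}$ is an irreducible component; being contained in the irreducible component $\calZ'$, it must equal $\calZ'$.

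For $(ii)\Rightarrow(iii)$: if $\calZ'=\overline{\cO_{P_0}}$, then $\calZ'$ contains the projective module $P_0$, which is $\tau_A$-rigid since $\tau_A(P_0)=0$; hence $\calZ'$ is generically $\tau$-regular (see the examples in Section~\ref{sec:tauregintro}). For $(iii)\Rightarrow(i)$: assume $\calZ'\in\irr^{\tau}(A)$. By Corollary~\ref{cor:bundle4} we have $\add(P_1^{\calZ'})\cap\add(P_0^{\calZ'})=0$, while $g(\calZ')=[P_1^{\calZ'}]-[P_0^{\calZ'}]$ has only non-positive entries; comparing coordinates forces $P_1^{\calZ'}=0$, so a generic $N\in\calZ'$ has minimal projective presentation $0\to P_0^{\calZ'}\to N\to 0$ and is therefore projective. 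On the other hand, by \cite{CBS02} (as used in the proof of Proposition~\ref{prop:ext}) the set $\cE(\calU)$ of extensions contains a dense open subset of $\calZ'$, so this generic $N$ sits in a short exact sequence $0\to P_1\to N\to M\to 0$ with $M\in\calZ$. Since $N$ and $P_1$ are projective, this is a projective resolution of $M$ of length $1$, whence $\pdim(\calZ)\le\pdim(M)\le 1$.

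The steps I expect to need the most care are the simultaneous genericity of $M$ in $(i)\Rightarrow(ii)$ and the use of \cite{CBS02} in $(iii)\Rightarrow(i)$ to guarantee that the generic module of $\calZ'$ is a genuine extension of a module in $\calZ$ by $P_1$; everything else is essentially formal, the key structural observation being that for a minimal projective presentation one has ``$f$ injective'' if and only if ``$\pdim(M)\le 1$''.
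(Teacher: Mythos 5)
Your proof is correct and follows essentially the same route as the paper: the cycle $(i)\Rightarrow(ii)\Rightarrow(iii)\Rightarrow(i)$, with $(i)\Rightarrow(ii)$ obtained by showing the minimal presentation of a suitably generic $M$ is exact so that $P_0\in\cE(\calU)$, and $(iii)\Rightarrow(i)$ by showing the generic module of $\calZ'$ is projective and realizing it as an extension $0\to P_1\to N\to M\to 0$, which is a length-one projective resolution of $M$. The only (harmless) deviation is in $(iii)\Rightarrow(i)$: the paper identifies $\calZ'=\overline{\cO_{P_0\oplus P}}$ via Theorem~\ref{thm:plamondon2} (generically $\tau$-regular components are determined by their $g$-vectors), whereas you deduce $P_1^{\calZ'}=0$ from Corollary~\ref{cor:bundle4} together with the non-positivity of $g(\calZ')$; both are valid.
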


\begin{proof}
(i) $\implies$ (ii):
If $\pdim(\calZ) \le 1$, then
there is an exact sequence
$$
0 \to P_1 \to P_0 \to M \to 0
$$
for some $M \in \calZ$ with $\ext_A^1(M,\calZ_2) = \ext_A^1(\calZ,\calZ_2)$.
It follows that $P_0 \in \calZ'$ and therefore
$$
\calZ' = \overline{\cO_{P_0}}.
$$

(ii) $\implies$ (iii): Trivial.

(iii) $\implies$ (i):
If $\calZ' \in \irr^\tau(A)$, then
$$
\calZ' = \overline{\cO_{P_0 \oplus P}},
$$
since $g(\calZ') = -[P_0]-[P]$ and since generically $\tau$-regular components are determined by their $g$-vectors, see Theorem~\ref{thm:plamondon2}.
Furthermore, there is a short exact sequence
$$
0 \to P_1 \to P_0 \oplus P \to M \to 0
$$
for some $M \in \calZ$.
Thus $\pdim(M) \le 1$ and therefore $\pdim(\calZ) \le 1$.
\end{proof}

Note that the module $P$ occuring in the proof of
Lemma~\ref{lem:alltaureg} turns out to be $0$.

\begin{Thm}\label{thm:main1b}
The following are equivalent:
\begin{itemize}\itemsep2mm

\item[(i)]
$\irr^\tau(A) = \irr(A)$;

\item[(ii)]
$\irr^{\tau^-}(A) = \irr(A)$;

\item[(iii)]
$A$ is hereditary.

\end{itemize}
\end{Thm}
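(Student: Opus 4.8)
The plan is to prove the cycle of implications (iii) $\implies$ (i) $\implies$ (iii) together with (iii) $\implies$ (ii) $\implies$ (iii). The implications out of (iii) are immediate, and the two implications into (iii) are established by contraposition, using the construction of Section~\ref{subsec:alltaureg} together with Lemma~\ref{lem:alltaureg}.

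For (iii) $\implies$ (i): if $A$ is hereditary, then every $A$-module has projective dimension at most $1$, so $\pdim(\calZ) \le 1$ for every $\calZ \in \irr(A)$; as recorded in Section~\ref{sec:tauregintro} (using that $e(M) = E(M)$ when $\pdim(M) \le 1$, by Theorem~\ref{thm:ARformulas} and Lemma~\ref{lem:pdim1stable}, combined with \cite[Proposition~23]{R01}), every such $\calZ$ is generically $\tau$-regular. Hence $\irr^\tau(A) = \irr(A)$. The implication (iii) $\implies$ (ii) is the mirror statement, obtained by replacing ``projective'' with ``injective'' and invoking the duals of the cited lemmas; alternatively one passes to $A^{\op}$, where $\gldim(A^{\op}) = \gldim(A) \le 1$.

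For (i) $\implies$ (iii) I argue the contrapositive. If $A$ is not hereditary then $\gldim(A) \ge 2$, so there is a simple module $S = S(i)$ with $\pdim(S) \ge 2$. Since $\dimv(S) = e_i$ and $I$ is admissible, the variety $\md(A,\dimv(S))$ is the single point $\{S\}$, so $\calZ := \overline{\cO_S} = \{S\}$ is an irreducible component with $\pdim(\calZ) = \pdim(S) \ge 2$. Feeding $\calZ$ into the construction preceding Lemma~\ref{lem:alltaureg} produces an irreducible component $\calZ'$ of $\md(A)$ (the second factor of the generic extension is $\overline{\cO_P}$ for a projective $P$, so Theorem~\ref{thm:CBS} applies), and since $\pdim(\calZ) \not\le 1$, Lemma~\ref{lem:alltaureg} gives $\calZ' \notin \irr^\tau(A)$. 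Hence $\irr^\tau(A) \neq \irr(A)$. The implication (ii) $\implies$ (iii) follows in the same fashion from the dual of Lemma~\ref{lem:alltaureg} (built from minimal injective copresentations, and using that $\gldim(A)$ is also the supremum of the injective dimensions of the simple $A$-modules), or again by transporting the just-proved equivalence (i) $\Leftrightarrow$ (iii) to $A^{\op}$ along the $K$-duality $D$, which carries $\irr(A)$ onto $\irr(A^{\op})$ and intertwines $\tau_A^-$ with $\tau_{A^{\op}}$, hence identifies $\irr^{\tau^-}(A)$ with $\irr^\tau(A^{\op})$.

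Essentially all the real work has already been done in Lemma~\ref{lem:alltaureg} and the construction preceding it; the one genuinely new observation needed is that a non-hereditary algebra carries a simple module of projective dimension $\ge 2$ that forms, by itself, a point irreducible component --- precisely the input that Lemma~\ref{lem:alltaureg} requires. I anticipate no serious obstacle; the only point demanding a little care is the dualization used for (ii), where one should either set up the $K$-dual explicitly as a variety isomorphism intertwining the two Auslander--Reiten translations, or state and use the dual of Lemma~\ref{lem:alltaureg} directly.
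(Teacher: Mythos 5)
Your proposal is correct and follows essentially the same route as the paper: the implications out of (iii) come from $e(M)=E(M)$ for modules of projective dimension at most one together with generic regularity, and the implications into (iii) are obtained by feeding the one-point components $\overline{\cO_{S(i)}}$ into the construction of Section~\ref{subsec:alltaureg} and applying Lemma~\ref{lem:alltaureg} (resp.\ its dual). The only cosmetic difference is that you phrase (i) $\implies$ (iii) contrapositively and cite the $\pdim(\calZ)\le 1$ criterion from Section~\ref{sec:tauregintro} rather than arguing directly from the affine-space structure of $\md(A,\bd)$; both are sound.
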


\begin{proof}
(iii) $\implies$ (i),(ii):
If $A$ is hereditary, then $\md(A,\bd)$ is an affine space for all
$\bd$.
Thus $\dim \md_M(A,\bd) = \dim(T_M)$ for all
$M \in \md(A,\bd)$.
Now Voigt's Lemma \ref{prop:voigt},
Theorem~\ref{thm:ARformulas} and
Lemma~\ref{lem:pdim1stable}
imply (i).
Using the dual of Lemma~\ref{lem:pdim1stable}, one shows that (ii) also holds.

(i) $\implies$ (iii):
Let $1 \le i \le n$, and let
$\calZ := \cO_{S(i)}$.
This is an irreducible component consisting of a single point,
namely $S(i)$.
Let $\calZ'$ be the associated component appearing in Lemma~\ref{lem:alltaureg}.
By (i) the component $\calZ'$ must be generically $\tau$-regular.
Now Lemma~\ref{lem:alltaureg} says that $\pdim(\calZ) \le 1$.
Thus $\pdim(S(i)) \le 1$ for all $i$.
This implies that $\gldim(A) \le 1$, i.e.\ (iii) holds.

(ii) $\implies$ (iii):
This is done similarly by using the dual of
Lemma~\ref{lem:alltaureg}.
\end{proof}

\subsection{Algebras where very few components are generically
$\tau$-regular}\label{subsec:fewtaureg}
We have seen that hereditary algebras are the only algebras
where $\irr^\tau(A)$ is as big as possible, namely
$\irr^\tau(A) = \irr(A)$.
The next result says that local algebras are the only algebras where
$\irr^\tau(A)$ is as small as possible.

\begin{Prop}\label{prop:local}
The following are equivalent:
\begin{itemize}\itemsep2mm

\item[(i)]
$A \cong A_1 \times \cdots \times A_t$ where all
$A_i$ are local algebras;

\item[(ii)]
$\irr^\tau(A) = \{ \overline{\cO_P} \mid P \in \proj(A) \}$;

\item[(iii)]
$\proj(A) = \text{$\tau$-$\rigid(A)$}$;

\item[(iv)]
$\irr^{\tau^-}(A) = \{ \overline{\cO_I} \mid I \in \inj(A) \}$;

\item[(v)]
$\inj(A) = \text{$\tau^-$-$\rigid(A)$}$.

\end{itemize}
\end{Prop}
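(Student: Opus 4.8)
The plan is to prove the chain of implications (i)$\Rightarrow$(ii)$\Rightarrow$(iii)$\Rightarrow$(i), after which (iv) and (v) come for free by applying these to the opposite algebra $A^{\op}$ and dualizing with $D=\Hom_K(-,K)$: condition (i) is self-dual, and $D$ interchanges $\proj(A)$ with $\inj(A^{\op})$, $\tau_A$ with $\tau^-_{A^{\op}}$, the orbit closures $\overline{\cO_M}$ with $\overline{\cO_{DM}}$, and hence $\irr^\tau(A)$ with $\irr^{\tau^-}(A^{\op})$, so (ii) for $A^{\op}$ is (iv) for $A$ and (iii) for $A^{\op}$ is (v) for $A$. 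The implication (ii)$\Rightarrow$(iii) is then immediate: if $M$ is $\tau$-rigid then $E(M)=\hom_A(M,\tau_A(M))=0$, so $c(M)\le e(M)\le E(M)=0$ forces $e(M)=0$ (hence $\overline{\cO_M}\in\irr(A)$) and $c(M)=E(M)$ (hence $M$ is $\tau$-regular), so $\overline{\cO_M}\in\irr^\tau(A)$; by (ii) we get $\overline{\cO_M}=\overline{\cO_P}$ for some $P\in\proj(A)$, and since $\cO_M$ and $\cO_P$ are dense open subsets of the irreducible variety $\overline{\cO_M}$ they meet, whence $M\cong P$. Together with the obvious inclusion of the projectives in the $\tau$-rigid modules this gives (iii).

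For (i)$\Rightarrow$(ii) I would first reduce to the case that $A$ is local: $\tau$-regularity of a module and generic $\tau$-regularity of a component are block-local, because $\md(A,\bd)$, the $G_\bd$-orbits, the sets $\md_M(A,\bd)$, $\tau_A$, and hence $c$ and $E$ all split along a product decomposition $A=A_1\times\dots\times A_t$. So assume $n=1$ and $K_0(\proj(A))=\Z$. For each $a\ge 0$ the module $P(1)^a$ is projective, so $\overline{\cO_{P(1)^a}}\in\irr(A)$ and $E(P(1)^a)=\hom_A(P(1)^a,\tau_A(P(1)^a))=0=c(P(1)^a)$, so $\overline{\cO_{P(1)^a}}\in\irr^\tau(A)$ with $g(\overline{\cO_{P(1)^a}})=-a$ and $\zero(\overline{\cO_{P(1)^a}})$ equal to $\N$ if $a=0$ and to $\{0\}$ if $a\ge 1$. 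Thus $g^\circ(\overline{\cO_{P(1)^0}})=\{0,1,2,\dots\}$ and $g^\circ(\overline{\cO_{P(1)^a}})=\{-a\}$ for $a\ge 1$, and these sets partition $\Z$. By Plamondon's Theorem~\ref{thm:plamondon2} the sets $g^\circ(\calZ)$, $\calZ\in\irr^\tau(A)$, also partition $\Z$, and each is nonempty (it contains $g(\calZ)$), so the family $\{\overline{\cO_{P(1)^a)}}\mid a\ge 0\}$ already exhausts $\irr^\tau(A)$; hence $\irr^\tau(A)=\{\overline{\cO_P}\mid P\in\proj(A)\}$. (For extra robustness one can prove (ii)$\Rightarrow$(i) directly from Theorem~\ref{thm:plamondon2}: for $P=\bigoplus_iP(i)^{a_i}$ one has $g(\overline{\cO_P})=(-a_1,\dots,-a_n)$ and $\supp(\overline{\cO_P})=\bigcup_{a_i>0}\supp(P(i))$, and asking into which $g^\circ(\overline{\cO_P})$ the vector $-[P(j)]+[P(k)]$ falls, for distinct $j,k$, forces $[P(j):S(k)]=0$; so $e_jAe_k=0$ for all $j\ne k$ and $A=\prod_i e_iAe_i$.)

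The remaining implication (iii)$\Rightarrow$(i) is the main point. I would argue contrapositively: if the (basic) algebra $A=KQ/I$ is not a product of local algebras, then $Q$ has an arrow $a\to b$ with $a\ne b$, so $0\ne e_bAe_a\cong\Hom_A(P(b),P(a))$, and in particular $\Hom_A(P(b),A/P(b))\ne 0$. By the $\tau$-tilting mutation theory of Adachi--Iyama--Reiten \cite{AIR14}, the left mutation of the maximal support $\tau$-tilting module $A$ at the indecomposable summand $P(b)$ is therefore not the support-decreasing mutation (that would require $\Hom_A(P(b),A/P(b))=0$); it equals $(A/P(b))\oplus X$ for an indecomposable module $X\notin\add(A)$, and $X$ is $\tau$-rigid as a direct summand of a support $\tau$-tilting module. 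Thus $X$ is a $\tau$-rigid module that is not projective, contradicting (iii).

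The hard step is precisely (iii)$\Rightarrow$(i): there is no uniform choice of a non-projective $\tau$-rigid module that works for every non-local algebra — for instance over the Kronecker algebra the simples $S(1),S(2)$ are not $\tau$-rigid, and over other two-vertex algebras different candidates fail — so one genuinely needs the existence statement coming from $\tau$-tilting mutation theory, equivalently the known fact that the support $\tau$-tilting modules of $A$ are exactly the modules $Ae$ with $e^2=e$ if and only if $A$ is a product of local algebras. Everything else is bookkeeping with Plamondon's parametrization (Theorems~\ref{thm:plamondon1} and \ref{thm:plamondon2}) and elementary orbit-closure geometry.
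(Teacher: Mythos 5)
Your overall architecture (prove (i)$\Rightarrow$(ii)$\Rightarrow$(iii)$\Rightarrow$(i), then get (iv),(v) by duality) is the same as the paper's, and your (i)$\Rightarrow$(ii) argument via the partition $\Z=\bigcup_m g^\circ(\overline{\cO_{P(1)^m}})$ and Theorem~\ref{thm:plamondon2} is essentially verbatim the paper's proof; your (ii)$\Rightarrow$(iii) just spells out what the paper dismisses as clear. The genuine divergence is in (iii)$\Rightarrow$(i). The paper argues elementarily: for connected non-local $A$ pick $P(i)$ with $[P(i):S(j)]\neq 0$ for some $j\neq i$, let $E(i)$ be the largest quotient of $P(i)$ all of whose composition factors are $S(i)$, and read off from the minimal projective presentation $P_1\to P(i)\to E(i)\to 0$ (where $P(i)\notin\add(P_1)$) that $\soc(\tau_A(E(i)))$ embeds into $\tp(P_1)$, hence contains no copy of $S(i)$, so $\Hom_A(E(i),\tau_A(E(i)))=0$; thus $E(i)$ is a non-projective $\tau$-rigid module. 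Your route instead invokes Adachi--Iyama--Reiten: since $\Hom_A(P(b),A/P(b))\neq 0$, the pair $(A/P(b),P(b))$ is not support $\tau$-tilting, so the second completion of the almost complete pair $(A/P(b),0)$ must be $(A/P(b)\oplus X,0)$ with $X$ indecomposable $\tau$-rigid and non-projective. This is correct, but it outsources the key existence statement to the two-completions theorem of \cite{AIR14}, whereas the paper's construction is self-contained and only uses the Nakayama functor. What your approach buys is brevity and a conceptual link to mutation; what the paper's buys is independence from $\tau$-tilting mutation theory and an explicit witness.

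Two small corrections to your closing remarks. First, your claim that ``one genuinely needs the existence statement coming from $\tau$-tilting mutation theory'' is refuted by the paper's explicit module $E(i)$. Second, your supporting example is wrong: over the Kronecker algebra the non-projective simple $S(2)$ (at the source of the arrows) satisfies $\Ext_A^1(S(2),S(2))=0$, and since the algebra is hereditary it is $\tau$-rigid; it is in fact exactly the module $E(i)$ the paper's recipe produces there. Neither slip affects the validity of your proof, which stands.
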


\begin{proof}
We assume without loss of generality that $A$ is connected.
In particular,
in (i) we assume that $t=1$.

We clearly have $\proj(A) \subseteq \text{$\tau$-$\rigid(A)$}$
and
$\inj(A) \subseteq  \text{$\tau^-$-$\rigid(A)$}$.

(i) $\implies$ (ii):
If $A$ is local, then there exists just one indecomposable projective
$P(1)$, up to isomorphism.
We have
$g(P(1)^m) = -m$ for $m \ge 0$.
Let $\calZ_m := \overline{\cO_{P(1)^m}}$.
Obviously, $\calZ_m \in \irr^\tau(A)$.
We get
$$
g^\circ(\calZ_m) =
\begin{cases}
-m & \text{if $m \ge 1$},
\\
\N & \text{if $m = 0$}.
\end{cases}
$$
Thus
$$
\Z = \bigcup_{m \ge 0} g^\circ(\calZ_m).
$$
Now Theorem~\ref{thm:plamondon2} implies (ii).

(ii) $\implies$ (iii):
This is clear, since for each $\tau$-rigid module $M$ we have
$\overline{\cO_M} \in \irr^\tau(A)$.

(iii) $\implies$ (i):
Assume that $A$ is not local.
Since $A$ is connected, there exists some $P(i)$ and some
$j \not= i$ such that $[P(i):S(j)] \not= 0$.
Let $E(i)$ be the largest factor module of $P(i)$ such that
$[E(i):S(k)] = 0$ for all $k \not= i$.
The module $E(i)$ is non-zero and non-projective.
We get a minimal projective presentation
$$
P_1 \to P(i) \to E(i) \to 0
$$
such that $P_1$ does not have a direct summand isomorphic to
$P(i)$.
This yields a minimal injective presentation
$$
0 \to \tau_A(E(i)) \to \nu_A(P_1) \to \nu_A(P(i)).
$$
In particular, we have
$$
\soc(\tau_A(E(i))) \cong \soc(\nu_A(P_1)) \cong \tp(P_1).
$$
Thus $\tau_A(E(i))$ does
not have a submodule isomorphic to $S(i)$.
In other words, $\Hom_A(E(i),\tau_A(E(i)) = 0$.
So $E(i)$ is a non-projective $\tau$-rigid module.

The equivalence of (i), (iv) and (v) is proved dually.
\end{proof}


\section{Generically $\tau$-regular versus generically $\tau^-$-regular components}
\label{sec:tauversustauminus}


\subsection{Correspondence of $\tau$-regular pairs and
$\tau^-$-regular pairs}\label{subsec:bijectionpairs}

Recall from \cite{AIR14} that a pair $(M,P)$ of $A$-modules
is a
\emph{$\tau$-rigid pair} if $M$ is $\tau$-rigid and $P$ is projective
with $\Hom_A(P,M) = 0$.
Dually, one defines \emph{$\tau^-$-rigid pairs}.
We consider such pairs up to isomorphism.

Let $\tau$-rigid-pairs$(A)$ (resp. $\tau^-$-rigid-pairs$(A)$) be the
set (of isomorphism classes) of $\tau$-rigid pairs (resp. $\tau^-$-rigid pairs) of $A$-modules.

For $M \in \md(A)$ let $P_M$ be a maximal projective direct summand
of $M$.

Applying the duality $D$ to the first bijection in \cite[Theorem~2.14]{AIR14} one gets the following theorem:

\begin{Thm}[{Adachi, Iyama, Reiten}]
The map
$$
(M,P) \mapsto (\tau(M) \oplus \nu_A(P),\nu_A(P_M))
$$
defines a bijection
$$
\tau\text{\rm -rigid-pairs}(A) \to \tau^-\text{\rm -rigid-pairs}(A).
$$
\end{Thm}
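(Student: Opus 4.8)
The plan is to obtain the statement by transporting the first bijection of \cite[Theorem~2.14]{AIR14} along the duality $D = \Hom_K(-,K)$; beyond bookkeeping there is essentially nothing to do. First I would record the compatibilities of $D$. It induces a duality $\md(A) \to \md(A^{\op})$, again written $D$, which respects direct sums and sends indecomposable projectives to indecomposable injectives; by the very definitions $\tau_A = D\operatorname{Tr}$ and $\nu_A = D(-)^*$, where $\operatorname{Tr}$ is the transpose and $(-)^* = \Hom_A(-,{_A}A)$; it intertwines the Auslander--Reiten translations, $D\circ\tau_{A^{\op}} \cong \tau_A^-\circ D$, so it carries $\tau_{A^{\op}}$-rigid $A^{\op}$-modules to $\tau_A^-$-rigid $A$-modules; and for $X,Y \in \md(A^{\op})$ one has $\Hom_A(DY,DX)\cong \Hom_{A^{\op}}(X,Y)$, so that the vanishing conditions $\Hom_{A^{\op}}(Q,X)=0$ and $\Hom_A(DX,DQ)=0$ correspond. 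Together these show that $(X,Q) \mapsto (DX,DQ)$ defines a bijection $\tau\text{\rm -rigid-pairs}(A^{\op}) \to \tau^-\text{\rm -rigid-pairs}(A)$.

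Next I would invoke \cite[Theorem~2.14]{AIR14}, whose first bijection (after identifying $\md(A^{\op})$ with the category of right $A$-modules) reads
$$
\tau\text{\rm -rigid-pairs}(A)\longrightarrow \tau\text{\rm -rigid-pairs}(A^{\op}),\qquad (M,P)\longmapsto\bigl(\operatorname{Tr}(M)\oplus P^*,\ (P_M)^*\bigr);
$$
here $\operatorname{Tr}(M)\oplus P^*$ is $\tau_{A^{\op}}$-rigid, $(P_M)^*$ is a projective $A^{\op}$-module with $\Hom_{A^{\op}}((P_M)^*,\operatorname{Tr}(M)\oplus P^*)=0$, and the assignment is bijective. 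Composing with the bijection from the previous paragraph gives a bijection $\tau\text{\rm -rigid-pairs}(A)\to\tau^-\text{\rm -rigid-pairs}(A)$, and on a pair $(M,P)$ the identities $D(\operatorname{Tr}(M))=\tau_A(M)$, $D(P^*)=\nu_A(P)$ and $D((P_M)^*)=\nu_A(P_M)$ — all instances of $\tau_A=D\operatorname{Tr}$ and $\nu_A=D(-)^*$ — show that this composite is
$$
(M,P)\longmapsto\bigl(\tau_A(M)\oplus\nu_A(P),\ \nu_A(P_M)\bigr),
$$
which is the asserted map. Since $D$ is its own inverse up to natural isomorphism, the inverse bijection arises the same way from the inverse of AIR's first bijection (namely AIR's first bijection for $A^{\op}$); explicitly it sends $(N,I)$ to $\bigl(\tau_A^-(N)\oplus\nu_A^-(I),\ \nu_A^-(I_N)\bigr)$, with $\nu_A^-$ the inverse Nakayama functor and $I_N$ a maximal injective direct summand of $N$.

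I do not expect a genuine obstacle: once \cite[Theorem~2.14]{AIR14} is granted, the argument is entirely formal. The care required is clerical — keeping the two incarnations of $D$ (on $\md(A)$ and on $\md(A^{\op})$) apart, using the intertwiner $D\circ\tau_{A^{\op}}\cong\tau_A^-\circ D$ rather than its $\tau_A$-analogue, and noting that the operation $M\mapsto P_M$ of choosing a maximal projective direct summand is well defined up to isomorphism and compatible with $(-)^*$ and $D$, so that $D((P_M)^*)=\nu_A(P_M)$ really is the statement's second component. One should also remember that $\operatorname{Tr}$ and $\tau$ are defined only up to adding projective, resp.\ injective, summands, so one works throughout with the representatives having no projective, resp.\ injective, summands — which is what the statement tacitly does.
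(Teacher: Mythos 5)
Your proposal is correct and follows exactly the route the paper takes: the paper offers no proof beyond the sentence ``Applying the duality $D$ to the first bijection in \cite[Theorem~2.14]{AIR14} one gets the following theorem,'' and your write-up simply makes explicit the routine compatibilities ($\tau_A = D\operatorname{Tr}$, $\nu_A = D(-)^*$, $D\circ\tau_{A^{\op}}\cong\tau_A^-\circ D$, and the correspondence of the Hom-vanishing conditions) that this one-line argument relies on.
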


Recall that for $\calZ \in \irr^\tau(A)$ and
$P \in \proj(A)$ we call $(\calZ,P)$ a \emph{$\tau$-regular pair}
provided $\Hom_A(P,M) = 0$ for all $M \in \calZ$.
Dually, one defines \emph{$\tau^-$-regular pairs}.

Let $\tau$-reg-pairs$(A)$ (resp. $\tau^-$-reg-pairs$(A)$) be the
set (of isomorphism classes) of $\tau$-regular pairs (resp. $\tau^-$-regular pairs) for $A$.

For $\calZ \in \irr^\tau(A)$
let $\tau(\calZ)$ be the generically $\tau^-$-regular component
$\calZ'$ such that
$$
(I_0^{\calZ'},I_1^{\calZ'}) = (\nu_A(P_1^{\calZ}),\nu_A(P_0^{\calZ})).
$$
(Here we are using the notation from Lemmas~\ref{lem:genericpresentations1} and \ref{lem:genericpresentations2}.)
Dually, one defines $\tau^-(\calZ)$ for $\calZ \in \irr^{\tau^-}(A)$.

Let  $P_{\calZ}$ (resp. $I_\calZ$) be a maximal projective (resp. injective) direct summand of
some generic $M \in \calZ$.

\begin{Prop}\label{prop:correspondence}
The map
\begin{align*}
\sigma\df \tau\text{\rm-reg-pairs}(A) &\to
\tau^-\text{\rm-reg-pairs}(A)
\\
(\calZ,P) &\mapsto (\overline{\tau(\calZ) \oplus \cO_{\nu_A(P)}},\nu_A(P_{\calZ}))
\end{align*}
is a bijection.
\end{Prop}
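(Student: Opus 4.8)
The strategy is to reduce everything to Plamondon's parametrizations. By Theorem~\ref{thm:plamondon2} and the reformulation in terms of $\tau$-regular pairs stated after it, $(\calZ,P)\mapsto g(\calZ)+[P]$ is a bijection from the set of $\tau$-regular pairs onto $K_0(\proj(A))\cong\Z^n$; dually, $(\calZ',I)\mapsto g^-(\calZ')+[I]$, where $g^-(\calZ'):=[I_1^{\calZ'}]-[I_0^{\calZ'}]$, is a bijection from the set of $\tau^-$-regular pairs onto $K_0(\inj(A))\cong\Z^n$. The Nakayama functor induces a group isomorphism $\nu_A\colon K_0(\proj(A))\to K_0(\inj(A))$ with $\nu_A[P(i)]=[I(i)]$. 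I would prove that $\sigma$ is well defined and that, under these two parametrizations, $\sigma$ becomes the isomorphism $-\nu_A$; being then a composition of bijections, $\sigma$ is a bijection, and its inverse is the dual map $(\calZ',I)\mapsto(\overline{\tau^-(\calZ')\oplus\cO_{\nu_A^{-1}(I)}},\nu_A^{-1}(I_{\calZ'}))$.

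For the well-definedness, fix a $\tau$-regular pair $(\calZ,P)$, let $M$ be generic in $\calZ$ and write $M=M_0\oplus P_\calZ$ with $P_\calZ$ a maximal projective summand and $M_0$ projective-free; then $(P_1^\calZ,P_0^\calZ)=(P_1^{M_0},\,P_0^{M_0}\oplus P_\calZ)$, and $\Hom_A(P,M)=0$ gives $\supp(P)\cap\supp(M_0)=\varnothing$. From the definition of $\tau(\calZ)$ together with the dual of Plamondon's Theorem~\ref{thm:plamondon1} (applied to the minimal projective presentation $P_1^{M_0}\to P_0^{M_0}\to M_0\to0$), the generic element of $\tau(\calZ)$ is $\tau_A(M_0)$, with minimal injective copresentation $0\to\tau_A(M_0)\to\nu_A P_1^{M_0}\to\nu_A P_0^{M_0}$. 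Now $\nu_A P$ is injective, and using $\Hom_A(X,\nu_A Q)\cong D\Hom_A(Q,X)$ together with the Auslander-Reiten formula one computes
\[
\Ext_A^1(\nu_A P,\tau_A(M_0))\cong D\underline{\Hom}_A(M_0,\nu_A P),
\]
which vanishes because $\Hom_A(M_0,\nu_A P)\cong D\Hom_A(P,M_0)=0$. Hence Theorem~\ref{thm:CBS} applies to $\overline{\tau(\calZ)\oplus\cO_{\nu_A P}}=\cE(\tau(\calZ),\overline{\cO_{\nu_A P}})$ (the extensions involved split, since $\nu_A P$ is injective): this set is an irreducible component, with generic element $\tau_A(M_0)\oplus\nu_A P$ and minimal injective copresentation $0\to\tau_A(M_0)\oplus\nu_A P\to\nu_A(P_1^{M_0}\oplus P)\to\nu_A P_0^{M_0}$; a dimension count (or the rank equality $r(\nu_A(P_1^{M_0}\oplus P),\nu_A P_0^{M_0})=r(\nu_A P_1^{M_0},\nu_A P_0^{M_0})$, which again rests on $\Hom_A(P,M)=0$) then shows this component is generically $\tau^-$-regular. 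Finally, $\nu_A P_\calZ$ is injective, and $\Hom_A(\tau_A(M_0)\oplus\nu_A P,\nu_A P_\calZ)\cong D\Hom_A(P_\calZ,\tau_A(M_0))\oplus D\Hom_A(P,P_\calZ)$: the second summand vanishes because $P_\calZ$ is a direct summand of $M$ and $\Hom_A(P,M)=0$, and the first by a support argument using $\add(P_\calZ)\cap\add(P_1^{M_0})=0$ (Corollary~\ref{cor:bundle4}). Thus $\sigma(\calZ,P)$ is a $\tau^-$-regular pair.

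To identify $\sigma$ with $-\nu_A$ on parameters, note that $g(\calZ)=g(M_0)-[P_\calZ]$, so from the copresentation $0\to\tau_A(M_0)\to\nu_A P_1^{M_0}\to\nu_A P_0^{M_0}$ we get $g^-(\tau(\calZ))=[\nu_A P_0^{M_0}]-[\nu_A P_1^{M_0}]=-\nu_A(g(M_0))=-\nu_A(g(\calZ)+[P_\calZ])$, while adjoining the injective summand $\nu_A P$ gives $g^-(\overline{\tau(\calZ)\oplus\cO_{\nu_A P}})=g^-(\tau(\calZ))-[\nu_A P]$. Therefore the parameter of $\sigma(\calZ,P)$ equals
\[
g^-\big(\overline{\tau(\calZ)\oplus\cO_{\nu_A P}}\big)+[\nu_A P_\calZ]
=-\nu_A\big(g(\calZ)+[P_\calZ]\big)-\nu_A[P]+\nu_A[P_\calZ]
=-\nu_A\big(g(\calZ)+[P]\big),
\]
i.e.\ $-\nu_A$ applied to the parameter $g(\calZ)+[P]$ of $(\calZ,P)$. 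Since $-\nu_A$ is a bijection of $\Z^n$ and both parametrizations are bijections, $\sigma$ is a bijection. The main obstacle I anticipate is the well-definedness step, and within it the verification that $\overline{\tau(\calZ)\oplus\cO_{\nu_A P}}$ is an irreducible component which is moreover generically $\tau^-$-regular: this is exactly where the hypothesis $\Hom_A(P,M)=0$ is indispensable --- it yields both the vanishing $\Ext_A^1(\nu_A P,\tau_A(M_0))=0$ that feeds Theorem~\ref{thm:CBS} and the rank equality needed for generic $\tau^-$-regularity --- together with Corollary~\ref{cor:bundle4} and the dual of Plamondon's construction; once this is settled, the $g$-vector bookkeeping above is routine.
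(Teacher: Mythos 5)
Your architecture is sound, and for the bijectivity step you take a genuinely different route from the paper: the paper writes down the candidate inverse $\sigma^{-1}(\calZ,I)=(\overline{\tau^-(\calZ)\oplus\cO_{\nu_A^-(I)}},\nu_A^-(I_\calZ))$ and leaves the verification to the reader, whereas you transport $\sigma$ through the $\Z^n$-parametrization of $\tau$-regular pairs (Theorem~\ref{thm:plamondon2}) and its dual and identify it with $-\nu_A$. Your $g$-vector bookkeeping for that identification is correct, and this is arguably the cleaner way to get bijectivity. The problems are in the well-definedness step, at exactly the two places where the paper's proof invokes \cite[Theorem~1.3]{CLS15} and its dual.

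First, your argument that $\overline{\tau(\calZ)\oplus\cO_{\nu_A(P)}}$ is generically $\tau^-$-regular is incomplete. The component-ness via Theorem~\ref{thm:CBS} and the computation $\Ext_A^1(\nu_A(P),\tau_A(M_0))\cong D\underline{\Hom}_A(M_0,\nu_A(P))=0$ are fine, but the asserted rank equality $r(\nu_A(P_1^{M_0}\oplus P),\nu_A(P_0^{M_0}))=r(\nu_A(P_1^{M_0}),\nu_A(P_0^{M_0}))$ does not follow from $\Hom_A(P,M)=0$ by the argument pattern of Lemma~\ref{lem:maxrankfM}: there the summand adjoined to the source is projective, so any map out of it whose composite to the cokernel vanishes lifts through the presentation map; here $\nu_A(P)$ is injective, and one only gets $\Ima(h_2)\subseteq\Ima(\nu_A(f_{M_0}))$ for the particular copresentation map, not $\Ima(h_2)\subseteq\Ima(h_1)$ for an arbitrary competitor $h_1$ of maximal rank (note also that $\nu_A$ does not preserve ranks, so maximal-rank maps on the projective and injective sides do not correspond). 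The equality is true, but the paper obtains generic $\tau^-$-regularity instead from the dual of \cite[Theorem~1.3]{CLS15}, after checking $\Hom_A(\tau_A^-(\tau_A(M_0)),\nu_A(P))=0$ and $\Hom_A(\tau_A^-(\nu_A(P)),\tau_A(M_0))=0$.

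Second, your ``support argument'' for $\Hom_A(P_\calZ,\tau_A(M_0))=0$ fails: since $\Hom_A(P(i),X)\cong e_iX$ vanishes iff $[X:S(i)]=0$, you would need to control \emph{all} composition factors of $\tau_A(M_0)$, whereas $\add(P_\calZ)\cap\add(P_1^{M_0})=0$ only controls $\soc(\tau_A(M_0))\subseteq\soc(\nu_A(P_1^{M_0}))\cong\tp(P_1^{M_0})$. This vanishing is precisely what \cite[Theorem~1.3]{CLS15} supplies for a generic module $M_0\oplus P_\calZ$ in a generically $\tau$-regular component, and it is not a formal consequence of the data you list. Once these two points are repaired (most economically by importing the CLS15 result as the paper does), your parametrization argument goes through.
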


\begin{proof}
Let $(\calZ,P)\in \tau\text{\rm-reg-pairs}(A)$.
First, we show that
$(\overline{\tau(\calZ) \oplus \cO_{\nu_A(P)}},\nu_A(P_{\calZ}))$
is a $\tau^-$-regular pair for $A$.

By construction, we have $\tau(\calZ) \in \irr^{\tau^-}(A)$.

Each generic $M \in \calZ$ is of the form
$M_{\rm np} \oplus P_\calZ$ for some $M_{\rm np}$ which does
not have a non-zero projective direct summand.
By \cite[Theorem~1.3]{CLS15} we have
$\Hom_A(P_\calZ,\tau_A(M_{\rm np})) = 0$.
By construction,
a generic module in $\tau(\calZ)$ is of the form
$\tau_A(M_{\rm np})$.
(This was already observed in \cite[Theorem~3.11(2)]{F23}.)

We have
$$
\Hom_A(\tau_A^-(\tau_A(M_{\rm np})),\nu_A(P))
\cong \Hom_A(M_{\rm np},\nu_A(P)) \cong \Hom_A(P,M_{\rm np}) = 0
$$
and
$$
\Hom_A(\tau_A^-(\nu_A(P)),\tau_A(M_{\rm np})) = 0.
$$
Now the dual of \cite[Theorem~1.3]{CLS15} says that
$$
\calZ' :=
\overline{\tau(\calZ) \oplus \cO_{\nu_A(P)}} \in \irr^{\tau^-}(A).
$$
Furthermore, we have
\begin{align*}
\Hom_A(\tau_A(M_{\rm np}) \oplus \nu_A(P),\nu_A(P_\calZ))
&\cong \Hom_A(P_\calZ,\tau_A(M_{\rm np}) \oplus \nu_A(P))
\\
&\cong  \Hom_A(P_\calZ,\tau_A(M_{\rm np})) \oplus \Hom_A(P,P_\calZ) = 0.
\end{align*}
Thus $\Hom_A(M',\nu_A(P_\calZ)) = 0$ for all $M' \in \calZ'$.
It follows that
$(\calZ',\nu_A(P_\calZ))$ is
a $\tau^-$-regular pair for $A$.

For a $\tau^-$-regular pair $(\calZ,I)$ define
$$
\sigma^{-1}(\calZ,I) :=
(\overline{\tau^-(\calZ) \oplus \cO_{\nu_A^-(I)}},\nu_A^-(I_\calZ)).
$$
Now one checks easily that $\sigma^{-1}$ is the inverse of
$\sigma$.
\end{proof}

\subsection{When do $\tau$-regular and $\tau^-$-regular components coincide?}
Let $m \ge 0$.
Then $A$ is an
\emph{$m$-Iwanaga-Gorenstein algebra} if
$$
\pdim(D(A_A)) \le m
\text{\quad and \quad}
\idim({_A}A) \le m.
$$
In this case, one has
$d := \pdim(D(A_A)) = \idim({_A}A)$.
Furthermore, for all $M \in \md(A)$ the following are equivalent:
\begin{itemize}\itemsep2mm
\item $\pdim(M) \le d$;
\item $\pdim(M) < \infty$;
\item $\idim(M) \le d$;
\item $\idim(M) < \infty$.
\end{itemize}

\begin{Thm}\label{thm:main4b}
Suppose that $\irr^\tau(A) = \irr^{\tau^-}(A)$.
Then $A/AeA$ is a $1$-Iwanaga-Gorenstein algebra for each
idempotent $e$ in $A$.
\end{Thm}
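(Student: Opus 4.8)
The plan is to reduce to the case $e=0$ and then apply a short Auslander--Reiten argument. For the reduction, given an idempotent $e$ in $A$ and $B:=A/AeA$, Corollary~\ref{cor:main6B} gives $\irr(B)\cap\irr^\tau(A)=\irr^\tau(B)$; applying the same corollary to $A^{\op}$ (with the same idempotent $e$) and translating via the duality $D$ yields the dual identity $\irr(B)\cap\irr^{\tau^-}(A)=\irr^{\tau^-}(B)$. Under the hypothesis $\irr^\tau(A)=\irr^{\tau^-}(A)$ these two subsets of $\irr(A)$ coincide, so $\irr^\tau(B)=\irr^{\tau^-}(B)$. Hence it suffices to prove: if $\irr^\tau(A)=\irr^{\tau^-}(A)$, then $A$ is $1$-Iwanaga-Gorenstein; the theorem then follows by applying this statement to $B$.

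So assume $\irr^\tau(A)=\irr^{\tau^-}(A)$. The first step is to show $\pdim(D(A_A))\le 1$, where $D(A_A)=I(1)\oplus\cdots\oplus I(n)$ is the injective cogenerator. Since $D(A_A)$ is injective, $\tau_A^-(D(A_A))=0$, so $D(A_A)$ is $\tau^-$-rigid; in particular it is rigid, so $\overline{\cO_{D(A_A)}}$ is an irreducible component (with generic module $D(A_A)$ itself) lying in $\irr^{\tau^-}(A)=\irr^\tau(A)$. Thus $\overline{\cO_{D(A_A)}}$ is generically $\tau$-regular, whence $c(D(A_A))=E(D(A_A))$. Combining this with $c(M)\le e(M)\le E(M)$ and $e(D(A_A))=\ext_A^1(D(A_A),D(A_A))=0$ forces $E(D(A_A))=\hom_A(D(A_A),\tau_A(D(A_A)))=0$. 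Since $\tau_A$ is additive, this means $\Hom_A(I(j),\tau_A(I(i)))=0$ for all $i,j$, hence $\Hom_A(I,\tau_A(I(i)))=0$ for every injective $A$-module $I$ and every $i$; by Lemma~\ref{lem:pdim1} we conclude $\pdim(I(i))\le 1$ for all $i$, i.e.\ $\pdim(D(A_A))\le 1$.

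The second step is dual: ${_A}A=P(1)\oplus\cdots\oplus P(n)$ is projective, so $\tau_A({_A}A)=0$ and ${_A}A$ is $\tau$-rigid, hence $\overline{\cO_{{_A}A}}\in\irr^\tau(A)=\irr^{\tau^-}(A)$ is generically $\tau^-$-regular. Then $c({_A}A)=E^-({_A}A)$, and together with $c(M)\le e(M)\le E^-(M)$ and $e({_A}A)=0$ this gives $E^-({_A}A)=\hom_A(\tau_A^-({_A}A),{_A}A)=0$, so ${_A}A$ is $\tau^-$-rigid. By additivity of $\tau_A^-$ we get $\Hom_A(\tau_A^-(P(i)),P)=0$ for every projective $P$ and every $i$, and the dual of Lemma~\ref{lem:pdim1} yields $\idim(P(i))\le 1$ for all $i$, i.e.\ $\idim({_A}A)\le 1$. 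Together with the first step, $A$ is $1$-Iwanaga-Gorenstein.

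I do not expect a genuine obstacle here; the two points that need some care are (a) making sure the dual of Corollary~\ref{cor:main6B} really produces the identification $\irr(B)\cap\irr^{\tau^-}(A)=\irr^{\tau^-}(B)$ used in the reduction, and (b) the passage, via additivity of the Auslander--Reiten translation and Lemma~\ref{lem:pdim1}, from the single vanishing ``$D(A_A)$ is $\tau$-rigid'' to ``$\pdim(I(i))\le 1$ for all $i$'' (and its dual). The guiding idea is to test the hypothesis $\irr^\tau(A)=\irr^{\tau^-}(A)$ on the two distinguished rigid components $\overline{\cO_{D(A_A)}}$ and $\overline{\cO_{{_A}A}}$ rather than on components of simple modules.
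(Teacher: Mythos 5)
Your proposal is correct and follows essentially the same route as the paper: the paper first observes that the hypothesis forces every injective to be $\tau$-rigid and every projective to be $\tau^-$-rigid (which is exactly your $c\le e\le E$ argument on the rigid components $\overline{\cO_{D(A_A)}}$ and $\overline{\cO_{{_A}A}}$, spelled out), deduces $\pdim(D(A_A))\le 1$ and $\idim({_A}A)\le 1$ from Lemma~\ref{lem:pdim1} and its dual, and then handles general $e$ by the same reduction via Corollary~\ref{cor:main6B} and its dual (stated in the paper as Corollary~\ref{cor:reduction3}). You merely perform the reduction first and supply more detail; the content is identical.
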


\begin{proof}
Under the assumption we get that
each $I \in \inj(A)$ is $\tau$-rigid, and each
$P \in \proj(A)$ is $\tau^-$-rigid.
Now Lemma~\ref{lem:pdim1} and its dual
imply that $\pdim(I) \le 1$ and $\idim(P) \le 1$, respectively.

Next, let $e$ be an idempotent in $A$, and let $B := A/AeA$.
Then $B$ is also a $1$-Iwanaga-Gorenstein algebra.
Otherwise
we get $\irr^\tau(B) \not= \irr^{\tau^-}(B)$, and then Corollary~\ref{cor:reduction2} and its dual imply
$\irr^\tau(A) \not= \irr^{\tau^-}(A)$, a contradition.
\end{proof}

In general,
it remains unclear when $\irr^\tau(A) = \irr^{\tau^-}(A)$ holds.
The following section
answers this question for several
classes of algebras.

\subsection{Examples}

\subsubsection{A selfinjective algebra with $\irr^\tau(A) \not= \irr^{\tau^-}(A)$}
Let $A = KQ/I$ where $Q$ is the quiver
$$
\xymatrix{
1 \ar@/^1ex/[r]^a & 2 \ar@/^1ex/[l]^b
}
$$
and $I = (ab,ba)$.
Observe that $A$ is a selfinjective algebra,
i.e.\ $A$ is $0$-Iwanaga-Gorenstein (and therefore also
$1$-Iwanaga-Gorenstein).
The $A$-module
$P(1) \oplus S(1)$
is $\tau$-rigid, but not $\tau^-$-rigid.
In particular, $\irr^\tau(A) \not= \irr^{\tau^-}(A)$.

\subsubsection{Local algebras}

\begin{Prop}
Assume $A$ is local.
Then the following are equivalent:
\begin{itemize}\itemsep2mm

\item[(i)]
$\irr^\tau(A) = \irr^{\tau^-}(A)$;

\item[(ii)]
$A$ is selfinjective.

\end{itemize}
\end{Prop}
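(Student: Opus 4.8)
The plan is to reduce everything to Proposition~\ref{prop:local}. First I would record the structure of a local algebra: it has $n = 1$, a unique indecomposable projective $P(1) = {_A}A$, and a unique indecomposable injective $I(1) = D(A_A)$, with $\dim_K P(1) = \dim_K A = \dim_K I(1)$; since $n = 1$, the dimension vector of a module coincides with its total $K$-dimension. By Proposition~\ref{prop:local}, the equivalence (i)$\Leftrightarrow$(ii) gives $\irr^\tau(A) = \{\, \overline{\cO_{P(1)^m}} \mid m \ge 0 \,\}$, and the equivalence (i)$\Leftrightarrow$(iv) gives $\irr^{\tau^-}(A) = \{\, \overline{\cO_{I(1)^m}} \mid m \ge 0 \,\}$.

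For the implication (ii)$\implies$(i): if $A$ is selfinjective, then ${_A}A = P(1)$ is an injective module, so $P(1) \cong I(1)$, and the two displayed families of components agree term by term; hence $\irr^\tau(A) = \irr^{\tau^-}(A)$.

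For (i)$\implies$(ii): assuming $\irr^\tau(A) = \irr^{\tau^-}(A)$, I would look at the component $\overline{\cO_{P(1)}} \in \irr^\tau(A) = \irr^{\tau^-}(A)$, which must therefore equal $\overline{\cO_{I(1)^m}}$ for some $m \ge 0$. Comparing total dimensions of the modules in this component yields $\dim_K A = m \dim_K A$, hence $m = 1$ and $\overline{\cO_{P(1)}} = \overline{\cO_{I(1)}}$. Since $P(1)$ is projective and $I(1)$ is injective, we have $\Ext_A^1(P(1),P(1)) = 0 = \Ext_A^1(I(1),I(1))$, so by Example~(i) of Section~\ref{subsec:irreducible} both $\cO_{P(1)}$ and $\cO_{I(1)}$ are dense open in this (irreducible) component; two dense open subsets of an irreducible variety meet, so $\cO_{P(1)} = \cO_{I(1)}$, i.e.\ $P(1) \cong I(1)$. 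Then ${_A}A = P(1) \cong I(1)$ is injective, so $A$ is selfinjective.

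I do not expect a genuine obstacle here: the argument is essentially bookkeeping on top of Proposition~\ref{prop:local}. The two points that need a word of care are the dimension count forcing $m = 1$ (using that $P(1)$ and $I(1)$ have the same $K$-dimension $\dim_K A$) and the passage from equality of orbit closures to isomorphism of modules, for which one invokes the rigidity of $P(1)$ and $I(1)$.
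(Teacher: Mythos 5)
Your proposal is correct and follows exactly the route the paper intends: the paper's own proof is the one-line remark that the statement "follows directly from Proposition~\ref{prop:local}", and your argument simply spells out the bookkeeping (identifying both sets of components via Proposition~\ref{prop:local}, matching dimensions to force $m=1$, and using rigidity of $P(1)$ and $I(1)$ to pass from equal orbit closures to $P(1)\cong I(1)$). No gaps; this is the same approach with the details made explicit.
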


\begin{proof}
This follows directly from Proposition~\ref{prop:local}.
\end{proof}

\subsubsection{Algebras of finite global dimension}

\begin{Prop}\label{prop:finiteglobaldim}
Assume that $\gldim(A) < \infty$.
Then the following are equivalent:
\begin{itemize}\itemsep2mm

\item[(i)]
$\irr^\tau(A) = \irr^{\tau^-}(A)$;

\item[(ii)]
$A$ is hereditary.

\end{itemize}
\end{Prop}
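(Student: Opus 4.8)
The plan is to derive both implications from results already in hand; in fact this proposition is exactly Corollary~\ref{cor:main5}, announced in the introduction as an immediate consequence of Theorems~\ref{thm:main1b} and \ref{thm:main4b}, so the task is just to spell out that deduction.

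For (ii) $\implies$ (i): if $A$ is hereditary, then Theorem~\ref{thm:main1b} gives $\irr^\tau(A) = \irr(A) = \irr^{\tau^-}(A)$. (Here the hypothesis $\gldim(A) < \infty$ is automatic and not needed.) For (i) $\implies$ (ii): assume $\irr^\tau(A) = \irr^{\tau^-}(A)$ and apply Theorem~\ref{thm:main4b} with the idempotent $e = 0$, so that $B := A/AeA = A$; this shows that $A$ itself is a $1$-Iwanaga-Gorenstein algebra. Thus $\pdim(D(A_A)) \le 1$ and $\idim({_A}A) \le 1$, so the common value $d := \pdim(D(A_A)) = \idim({_A}A)$ satisfies $d \le 1$.

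Now I would use $\gldim(A) < \infty$: every $M \in \md(A)$ then has $\pdim(M) < \infty$, hence, by the list of equivalent conditions recalled for Iwanaga-Gorenstein algebras, $\pdim(M) \le d \le 1$ for every $M \in \md(A)$. Therefore $\gldim(A) \le 1$, i.e.\ $A$ is hereditary, which is (ii).

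I do not expect a real obstacle here; the only point requiring a little care is the legitimacy of invoking Theorem~\ref{thm:main4b} for $A$ itself, which is simply its $e = 0$ instance. (Alternatively one could avoid the reduction altogether and observe directly from $\irr^\tau(A) = \irr^{\tau^-}(A)$ that every injective is $\tau$-rigid and every projective is $\tau^-$-rigid, invoke Lemma~\ref{lem:pdim1} and its dual to get $\pdim(I) \le 1$ for $I \in \inj(A)$ and $\idim(P) \le 1$ for $P \in \proj(A)$, and then combine with $\gldim(A) < \infty$; but routing through Theorem~\ref{thm:main4b} is shortest.)
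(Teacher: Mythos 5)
Your proposal is correct and matches the paper's own proof: the paper also deduces that $A$ is $1$-Iwanaga-Gorenstein from Theorem~\ref{thm:main4b} (implicitly the $e=0$ case) and then uses that a $1$-Iwanaga-Gorenstein algebra of finite global dimension is hereditary, while (ii) $\implies$ (i) follows from Theorem~\ref{thm:main1b}. Your spelling out of the Iwanaga-Gorenstein step via the list of equivalent conditions is exactly the intended argument.
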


\begin{proof}
(i) $\implies$ (ii):
We know from Theorem~\ref{thm:main4b} that (i) implies that
$A$ is $1$-Iwanaga-Gorenstein.
But a $1$-Iwanaga-Gorenstein algebra of finite global dimension
is hereditary.

(ii) $\implies$ (i):
This follows from (the obvious part of) Theorem~\ref{thm:main1b}.
\end{proof}

\subsubsection{Generalized species of Dynkin type}

For a symmetrizable generalized Cartan matrix $C$,
a symmetrizer $D$ and an acyclic orientation $\Omega$,
one can define a $1$-Iwanaga-Gorenstein algebra
$A = H(C,D,\Omega)$ which is an analogue of a species, see
\cite{GLS17} for details.
The locally free $A$-modules (as defined in \cite{GLS17}) are exactly
the $A$-modules with projective dimension at most one.

\begin{Prop}
For $A = H(C,D,\Omega)$ we have
$\irr^\tau(A) = \irr^{\tau^-}(A)$.
\end{Prop}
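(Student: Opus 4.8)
The plan is to combine two facts about $A = H(C,D,\Omega)$: that it is $1$-Iwanaga-Gorenstein with locally free modules being exactly those of projective dimension at most one (equivalently injective dimension at most one), and that a generic module in any generically $\tau$-regular component is locally free. The first yields a pointwise identity $E(M) = E^-(M)$ for locally free $M$, and together the two facts force $c(\calZ) = E(\calZ) = E^-(\calZ)$ both on $\irr^\tau(A)$ and, dually, on $\irr^{\tau^-}(A)$, giving $\irr^\tau(A) = \irr^{\tau^-}(A)$.

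First I would record the homological identity. Since $A$ is $1$-Iwanaga-Gorenstein, for every $M \in \md(A)$ the conditions $\pdim(M) \le 1$, $\pdim(M) < \infty$, $\idim(M) < \infty$, $\idim(M) \le 1$ are equivalent, and by the basic structure theory of $H(C,D,\Omega)$ (see \cite{GLS17}) they hold if and only if $M$ is locally free. For such $M$, Theorem~\ref{thm:ARformulas} together with Lemma~\ref{lem:pdim1stable} (with both arguments equal to $M$) gives $E(M) = \hom_A(M,\tau_A(M)) = \overline{\hom}_A(M,\tau_A(M)) = \ext_A^1(M,M) = e(M)$; dually, Theorem~\ref{thm:ARformulas} and the dual of Lemma~\ref{lem:pdim1stable} give $E^-(M) = \hom_A(\tau_A^-(M),M) = \underline{\hom}_A(\tau_A^-(M),M) = \ext_A^1(M,M) = e(M)$. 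Hence $E(M) = E^-(M) = e(M)$ whenever $M$ is locally free.

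Granting the key claim that for each $\calZ \in \irr^\tau(A)$ the locally free modules form a dense open subset of $\calZ$, I would argue as follows. Fix $\calZ \in \irr^\tau(A)$ and choose $M$ lying in the intersection of the locally free locus with the dense open loci $\{\,M : c(M) = E(M)\,\}$ (dense since $\calZ$ is generically $\tau$-regular) and $\{\,M : E^-(M) = E^-(\calZ)\,\}$ (dense by upper semicontinuity of $M \mapsto E^-(M)$); we may also take $M$ so that $c(M) = c(\calZ)$. By the previous paragraph $E(M) = E^-(M)$, so $c(\calZ) = c(M) = E(M) = E^-(M) = E^-(\calZ)$, which means $\calZ \in \irr^{\tau^-}(A)$. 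Thus $\irr^\tau(A) \subseteq \irr^{\tau^-}(A)$. Running the same argument over $A^{\rm op}$ — which is again an algebra of the form $H(C',D',\Omega')$, is $1$-Iwanaga-Gorenstein, and under the standard duality exchanges $\irr^\tau$ with $\irr^{\tau^-}$ while preserving local freeness — gives $\irr^{\tau^-}(A) \subseteq \irr^\tau(A)$. Since both sides lie in $\irr(A)$, they coincide.

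The hard part is the key claim itself, that a generic module in a generically $\tau$-regular component of $H(C,D,\Omega)$ is locally free; this is precisely where the special features of $H(C,D,\Omega)$ beyond being $1$-Iwanaga-Gorenstein are used, and the claim genuinely fails for arbitrary $1$-Iwanaga-Gorenstein algebras (for instance for the selfinjective algebra $KQ/(ab,ba)$ discussed above, where $\irr^\tau(A) \neq \irr^{\tau^-}(A)$). I would establish it either by invoking the Geiß--Leclerc--Schröer theory of locally free modules and strongly reduced components for $H(C,D,\Omega)$, or directly: by Theorem~\ref{thm:plamondon1} and Corollary~\ref{cor:bundle4} write $\calZ = \calZ_{P_1,P_0}$ with $\add(P_1) \cap \add(P_0) = 0$; a generic $f \in \Hom_A(P_1,P_0)^\circ$ furnishes a minimal projective presentation $P_1 \xrightarrow{f} P_0 \to M \to 0$ of a generic $M \in \calZ$ (Corollary~\ref{cor:bundle3}), and $M$ is locally free exactly when $f$ is injective, i.e.\ when $P_1$ embeds into $P_0$. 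One then has to show that every pair $(P_1^{\calZ},P_0^{\calZ})$ arising from $\irr^\tau(A)$ — equivalently, by Theorem~\ref{thm:plamondon2}, every admissible $g$-vector $g(\calZ)$ — has this embedding property; this is where the rank-vector and $E$-filtration machinery for $H(C,D,\Omega)$ must be brought in, and it is the step I expect to demand the most work.
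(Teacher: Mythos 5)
Your reduction is correct and is essentially the paper's own argument: the paper combines \cite[Theorem~1.2]{GLS17} (for $\calZ \in \irr(H(C,D,\Omega))$ one has: generically locally free $\Leftrightarrow$ $\pdim(\calZ) \le 1$ $\Leftrightarrow$ $\idim(\calZ) \le 1$) with Pfeifer's theorem \cite[Theorem~1.1]{Pf23}, which is precisely your ``key claim'' that $\calZ$ is generically $\tau$-regular if and only if it is generically locally free, and then dualizes. The step you correctly flag as the hard part is not reproved in the paper either --- it is the cited result of Pfeifer --- so your first suggested route (invoking the locally free / strongly reduced component theory for $H(C,D,\Omega)$) is exactly the intended one, and the sketched ``direct'' alternative need not be carried out.
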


\begin{proof}
Let $A = H(C,D,\Omega)$.
It follows from \cite[Theorem~1.2]{GLS17} that for $\calZ \in \irr(A)$ the following are equivalent:
\begin{itemize}\itemsep2mm

\item[(i)]
$\calZ$ is generically locally free;

\item[(ii)]
$\pdim(\calZ) \le 1$;

\item[(iii)]
$\idim(\calZ) \le 1$.

\end{itemize}
Pfeifer
\cite[Theorem~1.1]{Pf23}
shows that $\calZ \in \irr(A)$ is generically $\tau$-regular
if and only if $\calZ$ is generically locally free.
A dual proof yields the same for generically $\tau^-$-regular components.
\end{proof}

\subsubsection{Jacobian algebras}
For a $2$-acyclic quiver $Q$ and a potential $W$ for
$Q$ let $A = J(Q,W)$ be the associated Jacobian algebra introduced in
\cite{DWZ08}.
These algebras play a crucial role in the categorification of
Fomin-Zelevinsky cluster algebras.
(Note that the algebras $J(Q,W)$ can be infinite-dimensional, and for the
mentioned categorification one needs the additional assumption that
the potential $W$ is non-degenerate.)

For example, all gentle algebras arising from surface triangulations
(see \cite{ABCP10})
are of the form $J(Q,W)$.
We call these \emph{gentle surface algebras}.

Combining
\cite[Proposition~7.3]{DWZ10} and
\cite[Corollary~10.9]{DWZ10} yields the following
theorem.
Note that \cite[Proposition~7.3]{DWZ10}
is stated in \cite[Section~7]{DWZ10} where the potential $W$ is assumed to be non-degenerate for the whole section.
However the proposition clearly holds also under the assumptions
of \cite[Section~10]{DWZ10}.

\begin{Thm}\label{thm:jacobian}
Assume that $Q$ is a $2$-acyclic quiver, and let $W$ be a potential
for $Q$ such that
$W \in KQ$, and the ideal $I$ generated by the cyclic derivatives of $W$
is an admissible ideal in $KQ$.
Then for $A = J(Q,W) = KQ/I$ and each
$M \in \md(A)$ we have
$$
\hom_A(M,\tau_A(M)) = \hom_A(\tau_A^-(M),M).
$$
In particular, we get
$$
\irr^\tau(A) = \irr^{\tau^-}(A).
$$
\end{Thm}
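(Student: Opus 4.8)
The statement is a translation of results of Derksen, Weyman and Zelevinsky, so my plan is to recall enough of their formalism of decorated representations to match the quantities involved, and then to combine the two cited results. Fix $A = J(Q,W) = KQ/I$ as in the hypotheses, and note that $A^{\mathrm{op}} \cong J(Q^{\mathrm{op}},W^{\mathrm{op}})$ is again the finite-dimensional Jacobian algebra of a $2$-acyclic quiver with potential of the same kind. For $M \in \md(A)$ recall the \emph{canonical} (in general non-minimal) projective presentation
$$
\bigoplus_{a \in Q_1} P(t(a)) \otimes_K e_{s(a)}M \xrightarrow{f} \bigoplus_{i \in Q_0} P(i) \otimes_K e_i M \to M \to 0
$$
built from the arrows of $Q$ and the cyclic derivatives of $W$, together with its dual, the canonical injective copresentation. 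Applying $\Hom_A(-,M)$ to the former and using Lemma~\ref{lem:gvector3} expresses $\hom_A(M,\tau_A(M))$ as the dimension of an explicit cohomology space of a complex of vector spaces determined by $M$ and $W$; dualizing with $D$ and using the standard identity $\tau_A^- = D\tau_{A^{\mathrm{op}}}D$ does the same for $\hom_A(\tau_A^-(M),M)$, now in terms of the canonical presentation of the $A^{\mathrm{op}}$-module $DM$. Note that the Auslander--Reiten formula (Theorem~\ref{thm:ARformulas}) only yields $\dim \overline{\Hom}_A(M,\tau_A(M)) = \dim \underline{\Hom}_A(\tau_A^-(M),M)$, so the asserted equality of the \emph{non-stable} $\Hom$-spaces is a genuine feature of Jacobian algebras, traceable to the self-duality (up to shift) of the three-term bimodule complex attached to the potential.

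These two vector-space invariants are exactly Derksen--Weyman--Zelevinsky's $E^{\mathrm{proj}}$ and $E^{\mathrm{inj}}$ of the decorated representation $(M,0)$. Indeed, \cite[Proposition~7.3]{DWZ10} identifies, for a decorated representation $\mathcal{M} = (M,V)$, the invariant $E^{\mathrm{proj}}(\mathcal{M})$ with $\hom_A(M,\tau_A(M))$ up to a correction term depending only on the dimension vectors of $M$ and of the decoration $V$, and dually identifies $E^{\mathrm{inj}}(\mathcal{M})$ with $\hom_A(\tau_A^-(M),M)$ up to the same correction; taking $V = 0$ removes the correction, so that $E^{\mathrm{proj}}(M,0) = \hom_A(M,\tau_A(M))$ and $E^{\mathrm{inj}}(M,0) = \hom_A(\tau_A^-(M),M)$. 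Now \cite[Corollary~10.9]{DWZ10} asserts $E^{\mathrm{proj}}(\mathcal{M}) = E^{\mathrm{inj}}(\mathcal{M})$ for every decorated representation, and applying this to $(M,0)$ gives
$$
\hom_A(M,\tau_A(M)) = \hom_A(\tau_A^-(M),M)
$$
for all $M \in \md(A)$. As already noted, although \cite[Proposition~7.3]{DWZ10} is stated in a section of \cite{DWZ10} in which $W$ is assumed non-degenerate, its proof uses only that $A$ is the finite-dimensional Jacobian algebra of a quiver with potential, which is our standing hypothesis.

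For the final clause, observe that for $\calZ \in \irr(A)$ we have $E(\calZ) = \min\{\hom_A(M,\tau_A(M)) \mid M \in \calZ\}$ and $E^-(\calZ) = \min\{\hom_A(\tau_A^-(M),M) \mid M \in \calZ\}$, so the pointwise equality just proved forces $E(\calZ) = E^-(\calZ)$. Since $\calZ$ is generically $\tau$-regular if and only if $c(\calZ) = E(\calZ)$ and generically $\tau^-$-regular if and only if $c(\calZ) = E^-(\calZ)$, while $c(\calZ)$ makes no reference to the Auslander--Reiten translation, the two conditions are equivalent; hence $\irr^\tau(A) = \irr^{\tau^-}(A)$. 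The main obstacle is entirely contained in the cited input, namely \cite[Corollary~10.9]{DWZ10}, whose proof is the substantial content of \cite{DWZ10} (mutation invariance of both $E^{\mathrm{proj}}$ and $E^{\mathrm{inj}}$, together with a reduction to negative simple decorated representations); everything else is the bookkeeping of matching the canonical presentation and its injective dual to the two $E$-invariants and transporting the second one to $A^{\mathrm{op}}$ via $D$.
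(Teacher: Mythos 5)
Your proposal is correct and follows exactly the route the paper takes: the paper's entire proof consists of citing \cite[Proposition~7.3]{DWZ10} and \cite[Corollary~10.9]{DWZ10} (with the same remark that Proposition~7.3, though stated under a non-degeneracy hypothesis, holds under the assumptions of \cite[Section~10]{DWZ10}), and deducing the component statement from the pointwise equality. Your write-up simply supplies more of the bookkeeping (the decorated-representation invariants $E^{\mathrm{proj}}$, $E^{\mathrm{inj}}$ and the specialization $V=0$) that the paper leaves implicit.
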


\subsubsection{Gentle algebras}
Our finite-dimensional algebra $A = KQ/I$ is a
\emph{gentle algebra} if the following
hold:
\begin{itemize}\itemsep2mm

\item[(i)]
For each $i \in Q_0$ we have
$$
|\{ a \in Q_1 \mid s(a) = i \}| \le 2
\text{\quad and \quad}
|\{ a \in Q_1 \mid t(a) = i \}| \le 2;
$$

\item[(ii)]
$I$ is generated by a set of paths of length two;

\item[(iii)]
Let $a,b,c \in Q_1$ with $t(a) = s(b) = s(c)$ and $b \not= c$.
Then $|\{ ba,\; ca \} \cap I| = 1$;

\item[(iv)]
Let $a,b,c \in Q_1$ with $s(a) = t(b) = t(c)$ and $b \not= c$.
Then $|\{ ab,\; ac \} \cap I| = 1$.

\end{itemize}

These algebras first appeared in \cite{AS87}.

\begin{Prop}[{Chen, Lu \cite[Proposition~3.1]{CL16}}] \label{prop:CL}
A gentle algebra $A = KQ/I$ is $1$-Iwanaga-Gorenstein
if and only if for each path $a_ta_1$ of length two with
$a_ta_1 \in I$ there is a
path $a_1 \cdots a_t$ in $Q$ such that
$a_ia_{i+1} \in I$ for all $1 \le i \le t-1$.
\end{Prop}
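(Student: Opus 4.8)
The plan is to compute the first syzygies of the indecomposable injective $A$-modules and to read the stated condition off the outcome. As a preliminary reduction, $A$ is $1$-Iwanaga-Gorenstein exactly when $\pdim I(v)\le 1$ and $\idim P(v)\le 1$ for every vertex $v$; applying the duality $D$ identifies the second family of conditions with ``$\pdim J\le 1$ for every indecomposable injective $A^{\op}$-module $J$'', and since $A^{\op}$ is again a gentle algebra (of the opposite quiver, with reversed relations) while the stated combinatorial condition is unchanged when all arrows of $Q$ are reversed (a path $a_1\cdots a_t$ with $a_ia_{i+1}\in I$ turns into the reversed path in $Q^{\op}$ with the analogous property, and $a_ta_1\in I$ into the corresponding relation of $A^{\op}$), it suffices to prove, for an arbitrary gentle algebra, that $\pdim I(v)\le 1$ for all $v$ if and only if the condition holds. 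I also use the evident reformulation: the condition holds if and only if every relation $a_2a_1\in I$ lies on a \emph{cyclic sequence of relations}, i.e.\ there are arrows $a_1,\dots,a_t$ forming an oriented cycle in $Q$ with all of $a_1a_2,\dots,a_{t-1}a_t,a_ta_1$ in $I$.

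Two standard facts about gentle algebras are the computational engine. First, every indecomposable projective $P(i)$ and injective $I(i)$ is a string module: $P(i)$ (resp.\ $I(i)$) is obtained by gluing at the vertex $i$ the at most two maximal nonzero paths of $Q$ starting (resp.\ ending) at $i$. Second, for an arrow $a$ write $Aa\le P(s(a))$ for the submodule generated by $a$; right multiplication by $a$ gives an epimorphism $P(t(a))\twoheadrightarrow Aa$, and by the gentleness relations its kernel is $Ab$, where $b$ is the unique arrow with $s(b)=t(a)$ and $ba\in I$ if such an arrow exists, and is $0$ otherwise. Hence $Aa$ is projective if and only if no arrow $b$ satisfies $s(b)=t(a)$ and $ba\in I$; indeed, if such a $b$ exists then $Aa\cong P(t(a))/Ab$ with $Ab\ne 0$, which is not projective because $P(t(a))$ has simple top.

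Now fix $v$ and take the projective cover $\pi\colon\bigoplus_q P(s(q))\to I(v)$, the sum over the maximal nonzero paths $q$ ending at $v$. Tracking $\pi$ on string modules gives $\Omega I(v)$ explicitly: if $v$ is a source then $\Omega I(v)=\bigoplus_{s(a)=v}Aa$; if a single arrow $\alpha$ enters $v$ then $\Omega I(v)=A\beta\oplus A\gamma$, where $\beta$ is the second arrow out of $s(q)$ and $\gamma$ the arrow out of $v$ with $\gamma\alpha\notin I$, whenever these exist; and if two arrows enter $v$, giving two arms $q_1,q_2$, then $\Omega I(v)=A\beta_1\oplus A\beta_2\oplus P(v)$, where $\beta_j$ is the second arrow out of $s(q_j)$ when it exists. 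In the last case the summand $P(v)$ appears because the two ``tails'' by which the arms of $I(v)$ are prolonged inside their projective covers are joined through the common socle $S(v)$, and the gentle axiom for the two arrows entering $v$ forces this joined module to be the \emph{full} projective $P(v)$. Since $\pdim I(v)\le 1$ is equivalent to $\Omega I(v)$ being projective, we conclude that $\pdim I(v)\le 1$ for all $v$ if and only if no arrow occurring in any $\Omega I(v)$ is the left-hand factor of a relation of $I$.

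The two implications now follow by bookkeeping with the gentle axioms. If some relation $b_2b_1\in I$ lies on no cyclic sequence of relations, extend it to a maximal such sequence $b_1,\dots,b_m$; its leftmost arrow $b_1$ cannot be preceded in the sequence, which by the gentle axioms forces $s(b_1)$ to have at most one incoming arrow $\mu$, with $b_1\mu\notin I$. Thus $b_1$ occurs in $\Omega I(s(b_1))$ (as the arrow $\gamma$ if $\mu$ exists, otherwise as a source-arrow), and $b_1$ is the left-hand factor of $b_2b_1$, so $Ab_1$ is a non-projective summand of $\Omega I(s(b_1))$ and $\pdim I(s(b_1))\ge 2$. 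Conversely, assume the condition holds and suppose some arrow $a$ occurring in some $\Omega I(v)$ is a left-hand factor of a relation $ba\in I$; by the condition this relation lies on a cyclic sequence of relations, so there is an arrow $c$ with $t(c)=s(a)$ and $ac\in I$. But in every case above $s(a)$ has no incoming arrow (if $a$ is a source-arrow), or its unique incoming arrow $\alpha$ has $a\alpha\notin I$ (if $a=\gamma$), or — using the maximality of the relevant $q_j$ together with a gentle axiom — its unique incoming arrow $\alpha_0$ has $a\alpha_0\notin I$ (if $a$ is one of the $\beta$-arrows), and each alternative contradicts the existence of $c$. Hence no arrow occurring in any $\Omega I(v)$ is a left-hand factor of a relation, so $\pdim I(v)\le 1$ for all $v$, which completes the equivalence. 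I expect the main obstacle to be the syzygy computation for the two-arm case: one must verify carefully, using the gentle axioms at the socle vertex, that the two prolongation tails glued there assemble into the full projective $P(v)$, and handle the degenerate sub-cases in which some of the arrows $\beta_j,\gamma$ are absent; everything else reduces to the elementary criterion ``$Aa$ is projective iff $a$ is not a left factor of a relation'' and the combinatorics of maximal relation sequences.
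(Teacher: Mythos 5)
The paper does not prove this proposition; it is quoted verbatim from Chen--Lu \cite[Proposition~3.1]{CL16}, so there is no in-paper argument to compare against. Your blind proof is, as far as I can check, correct and complete, and it follows the standard route for such statements about string algebras: reduce $1$-Iwanaga-Gorensteinness to ``$\pdim I(v)\le 1$ for all $v$'' plus its opposite-algebra dual (legitimate, since $A^{\op}$ is again gentle and the cyclic-relation condition is invariant under reversing arrows), compute the first syzygies of the indecomposable injectives explicitly as direct sums of arrow-generated submodules $Aa$ of projectives, and use the elementary criterion that $Aa$ is projective precisely when $a$ admits no continuation $b$ with $ba\in I$. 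I verified the point you flag as delicate: in the two-arm case the gentle axiom at the socle vertex $v$ forces every arrow out of $v$ to be compatible with exactly one of the two incoming arrows, so the glued tails form a module with simple top $S(v)$ and radical $\bigoplus_{s(a)=v}Aa$, i.e.\ a copy of $P(v)$; the degenerate cases (missing $\beta_j$ or $\gamma_j$, loops, coinciding peaks of the two arms) do not disturb the formula. The combinatorial bookkeeping in both implications is also sound: the relation ``successor'' map on arrows is a partial injection by the gentle axioms, so the stated condition is equivalent to every arrow with a successor having a predecessor, which is exactly what your syzygy formulas detect.

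One purely expository caution: your phrase ``left-hand factor of a relation $ba$'' for the arrow $a$ is at odds with the paper's composition convention (in $ba$ the arrow $a$ is applied first and is written on the right); since the mathematical content is unambiguous from your definitions, this is only a matter of wording, but it should be fixed if the argument is written up.
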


Note that the path $a_1 \cdots a_t$ appearing in
Proposition~\ref{prop:CL} is an oriented cycle.

The proof of the following result is based on our reduction technique
from Corollary~\ref{cor:reduction2}.

\begin{Prop}\label{prop:gentle}
Let $A = KQ/I$ be a gentle algebra such that $Q$ has no loops.
Then the following are equivalent:
\begin{itemize}\itemsep2mm

\item[(i)]
$\irr^\tau(A) = \irr^{\tau^-}(A)$;

\item[(ii)]
$A$ is a gentle surface algebra.

\end{itemize}
\end{Prop}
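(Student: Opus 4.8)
The plan is to prove the two implications separately. The implication (ii)$\implies$(i) is immediate from the Derksen--Weyman--Zelevinsky result, while (i)$\implies$(ii) is the substantial part and will be reduced, via Theorem~\ref{thm:main4b} and Corollary~\ref{cor:reduction3}, to the combinatorics of gentle algebras.

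For (ii)$\implies$(i): a gentle surface algebra is by definition (see \cite{ABCP10}) a Jacobian algebra $A = J(Q,W)$ where $Q$ is $2$-acyclic and $W$ is the sum of the $3$-cycles attached to the internal triangles of the triangulation. As there are only finitely many triangles, $W \in KQ$, and since $A$ is finite-dimensional the Jacobian ideal is admissible; hence Theorem~\ref{thm:jacobian} applies and yields $\hom_A(M,\tau_A(M)) = \hom_A(\tau_A^-(M),M)$ for all $M \in \md(A)$. Consequently $E_A(\calZ) = E_A^-(\calZ)$ for every $\calZ \in \irr(A)$, and since $c(\calZ)$ does not involve $\tau$, a component is generically $\tau$-regular if and only if it is generically $\tau^-$-regular, i.e.\ $\irr^\tau(A) = \irr^{\tau^-}(A)$.

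For (i)$\implies$(ii) I argue by contraposition: assuming $A = KQ/I$ gentle with $Q$ loopless and $A$ \emph{not} a gentle surface algebra, I produce an idempotent $e$ with $\irr^\tau(A/AeA) \neq \irr^{\tau^-}(A/AeA)$, which by Corollary~\ref{cor:reduction3} forces $\irr^\tau(A) \neq \irr^{\tau^-}(A)$. The first ingredient is the combinatorial description of gentle surface algebras among loopless gentle algebras, extracted from \cite{ABCP10} together with Proposition~\ref{prop:CL}: by Chen--Lu's criterion and the $2$-in/$2$-out restriction in a gentle quiver, the relations of a $1$-Iwanaga--Gorenstein gentle algebra partition into uniquely determined cyclic families of relations, and a loopless gentle algebra is a gentle surface algebra precisely when it is $1$-Iwanaga--Gorenstein and all these families have length exactly $3$ (equivalently, every relation lies in a $3$-cycle of relations). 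Thus ``not a surface algebra'' splits into three cases. In case (a), $A$ is not $1$-Iwanaga--Gorenstein, and we are done immediately by Theorem~\ref{thm:main4b} with $e = 0$. In case (b), $A$ has a relation family of length $2$, i.e.\ a $2$-cycle $i \rightleftarrows j$ with arrows $a,b$ and $ab, ba \in I$; a short argument using finite-dimensionality shows that $i$ and $j$ are joined by no arrows besides $a,b$, so that $B := A/AeA$, with $e$ killing all vertices but $i,j$, is isomorphic to $KQ'/(ab,ba)$ with $Q'\colon i \rightleftarrows j$. For this algebra $P(i)\oplus S(i)$ is $\tau$-rigid but not $\tau^-$-rigid (one computes $\tau_A^-(S(i)) \cong S(i)$), so $\overline{\cO_{P(i)\oplus S(i)}}$ lies in $\irr^\tau(B)$ but not in $\irr^{\tau^-}(B)$, and Corollary~\ref{cor:reduction3} finishes this case. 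In case (c), $A$ has a relation family of length $m \geq 4$; let $v_0 \xrightarrow{a_1} v_1 \xrightarrow{a_2} v_2$ be three consecutive arcs of it (so $a_2 a_1 \in I$), and let $B := A/AeA$ kill every vertex except $v_0, v_1, v_2$. Then $B$ is again gentle, and no out-arrow of $v_2$ in $B$ composes with $a_2$ into a relation: the only out-arrow of $v_2$ in $A$ whose composite with $a_2$ lies in $I$ is the next arrow $a_3$ of the family, which has been deleted, whereas gentleness axiom~(iii) at $v_2$ forbids any other out-arrow to do so. Hence $a_2 a_1 \in I_B$ cannot be extended to a relation cycle and $B$ is not $1$-Iwanaga--Gorenstein, so Theorem~\ref{thm:main4b} (for $B$, with $e=0$) and Corollary~\ref{cor:reduction3} again give $\irr^\tau(A) \neq \irr^{\tau^-}(A)$.

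The main obstacle I expect lies in pinning down the combinatorial characterisation of gentle surface algebras precisely enough: one must verify that ``$1$-Iwanaga--Gorenstein with all relation families of length $3$'' is exactly the ABCP surface condition, and handle cleanly the possibility that a relation family visits a vertex more than once (so that the three chosen vertices in case (c) need not be distinct). A second, more routine obstacle is the finite-dimensionality bookkeeping in case (b) that rules out extra arrows between the two vertices of the $2$-cycle, and checking that $B = A/AeA$ is again gentle with the induced relations in each case.
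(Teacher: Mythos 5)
Your overall strategy coincides with the paper's: (ii) $\implies$ (i) via Theorem~\ref{thm:jacobian}, and (i) $\implies$ (ii) by combining Theorem~\ref{thm:main4b} with the reduction Corollary~\ref{cor:reduction3}, Chen--Lu's criterion, and the $2$-vertex selfinjective example. Both proofs also share the same implicit reliance on the combinatorial characterisation ``every relation lies in a full $3$-cycle of relations $\Leftrightarrow$ gentle surface algebra'' from \cite{ABCP10}, so I do not count that against you. The difference is purely in how the case analysis is organised: you sort by the length of the relation cycle, the paper first eliminates \emph{all} $2$-cycles of $Q$ (by listing the three possible $2$-vertex quotients and checking that only $KQ'/(ab,ba)$ is gentle, finite-dimensional and $1$-Iwanaga--Gorenstein) and then enumerates the four possible cyclic $3$-vertex quotients.

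There is, however, a genuine gap in your case (c), which you flag but do not close, and the step as written does fail. If the relation cycle revisits vertices, the arrow $a_3$ need not be deleted in $B = A/AeA$: for instance if $v_2 \xrightarrow{a_3} v_3$ with $v_3 = v_0$ (a cycle of length $\ge 4$ passing through $v_0$ twice), or if the cycle is supported on only two vertices (so $v_2 = v_0$), then your assertion ``the next arrow $a_3$ has been deleted'' is false and $B$ may well still satisfy Chen--Lu's condition for the relation $a_2a_1$; one then has to analyse the surviving configuration separately. These degenerate configurations are exactly what the paper's brute-force enumeration of $2$- and $3$-vertex quotients (including the double-arrow variants) is there to exclude, using finite-dimensionality and gentleness conditions (iii)--(iv); your proof needs an analogous preliminary step showing that, under hypothesis (i), $Q$ has no $2$-cycles at all (not just no length-$2$ relation families), after which three consecutive vertices of a relation cycle are automatically distinct and only the sub-case $v_3 = v_0$ remains to be handled. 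Separately, a minor slip in case (b): for $B = KQ'/(ab,ba)$ one has $\tau_B^-(S(i)) \cong S(j)$ with $j \ne i$, not $S(i)$; the module $P(i)\oplus S(i)$ fails to be $\tau^-$-rigid because $\soc(P(i)) \cong S(j) \cong \tau_B^-(S(i))$, so the conclusion stands.
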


\begin{proof}
(ii) $\implies$ (i):
All gentle surface algebras are Jacobian algebras $J(Q,W)$ and
satisfy the assumptions of
Theorem~\ref{thm:jacobian}.
This implies (i).

(i) $\implies$ (ii):
Since we asssume (i), we know from Theorem~\ref{thm:main4b} that $A/AeA$ is
$1$-Iwanaga-Gorenstein for each idempotent $e$ in $A$.

First, we assume that there is a $2$-cycle in $Q$.
Since $Q$ has no loops, there is an idempotent $e \in A$
such that $B = A/AeA \cong KQ'/I'$ is a gentle
$1$-Iwanaga-Gorenstein algebra
where $Q'$ is one of the following quivers:
$$
\xymatrix{
1 \ar@/^1ex/[r]^a& 2 \ar@/^1ex/[l]^b
&&
1 \ar@/^1ex/[r]\ar@/^2ex/[r] & 2 \ar@/^1ex/[l]
&&
1 \ar@/^1ex/[r]\ar@/^2ex/[r]& 2 \ar@/^1ex/[l]\ar@/^2ex/[l]
}
$$
For the first quiver, only the ideal $(ab,ba)$ gives a gentle $1$-Iwanaga-Gorenstein algebra.
But in this case, we know already that
$\irr^\tau(B) \not= \irr^{\tau^-}(B)$.
For the second and the third quiver, there is no ideal which yields a
gentle $1$-Iwanaga-Gorenstein algebra.

Thus we can assume that $Q$ does not have $2$-cycles.
If $A$ is hereditary, we are done.
Thus assume that $A$ is not hereditary.
Then there exists a $B = A/AeA \cong KQ'/I'$ where $Q'$ has
$3$ vertices, and $I'$ contains at least one  path
of length two, say $ba$.
Now, if $Q'$ is acyclic, then $B$ cannot be $1$-Iwanaga-Gorenstein,
since $\pdim(S_{s(a)}) = 2$.

Thus, it remains to consider the following four quivers:
$$
\xymatrix@-1ex{
& 2 \ar[dr]^b
&&
& 2 \ar[dr]^b
&&
& 2 \ar@/^0.5ex/[dr]\ar@/_0.5ex/[dr]
&&
& 2 \ar@/^0.5ex/[dr]\ar@/_0.5ex/[dr]
\\
1 \ar[ur]^a && 3 \ar[ll]^c
&
1 \ar@/^0.5ex/[ur]^a\ar@/_0.5ex/[ur] && 3 \ar[ll]^c
&
1 \ar@/^0.5ex/[ur]\ar@/_0.5ex/[ur] && 3 \ar[ll]
&
1 \ar@/^0.5ex/[ur]\ar@/_0.5ex/[ur] &&
3 \ar@/^0.5ex/[ll]\ar@/_0.5ex/[ll]
}
$$
In the first two cases, the ideal $(ba,cb,ac)$ is the only one which
yields a gentle $1$-Iwanaga-Gorenstein algebra.
Both are gentle surface algebras.
For the third and the fourth quivers there is no ideal
which yields a gentle $1$-Iwanaga-Gorenstein algebra.

This finishes the proof.
\end{proof}

If one considers gentle algebras $A = KQ/I$ where $Q$ has loops, then there are more cases to consider.
We leave this to the keen reader.

\subsubsection{Nakayama algebras}
Let $A = KQ/I$ where $Q$ is the cyclic quiver
$$
\xymatrix@-2ex{
& 2 \ar[r] & \cdots \ar[r] & n-1 \ar[dr]
\\
1 \ar[ur] &&&& n \ar@/^2ex/[llll]
}
$$
with $n$ vertices,
and $I$ is generated by some (non-empty) set of paths of length at least two.
In other words,
$A$ is a cyclic Nakayama algebra.
Since we dealt with local algebras already, we assume $n \ge 2$.
To simplify the notation, we work with indices modulo $n$.

The following three lemmas are well known and quite easy to prove.

\begin{Lem}
Let $A = KQ/I$ be a cyclic Nakayama algebra as above.
For each $1 \le i \le n$ we have
$$
\tau_A(S(i)) \cong S(i+1)
\text{\quad and \quad }
\tau_A^{-1}(S(i)) \cong S(i-1).
$$
\end{Lem}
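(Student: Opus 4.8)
The plan is to compute $\tau_A(S(i))$ directly from the transpose, exploiting that in a cyclic Nakayama algebra every vertex has exactly one incoming and exactly one outgoing arrow. First I would record the basic facts that make $\tau_A(S(i))$ and $\tau_A^-(S(i))$ defined: since $n\ge 2$ and $I$ is generated by paths of length at least two, the arrow $i\to i+1$ does not lie in $I$, so $\rad P(i)\ne 0$ and $S(i)$ is non-projective; dually the arrow $i-1\to i$ survives, $I(i)$ has length at least two, and $S(i)$ is non-injective. A minimal projective presentation of $S(i)$ is then $P(i+1)\xrightarrow{f}P(i)\to S(i)\to 0$: the kernel of $P(i)\twoheadrightarrow S(i)$ is $\rad P(i)$, a uniserial module whose top $\rad P(i)/\rad^2 P(i)$ is $S(i+1)$ (the unique arrow out of $i$ has target $i+1$), so its projective cover is $P(i+1)$; and under $\Hom_A(P(i+1),P(i))\cong e_{i+1}Ae_i$ the map $f$ corresponds to the arrow $a_i\colon i\to i+1$.

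Next I would apply $\Hom_A(-,A)$ to this presentation, so that $\operatorname{Tr}S(i)=\Coker\big(\Hom_A(P(i),A)\to\Hom_A(P(i+1),A)\big)$. Under the identifications $\Hom_A(P(j),A)\cong e_jA$ (as right $A$-modules) the induced map becomes left multiplication by $a_i$, i.e.\ $e_iA\to e_{i+1}A$, $y\mapsto a_iy$, whose image is $a_i\cdot e_iA=a_iA$. Because $a_i$ is the only arrow of $Q$ ending at $i+1$, every positive-length path ending at $i+1$ factors through $a_i$, so $a_iA$ equals the radical of the right module $e_{i+1}A$. Hence $\operatorname{Tr}S(i)=e_{i+1}A/\rad(e_{i+1}A)$ is the simple right $A$-module at vertex $i+1$, and applying the duality $D=\Hom_K(-,K)$ gives $\tau_A(S(i))=D\operatorname{Tr}S(i)\cong S(i+1)$.

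For the second isomorphism I would invoke the standard fact that $\tau_A$ and $\tau_A^-$ are mutually inverse bijections between the indecomposable non-projective and the indecomposable non-injective $A$-modules. Since every $S(j)$ is both non-projective and non-injective (using $n\ge 2$, as noted above), applying the first part with $j=i-1$ gives $\tau_A(S(i-1))\cong S(i)$, whence $\tau_A^-(S(i))\cong S(i-1)$. As an alternative to the transpose computation one could instead combine $\dim\Ext_A^1(S(i),S(j))=\#\{\text{arrows }i\to j\}=\delta_{j,\,i+1}$ with the Auslander--Reiten formula of Theorem~\ref{thm:ARformulas} to see that the (simple) socle of the uniserial module $\tau_A(S(i))$ is $S(i+1)$; but pinning down the length of $\tau_A(S(i))$ that way takes a little more work, so the route through $\operatorname{Tr}$ looks cleaner.

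The only genuine bookkeeping — and the step I would be most careful about — is the explicit identification of $f$, and of the induced map $\Hom_A(f,A)$, with multiplication by the arrow $a_i$, together with the equality $a_i\cdot e_iA=\rad(e_{i+1}A)$. Both rest entirely on the combinatorics of the cyclic quiver: exactly one arrow leaves each vertex, exactly one arrow enters each vertex, and $I\subseteq J^2$ so that every arrow remains nonzero in $A$.
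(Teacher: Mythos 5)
Your proof is correct and complete: the paper itself offers no argument (it labels the lemma ``well known and quite easy to prove''), and your computation via the minimal presentation $P(i+1)\to P(i)\to S(i)\to 0$, the identification of the induced map on $\Hom_A(-,A)$ with left multiplication by the unique arrow $i\to i+1$, and the equality of its image with $\rad(e_{i+1}A)$ is exactly the standard argument one would supply. The reduction of the second isomorphism to the first via the mutually inverse bijections $\tau_A$, $\tau_A^-$ is also fine, since you correctly verified that each $S(i)$ is neither projective nor injective (using $n\ge 2$ and $I\subseteq J^2$).
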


\begin{Lem}\label{lem:selfinjnakayama1}
Let $A$ be a cyclic Nakayama algebra.
If $A$ is $1$-Iwanaga-Gorenstein, then $A$ is selfinjective.
\end{Lem}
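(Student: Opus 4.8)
The plan is to pass to the combinatorial model of cyclic Nakayama algebras and argue by contradiction. Recall that, up to isomorphism, such an algebra is determined by its \emph{Kupisch series} $(c_1,\dots,c_n)$, where $c_i:=\ell(P(i))\ge 2$ is the length of the indecomposable projective $P(i)$ and $c_{i-1}\ge c_i-1$ for all $i$ (indices modulo $n$); moreover every indecomposable $A$-module is uniserial, the submodules of $P(i)$ are exactly the $\rad^kP(i)$, and $\rad^kP(i)$ is uniserial with top $S(i+k)$ and length $c_i-k$. In particular $A$ is selfinjective if and only if all $c_i$ are equal. So suppose, for a contradiction, that $A$ is $1$-Iwanaga-Gorenstein but the $c_i$ are not all equal.

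First I would record the homological input. Being $1$-Iwanaga-Gorenstein gives $\pdim(D(A))\le 1$ and $\idim({}_AA)\le 1$, i.e. $\pdim I(j)\le 1$ for every indecomposable injective and $\idim P(i)\le 1$ for every indecomposable projective; by Lemma~\ref{lem:pdim1} and its dual one can also phrase these via $\tau_A$, but the uniserial structure makes them explicit statements about lengths. Writing $d_j:=\ell(I(j))$ and $S(g(j))=\tp I(j)$, the projective cover of $I(j)$ is $P(g(j))$ and the syzygy $\Omega I(j)=\rad^{d_j}P(g(j))$ is uniserial with top $S(j+1)$; hence $\pdim I(j)\le 1$ forces $\Omega I(j)$ to be projective, i.e. either $I(j)=P(g(j))$ (so $c_{g(j)}=d_j$) or $\Omega I(j)=P(j+1)$ (so $c_{g(j)}=d_j+c_{j+1}$). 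Dually, writing $f(i):=i+c_i-1$ for the socle-vertex of $P(i)$, so that the injective envelope of $P(i)$ is $I(f(i))$, the condition $\idim P(i)\le 1$ forces the cokernel of $P(i)\hookrightarrow I(f(i))$ to be injective: either $P(i)=I(f(i))$ (so $d_{f(i)}=c_i$) or this cokernel is $I(i-1)$ (so $d_{f(i)}=c_i+d_{i-1}$). In both dichotomies the first alternative says exactly that $P(i)$, resp.\ $I(j)$, is projective--injective; along the way one also gets the bookkeeping identities $g(f(i))=i$ when $P(i)$ is injective, $f(g(j))=j$ when $I(j)$ is projective, and the observation that the number of indecomposable projectives which fail to be injective equals the number of indecomposable injectives which fail to be projective (both equal $n$ minus the number of indecomposable projective--injectives), so under our assumption both numbers are positive.

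Next I would combine these into a monotonicity statement and iterate it around the cycle. Put $h(i):=d_{f(i)}-c_i\ge 0$, which vanishes exactly when $P(i)$ is injective. From the dichotomy for $P(i)$ one reads off $g(f(i))=i-h(i)$; since $P(g(f(i)))$ is the projective cover of $I(f(i))$ one gets the inequality $c_{\,i-h(i)}=c_{g(f(i))}\ge \ell(I(f(i)))=d_{f(i)}=c_i+h(i)$, with equality if and only if $I(f(i))$ is projective. Now pick $i_0$ realizing $\max\{c_i : P(i)\text{ is not injective}\}$ and iterate $i_{m+1}:=i_m-h(i_m)$. While $h(i_m)>0$ the index moves strictly backwards around the cycle and $c_{i_0}<c_{i_1}<\cdots$, so the $i_m$ are pairwise distinct; if the iteration never reached an index with $h=0$ it would have to wrap around, forcing $c_{i_0}\ge c_{i_0}+\sum h(i_m)\ge c_{i_0}+n$, a contradiction. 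Hence it terminates at some $i_L$ with $P(i_L)$ injective and $c_{i_L}>c_{i_0}$; then $I(i_{L-1}-1)$ (the cokernel of $P(i_{L-1})\hookrightarrow I(f(i_{L-1}))$) is a proper quotient of $P(i_L)$ with $\pdim\le 1$, whose syzygy $\rad^{h(i_{L-1})}P(i_L)$ must be projective, forcing $\rad^{h(i_{L-1})}P(i_L)\cong P(i_{L-1})$ and the equality $c_{i_L}=c_{i_{L-1}}+h(i_{L-1})$; finally one runs the dual iteration (with $\idim P(i)\le 1$ in place of $\pdim I(j)\le 1$, and $d$ in place of $c$) from $f(i_{L-1})$, using that $P(i_L)$ is now known to be projective--injective, to push the strict increase around the cycle and again contradict $\max_i c_i<\infty$. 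Thus every indecomposable projective is injective, $\idim({}_AA)=0$, and $A$ is selfinjective.

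The main obstacle is exactly this last step: managing the interaction between the projective Kupisch series $(c_i)$ and the injective one $(d_j)$, and turning the \emph{termination} case of the iteration — where the chain runs into a projective--injective rather than looping — into a genuine contradiction. The cleanest organization, I think, is to start from the extremal index $i_0$ as above and to carry the dual data along simultaneously, so that a single extremal choice propagates through the auxiliary identities $g(f(i))=i-h(i)$, $f(g(j))=j+(c_{g(j)}-d_j)$ and forces every indecomposable projective to be injective; the several case distinctions (whether a given indecomposable projective or injective is, or is not, projective--injective) should be absorbed uniformly into the single inequality $c_{\,i-h(i)}\ge c_i+h(i)$ and its dual.
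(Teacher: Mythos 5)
The paper does not actually prove this lemma (it is one of three declared ``well known and quite easy to prove''), so there is no proof of record to compare with; I can only assess your argument on its own terms. Your setup and all the intermediate derivations are correct: the two dichotomies from $\pdim I(j)\le 1$ and $\idim P(i)\le 1$, the identity $g(f(i))=i-h(i)$, the inequality $c_{i-h(i)}\ge c_i+h(i)$, the termination of the iteration at some $i_L$ with $P(i_L)$ injective, and the conclusions $\rad^{h(i_{L-1})}P(i_L)\cong P(i_{L-1})$, $c_{i_L}=c_{i_{L-1}}+h(i_{L-1})$ and $P(i_L)\cong I(f(i_{L-1}))$ projective--injective. (Two small slips: with the paper's orientation $i\to i+1$ the Kupisch inequality reads $c_{i+1}\ge c_i-1$, not $c_{i-1}\ge c_i-1$; and since $i_0$ maximizes $c_i$ over non-injective projectives, the iteration in fact stops after a single step, $L=1$.) The genuine gap is the last sentence. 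The ``dual iteration from $f(i_{L-1})$'' cannot be run: you have just proved that $I(f(i_{L-1}))\cong P(i_L)$ is projective, so the dual analogue of $h$ vanishes there and the iteration is stationary; starting it instead at the non-projective $I(i_{L-1}-1)$ sends it in one step to $f(g(i_{L-1}-1))=f(i_{L-1})$, where it again stops. More structurally, your own relations show that for \emph{every} non-injective $P(i)$ (not only the extremal one) the module $P(g(f(i)))\cong I(f(i))$ is projective--injective, so the map $i\mapsto i-h(i)$ always leaves the set of non-injective projectives after one step. There is no strict increase to ``push around the cycle'', and the contradiction with $\max_i c_i<\infty$ is never reached.

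The contradiction is, however, one observation away from what you have established, by a different mechanism. Write $h:=h(i_{L-1})=d_{i_{L-1}-1}$. From $c_{i_L}-c_{i_{L-1}}=h$ and the Kupisch inequality $c_{j+1}\ge c_j-1$ (each step drops by at most $1$), the chain $c_{i_L},c_{i_L+1},\dots,c_{i_L+h}=c_{i_{L-1}}$ must drop by exactly $1$ at every step, whence $\rad^{m}P(i_L)\cong P(i_L+m)$ for all $0\le m\le h$. Admissibility of the ideal forces $h=d_{i_{L-1}-1}\ge 2$, so $P(i_{L-1}-1)\cong\rad^{h-1}P(i_L)$ is a proper non-zero submodule of the injective module $P(i_L)$ having the same socle; hence $P(i_{L-1}-1)$ is not injective, while $c_{i_{L-1}-1}=c_{i_{L-1}}+1>c_{i_0}$. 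This contradicts the maximality of $c_{i_0}$ among the non-injective indecomposable projectives and finishes the proof. I recommend replacing your final sentence by this argument (and fixing the direction of the Kupisch inequality, which it uses).
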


Let $J$ be the ideal in $KQ$ generated by all arrows.

\begin{Lem}\label{lem:selfinjnakayama2}
A cyclic Nakayama algebra $A = KQ/I$ is selfinjective if and only
if there exists some $t \ge 2$ such that $I = J^t$.
\end{Lem}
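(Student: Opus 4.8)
The plan is to reduce the statement to a purely combinatorial fact about the Kupisch series of $A$. Write $c_i := \dim P(i)$ for $1 \le i \le n$ (indices taken modulo $n$); since $A$ is Nakayama, $P(i)$ is uniserial with composition factors, read from the top, $S(i), S(i+1), \dots, S(i+c_i-1)$, so $\soc(P(i)) \cong S(i+c_i-1)$. As $I$ is generated by paths of length at least two, the arrow starting at $i$ survives in $A$, so $c_i \ge 2$ for every $i$; and since $\rad(P(i))$ is a uniserial module with top $S(i+1)$, it is a quotient of $P(i+1)$, which gives the Kupisch inequality $c_{i+1} \ge c_i - 1$ for all $i$. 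I would also record the elementary equivalence that $I = J^t$ for some $t \ge 2$ if and only if $c_1 = \dots = c_n$ (with common value $t$): an ideal generated by paths is spanned by the paths lying in it, and ``all $c_i$ equal $t$'' says exactly that these are the paths of length $\ge t$.

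Next I would characterise self-injectivity through the map $f \colon \Z/n \to \Z/n$ sending $i$ to $i + c_i - 1$ read modulo $n$. If $A$ is self-injective then each $P(i)$ is injective; being uniserial it has simple socle $S(f(i))$, hence equals the injective envelope $I(f(i))$, and since the $n$ indecomposable injectives have pairwise distinct socles, $f$ is injective, hence bijective. Conversely, assume $f$ is bijective. For each $i$ the module $P(i)$ has simple socle $S(f(i))$ and therefore embeds into $I(f(i))$, so $\dim P(i) \le \dim I(f(i))$; summing over $i$ and using $\sum_i \dim P(i) = \dim A = \sum_j \dim I(j) = \sum_i \dim I(f(i))$ forces $\dim P(i) = \dim I(f(i))$ for every $i$, whence $P(i) \cong I(f(i))$ and $A$ is self-injective. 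Thus $A$ is self-injective $\iff$ $f$ is bijective.

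It remains to prove that $f$ is bijective precisely when $c_1 = \dots = c_n$, and this is the step I expect to require the most care, since it is exactly where the Kupisch inequality must be used (it is what rules out non-constant series with bijective $f$). Extend $g(i) := i + c_i - 1 \in \Z$ to all of $\Z$ by $g(i+n) := g(i) + n$; the Kupisch inequality gives $g(i) - g(i-1) = 1 + c_i - c_{i-1} \ge 0$, so $g$ is non-decreasing. One checks that $f$ is injective if and only if $g$ is strictly increasing: if $g(i_0) = g(i_0+1)$ then $f$ identifies two distinct classes (here one uses $n \ge 2$), while if $g$ is strictly increasing then $g(1) < \dots < g(n) < g(1)+n$, so the $g(i)$ occupy $n$ consecutive integers and are pairwise incongruent modulo $n$. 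Finally, $g$ strictly increasing means $c_{i+1} \ge c_i$ for all $i$, which around the cycle forces $c_1 \le c_2 \le \dots \le c_n \le c_1$, i.e.\ all $c_i$ equal. Chaining the three equivalences yields: $A$ is self-injective $\iff$ all $c_i$ are equal $\iff$ $I = J^t$ for some $t \ge 2$.
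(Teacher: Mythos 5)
Your proof is correct and complete. The paper offers no proof of this lemma (it is listed among facts that are ``well known and quite easy to prove''), so there is nothing to compare against; your route via the Kupisch series $c_i = \dim P(i)$ --- using $c_{i+1} \ge c_i - 1$, the bijectivity of $i \mapsto i + c_i - 1$ as a criterion for self-injectivity, and the dimension-count $\sum_i \dim P(i) = \dim A = \sum_j \dim I(j)$ to upgrade the embeddings $P(i) \hookrightarrow I(f(i))$ to isomorphisms --- is the standard argument and all steps check out, including the conventions ($\soc(P(i)) \cong S(i+c_i-1)$ matches the paper's use of $\soc(P(j)) \cong S(j-2)$ when $\dim P(j) = (n-1)+nr$).
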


\begin{Lem}[{Adachi \cite[Proposition~2.5]{A16}}]
\label{lem:taurigidnakayama}
Let $A = KQ/I$ be a cyclic Nakayama algebras as above,
and let $M \in \ind(A)$, and assume that $M$ is non-projective.
Then $M$ is $\tau$-rigid if and only if $\dim(M) \le n-1$.
\end{Lem}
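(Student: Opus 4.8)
The plan is to combine the classification of indecomposable modules over a cyclic Nakayama algebra with an explicit formula for $\tau_A$, reducing the statement to an elementary congruence.

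\emph{Setup.} Every indecomposable $A$-module is uniserial. For $1\le i\le n$ and $1\le \ell\le c_i$, where $c_i:=\dm P(i)$ and all vertex indices are read modulo $n$, let $N_{i,\ell}$ denote the indecomposable module with top $S(i)$ and composition factors $S(i),S(i+1),\dots,S(i+\ell-1)$ read from the top. Then $\rad^{k}N_{i,\ell}\cong N_{i+k,\ell-k}$, the unique submodule of $N_{i,\ell}$ of length $k$ is $N_{i+\ell-k,k}$, and $N_{i,\ell}$ is projective if and only if $\ell=c_i$. Since $I$ is generated by paths, no subpath of a path not in $I$ lies in $I$; in particular $c_{i+1}\ge c_i-1$. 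Write the given module as $M\cong N_{i,\ell}$ with $\ell:=\dm M$; the hypothesis that $M$ is non-projective means $\ell\le c_i-1$, so in particular $\ell\le c_{i+1}$.

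\emph{The Auslander--Reiten translate.} For non-projective $M=N_{i,\ell}$ the minimal projective presentation is $P(i+\ell)\xrightarrow{f}P(i)\to M\to 0$ with $\Ima(f)=\rad^{\ell}P(i)\cong N_{i+\ell,c_i-\ell}$ (the inequality $c_i-\ell\le c_{i+\ell}$ being automatic, as $\rad^{\ell}P(i)$ is a submodule of $P(i)$ with top $S(i+\ell)$). A standard computation with this presentation --- applying $\Hom_A(-,A)$ to pass to the transpose, which is again uniserial, and then dualising; or, when $A$ is selfinjective, using $\tau_A\cong\nu_A\Omega^2$ --- yields
\[
\tau_A M\cong N_{i+1,\ell}.
\]
For $\ell=1$ this recovers the formula $\tau_A(S(i))\cong S(i+1)$ stated above. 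I expect this identification of $\tau_A M$ to be the one step requiring genuine care (essentially bookkeeping of conventions); everything else is elementary.

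\emph{The Hom space and conclusion.} Any nonzero $g\in\Hom_A(M,\tau_A M)=\Hom_A(N_{i,\ell},N_{i+1,\ell})$ factors as $N_{i,\ell}\twoheadrightarrow\Ima(g)\hookrightarrow N_{i+1,\ell}$; setting $k$ to be the length of $\Ima(g)$, uniseriality of the source forces $\Ima(g)\cong N_{i,k}$, and uniseriality of the target forces $\Ima(g)$ to be the unique length-$k$ submodule $N_{i+\ell-k+1,k}$ of $N_{i+1,\ell}$, with $1\le k\le\ell$. Comparing tops gives $i+\ell-k+1\equiv i\pmod n$, i.e.\ $k\equiv\ell+1\pmod n$; conversely, any $k$ with $1\le k\le\ell$ and $k\equiv\ell+1\pmod n$ produces such a homomorphism. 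Hence $\Hom_A(M,\tau_A M)\neq 0$ if and only if there is an integer $k$ with $1\le k\le\ell$ and $k\equiv\ell+1\pmod n$. If $\ell\le n-1$ there is no such $k$, since the least positive integer congruent to $\ell+1$ modulo $n$ is $\ell+1>\ell$ while the next one below it is $\ell+1-n\le 0$; thus $M$ is $\tau$-rigid. If $\ell\ge n$, then $k:=\ell+1-n$ satisfies $1\le k\le\ell$, so $\Hom_A(M,\tau_A M)\neq 0$ and $M$ is not $\tau$-rigid. This gives the lemma.
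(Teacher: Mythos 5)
Your proof is correct. The paper does not prove this lemma at all --- it is quoted from Adachi \cite[Proposition~2.5]{A16} --- so there is no internal proof to compare against; your argument supplies the missing details and follows the standard route (explicit description of $\tau_A$ on uniserials, then a Hom computation reduced to a congruence). The one step you flag as needing care, $\tau_A(N_{i,\ell})\cong N_{i+1,\ell}$ for non-projective $N_{i,\ell}$, does check out: from the minimal presentation $P(i+\ell)\to P(i)\to N_{i,\ell}\to 0$, the map $\Hom_A(f,A)\df e_iA\to e_{i+\ell}A$ is left multiplication by the length-$\ell$ path, whose image is $\rad^\ell(e_{i+\ell}A)$, so $\operatorname{Tr}M$ is the uniserial right module of length $\ell$ with top at $i+\ell$, and dualizing gives the left uniserial of length $\ell$ with top $S(i+1)$; this is also consistent with the paper's own formula $\tau_A(S(i))\cong S(i+1)$ (the case $\ell=1$). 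The remaining steps (quotients of the uniserial source are $N_{i,k}$, the unique length-$k$ submodule of $N_{i+1,\ell}$ is $N_{i+1+\ell-k,k}$, and a uniserial is determined by its top and length) are all correct, and the final congruence analysis, using the paper's standing assumption $n\ge 2$, gives exactly the stated dichotomy.
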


\begin{Prop}\label{prop:nakayama}
Let $A = KQ/I$ be a cyclic Nakayama algebra as above.
Then the following are equivalent:
\begin{itemize}\itemsep2mm

\item[(i)]
$\irr^\tau(A) = \irr^{\tau^-}(A)$;

\item[(ii)]
$I = J^{(n-1) + nr}$ for some $r \ge 0$ (with $r \ge 1$ in case
$n=2$).

\end{itemize}
\end{Prop}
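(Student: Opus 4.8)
The plan is to exploit that a cyclic Nakayama algebra $A$ is representation-finite, which turns the statement into a purely module-theoretic one. The first step is a reduction: for representation-finite $A$ the map $M\mapsto\overline{\cO_M}$ is a bijection from the isomorphism classes of $\tau$-rigid modules onto $\irr^\tau(A)$, and likewise with $\tau^-$ in place of $\tau$. Indeed, if $M$ is $\tau$-rigid then $\ext_A^1(M,M)=0$, since by Theorem~\ref{thm:ARformulas} this number equals $\dim\overline{\Hom}_A(M,\tau_A(M))\le\hom_A(M,\tau_A(M))=0$; hence $\overline{\cO_M}$ is an irreducible component, $c(M)\le e(M)=\ext_A^1(M,M)=0$, so $c(M)=E(M)=0$ and $M$ is $\tau$-regular, so $\overline{\cO_M}\in\irr^\tau(A)$. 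Conversely, representation-finiteness gives every component a dense orbit $\cO_M$; if that component is generically $\tau$-regular then $M$ is $\tau$-regular, so $c(M)=E(M)$, hence $c(M)=e(M)=E(M)$ by $c\le e\le E$, hence $M$ lies in a unique component by Lemma~\ref{lem:regular}, forcing $c(M)=0$ and $M$ $\tau$-rigid. Since a component has a unique dense orbit, $\irr^\tau(A)=\irr^{\tau^-}(A)$ if and only if the isomorphism classes of $\tau$-rigid and of $\tau^-$-rigid $A$-modules coincide. Everything below is argued in these terms, using: the indecomposable modules are the uniserial modules $M_{i,\ell}$ with $\tp(M_{i,\ell})=S(i)$ of length $\ell$ (so $P(i)=M_{i,t}$, $t$ the Loewy length); $\tau_A(M_{i,\ell})=M_{i+1,\ell}$ and $\tau_A^-(M_{i,\ell})=M_{i-1,\ell}$ for $\ell<t$ (extending $\tau_A(S(i))=S(i+1)$); and $\hom_A(M_{i,\ell},M_{j,m})=\#\{k:1\le k\le\min(\ell,m),\ k\equiv j+m-i\pmod n\}$.

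For (i) $\implies$ (ii): Theorem~\ref{thm:main4b} with $e=0$ shows $A$ is $1$-Iwanaga-Gorenstein, hence selfinjective by Lemma~\ref{lem:selfinjnakayama1}, hence $I=J^t$ with $t\ge2$ by Lemma~\ref{lem:selfinjnakayama2}. It remains to prove $n\mid t+1$. If not, choose $k_0\in\{1,\dots,n-1\}$ with $k_0\equiv t+1\pmod n$ and put $M:=P(1)\oplus S(k_0)$. Then $M$ is $\tau$-rigid: $\hom_A(P(1),\tau_A(S(k_0)))=\hom_A(P(1),S(k_0+1))=[S(k_0+1):S(1)]=0$ since $n\nmid k_0$, $\hom_A(S(k_0),\tau_A(P(1)))=0$, and $S(k_0)$ is $\tau$-rigid. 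But $M$ is not $\tau^-$-rigid: as $A$ is selfinjective, $\tau_A^-(M)=S(k_0-1)$, and $\hom_A(S(k_0-1),P(1))=\hom_A(M_{k_0-1,1},M_{1,t})=1$ precisely because $k_0\equiv t+1\pmod n$. This contradicts the reduction. Finally $t\ge2$ forces $r\ge1$ when $n=2$, while for $n\ge3$ one has $t=n-1\ge2$ automatically, so the conclusion is exactly (ii).

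For (ii) $\implies$ (i): now $I=J^t$ with $t=(n-1)+nr\ge2$, so $A$ is selfinjective and $n\mid t+1$; one must show every $M$ is $\tau$-rigid iff $\tau^-$-rigid. Write $M=P\oplus N$ with $P$ projective (hence projective-injective) and $N$ without projective summands. Since $\tau_A(P)=\tau_A^-(P)=0$: $M$ is $\tau$-rigid iff $N$ is $\tau$-rigid and $\Hom_A(P,\tau_A(N))=0$; $M$ is $\tau^-$-rigid iff $N$ is $\tau^-$-rigid and $\Hom_A(\tau_A^-(N),P)=0$. For the first factor, the identity $\hom_A(M_{i,\ell},M_{j+1,m})=\hom_A(M_{i-1,\ell},M_{j,m})$ (both sides count $k\in\{1,\dots,\min(\ell,m)\}$ with $k\equiv j+m-i+1\pmod n$), applied to every pair of indecomposable summands of $N$ together with $\tau_A(M_{i,\ell})=M_{i+1,\ell}$ and $\tau_A^-(M_{i,\ell})=M_{i-1,\ell}$, gives ``$N$ is $\tau$-rigid iff $N$ is $\tau^-$-rigid''. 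For the second factor, write $N=\bigoplus_s M_{i_s,\ell_s}$; using $\hom_A(P(a),X)=[X:S(a)]$ and $\hom_A(X,P(a))=\hom_A(X,I(a+t-1))=[X:S(a+t-1)]$ (as $P(a)$ is the indecomposable injective with socle $S(a+t-1)$), one obtains $\Hom_A(P,\tau_A(N))=0\iff\supp([P])\cap\supp(\tau_A(N))=\varnothing$ and $\Hom_A(\tau_A^-(N),P)=0\iff\supp([P])\cap(\supp(\tau_A^-(N))-(t-1))=\varnothing$, where $\supp(\tau_A(N))=\bigcup_s\{i_s+1,\dots,i_s+\ell_s\}$ and $\supp(\tau_A^-(N))-(t-1)=\bigcup_s\{i_s-t,\dots,i_s+\ell_s-1-t\}$ in $\Z/n$. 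Since $n\mid t+1$ gives $-t\equiv1\pmod n$, these two subsets of $\Z/n$ coincide, so the two vanishing conditions agree. Combining the two factors gives the claim, hence $\irr^\tau(A)=\irr^{\tau^-}(A)$.

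The delicate points I expect to require care are setting up the indecomposable $A$-modules, the Auslander-Reiten translation formulas and the hom-dimension formulas with the correct left/right conventions and cyclic vertex labeling, and, in the last step, the bookkeeping with the ``wrap-around'' residue intervals in $\Z/n$ — in particular the degenerate case $\ell_s\ge n$ (where the $\supp$ of a summand is all of $\Z/n$) and the handling of projective-injective summands. As an alternative for (ii) $\implies$ (i) when $n\ge3$ and $\mathrm{char}(K)\nmid r+1$, one can instead realize $A=KQ/J^{(n-1)+nr}$ as the Jacobian algebra $J(Q,W)$ with $W=c^{r+1}$, where $c$ is the fundamental oriented $n$-cycle, whose cyclic derivatives generate $J^{(n-1)+nr}$, and invoke Theorem~\ref{thm:jacobian}; the direct argument above has the virtue of being uniform in $n$ and in $\mathrm{char}(K)$ (and it is the only option when $n=2$, since then the Gabriel quiver of $A$ is not $2$-acyclic).
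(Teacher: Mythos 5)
Your proposal is correct, and it diverges from the paper's proof in an interesting way. The preliminary reduction you make explicit --- for a representation-finite algebra, $\calZ \mapsto$ (module with dense orbit in $\calZ$) identifies $\irr^\tau(A)$ with the set of isoclasses of $\tau$-rigid modules, and likewise for $\tau^-$ --- is exactly what the paper uses implicitly when it argues via modules such as $S(i)\oplus P(j)$; spelling it out (via Lemma~\ref{lem:regularopen}, Lemma~\ref{lem:regular} and $c\le e\le E$) is a genuine improvement in transparency. Your (i)$\implies$(ii) is then essentially the paper's argument: reduce to the selfinjective case $I=J^t$ via Theorem~\ref{thm:main4b} and Lemma~\ref{lem:selfinjnakayama1}, and exhibit $P(1)\oplus S(k_0)$ as $\tau$-rigid but not $\tau^-$-rigid when $n\nmid t+1$; the paper phrases this as ``$S(i)\oplus P(j)$ is $\tau$-rigid iff $j\ne i+1$'' plus a socle computation, but the content is the same, and your version handles $n=2$ and $n\ge 3$ uniformly. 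The real difference is in (ii)$\implies$(i): the paper enumerates rigid modules by hand for $n=2$ and, for $n\ge 3$, realizes $A$ as a Jacobian algebra $J(Q,W)$ with $W$ a power of the fundamental cycle and invokes the Derksen--Weyman--Zelevinsky symmetry $\hom_A(M,\tau M)=\hom_A(\tau^-M,M)$, patching the characteristic restriction by field-independence of Nakayama representation theory. You instead verify the coincidence of $\tau$-rigid and $\tau^-$-rigid modules directly from the uniserial hom-formula, splitting off the projective-injective part and matching the two support conditions in $\Z/n$ using $-t\equiv 1 \pmod n$. Your route is uniform in $n$ and in the characteristic and avoids the external DWZ input, at the cost of some convention-sensitive bookkeeping ($\tau(M_{i,\ell})=M_{i+1,\ell}$, the hom-counting formula, wrap-around supports), all of which you have set up consistently with the paper's labeling $\tau_A(S(i))\cong S(i+1)$. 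I find no gap.
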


\begin{proof}
(i) $\implies$ (ii):
Under the assumption (i), combining
Theorem~\ref{thm:main4b} and Lemma~\ref{lem:selfinjnakayama1}
shows that
$A$ is selfinjective.
Thus by Lemma~\ref{lem:selfinjnakayama2} we have
$I = J^t$ for some $t \ge 2$.

First, let $n = 2$.
If $t$ is even, then $\soc(P(i)) \cong S(j)$ for $\{ i,j \} = \{ 1,2 \}$.
Thus $P(1) \oplus S(1)$ is $\tau$-rigid but not $\tau^-$-rigid.
Thus $t$ has to be odd, i.e.\ (ii) holds.

Next, let $n \ge 3$.
For $1 \le i,j \le n$ let $M_{ij} := S(i) \oplus P(j)$.
Then $M_{ij}$ is $\tau$-rigid if and only if $j \not= i+1$.

Let now $1 \le j \le n$.
We choose $i$ such that $\soc(P(j)) \cong S(i-1)$.
This implies $\Hom_A(\tau_A^-(M_{ij}),M_{ij}) \not= 0$.
Thus, if $j \not= i+1$, then $\irr^\tau(A) \not= \irr^{\tau^-}(A)$.

Assume now that $\irr^\tau(A) = \irr^{\tau^-}(A)$.
By the above, this implies
$\soc(P(j)) \cong S(j-2)$ for each $1 \le j \le n$.
It follows that there exists some $r \ge 0$ such that
$\dim(P(j)) = (n-1) +nr$ for all $j$.
This implies $I = J^{(n-1) + nr}$.

(ii) $\implies$ (i):
First, let $n=2$.
Then (ii) amounts to assuming that $I = J^t$ with $t$ odd.
By Lemma~\ref{lem:taurigidnakayama} the only indecomposable
$\tau$-rigid (resp. $\tau^-$-rigid) $A$-modules are $S(1),S(2),P(1),P(2)$ (resp. $S(1),S(2),I(1),I(2))$.
We know that $\tau_A(S(i)) \cong S(j) \cong \tau_A^-(S(i))$
for $\{ i,j \} = \{ 1,2 \}$.
Since $t$ is odd, we have $\soc(P(i)) \cong S(i)$ for $i=1,2$.
The $\tau$-rigid $A$-modules are
then $P(1)^{a_1} \oplus P(2)^{a_2}$,  $P(1)^{a_1} \oplus S(1)^{a_2}$,
 $P(2)^{a_1} \oplus S(2)^{a_2}$ for $a_1,a_2 \ge 0$, and these are also the
$\tau^-$-rigid modules,
i.e.\ we have
$\irr^\tau(A) = \irr^{\tau^-}(A)$.

Next,
assume that $n \ge 3$.
We have
$A = KQ/J^{(n-1) + nr}$ for some
$r \ge 0$.
Let $S$ be a cycle of length $n$ in $Q$, and set $W := S^{r+1}$.
This is a potential in the sense of \cite{DWZ08}.
Assume that ${\rm char}(K) \not= r+1$.
Then $J^{(n-1)+nr}$ equals the
ideal generated by the cyclic derivatives of
$W$.  (If ${\rm char}(K) = r+1$, then the cyclic derivatives of $W$
are $0$.)
Thus $J(Q,W)$ satisfies the assumptions of
Theorem~\ref{thm:jacobian}.
This implies
$\irr^\tau(A) = \irr^{\tau^-}(A)$.

Since $A$ is a Nakayama algebra, the classification of
indecomposable
$A$-modules and their Auslander-Reiten translates
and also $\hom_A(M,N)$ for all $M,N \in \md(A)$ do
not depend on the chosen ground field $K$.
This implies that the condition $\irr^\tau(A) = \irr^{\tau^-}(A)$
does not depend on $K$.

This finishes the proof.
\end{proof}

\bigskip
\subsection*{Acknowledgements}
GB gratefully acknowledges the support of the National Science Centre grant no.\ 2020/37/B/ST1/00127.
JS was partially funded by the Deutsche Forschungsgemeinschaft
(DFG, German Research Foundation) under Germany's Excellence Strategy - GZ 2047/1, Projekt-ID 390685813.
He also thanks the
Faculty of Mathematics and Computer Science
of the Nicolaus Copernicus University in Toru\'n for 10 days of hospitality in August 2024.


\end{document}